\newtheorem{theorem}{Theorem}[section]
\newtheorem{lemma}[theorem]{Lemma}
\newtheorem{proposition}[theorem]{Proposition}
\newtheorem{corollary}[theorem]{Corollary}
\newtheorem{conjecture}[theorem]{Conjecture}
\theoremstyle{definition}
\newtheorem{definition}[theorem]{Definition}
\theoremstyle{remark}
\numberwithin{equation}{section}
\newcommand{\schubert}{\ensuremath{\mathfrak{S}}}
\newcommand{\key}{\ensuremath{\kappa}}
\newcommand{\lock}{\ensuremath{\mathfrak{L}}}
\newcommand{\fund}{\ensuremath{\mathfrak{F}}}
\newcommand{\mono}{\ensuremath{\mathfrak{M}}}
\newcommand{\kohnert}{\ensuremath{\mathfrak{K}}}
\newcommand{\Kohnert}{\ensuremath{\mathcal{K}}}
\newcommand{\eS}{\ensuremath{\mathcal{E}}}
\newcommand{\D}{\ensuremath{\mathbb{D}}}
\DeclareRobustCommand{\DD}{\ensuremath{\reflectbox{$\mathbb{D}$}}}
\DeclareRobustCommand{\sDD}{\ensuremath{\reflectbox{$_{\mathbb{D}}$}}}
\newcommand{\sort}{\ensuremath{\mathrm{sort}}}
\newcommand{\Up}{\ensuremath{\mathrm{Up}}}
\newcommand{\KD}{\ensuremath{\mathrm{KD}}}
\newcommand{\MKD}{\ensuremath{\mathrm{MKD}}}
\newcommand{\FKD}{\ensuremath{\mathrm{FKD}}}
\newcommand{\LT}{\ensuremath{\mathrm{LT}}}
\newcommand{\QLT}{\ensuremath{\mathrm{QLT}}}
\newcommand{\SSET}{\ensuremath{\mathrm{SSET}}}
\newcommand{\SET}{\ensuremath{\mathrm{SET}}}
\newcommand{\wt}{\ensuremath{\mathrm{wt}}}
\newcommand{\Des}{\ensuremath{\mathrm{Des}}}
\newcommand{\destand}{\ensuremath{\mathrm{dst}}}
\newcommand{\flatten}{\ensuremath{\mathrm{flat}}}
\newlength\cellsize \setlength\cellsize{12\unitlength}
\newcommand\cellify[1]{\def\thearg{#1}\def\nothing{}%
\ifx\thearg\nothing\vrule width0pt height\cellsize depth0pt%
  \else\hbox to 0pt{\usebox2\hss}\fi%
  \vbox to 12\unitlength{\vss\hbox to 12\unitlength{\hss$#1$\hss}\vss}}
\newcommand\tableau[1]{\vtop{\let\\=\cr
\setlength\baselineskip{-12000pt}
\setlength\lineskiplimit{12000pt}
\setlength\lineskip{0pt}
\halign{&\cellify{##}\cr#1\crcr}}}
\newcommand{\cir}[1]{\def\thearg{#1}\def\nothing{}%
\ifx\thearg\nothing\vrule width0pt height\cellsize depth0pt%
  \else\hbox to 0pt{\usebox4\hss}\fi%
  \vbox to 12\unitlength{\vss\hbox to 12\unitlength{\hss$#1$\hss}\vss}}
\newcommand\nocellify[1]{\def\thearg{#1}\def\nothing{}%
\ifx\thearg\nothing\vrule width0pt height\cellsize depth0pt%
  \else\hbox to 0pt{\hss}\fi%
  \vbox to 12\unitlength{\vss\hbox to 12\unitlength{\hss$#1$\hss}\vss}}
\newcommand\notableau[1]{\vtop{\let\\=\cr
\setlength\baselineskip{-12000pt}
\setlength\lineskiplimit{12000pt}
\setlength\lineskip{0pt}
\halign{&\nocellify{##}\cr#1\crcr}}}
\definecolor{boxgray}{gray}{.7}
\newcommand{\cb}{\color{boxgray}\rule{1\cellsize}{1\cellsize}\hspace{-\cellsize}\usebox2}
\begin{document}


\title[Kohnert polynomials]{Kohnert polynomials}  

\author[S. Assaf]{Sami Assaf}
\address{Department of Mathematics, University of Southern California, 3620 S. Vermont Ave., Los Angeles, CA 90089-2532, U.S.A.}
\email{shassaf@usc.edu}
\thanks{This work was supported by a Collaboration Grant for Mathematicians from the Simons Foundation (Award 524477, S.A.).}

\author[D. Searles]{Dominic Searles}
\address{Department of Mathematics and Statistics, University of Otago, 730 Cumberland St., Dunedin 9016, New Zealand}
\email{dominic.searles@otago.ac.nz}

\subjclass[2010]{Primary 14M15; Secondary 14N15, 05E05}


\dedicatory{To the memory of Axel Kohnert}

\keywords{Schubert polynomials, Demazure characters, key polynomials, fundamental slide polynomials}

\begin{abstract}
  We associate a polynomial to any diagram of unit cells in the first quadrant of the plane using Kohnert's algorithm for moving cells down. In this way, for every weak composition one can choose a cell diagram with corresponding row-counts, with each choice giving rise to a combinatorially-defined basis of polynomials. These \emph{Kohnert bases} provide a simultaneous generalization of Schubert polynomials and Demazure characters for the general linear group. Using the monomial and fundamental slide bases defined earlier by the authors, we show that Kohnert polynomials stabilize to quasisymmetric functions that are nonnegative on the fundamental basis for quasisymmetric functions. For initial applications, we define and study two new Kohnert bases. The elements of one basis are conjecturally Schubert-positive and stabilize to the skew-Schur functions; the elements of the other basis stabilize to a new basis of quasisymmetric functions that contains the Schur functions.
\end{abstract}

\maketitle
\tableofcontents

%
\section{Introduction}
%
\label{sec:introduction}

Certain homogeneous bases of the ring of polynomials are of central importance in representation theory and geometry. Foremost among these are the Schubert polynomials \cite{LS82}, which are characters of Kra\'skiewicz-Pragacz modules \cite{KP1, KP2} and represent Schubert basis classes in the cohomology of the complete flag variety, and the Demazure characters \cite{Dem74} (also known as key polynomials), which are the characters of Demazure modules for the general linear group. We are motivated by the question of finding other bases of polynomials that exhibit close connections to and share key properties with these important bases. Such bases may be used to understand Schubert polynomials and Demazure characters and moreover may be of independent representation-theoretic or geometric interest.

Kohnert \cite{Koh91} introduced a combinatorial model for the monomial expansion of a Demazure character. This model begins with the diagram $\D(a)$ of a weak composition $a$, the cell diagram in $\mathbb{N}\times \mathbb{N}$ which has $a_i$ cells in row $i$, left-justified. Kohnert defined an algorithmic process on cell diagrams that moves the rightmost cell of a row down to the first available position below. The Kohnert diagrams for $a$ are the cell diagrams that may be obtained by a (possibly empty) sequence of these Kohnert moves on $\D(a)$. Kohnert proved that the Demazure character for $a$ is the generating function of the Kohnert diagrams of $\D(a)$.

Kohnert conjectured that the Schubert polynomials arise by applying the exact same algorithm to different initial cell diagrams, namely, the Rothe diagrams of permutations. Proofs were given by Winkel \cite{Win99,Win02}, though were not fully accepted due to the very technical nature of the arguments; a recent more direct proof was given by Assaf \cite{Ass-R} using the expansion of Schubert polynomials into Demazure characters.

In this work, we study the polynomials arising from application of Kohnert's algorithm to \emph{any} cell diagram in $\mathbb{N}\times \mathbb{N}$; we call these polynomials \emph{Kohnert polynomials}. By definition, Kohnert polynomials expand positively in monomials, and simultaneously generalize both Schubert polynomials and Demazure characters. Given a weak composition $a$, there are several different (though finitely many)  Kohnert polynomials for $a$: in creating an initial cell diagram one must place $a_i$ cells in row $i$, but one may choose in which columns the cells are placed. If one Kohnert polynomial is chosen for every weak composition $a$, we call the resulting set of polynomials a \emph{Kohnert basis} of the polynomial ring. Each Kohnert polynomial in a Kohnert basis has a unique monomial that is minimal in dominance order, hence a Kohnert basis is lower uni-triangular with the basis of monomials. Thus Kohnert bases are bases of the polynomial ring, justifying the nomenclature.

Kohnert bases thus comprise a vast collection of combinatorially-defined bases of polynomials, which includes the Schubert and Demazure character bases. To motivate and facilitate further investigation of Kohnert bases, we prove that \emph{every} Kohnert polynomial expands positively in the monomial slide polynomials introduced in \cite{AS17}. An immediate application is that every Kohnert polynomial has a stable limit, which, in fact, is quasisymmetric and expands positively in the monomial basis of quasisymmetric functions. 

We define necessary and sufficient conditions on cell diagrams for the corresponding Kohnert polynomial to expand positively in the fundamental slide basis \cite{AS17}, a polynomial ring analogue of Gessel's basis of fundamental quasisymmetric functions \cite{Ges84}. While not every Kohnert polynomial expands positively in the fundamental slide basis, we prove that, surprisingly, the stable limit of \emph{any} Kohnert polynomial expands positively in fundamental quasisymmetric functions. For example, the stable limits of Schubert polynomials are Stanley symmetric functions \cite{Mac91} and the stable limits of Demazure characters are Schur polynomials \cite{LS90, AS-2}; each of these is known to expand positively in fundamental quasisymmetric functions. Thus by taking stable limits of Kohnert bases, one obtains new and recovers known families of fundamental-positive quasisymmetric functions. These families may or may not be bases of quasisymmetric functions; for example, the stable limits of Schubert polynomials and Demazure characters are not.

We define a simple condition on diagrams that we conjecture characterizes those diagrams for which the corresponding Kohnert polynomial expands non-negatively as a sum of Demazure characters. Both key diagrams, indexing Demazure characters, and Rothe diagrams, indexing Schubert polynomials, satisfy the stated condition. In further support of the conjecture, the demazure condition is exactly the same as the \emph{northwest} condition of Reiner and Shimozono \cite{RS95-2,RS98} in their study of Specht modules associated to diagrams, suggesting a possible connection between flagged Weyl modules and Kohnert polynomials.

There are several natural choices of ways to associate a two-dimensional cell diagram to a weak composition. Demazure characters arise from left-justification, Schubert polynomials arise from choosing the Rothe diagram of the associated permutation. Using the construction of Kohnert bases, we believe that other natural choices of diagram for a weak composition will yield several new and interesting combinatorial objects, from Kohnert bases of the polynomial ring to families (or bases) of quasisymmetric functions. As a first application of Kohnert bases, we introduce two new bases of polynomials.

The \emph{skew polynomials} are the Kohnert polynomials associated to diagrams arising from certain rightward shifts of contiguous rows of cells. As predicted by our demazure condition, we prove that skew polynomials expand positively into Demazure characters. Based on computer evidence, we conjecture they also expand positively in Schubert polynomials, suggesting a hidden connection with geometry. Stable limits of skew polynomials are symmetric, and in fact are the skew-Schur functions.

The \emph{lock polynomials} are the Kohnert polynomials associated to right-justified diagrams.  Lock polynomials expand positively in fundamental slide polynomials (as do the Schubert polynomials and Demazure characters), and coincide with Demazure characters when the nonzero entries of $a$ are weakly decreasing (which is also the only case when the diagrams satisfy our conjectured demazure condition). The stable limits of lock polynomials, which we call the \emph{extended Schur functions}, are a new basis of quasisymmetric functions. By the theory of Kohnert bases, the extended Schur functions expand positively in the fundamental basis. Per the name, the extended Schur function basis contains the Schur functions as a subset, thus is a lifting of the Schur basis from symmetric to quasisymmetric functions. The description in terms of cell diagrams naturally gives rise to families of tableaux generating the lock polynomials and the extended Schur functions, similar to the definition of Kohnert tableaux for Demazure characters in \cite{AS-2}. The tableau description enables us to give explicit formulas for the expansion of an extended Schur function in terms of fundamental quasisymmetric polynomials, and extract further interesting properties.

We expect these bases and others arising as Kohnert bases may, like the Schubert and Demazure character bases, have deep connections to representation theory and geometry. 

\subsection*{Acknowledgments}

The authors thank Per Alexandersson, Nantel Bergeron, and Vic Reiner for helpful comments and illuminating discussions.

%
\section{Kohnert polynomials}
%
\label{sec:kohnert}

In Section~\ref{sec:kohnert-diagrams}, we review Kohnert's algorithm that generates a polynomial from a cell diagram in $\mathbb{N} \times \mathbb{N}$ and use this to define \emph{Kohnert polynomials}. We review the motivating examples of Demazure characters in Section~\ref{sec:kohnert-key} and Schubert polynomials in Section~\ref{sec:kohnert-rothe}, presenting both in the context of Kohnert polynomials.

\subsection{Kohnert diagrams}
\label{sec:kohnert-diagrams}

A \emph{diagram} is an array of finitely many cells in $\mathbb{N} \times \mathbb{N}$. The weight of a diagram $D$, denoted by $\wt(D)$, is the weak composition whose $i$th part is the number of cells in row $i$. For example, four diagrams with weight $(0,2,1,2)$ are shown in Figure~\ref{fig:diagrams}.

\begin{figure}[ht]
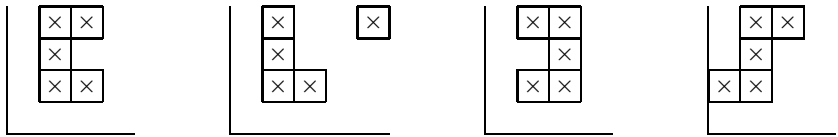

  \begin{center}
    \begin{displaymath}
      \vline\tableau{ & \times & \times \\ & \times \\ & \times & \times \\ & & & \\\hline} \hspace{3\cellsize}
      \vline\tableau{ & \times & & & \times \\ & \times \\ & \times & \times \\ & \\\hline} \hspace{3\cellsize}
      \vline\tableau{ & \times & \times \\ & & \times \\ & \times & \times \\ & & & \\\hline} \hspace{3\cellsize}
      \vline\tableau{ & \times & \times \\ & \times \\ \times & \times \\ & & & \\\hline} 
    \end{displaymath}
    \caption{\label{fig:diagrams}Four diagrams of weight $(0,2,1,2)$.}
  \end{center}
\end{figure}

A diagram is called a \emph{key diagram} if the rows are left justified. For each weak composition $a$, there is a unique key diagram of weight $a$ which we call the \emph{key diagram for $a$} and denote by $\D(a)$. For example, the leftmost diagram in Figure~\ref{fig:diagrams} is the key diagram for $(0,2,1,2)$.

In his thesis, Kohnert \cite{Koh91} described an algorithm for generating a Demazure character, which he called a key polynomial after Lascoux and Sch{\"u}tzenberger \cite{LS90}, from a key diagram by iteratively applying certain \emph{Kohnert moves} to the diagram. 

\begin{definition}[\cite{Koh91}]
  A \emph{Kohnert move} on a diagram selects the rightmost cell of a given row and moves the cell to the first available position below, jumping over other cells in its way as needed. Given a diagram $D$, let $\KD(D)$ denote the set of all diagrams that can be obtained by applying a series of Kohnert moves to $D$. 
\end{definition}

For example, Figure~\ref{fig:KD-key} shows all $16$ Kohnert diagrams for the key diagram $\D(0,2,1,2)$. For comparison, the second diagram in Figure~\ref{fig:diagrams} gives rise to $26$ Kohnert diagrams shown in Figure~\ref{fig:KD-rothe} and the third gives rise to $9$ Kohnert diagrams shown in Figure~\ref{fig:KD-right}.
  
\begin{figure}[ht]
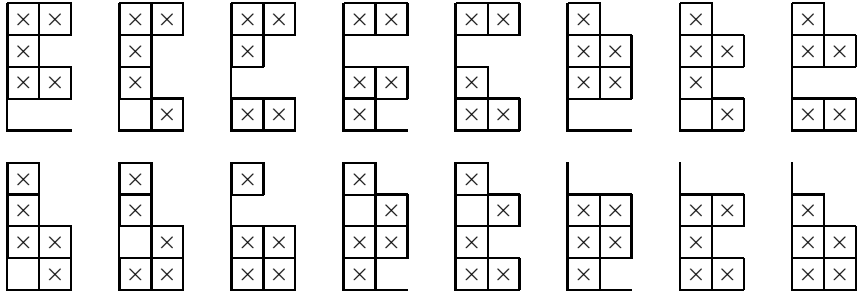

  \begin{center}
    \begin{displaymath}
      \begin{array}{c@{\hskip 1.5\cellsize}c@{\hskip 1.5\cellsize}c@{\hskip 1.5\cellsize}c@{\hskip 1.5\cellsize}c@{\hskip 1.5\cellsize}c@{\hskip 1.5\cellsize}c@{\hskip 1.5\cellsize}c}
        \vline\tableau{ \times & \times \\ \times & \\ \times & \times \\ & \\\hline} &
        \vline\tableau{ \times & \times \\ \times & \\ \times & \\ & \times \\\hline} &
        \vline\tableau{ \times & \times \\ \times & \\ & \\ \times & \times \\\hline} &
        \vline\tableau{ \times & \times \\ & \\ \times & \times \\ \times & \\\hline} &
        \vline\tableau{ \times & \times \\ & \\ \times & \\ \times & \times \\\hline} &
        \vline\tableau{ \times & \\ \times & \times \\ \times & \times \\ & \\\hline} &
        \vline\tableau{ \times & \\ \times & \times \\ \times & \\ & \times \\\hline} &
        \vline\tableau{ \times & \\ \times & \times \\ & \\ \times & \times \\\hline} \\ \\
        \vline\tableau{ \times & \\ \times & \\ \times & \times \\ & \times \\\hline} &
        \vline\tableau{ \times & \\ \times & \\ & \times \\ \times & \times \\\hline} &
        \vline\tableau{ \times & \\ & \\ \times & \times \\ \times & \times \\\hline} &
        \vline\tableau{ \times & \\ & \times \\ \times & \times \\ \times & \\\hline} &
        \vline\tableau{ \times & \\ & \times \\ \times & \\ \times & \times \\\hline} &
        \vline\tableau{ & \\ \times & \times \\ \times & \times \\ \times & \\\hline} &
        \vline\tableau{ & \\ \times & \times \\ \times & \\ \times & \times \\\hline} &
        \vline\tableau{ & \\ \times & \\ \times & \times \\ \times & \times \\\hline} 
      \end{array}
    \end{displaymath}
    \caption{\label{fig:KD-key}Kohnert diagrams for $\D(0,2,1,2)$.}
  \end{center}
\end{figure}

\begin{definition}
  The \emph{Kohnert polynomial indexed by $D$} is 
  \begin{equation}
    \kohnert_{D} = \sum_{T \in \KD(D)} x_1^{\wt(T)_1} \cdots x_n^{\wt(T)_n}.
  \end{equation}
  \label{def:kohnert_poly}
\end{definition}

For example, from Figure~\ref{fig:KD-key}, we see that
\begin{eqnarray*}
  \kohnert_{\D(0,2,1,2)} & = & x_1^2 x_2^2 x_3 + x_1^2 x_2^2 x_4 + x_1^2 x_2 x_3^2 + 2 x_1^2 x_2 x_3 x_4 + x_1^2 x_2 x_4^2 + x_1^2 x_3^2 x_4 + x_1^2 x_3 x_4^2 + x_1 x_2^2 x_3^2 \\
  & & + 2 x_1 x_2^2 x_3 x_4 + x_1 x_2^2 x_4^2 + x_1 x_2 x_3^2 x_4 + x_1 x_2 x_3 x_4^2 + x_2^2 x_3^2 x_4 + x_2^2 x_3 x_4^2.
\end{eqnarray*}

Note that the diagram of a Kohnert polynomial is not necessarily unique. For instance, if two diagrams differ by insertion or deletion of empty columns, then they necessarily give the same Kohnert polynomial. However, as demonstrated by Theorem~\ref{thm:LR} below, this is not sufficient. It is an interesting question to ask for necessary and sufficient conditions for two diagrams to give the same Kohnert polynomial. 

Given weak compositions $a$ and $b$, say that $b$ \emph{dominates} $a$, denoted by $a \leq b$, if $a_1 + \cdots + a_k \leq b_1 + \cdots + b_k$ for all $k$. Since Kohnert polynomials have a unique leading term that is minimal in dominance order, they provide a simple mechanism for constructing interesting bases of the polynomial ring.

\begin{theorem}
  Given any set of diagrams $\{D_a\}$, one for every weak composition, such that $\wt(D_a) = a$, the corresponding Kohnert polynomials $\{\kohnert_{D_a}\}$ form a basis of the polynomial ring.
  \label{thm:kohnert-basis}
\end{theorem}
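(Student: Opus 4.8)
The plan is to prove that the transition matrix from the monomials $\{x^a\}$ to the Kohnert polynomials $\{\kohnert_{D_a}\}$ is unitriangular with respect to dominance order, and then to check that this forces $\{\kohnert_{D_a}\}$ to be a basis, despite the fact that dominance is only a partial order and the graded pieces of the polynomial ring are infinite-dimensional. Since every diagram in $\KD(D_a)$ has exactly $|a| = a_1 + a_2 + \cdots$ cells, each $\kohnert_{D_a}$ is homogeneous of degree $|a|$, so it suffices to argue separately inside each fixed degree $d$.

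The first and essentially only computational step is the triangularity. A single Kohnert move takes the rightmost cell of some row $i$ down to a row $j < i$, changing a weight $b$ to the weight $b'$ with $b'_j = b_j + 1$, $b'_i = b_i - 1$, and $b'_k = b_k$ for $k \notin \{i,j\}$; hence $b_1 + \cdots + b_k \le b'_1 + \cdots + b'_k$ for every $k$, with strict inequality at $k = j$. So each Kohnert move strictly increases the weight in dominance order. Two consequences: first, $D_a$ is the only diagram in $\KD(D_a)$ of weight $a$ (every other arises from a nonempty move sequence), so $x^a$ occurs in $\kohnert_{D_a}$ with coefficient exactly $1$; second, every monomial $x^b$ occurring in $\kohnert_{D_a}$ satisfies $a \le b$. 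Thus $\kohnert_{D_a} = x^a + \sum_{b > a} c_{a,b}\, x^b$, a finite sum over compositions $b$ strictly dominating $a$.

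It remains to deduce that $\{\kohnert_{D_a} : |a| = d\}$ is a basis of the degree-$d$ component. The device I would use is the statistic $\rho(b) = \sum_{k \ge 1}\bigl(d - (b_1 + \cdots + b_k)\bigr)$, which is a finite nonnegative integer (the partial sums stabilize at $d$ since $b$ is finitely supported), which satisfies $\rho(a) > \rho(b)$ whenever $a < b$, and which vanishes exactly when $b = (d,0,0,\dots)$ — the maximum of dominance order, for which $\kohnert_{D_{(d)}} = x_1^d$ since no Kohnert move applies. Spanning then follows by induction on $\rho(a)$: rearranging gives $x^a = \kohnert_{D_a} - \sum_{b > a} c_{a,b}\, x^b$, and each $x^b$ with $\rho(b) < \rho(a)$ lies in the span by induction. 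For linear independence, suppose $\sum_a \lambda_a \kohnert_{D_a} = 0$ is a finite relation (which we may take homogeneous of degree $d$) with some $\lambda_a \ne 0$, and pick $a^{\ast}$ minimal in dominance among the finite set $\{a : \lambda_a \ne 0\}$; since $x^{a^{\ast}}$ can occur in $\kohnert_{D_a}$ only when $a \le a^{\ast}$, and this forces $a = a^{\ast}$ within the chosen set, the coefficient of $x^{a^{\ast}}$ in the relation equals $\lambda_{a^{\ast}} \ne 0$, a contradiction. I expect the only point requiring care beyond routine bookkeeping to be exactly this last one: because dominance has no minimum and the graded pieces are infinite-dimensional, the usual finite-dimensional unitriangularity lemma does not apply verbatim, and one needs the rank statistic $\rho$ (equivalently, the finiteness of each dominance up-set $\{b : a \le b\}$ in a fixed degree) to guarantee that inverting the transition matrix terminates.
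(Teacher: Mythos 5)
Your proof is correct and follows essentially the same route as the paper: the key observation in both is that each Kohnert move strictly increases the weight in dominance order, so that $\kohnert_{D_a} = x^a + \sum_{b>a} c_{a,b}x^b$ and the family is uni-triangular against the monomial basis. The only difference is that the paper stops at ``uni-triangular, hence a basis,'' whereas you supply the (correct, and worth having) justification via the rank statistic $\rho$ that inverting the transition matrix terminates even though dominance is only a partial order on an infinite index set in each degree.
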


\begin{proof}
  For any weak composition $a$ and any diagram $D$ such that $\wt(D_a)=a$, the corresponding Kohnert polynomial $\kohnert_{D}$ expands as
  \[ \kohnert_D = x_1^{a_1} \cdots x_n^{a_n} + \sum_{b > a} c_{a,b} x_1^{b_1} \cdots x_n^{b_n} \]
  for some nonnegative integers $c_{a,b}$, where the sum is over weak compositions $b$ that strictly dominate $a$. In particular, any set of Kohnert polynomials of the form $\{\kohnert_{D_a}\}$ where $\wt(D_a) = a$ is lower uni-triangular with respect to monomials, and thus is also a basis.
\end{proof}

As this concept is central to the current study, we introduce the following terminology.

\begin{definition}
  A basis $\{\mathfrak{B}_a\}$ for polynomials is a \emph{Kohnert basis} if each element $\mathfrak{B}_a$ can be realized as a Kohnert polynomial for some diagram $D$ with $\wt(D)=a$.
  \label{def:kohnert-basis}
\end{definition}

Two important examples of Kohnert bases are Demazure characters and Schubert polynomials, discussed below. In addition to proving general positivity results for Kohnert polynomials, we demonstrate the power of this paradigm by giving a new example of a Kohnert basis in Section \ref{sec:right}.

\subsection{Demazure characters}
\label{sec:kohnert-key}

Kohnert's original motivation for studying key diagrams arose from characters of Demazure modules for the general linear group \cite{Dem74a}, which may be regarded as truncations of irreducible characters \cite{Dem74}. These polynomials were studied combinatorially by Lascoux and Sch{\"u}tzenberger \cite{LS90}, who call them \emph{key polynomials}. For a nice survey of the combinatorial aspects, see \cite{RS95}; for a recent treatment from Kohnert's perspective, see \cite{AS-2}.

The original definition for Demazure characters is in terms of \emph{divided difference operators}, denoted by $\partial_i$, defined on a polynomial $f$ by
\begin{equation}
  \partial_i f = \frac{f - s_i \cdot f}{x_i-x_{i+1}},
  \label{e:del}
\end{equation}
where $s_i$ is the simple transposition interchanging $i$ and $i+1$ and it acts on polynomials by interchanging $x_i$ and $x_{i+1}$. Extending this, we may define a linear operator $\pi_i$ on polynomials by
\begin{equation}
  \pi_i f = \partial_i \left( x_i f \right).
  \label{e:pi}
\end{equation}
Given a permutation $w$, we may define
\begin{eqnarray*}
  \partial_w & = & \partial_{s_1} \cdots \partial_{s_k} \\
  \pi_w & = & \pi_{s_1} \cdots \pi_{s_k}
\end{eqnarray*}
for any expression $s_1 \cdots s_k = w$ with $k$ minimal. It can be shown that both $\partial_w$ and $\pi_w$ are independent of the choice of reduced expression.

\begin{definition}
  Given a weak composition $a$, the \emph{Demazure character} $\key_a$ is 
  \begin{equation}
    \key_a = \pi_{w(a)} x^{\sort(a)},
    \label{e:key}
  \end{equation}
  where $\sort(a)$ is the weakly decreasing rearrangement of $a$ and $w(a)$ is the shortest permutation that sorts $a$.
  \label{def:key}
\end{definition}

For example, for $a = (0,2,1,2)$, we have $\sort(a) = (2,2,1,0)$ and $w(a) = 2431$, and so
\begin{eqnarray*}
  \key_{(0,2,1,2)} & = & \pi_1 \pi_2 \pi_3 \pi_2 \left(x_1^2 x_2^2 x_3\right) \\
  & = & \pi_1 \pi_2 \pi_3 \left( x_1^2 x_2^2 x_3 + x_1^2 x_2 x_3^2 \right) \\
  & = & \pi_1 \pi_2 \left( x_1^2 x_2^2 x_3 + x_1^2 x_2^2 x_4 + x_1^2 x_2 x_3^2 + x_1^2 x_2 x_3 x_4 + x_1^2 x_2 x_4^2 \right) \\
  & = & \pi_1 \left( x_1^2 x_2^2 x_3 + x_1^2 x_2^2 x_4 + x_1^2 x_2 x_3^2 + 2 x_1^2 x_2 x_3 x_4 + x_1^2 x_2 x_4^2 + x_1^2 x_3^2 x_4 + x_1^2 x_3 x_4^2 \right) \\
  & = & x_1^2 x_2^2 x_3 + x_1^2 x_2^2 x_4 + x_1^2 x_2 x_3^2 + 2 x_1^2 x_2 x_3 x_4 + x_1^2 x_2 x_4^2 + x_1^2 x_3^2 x_4 + x_1^2 x_3 x_4^2 + x_1 x_2^2 x_3^2 \\
  & & + 2 x_1 x_2^2 x_3 x_4 + x_1 x_2^2 x_4^2 + x_1 x_2 x_3^2 x_4 + x_1 x_2 x_3 x_4^2 + x_2^2 x_3^2 x_4 + x_2^2 x_3 x_4^2  .
\end{eqnarray*}
Notice that the final computation agrees with $\kohnert_{\D(a)}$ computed earlier. 

\begin{theorem}[\cite{Koh91}]
  The Demazure character $\key_a$ is equal to the Kohnert polynomial $\kohnert_{\D(a)}$, i.e. 
  \begin{equation}
    \key_a = \kohnert_{\D(a)},
    \label{e:kohnert-key}
  \end{equation}
  where $\D(a)$ is the key diagram for the indexing composition $a$.
  \label{thm:kohnert-key}
\end{theorem}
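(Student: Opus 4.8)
The plan is to prove the identity $\key_a = \kohnert_{\D(a)}$ by showing that both sides satisfy the same recursive structure under the operators $\pi_i$, since $\key_a$ is \emph{defined} via $\pi_{w(a)}$ applied to the dominant monomial $x^{\sort(a)}$. First I would establish the base case: when $a$ is weakly decreasing, $\D(a)$ has its longest row at the bottom and no Kohnert move can produce a new diagram (every cell already sits above a full row, or equivalently there is no cell below and to the left with an empty slot to drop into), so $\KD(\D(a)) = \{\D(a)\}$ and $\kohnert_{\D(a)} = x^a = x^{\sort(a)} = \key_a$. This handles the dominant weights.

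Next I would prove the key inductive step: if $a_i < a_{i+1}$, so that $s_i a$ (swapping parts $i$ and $i+1$) has $w(s_i a) = s_i w(a)$ shorter than $w(a)$, then
\begin{equation}
  \pi_i \kohnert_{\D(s_i a)} = \kohnert_{\D(a)}.
  \nonumber
\end{equation}
Combined with the base case and the definition $\key_a = \pi_{w(a)} x^{\sort(a)}$ built up one simple transposition at a time, this yields the theorem by induction on $\ell(w(a))$. To prove the displayed identity one analyzes how $\pi_i$ acts on a monomial $x^b$: writing $b_i, b_{i+1}$ for the relevant exponents, $\pi_i x^b$ is the sum of $x^c$ over all $c$ obtained from $b$ by replacing the pair $(b_i, b_{i+1})$ by $(b_i + j,\, b_{i+1} - j)$... no — more precisely, if $b_i \le b_{i+1}$ then $\pi_i x^b = \sum x^c$ where $c$ ranges over the pairs between $(b_{i+1}, b_i)$ "downward", and if $b_i > b_{i+1}$ one gets a cancelling contribution. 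The combinatorial content is then to show that applying $\pi_i$ to the generating function $\kohnert_{\D(s_i a)}$ — equivalently, performing this "row-pair averaging" operation on every diagram $T \in \KD(\D(s_i a))$ — reproduces exactly the multiset of diagrams $\KD(\D(a))$.

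The main obstacle, and the heart of the proof, is precisely this last combinatorial matching: one must exhibit a weight-preserving correspondence showing that the diagrams obtained by taking each $T \in \KD(\D(s_i a))$ and "redistributing" cells between rows $i$ and $i+1$ in all the ways dictated by $\pi_i$ coincide (with multiplicity) with $\KD(\D(a))$. The subtlety is that Kohnert moves in $\KD(\D(a))$ can move cells \emph{from} row $i+1$ \emph{into} row $i$ and below, so rows $i$ and $i+1$ are not independent of the rest of the diagram; one needs to argue that the set $\KD(\D(a))$ decomposes into $s_i$-strings (orbits under raising/lowering a cell between rows $i$ and $i+1$) that are in bijection with the diagrams of $\KD(\D(s_i a))$ together with their $\pi_i$-expansions. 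Establishing that this local operation commutes appropriately with the global closure under Kohnert moves is the delicate step. An alternative route, if the direct diagrammatic argument proves unwieldy, is to instead cite the recently available expansion of Schubert/Demazure objects and known crystal-theoretic structure on $\KD(\D(a))$ (the Kohnert-to-semistandard-tableau correspondence of \cite{AS-2}), transporting the problem to the Demazure crystal where the $\pi_i$-recursion is the defining property; but I would prefer the self-contained combinatorial induction sketched above.
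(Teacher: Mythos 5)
First, note that the paper does not prove this statement at all: Theorem~\ref{thm:kohnert-key} is quoted from Kohnert's thesis \cite{Koh91}, so there is no in-paper argument to compare against. Your proposal must therefore stand on its own, and as written it does not. The base case is fine: when $a$ is weakly decreasing the rightmost cell of each row of $\D(a)$ sits above a full column of cells, no Kohnert move changes the diagram, and $\kohnert_{\D(a)} = x^a = \key_a$. The inductive framework (induct on $\ell(w(a))$ via the recursion $\key_a = \pi_i\,\key_{s_i a}$ when $a_i < a_{i+1}$) is also the standard and correct strategy. But the entire mathematical content of the theorem is the identity $\pi_i\, \kohnert_{\D(s_i a)} = \kohnert_{\D(a)}$, and you explicitly leave this unproved, saying only that ``one must exhibit a weight-preserving correspondence'' decomposing $\KD(\D(a))$ into $s_i$-strings compatible with the signed expansion of $\pi_i$ on monomials. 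Naming the delicate step is not the same as carrying it out: one has to (a) handle the cancellation inherent in $\pi_i$ when a diagram of $\KD(\D(s_i a))$ has fewer cells in row $i$ than in row $i+1$, and (b) show that the local redistribution of cells between rows $i$ and $i+1$ interacts correctly with the global closure of $\KD(\D(a))$ under Kohnert moves, which can transport cells from rows above $i+1$ into rows $i$ and $i+1$ and below. This is precisely the step on which Kohnert's own argument and Winkel's arguments for the Rothe-diagram analogue (Theorem~\ref{thm:kohnert-rothe}) were contested for years, so it cannot be waved at.

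Your proposed fallback --- transporting the problem to the Demazure crystal via the Kohnert tableaux of \cite{AS-2} --- is closer to how a complete modern proof actually runs, but then the work is in constructing the crystal (or string) structure on $\KD(\D(a))$ and verifying its axioms, which again is the substance you have deferred. Also, your description of the action of $\pi_i$ on a monomial is garbled (the self-correction ``no --- more precisely\dots'' never resolves into a correct formula); since the signed case $b_i < b_{i+1}$ is exactly where the cancellation in (a) must be controlled, you should state it precisely: for $p \geq q$ one has $\pi_i(x_i^p x_{i+1}^q) = \sum_{j=0}^{p-q} x_i^{p-j} x_{i+1}^{q+j}$, while for $p < q$ the result is the negative of the sum over the strictly intermediate exponent pairs (and is $0$ when $q = p+1$). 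In summary: right skeleton, correct base case, but the theorem's proof is missing.
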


Kohnert's algorithm for key diagrams precisely gives the monomial expansion of a Demazure character. Therefore Kohnert polynomials are a generalization of Demazure characters.

Macdonald \cite{Mac91} noted that when $a$ is weakly increasing of length $n$, we have $\key_a = s_{\mathrm{rev}(a)}(x_1,\ldots,x_n)$, where $s_{\lambda}$ is the \emph{Schur polynomial} that gives the irreducible characters for the general linear group. The Demazure characters are obtained from the irreducible characters by truncating, and so they are, in general, only partially symmetric. However, they are well-defined under stabilization and in the limit converge to the \emph{Schur functions}. This result is implicit in \cite{LS90} and explicit in \cite{AS-2}.

\begin{proposition}\label{prop:keytoSchur}
  For a weak composition $a$, we have
  \begin{equation}
    \lim_{m\rightarrow\infty} \key_{0^m \times a}(x_1,\ldots,x_{n+m}) = s_{\sort(a)}(x_1,x_2,\ldots),
  \end{equation}
  where $0^m \times a$ denotes the weak composition obtained by pre-pending $m$ zeros to $a$.
  \label{prop:key-stable}
\end{proposition}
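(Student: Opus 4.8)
The plan is to pass to Kohnert diagrams via Theorem~\ref{thm:kohnert-key}, show that $\key_{0^m\times a}$ becomes symmetric in more and more variables as $m$ grows, and then pin down the stable limit by comparison with the weakly increasing rearrangement of $a$, where Macdonald's observation applies directly.

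First I would record the standard fact that $\pi_i \key_b = \key_b$ whenever $b_i \le b_{i+1}$: when $b_i < b_{i+1}$ this follows from the Demazure recursion $\key_b = \pi_i \key_{s_i b}$ (immediate from Definition~\ref{def:key}, since $w(b)$ then has a reduced word beginning with $s_i$) together with $\pi_i^2 = \pi_i$, and the case $b_i = b_{i+1}$ is a short direct check. Since the first $m+1$ parts of $0^m \times a$ are $0,\ldots,0,a_1$, this shows $\key_{0^m \times a}$ is symmetric in $x_1,\ldots,x_{m+1}$; hence for any fixed $k$ and all $m\ge k$ the coefficient of a monomial $x^c$ in $\key_{0^m\times a}$ is invariant under permuting the first $k$ entries of $c$, so once we know these coefficients stabilize as $m\to\infty$ the limiting power series is symmetric. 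Next, letting $a^\uparrow$ denote the weakly increasing rearrangement of $a$, the composition $0^m\times a^\uparrow$ is itself weakly increasing, so Macdonald's observation gives $\key_{0^m\times a^\uparrow}=s_{\sort(a)}(x_1,\ldots,x_{m+n})$, which converges coefficientwise to $s_{\sort(a)}(x_1,x_2,\ldots)$; thus the proposition holds for $a^\uparrow$.

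To finish, I would show the stable limit for $a$ coincides with that for $a^\uparrow$. In the Kohnert model this is natural: for every weak composition $c$ with $\sort(c)=\lambda:=\sort(a)$, the diagram $\D(0^m\times c)$ has exactly $\lambda'_j$ cells in column $j$ (only their vertical placement depends on $c$), and Kohnert moves preserve columns and only move cells downward. One then argues that, once $m$ is large enough that there is ample empty space below, the multiset of row-count vectors realized by $\KD(\D(0^m\times c))$ within any bounded window of rows is independent both of $m$ and of the choice of $c$ with $\sort(c)=\lambda$. Equivalently, via the Demazure crystal description of $\key_{0^m\times c}$ as the character of $B_{w(0^m\times c)}(\lambda)\subseteq B(\lambda)$ from \cite{AS-2}, the subcrystals $B_{w(0^m\times c)}(\lambda)$ increase with $m$ and exhaust $B(\lambda)$, whose character is $s_\lambda$. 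Either way, $\lim_m \key_{0^m\times a}=\lim_m\key_{0^m\times a^\uparrow}=s_{\sort(a)}$.

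The main obstacle is exactly this last step: the claim that the configuration counts stabilize and exhaust the Schur (Kostka) totals. The symmetry established above reduces it to a statement about the bottom of the picture — namely that running Kohnert moves on a diagram with arbitrarily many empty rows beneath it produces, in each fixed window of rows, a count that no longer sees either the amount of empty space or the vertical order of cells within their columns. I expect this confluence-type assertion (or, equivalently, the crystal exhaustion statement borrowed from \cite{AS-2}) to be the only genuinely nontrivial ingredient; the reduction to the weakly increasing case and the appeal to classical stability of Schur polynomials are routine.
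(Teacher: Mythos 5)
First, note that the paper does not actually prove Proposition~\ref{prop:key-stable}: it is stated as a known result, ``implicit in \cite{LS90} and explicit in \cite{AS-2},'' so there is no internal proof to compare against. Your first two steps are correct and standard: the identity $\pi_i\key_b=\key_b$ for $b_i\le b_{i+1}$ (via the recursion $\key_b=\pi_i\key_{s_ib}$ and $\pi_i^2=\pi_i$) does give symmetry of $\key_{0^m\times a}$ in $x_1,\ldots,x_{m+1}$, and Macdonald's observation does settle the weakly increasing case. The column-count observation about $\D(0^m\times c)$ is also correct.

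The problem is that the entire content of the proposition sits in the step you defer. The assertion that ``the multiset of row-count vectors realized by $\KD(\D(0^m\times c))$ within any bounded window of rows is independent both of $m$ and of the choice of $c$ with $\sort(c)=\lambda$'' is not a routine confluence statement; it is essentially equivalent to the proposition itself, and nothing in your sketch forces it. The soft facts you do establish are not enough to close the gap by comparison: symmetry in $x_1,\ldots,x_k$ plus the coefficientwise upper bound $\key_{0^m\times a}\le s_\lambda(x_1,\ldots,x_{n+m})$ plus the presence of the dominant monomial $x^\lambda$ (which one can get by dropping every column of the Kohnert diagram to the floor) only yield $\lim_m\key_{0^m\times a}=\sum_\mu d_\mu m_\mu$ with $d_\lambda=1$ and $d_\mu\le K_{\lambda\mu}$; they do not force $d_\mu=K_{\lambda\mu}$, so the limit could a priori be strictly smaller than $s_\lambda$ (e.g.\ as small as $m_\lambda$). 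Your alternative route, ``the subcrystals $B_{w(0^m\times c)}(\lambda)$ increase with $m$ and exhaust $B(\lambda)$,'' is exactly the statement proved in \cite{AS-2}; invoking it is not a gap in correctness, but then your argument reduces to citing the same source the paper cites, and the surrounding scaffolding (reduction to $a^\uparrow$, symmetry in the leading variables) becomes unnecessary. So either you supply a genuine proof of the exhaustion/confluence claim --- for instance by exhibiting, for each semistandard tableau with entries at most $k$, an explicit sequence of Kohnert moves realizing the corresponding diagram once $m\ge k$, together with a multiplicity count --- or you should simply quote the result as the paper does.
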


We will see below that Kohnert polynomials also stabilize, though not, in general, to symmetric functions.

\subsection{Schubert polynomials}
\label{sec:kohnert-rothe}

Schubert polynomials were introduced by Lascoux and Sch\"utzenberger \cite{LS82} as polynomial representatives for Schubert classes in the cohomology ring of the flag manifold for the general linear group. That is, they are polynomials indexed by permutations whose structure constants precisely correspond to those for the distinguished linear basis of the cohomology ring. They are defined by the divided difference operators, which Fulton \cite{Ful92} showed have deep connections to modern intersection theory. 

\begin{definition}[\cite{LS82}]
  Given a permutation $w$, the \emph{Schubert polynomial} $\schubert_w$ is given by
  \begin{equation}
    \schubert_w = \partial_{w^{-1} w_0} \left( x_1^{n-1} x_2^{n-2} \cdots x_{n-1} \right),
    \label{e:schubert}
  \end{equation}
  where $w_0 = n \cdots 2 1$ is the longest permutation of length $\binom{n}{2}$.
  \label{def:schubert}
\end{definition}

For example, for $w = 143625$, we have $w^{-1}w_0 = 462351$, and so
\begin{eqnarray*}
  \schubert_{143625} & = & \pi_1 \pi_2 \pi_3 \pi_4 \pi_5 \pi_4 \pi_2 \pi_3 \pi_1 \pi_2 \left(x_1^5 x_2^4 x_3^3 x_2^2 x_1 \right) \\
  & = & x_1^3 x_2 x_3 + x_1^3 x_2 x_4 + x_1^3 x_3 x_4 + 2 x_1^2 x_2^2 x_3 + 2 x_1^2 x_2^2 x_4 + x_1^2 x_2 x_3^2 + 3 x_1^2 x_2 x_3 x_4 \\
  & & + x_1^2 x_2 x_4^2 + x_1^2 x_3^2 x_4 + x_1^2 x_3 x_4^2 + x_1 x_2^3 x_3 + x_1 x_2^3 x_4 + x_1 x_2^2 x_3^2 + 3 x_1 x_2^2 x_3 x_4 \\
  & & + x_1 x_2^2 x_4^2 + x_1 x_2 x_3^2 x_4 + x_1 x_2 x_3 x_4^2 + x_2^3 x_3 x_4 + x_2^2 x_3^2 x_4 + x_2^2 x_3 x_4^2 .
\end{eqnarray*}

For a permutation $w$ with a unique descent at position $k$, we have $\schubert_w = s_{\lambda}(x_1,\ldots,x_k)$, where $\lambda$ is the partition given by $\lambda_{k-i+1} = w_i - k$. In particular, Schubert polynomials contain the Schur polynomials as a special case. In certain cases, including this so-called \emph{grassmannian} case, a Schubert polynomial is equal to a Demazure character.

The \emph{Rothe diagram} of a permutation $w$, denoted by $\D(w)$, is given by
\begin{equation}
  \D(w) = \{ (i,w_j) \mid i<j \mbox{ and } w_i > w_j \}.
  \label{e:rothe}
\end{equation}
For example, the middle diagram in Figure~\ref{fig:diagrams} is the Rothe diagram for $143625$. Macdonald \cite{Mac91} used the Rothe diagram of a permutation to characterize precisely when a Schubert polynomial is equal to a Demazure character. Lascoux and Sch\"utzenberger \cite{LS85} first gave such a characterization in terms of pattern avoidance, and they termed permutations $w$ for which $\schubert_w = \key_a$ \emph{vexillary} permutations.

\begin{proposition}[\cite{Mac91}]
  Given a permutation $w$, the following are equivalent
  \begin{enumerate}[label=(\roman*)]
  \item the row support of any two columns of $\D(w)$ are nested sets;
  \item the column support of any two rows of $\D(w)$ are nested sets;
  \item the Schubert polynomial $\schubert_w$ is equal to a key polynomial.
  \end{enumerate}
  When $\schubert_w = \key_a$, we have $a = \wt(\D(w))$.
  \label{prop:vex}
\end{proposition}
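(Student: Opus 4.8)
The plan is to reduce the three-way equivalence to the classical description of vexillary permutations as those avoiding the pattern $2143$ (Lascoux--Sch\"utzenberger \cite{LS85}), combined with an elementary analysis of forbidden configurations in the Rothe diagram and the transpose symmetry $\D(w^{-1}) = \D(w)^{\top}$. The final clause is quickest: by Theorem~\ref{thm:kohnert-key} and the established Kohnert expansion of Schubert polynomials \cite{Ass-R} we have $\key_a = \kohnert_{\D(a)}$ and $\schubert_w = \kohnert_{\D(w)}$, and by the proof of Theorem~\ref{thm:kohnert-basis} each of these has a unique monomial minimal in dominance order, namely $x^{a}$ and $x^{\wt(\D(w))}$ respectively; so $\schubert_w = \key_a$ forces $a = \wt(\D(w))$.

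For $(i) \Leftrightarrow (ii)$, I would first note that unwinding \eqref{e:rothe} gives $(p,q) \in \D(w)$ if and only if $w(p) > q$ and $w^{-1}(q) > p$, a condition symmetric under $(p,q,w) \mapsto (q,p,w^{-1})$; hence $\D(w^{-1}) = \D(w)^{\top}$. Transposition exchanges the row-supports of the columns with the column-supports of the rows, so $(i)$ for $w$ is precisely $(ii)$ for $w^{-1}$. It therefore suffices to prove the single statement that $\D(w)$ satisfies $(i)$ if and only if $w$ avoids $2143$: applying this to $w$ and to $w^{-1}$, and using that $2143$ is its own inverse, then yields $(ii)$ for $w \Leftrightarrow (i)$ for $w^{-1} \Leftrightarrow w^{-1}$ avoids $2143 \Leftrightarrow w$ avoids $2143 \Leftrightarrow (i)$ for $w$.

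I would prove that forbidden-configuration statement directly. If $w$ has a $2143$-pattern in positions $p_1 < p_2 < p_3 < p_4$, so $w_{p_2} < w_{p_1} < w_{p_4} < w_{p_3}$, put $c = w_{p_2}$ and $c' = w_{p_4}$, so $c < c'$. Then $(p_1, c), (p_3, c') \in \D(w)$, and writing $R_c = \{\, i < w^{-1}(c) : w_i > c\,\}$ for the row-support of column $c$, one checks that $p_1 \in R_c \setminus R_{c'}$ (since $w_{p_1} < w_{p_4} = c'$) and $p_3 \in R_{c'} \setminus R_c$ (since $p_3 > p_2 = w^{-1}(c)$), so columns $c$ and $c'$ have incomparable row-supports. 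Conversely, if columns $c < c'$ have incomparable row-supports, pick $r \in R_c \setminus R_{c'}$ and $r' \in R_{c'} \setminus R_c$; from $w_{r'} > c' > c$ together with $r' \notin R_c$ one gets $r' > w^{-1}(c)$, hence $r < w^{-1}(c) < r' < w^{-1}(c')$, and from $r < w^{-1}(c')$ together with $r \notin R_{c'}$ one gets $w_r < c'$, hence $c < w_r < c' < w_{r'}$. Reading the values $(w_r, c, w_{r'}, c')$ at the positions $(r, w^{-1}(c), r', w^{-1}(c'))$ then exhibits the pattern $2143$.

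Finally $(i) \Leftrightarrow (iii)$: by the statement just proved, $(i)$ holds if and only if $w$ avoids $2143$, which by \cite{LS85} is exactly the condition that $\schubert_w$ be a Demazure character. For a proof not appealing to \cite{LS85}, one direction is easy --- if $\schubert_w = \key_a$ then $\schubert_w = \kohnert_{\D(w)}$ equals $\kohnert_{\D(\wt(\D(w)))}$, and by Proposition~\ref{prop:key-stable} and the identification of stable limits of Schubert polynomials with Stanley symmetric functions \cite{Mac91}, the stable limit of $\schubert_w$ is the single Schur function $s_{\sort(a)}$, which by the standard theory forces $w$ to avoid $2143$. The reverse implication --- that $2143$-avoidance makes $\schubert_w$ a \emph{single} Demazure character --- carries essentially all of the content, and is the step I expect to be the main obstacle. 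I would attack it via the Demazure-positive expansion $\schubert_w = \sum_b d_{w,b}\,\key_b$ of \cite{LS90} (see also \cite{RS95-2}, \cite{Ass-R}), in which $d_{w,\wt(\D(w))} = 1$ and all coefficients are nonnegative, and show that condition $(i)$ forces every other $d_{w,b}$ to vanish; alternatively, and more in keeping with the viewpoint of this paper, one can try to construct a weight-preserving bijection $\KD(\D(w)) \to \KD(\D(\wt(\D(w))))$ directly. The subtlety there is that $\D(w)$ need not become a key diagram merely by deleting its empty columns, so such a bijection must genuinely track how Kohnert moves interact with the nested-column structure of $\D(w)$.
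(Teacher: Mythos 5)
The paper itself offers no proof of Proposition~\ref{prop:vex}: it is quoted from Macdonald \cite{Mac91}, with the pattern-avoidance characterization attributed to \cite{LS85}. So there is no internal argument to compare against, and the relevant question is whether your proposal actually establishes the statement. Your treatment of the final clause is correct: both $\schubert_w=\kohnert_{\D(w)}$ and $\key_a=\kohnert_{\D(a)}$ have a unique dominance-minimal monomial by the proof of Theorem~\ref{thm:kohnert-basis}, so equality forces $a=\wt(\D(w))$. Your proof of $(i)\Leftrightarrow(ii)$ is also complete and correct: the characterization $(p,q)\in\D(w)\iff w(p)>q$ and $w^{-1}(q)>p$ does give $\D(w^{-1})=\D(w)^{\top}$, your two-sided verification that non-nested column supports are equivalent to a $2143$ occurrence checks out in every inequality, and the reduction via $2143=2143^{-1}$ is sound. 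This is a genuinely self-contained combinatorial argument for a part of the proposition that the paper leaves entirely to the references.

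The gap is exactly where you locate it: the implication ($w$ avoids $2143$) $\Rightarrow$ ($\schubert_w$ is a single Demazure character), i.e.\ the hard half of $(i)\Leftrightarrow(iii)$. As written, this step rests entirely on the citation to \cite{LS85}; your ``easy direction'' via stable limits also quietly imports the nontrivial fact that a Stanley symmetric function equal to a single Schur function forces vexillarity, so it is not more elementary than the citation it is meant to replace. Neither of your two sketched attacks is carried out: showing that condition $(i)$ kills every coefficient $d_{w,b}$ with $b\neq\wt(\D(w))$ in the Demazure expansion requires an explicit combinatorial model for those coefficients, and the proposed weight-preserving bijection $\KD(\D(w))\to\KD(\D(\wt(\D(w))))$ is essentially the content of \cite{Ass-R} restricted to the vexillary case --- you correctly flag that $\D(w)$ is not obtained from a key diagram by deleting empty columns, and bridging that is where all the work lies. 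Since the proposition is stated as a citation, leaning on \cite{LS85} (or \cite{Mac91}) for this step is legitimate and makes your write-up acceptable; but if the goal was a proof from scratch, this direction remains unproved.
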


Kohnert observed that his algorithm can be used on the Rothe diagram of a vexillary permutation to compute the Schubert polynomial, and he asserted that his rule worked for Schubert polynomials in general. For example, Figure~\ref{fig:KD-rothe} gives the Kohnert diagrams for $\D(143625)$, where we have deleted the empty column on the left since doing so does not affect the Kohnert polynomial. Note that the corresponding Kohnert polynomial is precisely the Schubert polynomial for $143625$.

\begin{figure}[ht]
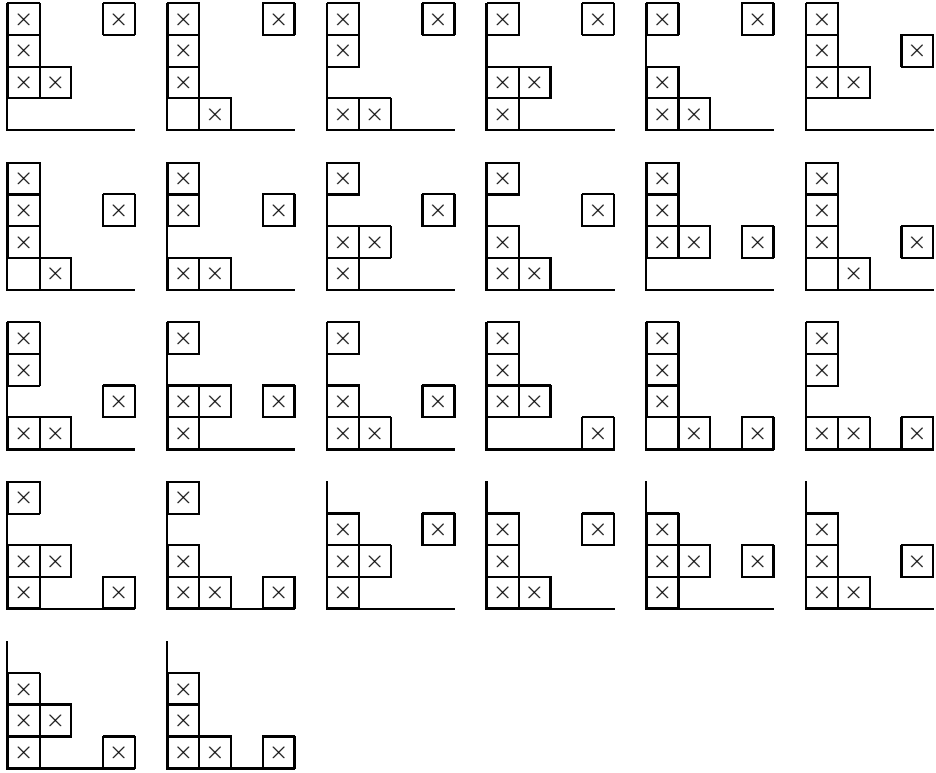

  \begin{center}
    \begin{displaymath}
      \begin{array}{c@{\hskip\cellsize}c@{\hskip\cellsize}c@{\hskip\cellsize}c@{\hskip\cellsize}c@{\hskip\cellsize}c}
      \vline\tableau{ \times & & & \times \\ \times \\ \times & \times \\ \\\hline} & 
      \vline\tableau{ \times & & & \times \\ \times \\ \times & \\ & \times \\\hline} & 
      \vline\tableau{ \times & & & \times \\ \times \\ & \\ \times & \times \\\hline} & 
      \vline\tableau{ \times & & & \times \\ \\ \times & \times \\ \times \\\hline} & 
      \vline\tableau{ \times & & & \times \\ \\ \times & \\ \times & \times \\\hline} & 
      \vline\tableau{ \times \\ \times & & & \times \\ \times & \times \\ \\\hline} \\ \\  
      \vline\tableau{ \times \\ \times & & & \times \\ \times & \\ & \times \\\hline} &
      \vline\tableau{ \times \\ \times & & & \times \\ & \\ \times & \times \\\hline} & 
      \vline\tableau{ \times \\   & & & \times \\ \times & \times \\ \times \\\hline} & 
      \vline\tableau{ \times \\   & & & \times \\ \times & \\ \times & \times \\\hline} & 
      \vline\tableau{ \times \\ \times \\ \times & \times & & \times \\ \\\hline} & 
      \vline\tableau{ \times \\ \times \\ \times &   & & \times \\ & \times \\\hline} \\ \\  
      \vline\tableau{ \times \\ \times \\   &   & & \times \\ \times & \times \\\hline} &
      \vline\tableau{ \times \\   \\ \times & \times & & \times \\ \times \\\hline} &
      \vline\tableau{ \times \\   \\ \times &   & & \times \\ \times & \times \\\hline} & 
      \vline\tableau{ \times \\ \times \\ \times & \times \\   &   & & \times \\\hline} & 
      \vline\tableau{ \times \\ \times \\ \times &   \\   & \times & & \times \\\hline} &
      \vline\tableau{ \times \\ \times \\   &   \\ \times & \times & & \times \\\hline} \\ \\  
      \vline\tableau{ \times \\   \\ \times & \times \\ \times &   & & \times \\\hline} &
      \vline\tableau{ \times \\   \\ \times &   \\ \times & \times & & \times \\\hline} & 
      \vline\tableau{ \\ \times & & & \times \\ \times & \times \\ \times \\\hline} &
      \vline\tableau{ \\ \times & & & \times \\ \times & \\ \times & \times \\\hline} &
      \vline\tableau{ \\ \times & \\ \times & \times & & \times \\ \times \\\hline} & 
      \vline\tableau{ \\ \times & \\ \times &   & & \times \\ \times & \times \\\hline} \\ \\  
      \vline\tableau{ \\ \times & \\ \times & \times \\ \times &   & & \times \\\hline} & 
      \vline\tableau{ \\ \times & \\ \times &   \\ \times & \times & & \times \\\hline} & & & & 
      \end{array}
    \end{displaymath}
    \caption{\label{fig:KD-rothe}Kohnert diagrams for $\D(143625)$.}
  \end{center}
\end{figure}

Two proofs of Kohnert's rule for Schubert polynomials appear in the literature by Winkel \cite{Win99,Win02}, though given the obscure and intricate nature of the arguments, they are not widely accepted. A direct, bijective proof by Assaf \cite{Ass-R} utilizes the expansion of Schubert polynomials into Demazure characters. 

\begin{theorem}[\cite{Win99,Win02,Ass-R}]
  The Schubert polynomial $\schubert_w$ is given by the Kohnert polynomial
  \begin{equation}
    \schubert_w = \kohnert_{\D(w)},
    \label{e:kohnert-rothe}
  \end{equation}
  where $\D(w)$ is the Rothe diagram for the indexing permutation $w$.
  \label{thm:kohnert-rothe}
\end{theorem}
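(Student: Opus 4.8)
The plan is to deduce Theorem~\ref{thm:kohnert-rothe} from Kohnert's theorem for key diagrams (Theorem~\ref{thm:kohnert-key}) together with the classical nonnegative expansion of Schubert polynomials into Demazure characters. It is known (Lascoux--Sch\"utzenberger) that one may write
\[ \schubert_w = \sum_a c^w_a\, \key_a \]
with all $c^w_a \in \mathbb{Z}_{\ge 0}$, and that this expansion is unique since the $\key_a$ are lower uni-triangular with respect to the monomials. By Theorem~\ref{thm:kohnert-key}, $\key_a = \kohnert_{\D(a)} = \sum_{T\in\KD(\D(a))} x_1^{\wt(T)_1}\cdots x_n^{\wt(T)_n}$. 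It therefore suffices to partition the Kohnert diagrams of the Rothe diagram as
\[ \KD(\D(w)) \;=\; \bigsqcup_{k} B_k, \]
in such a way that each block $B_k$ is a weight-preserving copy of $\KD(\D(a_k))$ for some weak composition $a_k$, and that each composition $a$ occurs as an $a_k$ exactly $c^w_a$ times. Summing the monomial generating functions over the blocks and applying Theorem~\ref{thm:kohnert-key} blockwise then yields $\kohnert_{\D(w)} = \sum_a c^w_a \key_a = \schubert_w$. (One first makes the harmless reduction of deleting the empty columns of $\D(w)$, which does not change the Kohnert polynomial.)

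To produce the partition I would equip $\KD(\D(w))$ with a crystal-like structure. Following the Kohnert-tableau model for Demazure characters in \cite{AS-2}, one defines raising and lowering operators $e_i$ and $f_i$ on Kohnert diagrams for which $\KD(\D(a))$ is precisely the Demazure crystal attached to $a$; this is the crystal-theoretic refinement of Theorem~\ref{thm:kohnert-key}. One then shows that these operators are well defined on all of $\KD(\D(w))$, and that as an $\{e_i,f_i\}$-set $\KD(\D(w))$ decomposes into connected pieces, each of which is isomorphic to $\KD(\D(a))$ for the key diagram $\D(a)$ of its source element. Each such piece is a block $B_k$, with $a_k$ the weak composition of its source, and the crystal isomorphism supplies the weight-preserving bijection $B_k \cong \KD(\D(a_k))$. (Equivalently, one may describe the same decomposition concretely by attaching to each $D \in \KD(\D(w))$ a canonical leftward ``straightening'' to a key diagram together with a label for the piece it lies in.) Finally, the multiset of sources $\{a_k\}$ must be identified with the multiset containing $c^w_a$ copies of each $a$; this is done by reading off the sources via a combinatorial statistic on $\D(w)$ and comparing with an explicit positive rule for the Demazure expansion of $\schubert_w$, such as the one coming from standardized reduced words or compatible sequences.

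The main obstacle is the decomposition step: proving that the crystal structure on $\KD(\D(w))$ genuinely breaks into \emph{Demazure} crystals, i.e.\ that each connected piece is exactly some $\KD(\D(a))$ rather than a larger or otherwise different set. Kohnert moves are not obviously commutative, and $\KD(\D(w))$ is only known a priori to be closed under downward moves, so one must carefully verify the string/truncation conditions characterizing Demazure crystals on each piece and check that the operators $e_i, f_i$ are compatible with Kohnert moves --- this confluence-type analysis is the technical heart of both Winkel's proofs \cite{Win99,Win02} and the bijective argument of \cite{Ass-R}. A secondary, but real, difficulty is matching the source multiset to the Lascoux--Sch\"utzenberger coefficients $c^w_a$ without circularity: one cannot simply invoke uniqueness of the Demazure expansion, since that would already presuppose $\kohnert_{\D(w)}=\schubert_w$, so the count of pieces straightening to each $\D(a)$ must be pinned down directly from the structure of the Rothe diagram and matched to a known formula for $c^w_a$.
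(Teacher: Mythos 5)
The paper does not actually prove this theorem: it is imported from the literature, cited to \cite{Win99,Win02,Ass-R}, and the surrounding text describes Assaf's proof in \cite{Ass-R} as one ``using the expansion of Schubert polynomials into Demazure characters.'' Your outline is therefore the right strategy --- essentially the strategy of \cite{Ass-R} --- but as written it is a plan, not a proof: the two steps you label as ``obstacles'' are not details to be filled in, they \emph{are} the entire content of the theorem. First, you must actually construct the decomposition $\KD(\D(w)) = \bigsqcup_k B_k$ with each block weight-preservingly isomorphic to some $\KD(\D(a_k))$. Nothing in the present paper supplies this: Theorem~\ref{thm:kohnert-slide} gives only a fundamental slide decomposition of $\kohnert_{\D(w)}$, which is strictly coarser than a Demazure decomposition, and Conjecture~\ref{conj:demazure} --- which would yield Demazure positivity for all demazure diagrams, Rothe diagrams included --- is left open in this paper precisely because such a block decomposition is not known in general. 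Be careful, too, about asserting that the crystal-theoretic refinement of Theorem~\ref{thm:kohnert-key} is already available from \cite{AS-2}; that reference provides quasi-Yamanouchi Kohnert tableaux and fundamental slide expansions, not Demazure crystal operators on Kohnert diagrams, so the operators $e_i,f_i$ and their compatibility with Kohnert moves would have to be built and verified from scratch.

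Second, your worry about circularity in identifying the multiset $\{a_k\}$ with the Lascoux--Sch\"utzenberger coefficients $c^w_a$ is well founded and is a genuine second gap: even granting the block decomposition, one obtains $\kohnert_{\D(w)} = \sum_a d_a \key_a$ for some nonnegative $d_a$, and uniqueness of expansion in the Demazure basis tells you nothing until you know the polynomial being expanded is $\schubert_w$. Pinning down $d_a = c^w_a$ requires an independent positive combinatorial rule for $c^w_a$ (compatible sequences, standardized reduced words, or the like) together with an explicit bijection between block sources and the objects of that rule. In short, the proposal is a faithful roadmap of the known proof with the hard parts honestly flagged, but neither of the two essential steps is carried out, so it does not constitute a proof.
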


While the Schubert polynomials the contain Schur polynomials, and so are also a polynomial generalization of Schur polynomials, we argue that this fact has more to do with the result that Schubert polynomials expand as nonnegative sums of Demazure characters, and the latter naturally contains Schur polynomials.

Macdonald \cite{Mac91} showed that Schubert polynomials also stabilize and that their stable limits are the Stanley symmetric functions \cite{Sta84} introduced by Stanley to study reduced expressions for a permutation. Stanley \cite{Sta84} proved that these functions are symmetric, and Edelman and Greene \cite{EG87} showed that they are Schur positive. The Schur positivity also follows from Demazure positivity of Schubert polynomials in light of Proposition~\ref{prop:key-stable}.

%
\section{Monomial slide expansions}
%
\label{sec:mono}

We begin our study of Kohnert polynomials by investigating their expansion in the \emph{monomial slide basis} introduced in \cite{AS17}. In Section~\ref{sec:mono-def}, we review quasisymmetric polynomials and monomial slide polynomials. In Section~\ref{sec:mono-MKD}, we show that every Kohnert polynomial expands nonnegatively into the monomial slide basis allowing us to determine, in particular, when a Kohnert polynomial is quasisymmetric. In Section~\ref{sec:mono-stable}, we use the stable limit of monomial slide polynomials to define \emph{Kohnert quasisymmetric functions}, which are the well-defined stable limits of Kohnert polynomials. 

\subsection{Monomial slide polynomials}
\label{sec:mono-def}

A polynomial $f \in \mathbb{Z}[x_1,\ldots,x_n]$ is \emph{quasisymmetric} if for every (strong) composition $\alpha = (\alpha_1,\ldots,\alpha_{\ell})$ with $\ell \leq n$, we have
\begin{equation}
  [x_{i_1}^{\alpha_1} \cdots x_{i_{\ell}}^{\alpha_{\ell}} \mid  f] = [x_{j_1}^{\alpha_1} \cdots x_{j_{\ell}}^{\alpha_{\ell}} \mid  f] 
  \label{e:quasisymmetric}
\end{equation}
for any two sequences $1 \leq i_1< \cdots < i_{\ell} \leq n$ and $1 \leq j_1< \cdots < j_{\ell} \leq n$.

The ring of quasisymmetric functions plays a central role in algebraic combinatorics. Gessel \cite{Ges84} initiated the study of quasisymmetric polynomials by introducing the monomial quasisymmetric functions that give an integral basis.

Given a weak composition $a$, let $\flatten(a)$ denote the strong composition obtained by removing all zero parts from $a$. For example, $\flatten(0,2,1,0,2) = (2,1,2)$.

\begin{definition}[\cite{Ges84}]
  The \emph{monomial quasisymmetric polynomial indexed by $\alpha$} is 
  \begin{equation}
    M_{\alpha}(x_1,\ldots,x_n) = \sum_{\flatten(b) = \alpha} x_1^{b_1} \cdots x_n^{b_n} ,
    \label{e:monomial}
  \end{equation}
  where the sum is over all weak compositions of length $n$ whose flattening gives $\alpha$.
  \label{def:monomial}
\end{definition}

For example, take $\alpha = (2,1,2)$ and restricting to $4$ variables, we have
\[ M_{(2,1,2)}(x_1,x_2,x_3,x_4) = x_1^2 x_2 x_3^2 + x_1^2 x_2 x_4^2 + x_1^2 x_3 x_4^2 + x_2^2 x_3 x_4^2 . \]

Assaf and Searles \cite{AS17} introduced a new basis for the polynomial ring, called \emph{monomial slide polynomials}, that gives a natural polynomial generalization of monomial quasisymmetric functions. 

\begin{definition}[\cite{AS17}]
  The \emph{monomial slide polynomial indexed by $a$} is 
  \begin{equation}
    \mono_{a} = \sum_{\substack{b \geq a \\ \flatten(b) = \flatten(a)}} x_1^{b_1} \cdots x_n^{b_n},
    \label{e:monomial-shift}
  \end{equation}
  where $b \geq a$ means $b_1 + \cdots + b_k \geq a_1 + \cdots + a_k$ for all $k=1,\ldots,n$.
  \label{def:monomial-shift}
\end{definition}

For example, taking $a = (2,0,1,2)$, which implies $4$ variables, we compute
\[ \mono_{(2,0,1,2)}(x_1,x_2,x_3,x_4) = x_1^2 x_2 x_3^2 + x_1^2 x_2 x_4^2 + x_1^2 x_3 x_4^2 . \]
As this example illustrates, monomial slide polynomials are not, in general, quasisymmetric. The following result characterizes when a monomial slide polynomial is quasisymmetric.

\begin{proposition}[\cite{AS17}]
  For a weak composition $a$ of length $n$, $\mono_a$ is quasisymmetric in $x_1,\ldots,x_n$ if and only if $a_j\neq 0$ whenever $a_i\neq 0$ for some $i<j$. Moreover, in this case we have $\mono_a = M_{\flatten(a)}(x_1,\ldots,x_n)$.
  \label{prop:lift-mono}
\end{proposition}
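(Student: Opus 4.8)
The plan is to prove both directions of the equivalence and then identify the polynomial explicitly. For the forward direction, I would argue by contrapositive: suppose there exist $i < j$ with $a_i \neq 0$ but $a_j = 0$. Choose the smallest such $j$ and the largest $i < j$ with $a_i \neq 0$. Then consider the monomial obtained from $x_1^{a_1} \cdots x_n^{a_n}$ (which certainly appears in $\mono_a$, being the minimal term) versus the monomial gotten by the ``slide'' that moves the block of exponents so that the variable $x_j$ picks up a positive exponent; the point is that $\flatten$ is preserved under such a rearrangement, but $b \geq a$ in dominance may or may not hold. Concretely, I expect to exhibit a strong composition $\alpha = \flatten(a)$ and two index sequences $i_1 < \cdots < i_\ell$ and $j_1 < \cdots < j_\ell$ such that the corresponding monomials have different coefficients in $\mono_a$ — one lies above $a$ in dominance and the other does not, violating \eqref{e:quasisymmetric}.

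For the reverse direction, suppose $a_j \neq 0$ whenever $a_i \neq 0$ for some $i < j$; equivalently, the nonzero parts of $a$ occupy a final segment $\{k+1, \ldots, k+\ell\}$ of $\{1,\ldots,n\}$ (where $\ell$ is the number of nonzero parts). Wait — more carefully, the condition says: once you see a nonzero part, everything after it is nonzero. So the support of $a$ is an up-set in $\{1,\ldots,n\}$, i.e. $a$ has the form $(0,\ldots,0,a_{k+1},\ldots,a_n)$ with all of $a_{k+1},\ldots,a_n$ nonzero. I would then show directly that the set of weak compositions $b$ of length $n$ with $\flatten(b) = \flatten(a)$ and $b \geq a$ is exactly the set of all weak compositions of length $n$ with $\flatten(b) = \flatten(a)$. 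The containment $\subseteq$ is trivial; for $\supseteq$, given any $b$ with $\flatten(b) = \alpha$, I must check $b_1 + \cdots + b_t \geq a_1 + \cdots + a_t$ for every $t$. Since $a$'s partial sums are zero up through index $k$ and then equal $\alpha_1 + \cdots + \alpha_s$ at index $k+s$, while $b$'s partial sum at index $t$ is $\alpha_1 + \cdots + \alpha_r$ where $r$ is the number of nonzero parts of $b$ among the first $t$ positions — the key inequality reduces to: the number of nonzero parts of $b$ in positions $1,\ldots,k+s$ is at least $s$. But $b$ has exactly $\ell$ nonzero parts total among its $n$ positions, and at most $n - (k+s) = \ell - s$ of them can lie in the final $n - k - s$ positions, so at least $s$ lie in the first $k+s$ positions. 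This is the crux of the argument and the step I'd be most careful about. Once this is established, \eqref{e:monomial-shift} becomes a sum over \emph{all} weak compositions of length $n$ flattening to $\alpha$, which is precisely $M_{\flatten(a)}(x_1,\ldots,x_n)$ by \eqref{e:monomial}; and this manifestly satisfies \eqref{e:quasisymmetric}, giving both the quasisymmetry and the ``moreover'' claim simultaneously.

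The main obstacle I anticipate is purely bookkeeping: translating the combinatorial ``up-set support'' condition into the right statement about partial sums and counting nonzero parts, and making sure the dominance inequality is checked at every index $t$ (not just at the ``breakpoints'' $k+s$) — though in fact it suffices to check it at indices $t$ where $a_{t+1} = 0$ or $t = n$, since between consecutive nonzero parts of $a$ the partial sums of $a$ are constant while those of $b$ are nondecreasing. I would also want to double-check the forward direction handles the case where the ``bad'' pattern $a_i \neq 0, a_j = 0$ occurs but is somehow masked; I expect the explicit two-monomial comparison above settles this cleanly, since picking $b$ to be the rearrangement of $a$ that is \emph{not} dominance-above $a$ is always possible once the support fails to be an up-set.
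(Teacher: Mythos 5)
The paper does not prove this proposition; it is imported from \cite{AS17} and stated without proof, so there is no in-paper argument to compare against. Judged on its own, your proposal is essentially correct. The reverse direction is complete: the support condition forces $a=(0^k,a_{k+1},\dots,a_n)$ with all of $a_{k+1},\dots,a_n$ nonzero, and your counting argument (at most $\ell-s$ of the $\ell$ nonzero parts of $b$ fit in the last $n-k-s$ positions, so at least $s$ lie in the first $k+s$) correctly shows every $b$ with $\flatten(b)=\flatten(a)$ satisfies $b\geq a$, whence $\mono_a=M_{\flatten(a)}(x_1,\dots,x_n)$, which is manifestly quasisymmetric. The forward direction is the right idea but you should commit to it rather than saying the dominance comparison ``may or may not hold'': with $j$ minimal such that $a_j=0$ and some earlier part is nonzero, and $i<j$ maximal with $a_i\neq 0$, the weak composition $b$ obtained by moving the part $a_i$ from position $i$ to position $j$ satisfies $\flatten(b)=\flatten(a)$ (positions $i+1,\dots,j$ of $a$ are all zero, so the relative order of nonzero parts is unchanged) and $b_1+\cdots+b_i=a_1+\cdots+a_i-a_i<a_1+\cdots+a_i$, so $b\not\geq a$ \emph{always}. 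Thus $x^b$ has coefficient $0$ in $\mono_a$ while $x^a$ has coefficient $1$, and both monomials realize the same strong composition $\flatten(a)$ on increasing index sequences, violating \eqref{e:quasisymmetric}. With that one sentence made definite, the proof is complete.
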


The monomial slide polynomials are a lifting of monomial quasisymmetric polynomials to the full polynomial ring. Remarkably, their structure constants are non-negative and generalize the quasi-shuffle product of Hoffman \cite{Hof00}.

\begin{theorem}[\cite{AS17}]
  The monomial slide polynomials $\{\mono_a\}$ are a basis of the polynomial ring with structure constants 
  \begin{equation}
    \mono_{a} \mono_{b} = \sum_{c} [c \mid a \squplus b] \mono_{c},
  \end{equation}
  where $[c \mid a \squplus b]$ is the coefficient of $c$ in the \emph{quasi-slide product} $a \squplus b$. In particular, $[c \mid a \squplus b]$ is a non-negative integer.
\end{theorem}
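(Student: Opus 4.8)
The plan is to establish the product formula for monomial slide polynomials by a direct double-counting argument on monomials, followed by a separate argument identifying the slide product combinatorially.

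\textbf{Step 1: Reduce to a monomial identity.} First I would multiply out $\mono_a \mono_b$ using Definition~\ref{def:monomial-shift}. Each product of monomials $x^{b'} x^{b''}$ with $b' \geq a$, $\flatten(b') = \flatten(a)$, $b'' \geq b$, $\flatten(b'') = \flatten(b)$ contributes $x^{b'+b''}$. The goal is to show that, grouping these contributions by the value $c = b' + b''$, the total coefficient of $x^c$ matches $\sum_{c'} [c' \mid a \squplus b]$ times the coefficient of $x^c$ in $\mono_{c'}$. Since $\mono_{c'}$ is itself a sum of monomials $x^d$ with $d \geq c'$ and $\flatten(d) = \flatten(c')$, what must be verified is: for each weak composition $c$, the number of ways to write $c = b' + b''$ with the stated dominance and flattening constraints on $b'$ and $b''$ equals $\sum_{c' \leq c,\ \flatten(c') = \flatten(c)} [c' \mid a \squplus b]$. (Here one should be slightly careful: not every $c$ arising as such a sum has $\flatten(c)$ equal to a single flattening class; the claim is really that the generating function identity holds coefficient-by-coefficient.)

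\textbf{Step 2: Identify the combinatorial model for $a \squplus b$.} The key is to give an explicit description of the quasi-slide product and then check the counting matches. I expect the natural definition of $a \squplus b$ to be: $c'$ appears in $a \squplus b$ with multiplicity equal to the number of ways of overlaying the nonzero parts of $a$ and the nonzero parts of $b$ into the cells of $c'$ — that is, interleavings (in the quasi-shuffle sense, where parts may be placed in the same slot and then added) of $\flatten(a)$ and $\flatten(b)$ that are ``slid left'' to be dominance-minimal in their flattening class. Equivalently, one uses the stable-limit picture: $\mono_a \to M_{\flatten(a)}$ under stabilization, $M_\alpha M_\beta = \sum_\gamma [\gamma \mid \alpha \ast \beta] M_\gamma$ is Hoffman's quasi-shuffle product \cite{Hof00}, and the slide product is the ``polynomial refinement'' that tracks where the zeros land. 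So I would (a) recall/define the quasi-shuffle $\alpha \ast \beta$ on strong compositions, (b) define $a \squplus b$ by assigning to each quasi-shuffle term a set of weak compositions recording all dominance-minimal placements consistent with $a$ and $b$, and (c) prove the monomial count in Step 1 decomposes exactly according to this.

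\textbf{Step 3: The bijection.} The heart of the proof is a bijection between pairs $(b', b'')$ as in Step 1 summing to a fixed $c$, and pairs (quasi-shuffle term $\gamma$ of $\flatten(a)$ and $\flatten(b)$, weak composition $c'$ with $\flatten(c') = \gamma$ and $c' \leq c$, together with a choice witnessing $c'$ in $a \squplus b$). Given $(b', b'')$, the supports of $b'$ and $b''$ determine how the parts of $\flatten(a)$ and $\flatten(b)$ interleave and/or coincide in $c = b'+b''$, which reads off a quasi-shuffle term; pushing everything as far left as possible while respecting the separate dominance constraints $b' \geq a$ and $b'' \geq b$ yields the minimal $c'$. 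Conversely, from $c'$ and a witness one reconstructs the leftmost valid $(b', b'')$ and then slides rightward to reach $c$. The non-negativity and integrality of $[c \mid a \squplus b]$ are then immediate, since it is manifestly a count of combinatorial objects.

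\textbf{Main obstacle.} The subtle point — and where I would spend the most care — is the interaction of the two independent dominance conditions $b' \geq a$ and $b'' \geq b$ with the flattening constraints when parts of $a$ and $b$ land in the \emph{same} slot of $c$ (the genuinely ``quasi''-part of the quasi-shuffle, as opposed to an ordinary shuffle). In that case $b'_i + b''_i = c_i$ with both $b'_i, b''_i > 0$, and one must check that the left-justification that makes $c'$ dominance-minimal is simultaneously compatible with both partial-sum inequalities, and that distinct $(b',b'')$ decompositions of the same $c$ with the same support pattern correspond to distinct witnesses (injectivity) and that every witness is hit (surjectivity). Verifying this amounts to a careful induction on the number of columns/parts, tracking partial sums of $a$, of $b$, and of $c$ simultaneously; I would organize it as a greedy leftmost-filling algorithm and prove it produces exactly the dominance-minimal representative, then argue the rightward-sliding moves are reversible.
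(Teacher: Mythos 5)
First, a point of comparison you could not have known: the paper does not prove this statement at all --- it is quoted verbatim from \cite{AS17}, so there is no in-paper argument to measure your proposal against. Judged on its own terms, your architecture is the right one and matches how such product formulas are established for slide polynomials: expand $\mono_a\mono_b$ monomial by monomial, observe that each pair $(b',b'')$ determines a ``quasi-shuffle pattern'' recording which parts of $\flatten(a)$ and $\flatten(b)$ interleave or merge (note that the merged word is automatically $\flatten(b'+b'')$), and show that for each fixed pattern the set of resulting monomials is exactly the monomial support of a single $\mono_{c'}$. The fact that makes this work, which you flag as the ``main obstacle'' but do not carry out, is that for fixed flattening the condition $b'\geq a$ is equivalent to a componentwise upper bound on the positions of the nonzero parts of $b'$; intersecting the two families of bounds (from $a$ and from $b$) with the strictly-increasing-positions requirement yields a set of placements with a unique componentwise-maximal element (computed greedily from the right), and that maximal placement is precisely the index $c'$. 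Without this verification the proposal is a plan rather than a proof: the quasi-slide product is never actually defined, and Steps 2 and 3 describe what must be checked rather than checking it.

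There is also a concrete directional error you should fix before executing the plan. In Definition~\ref{def:monomial-shift} the index $a$ of $\mono_a$ is the dominance-\emph{minimal} weak composition in its support, and $b\geq a$ means the parts of $b$ sit in \emph{earlier} (lower-indexed) positions; for instance $\mono_{(0,2)}=x_2^2+x_1^2$ is indexed by the composition with its part pushed \emph{right}. So the representative $c'$ attached to a quasi-shuffle pattern is obtained by pushing the merged parts as far \emph{right} as the constraints $b'\geq a$ and $b''\geq b$ permit, not as far left as possible --- pushing left never violates those constraints and would land you on the dominance-maximal placement, indexing each pattern by the wrong weak composition and breaking the identity. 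With that reversal corrected and the greedy-maximal-placement lemma proved, your bijection in Step 3 goes through and nonnegativity is, as you say, immediate.
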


Unlike Demazure characters and Schubert polynomials, they are not a Kohnert basis.

\begin{proposition}
  Monomial slide polynomials are not a Kohnert basis.
\end{proposition}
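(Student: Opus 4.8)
The plan is to exhibit a single monomial slide polynomial that provably cannot be a Kohnert polynomial for any diagram, and then invoke the fact that a Kohnert basis must realize \emph{every} basis element as a Kohnert polynomial. A natural candidate is $\mono_{(2,0,1,2)}$, computed in the excerpt as $x_1^2 x_2 x_3^2 + x_1^2 x_2 x_4^2 + x_1^2 x_3 x_4^2$: its dominance-minimal term is $x_1^2 x_2 x_3^2$, corresponding to the weak composition $a = (2,1,2)$ padded with a zero, i.e. the exponent vector $(2,0,1,2)$ — wait, more precisely the leading (minimal in dominance) monomial must be $x_1^2 x_2 x_3^2$ with exponent vector $(2,1,2,0)$, and one checks $(2,0,1,2)$, $(2,0,2,1)$ dominate it. So any diagram $D$ whose Kohnert polynomial equals $\mono_{(2,0,1,2)}$ must, by Theorem~\ref{thm:kohnert-basis}, have $\wt(D) = (2,1,2,0)$ after deleting empty columns (or more precisely $\wt(D)$ flattens to $(2,1,2)$ and the leading monomial forces the zero to sit in the fourth row among the first three nonzero rows in this order). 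The key point is that such a diagram must have exactly three nonempty rows, with $2$, $1$, $2$ cells respectively, in rows $1,3,4$ of $\mathbb{N}\times\mathbb{N}$ — but $\mono_{(2,0,1,2)}$ has only three monomials, whereas the constraint that the leading diagram can be reached and has weight $(2,1,2,0)$ will force more Kohnert diagrams than that.

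The key steps, in order: (1) Identify the leading monomial of $\mono_{(2,0,1,2)}$ and conclude via Theorem~\ref{thm:kohnert-basis} that any diagram $D$ with $\kohnert_D = \mono_{(2,0,1,2)}$ has $\wt(D)$ equal to (a zero-column-padding of) $(2,0,1,2)$; in particular $D$ has cells only in rows $1,3,4$, with row counts $2,1,2$. (2) Observe that row $4$ is nonempty, so the rightmost cell of row $4$ admits a Kohnert move dropping it into row $2$ or row $3$ (whichever is the first available position below in its column); since row $2$ is empty, there is a column in which this cell can land in row $2$ or lower, producing a new diagram of weight dominating $(2,0,1,2)$ but with a cell in row $2$. (3) Enumerate the possible resulting monomials: any diagram reachable from $D$ that has a cell in row $2$ contributes a monomial divisible by $x_2$, but the only monomials of $\mono_{(2,0,1,2)}$ are $x_1^2 x_2 x_3^2$, $x_1^2 x_2 x_4^2$, $x_1^2 x_3 x_4^2$, and the first two already have $x_2$ to the first power only. (4) Derive a contradiction by showing that the Kohnert moves available on $D$ generate at least four distinct diagrams (hence at least four monomials, or a monomial outside the support of $\mono_{(2,0,1,2)}$): concretely, from rows $(2,1,2)$ in rows $1,3,4$ one can move the rightmost row-$4$ cell down, the rightmost row-$3$ cell down, and combinations thereof, and a short case analysis on the three possible column configurations of $D$ (the two row-$1$ cells are in columns $1,2$; the row-$3$ cell is in column $1$ or $2$; the two row-$4$ cells are in columns $c<d$) shows every configuration yields strictly more than three Kohnert diagrams.

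Alternatively, and perhaps more cleanly, the plan could instead argue structurally: a Kohnert polynomial $\kohnert_D$ with $D$ having $k$ nonempty rows always contains, among its $\KD(D)$ diagrams, the one obtained by pushing all cells as far down as possible, and counting cells-in-rows patterns shows $|\KD(D)|$ strictly exceeds $3$ whenever $\wt(D)$ has the shape $(2,0,1,2)$ because the single cell in the middle nonempty row together with the two-cell bottom-most and top-most rows admit independent downward moves. The main obstacle is step (4): making the lower bound $|\KD(D)| \geq 4$ (or the production of a monomial outside $\{x_1^2x_2x_3^2, x_1^2x_2x_4^2, x_1^2x_3x_4^2\}$) completely rigorous requires handling all column placements of $D$ consistent with the forced weight, which is a finite but slightly fiddly case check; I would organize it by noting that in every admissible $D$ the bottom row (row $4$) has its rightmost cell strictly right of column $1$ or can drop past the row-$3$ cell, giving at least one move into row $2$ and one independent move of the row-$3$ cell, whose four combinations (neither, first, second, both) are pairwise distinct as diagrams and hence as — no, not necessarily as monomials, so I must additionally check the weights of these four diagrams are not all among the three target exponent vectors, which follows because at least one of them places a cell in row $2$ while leaving a cell in row $4$, giving a monomial divisible by $x_2 x_4$, which is not the leading behavior of any of the three monomials beyond $x_1^2x_2x_4^2$ — and I then show a \emph{fifth} distinct monomial appears, completing the contradiction.
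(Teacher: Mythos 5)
Your overall strategy --- exhibit one monomial slide polynomial that cannot be realized as $\kohnert_D$ for any diagram $D$, by pinning down $\wt(D)$ from the dominance-minimal term and then showing a forced Kohnert move produces a monomial outside the support --- is exactly the paper's strategy; the paper simply chooses the far simpler witness $\mono_{(0,2)} = x_1^2 + x_2^2$, for which any $D$ of weight $(0,2)$ yields the monomial $x_1 x_2$ after a single move. Two things in your writeup need repair. First, you have the dominance order inverted: by Theorem~\ref{thm:kohnert-basis} the leading term $x^{\wt(D)}$ is dominance-\emph{minimal}, and the dominance-minimal exponent vector occurring in $\mono_{(2,0,1,2)}$ is $(2,0,1,2)$ itself (partial sums $2,2,3,5$), not $(2,1,2,0)$ (partial sums $2,3,5,5$), which is in fact the maximal one. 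Your step (1) nevertheless lands on the correct configuration --- $\wt(D)=(2,0,1,2)$, so rows $1,3,4$ carry $2,1,2$ cells and row $2$ is empty --- but only after contradicting your opening paragraph, so this must be cleaned up before the argument is coherent.

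Second, step (4) is left as an admittedly unfinished ``fiddly case check,'' and you hedge about whether exhibiting four distinct diagrams suffices. It does, with no case analysis: since $\kohnert_D = \sum_{T\in\KD(D)} x^{\wt(T)}$, the sum of its coefficients equals $|\KD(D)|$, whereas the coefficient sum of $\mono_{(2,0,1,2)}$ is $3$; so $|\KD(D)|\ge 4$ is already a contradiction, with no need to track which weights arise. Better still, no counting is needed at all: the rightmost cell of row $4$ sits in some column $d$; if position $(3,d)$ is empty the cell drops there, giving weight $(2,0,2,1)$ and the monomial $x_1^2x_3^2x_4$; if $(3,d)$ is occupied the cell drops to $(2,d)$ (row $2$ being empty), giving weight $(2,1,1,1)$ and the monomial $x_1^2x_2x_3x_4$. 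Neither monomial lies in $\{x_1^2x_2x_3^2,\ x_1^2x_2x_4^2,\ x_1^2x_3x_4^2\}$, so a single Kohnert move already completes the proof. With these repairs your argument is valid, but it is doing with a four-row witness what the paper does with a two-row one.
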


\begin{proof}
  For any diagram $D$ of weight $(0,2)$, we claim $\kohnert_D \neq \mono_{(0,2)}$. If $\wt(D)=(0,2)$, then $\KD(D)$ must have a diagram of weight $(1,1)$ by pushing the rightmost box in row $2$. Therefore $\kohnert_D$ will contain the monomial $x_1 x_2$, which does not appear in $\mono_{(0,2)} = x_2^2 + x_1^2$. Therefore $\mono_{(0,2)}$ is not a Kohnert polynomial.
\end{proof}

\subsection{Kohnert polynomials are monomial slide positive}
\label{sec:mono-MKD}

For every row index $r \geq 1$, define an operator $\Up_r$ on diagrams that raises all cells in row $r$ up to row $r+1$. While this is not, in general, well-defined, we will only apply $\Up_r$ when no cell in row $r$ sits immediately below a cell in row $r+1$. The following definition allows us to relate Kohnert polynomials with monomial slide polynomials.

\begin{definition}
  For a diagram $D$, define the subset $\MKD(D)$ of Kohnert diagrams for $D$ by
  \begin{equation}
    \MKD(D) = \{ T \in \KD(D) \mid \Up_r(T) \not\in \KD(D) \ \forall r \mbox{ such that } (r+1,c)\not\in D \ \forall \ c \}.
  \end{equation}
  \label{def:MKD}
\end{definition}

For example, for $D$ the third diagram in Figure~\ref{fig:diagrams}, Figure~\ref{fig:MKD} shows the set $\MKD(D)$. Notice that
\[ \kohnert_D = \mono_{(0,2,1,2)} + \mono_{(1,1,1,2)} + \mono_{(2,1,1,1)} + \mono_{(1,2,0,2)} + \mono_{(1,2,1,1)}, \]
which corresponds precisely to the weights of the diagrams in $\MKD(D)$. 

\begin{figure}[ht]
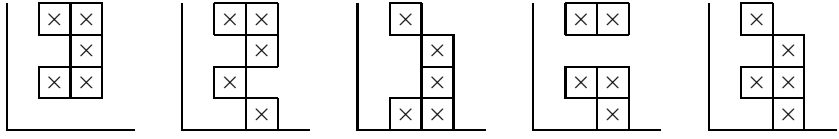

  \begin{center}
    \begin{displaymath}
      \begin{array}{c@{\hskip 1.5\cellsize}c@{\hskip 1.5\cellsize}c@{\hskip 1.5\cellsize}c@{\hskip 1.5\cellsize}c}
        \vline\tableau{ & \times & \times \\  & & \times \\  & \times & \times \\ & & & \\ \hline} &
        \vline\tableau{ & \times & \times \\  & & \times \\  & \times & \\ & & \times & \\ \hline} &
        \vline\tableau{ & \times & \\  & & \times \\  & & \times \\ & \times & \times & \\ \hline} &
        \vline\tableau{ & \times & \times \\  & & \\  & \times & \times \\ & & \times & \\ \hline} &
        \vline\tableau{ & \times & \\  & & \times \\  & \times & \times \\ & & \times & \\ \hline} 
      \end{array}
    \end{displaymath}
    \caption{\label{fig:MKD}The set $\MKD(D)$ for $D$ the leftmost diagram above.}
  \end{center}
\end{figure}

\begin{theorem}
  Given any diagram $D$, we have
  \begin{equation}
    \kohnert_D = \sum_{T \in \MKD(D)} \mono_{\wt(T)} .
  \end{equation}
  In particular, Kohnert polynomials expand non-negatively into monomial slide polynomials.
  \label{thm:kohnert-mono}
\end{theorem}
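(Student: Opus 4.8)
The plan is to show that the monomial slide polynomials $\{\mono_{\wt(T)} : T \in \MKD(D)\}$ partition the Kohnert diagrams $\KD(D)$ according to their monomials: each Kohnert diagram $T' \in \KD(D)$ contributes its monomial $x^{\wt(T')}$ to exactly one $\mono_{\wt(T)}$ with $T \in \MKD(D)$, and conversely every monomial of every such $\mono_{\wt(T)}$ arises from some diagram in $\KD(D)$. Recall from Definition~\ref{def:monomial-shift} that $\mono_{a} = \sum_{b \geq a,\ \flatten(b) = \flatten(a)} x^{b}$, so the monomials of $\mono_{\wt(T)}$ correspond to weak compositions $b$ obtained from $\wt(T)$ by ``sliding weight to the left'' among a fixed set of nonzero part-sizes. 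The key geometric observation is that the operator $\Up_r$ does exactly this: if $T$ has no cell in row $r+1$ and no cell of row $r$ sits directly below a cell of row $r+1$ (vacuously true here), then $\Up_r(T)$ has the same multiset of nonzero row-counts as $T$, with the block of cells previously in row $r$ now in row $r+1$. Passing from $\wt(\Up_r(T))$ to $\wt(T)$ is precisely one elementary leftward slide in the sense of the dominance condition defining $\mono$.

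First I would establish the forward containment: for any $T' \in \KD(D)$, repeatedly apply $\Up_r$-moves (each time choosing some empty row $r$, i.e. $(r+1,c)\notin D$ for all $c$ — wait, more precisely some row $r$ with the stabilization property from Definition~\ref{def:MKD}) that stay inside $\KD(D)$ and lower the diagram in dominance order for the weight; since weights are bounded below in dominance order, this terminates at some $T \in \MKD(D)$. I would check that $\Up_r$ moves commute sufficiently / are confluent so that the resulting $T$ is well-defined given $T'$ — this is the ``each monomial lands in a unique $\mono$'' part. Then $\wt(T') \geq \wt(T)$ and $\flatten(\wt(T')) = \flatten(\wt(T))$, so $x^{\wt(T')}$ is a monomial of $\mono_{\wt(T)}$. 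Second, for the reverse containment I would show: given $T \in \MKD(D)$ and any weak composition $b \geq \wt(T)$ with $\flatten(b) = \flatten(\wt(T))$, there is a diagram $T' \in \KD(D)$ with $\wt(T') = b$. The idea is that $b$ is reached from $\wt(T)$ by a sequence of elementary leftward slides of blocks of cells, and each such slide is realized by an $\Up_r^{-1}$-type move, i.e. finding the diagram in $\KD(D)$ that $\Up_r$ sends to (a predecessor of) $T$ — this uses the ``$\Up_r(T') \in \KD(D)$'' condition in the complement of $\MKD(D)$, reading Definition~\ref{def:MKD} backwards. One must verify that lowering in dominance order among diagrams of $\KD(D)$ via these moves can achieve any target weight satisfying the constraints.

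The main obstacle I expect is the confluence/well-definedness in the forward direction: a given $T' \in \KD(D)$ might admit several distinct $\Up_r$-moves staying inside $\KD(D)$, and I need that all maximal sequences of such moves land on the \emph{same} $T \in \MKD(D)$ (equivalently, that $\MKD(D)$-membership is detected correctly by the local condition that no single $\Up_r$ leaves the diagram inside $\KD(D)$, rather than requiring a global minimality). Resolving this likely requires a careful analysis of how $\Up_r$ interacts with the structure of $\KD(D)$ — in particular, understanding which rows can be ``raised'' and showing that raising one empty row does not obstruct raising another, so that the set of raisable configurations forms something like a lattice. A secondary, more routine obstacle is bookkeeping the correspondence between elementary dominance-order slides on weak compositions (the combinatorics underlying Definition~\ref{def:monomial-shift}) and the $\Up_r$ operations on diagrams, including checking that the ``$\flatten$ is preserved'' and ``only adjacent empty row'' hypotheses match up exactly. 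Once confluence is in hand, the two containments give a weight-preserving bijection $\KD(D) \leftrightarrow \bigsqcup_{T \in \MKD(D)} \{b : b \geq \wt(T),\ \flatten(b) = \flatten(\wt(T))\}$, from which the identity $\kohnert_D = \sum_{T \in \MKD(D)} \mono_{\wt(T)}$ follows immediately, and non-negativity is then automatic.
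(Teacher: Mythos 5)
Your proposal follows essentially the same route as the paper: partition $\KD(D)$ into fibers of the lifting map given by maximal sequences of $\Up_r$-moves into empty rows, and show that the generating function of the fiber over each $T \in \MKD(D)$ is exactly $\mono_{\wt(T)}$ via the correspondence between dominance-order slides of weight and dropping whole rows through empty space. The confluence issue you single out as the main obstacle is precisely the point on which the paper's proof also rests --- it simply asserts that every Kohnert diagram lifts to a unique element of $\MKD(D)$ independently of the order in which rows are lifted --- so your outline matches the published argument in both structure and level of detail.
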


\begin{proof}
  Given a diagram $T$, we may consider all diagrams $U$ such that applying some sequence of $\Up_i$ moves to $U$ results in $T$. Since $\Up_i$ moves an entire row without consolidating rows, we have $\flatten(U) = \flatten(T)$ for any such $U$. Moreover, since rows move up, we also have $\wt(U) \geq \wt(T)$. Moreover, for any weak composition $b$ such that $\flatten(b)=\flatten(\wt(T))$ and $b \geq \wt(T)$, we may construct $U$ from $b$ and $T$ uniquely by moving the lowest row down to the leftmost nonzero part of $b$, and continuing thus. Therefore we have
  \begin{displaymath}
    \sum_{U \ \mathrm{lifts \ to} \ T} x^{\wt(U)} = \mono_{\wt(T)}.
  \end{displaymath}
  By Definition~\ref{def:MKD}, every Kohnert diagram lifts to some unique element of $\MKD(D)$, independently of the order of the rows that are lifted. Therefore the theorem follows.
\end{proof}

Combining Theorem~\ref{thm:kohnert-mono} and Proposition~\ref{prop:lift-mono}, we have the following characterization of when a Kohnert polynomial is quasisymmetric.

\begin{proposition}
  The polynomial $\kohnert_D$ is quasisymmetric in $x_1,\ldots,x_k$ if and only if for each $i<k$, the set of columns containing cells of $D$ in row $i$ is a subset of the set of columns containing cells of $D$ in row $i+1$.
\end{proposition}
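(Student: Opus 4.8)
The plan is to combine the monomial slide expansion from Theorem~\ref{thm:kohnert-mono} with the quasisymmetry criterion for monomial slide polynomials from Proposition~\ref{prop:lift-mono}. The key observation is that a polynomial which is a nonnegative sum of monomial slide polynomials is quasisymmetric in $x_1,\ldots,x_k$ if and only if \emph{every} monomial slide polynomial appearing in the sum is individually quasisymmetric in $x_1,\ldots,x_k$. The forward direction of this equivalence is immediate; the backward direction needs a small argument that there is no cancellation, which follows because distinct monomial slide polynomials are supported on disjoint sets of monomials (they form a basis, and each $\mono_a$ has leading term $x^a$ with all other terms strictly dominating $a$ but sharing the flattening of $a$; distinct $a$ with the same flattening give disjoint supports, and different flattenings obviously do too). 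So I would first record this reduction.

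Next I would translate the condition ``$\mono_a$ quasisymmetric in $x_1,\ldots,x_k$'' from Proposition~\ref{prop:lift-mono} — namely that $a_j \neq 0$ whenever $a_i \neq 0$ for some $i < j$ (within the first $k$ coordinates) — into a statement about the diagrams $T \in \MKD(D)$. Here I would invoke the structure of $\MKD(D)$: by construction, for any $T \in \MKD(D)$ and any row $r$ with $r+1 \le k$ that is empty in $D$ (has no cells in any column), the map $\Up_r$ takes $T$ outside $\KD(D)$, which forces row $r$ of $T$ to contain a cell sitting directly below a cell of row $r+1$ in $T$. Combined with the fact that the cells in row $r$ of $T$ must have migrated down from rows $\ge r$, one sees that the set of columns occupied in row $r+1$ of $T$ must contain at least one column occupied in row $r$ of $T$; iterating and using that $D$-empty rows stay controlled, one deduces the key claim: \textbf{the column set of row $i$ of $D$ is contained in the column set of row $i+1$ of $D$ for all $i<k$ if and only if every $T\in\MKD(D)$ has $\wt(T)$ satisfying the no-trailing-zeros condition in its first $k$ coordinates.}

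The main obstacle is proving exactly this last equivalence cleanly. For the ``only if'' direction: if row $i$'s columns are contained in row $i+1$'s columns for all $i<k$ in $D$, then in particular no row $i<k$ of $D$ is empty while some row $i' \le i$ is nonempty — wait, more carefully, I need that a zero can only occur among the first $k$ parts of $\wt(T)$ after all nonzero parts, for every $T\in\MKD(D)$; the containment hypothesis on $D$ should propagate through Kohnert moves and the $\MKD$ saturation condition to guarantee this, and here I expect to need a short lemma that the containment-of-column-sets property is preserved in an appropriate weak form under Kohnert moves restricted to the relevant rows. For the ``if'' direction (the contrapositive is cleaner): if some row $i<k$ of $D$ has a column $c$ not appearing in row $i+1$ of $D$, I construct an explicit $T\in\KD(D)$, and then saturate it up via $\Up$ moves to land in $\MKD(D)$, whose weight violates the no-trailing-zeros condition in the first $k$ coordinates — intuitively by pushing the cell in column $c$, row $i$ down past row $i+1$, emptying row $i+1$ below a nonzero row. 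Making the saturation step rigorous — that the resulting $\MKD$ element still witnesses a ``bad'' zero — is the delicate point, and I would handle it by tracking which columns can ever be occupied in each row across $\KD(D)$.

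Finally, I would assemble the pieces: $\kohnert_D$ is quasisymmetric in $x_1,\ldots,x_k$ $\iff$ every $\mono_{\wt(T)}$ with $T\in\MKD(D)$ is quasisymmetric in $x_1,\ldots,x_k$ (by the no-cancellation reduction and Theorem~\ref{thm:kohnert-mono}) $\iff$ every such $\wt(T)$ satisfies the Proposition~\ref{prop:lift-mono} condition $\iff$ the column-set containment holds for $D$ through row $k$. This chain gives the stated proposition.
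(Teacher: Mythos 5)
Your reduction rests on the claim that distinct monomial slide polynomials have disjoint monomial supports, and that claim is false: for example $\mono_{(1,0)} = x_1$ while $\mono_{(0,1)} = x_1 + x_2$, so the two share the monomial $x_1$. The conclusion you want (a nonnegative sum of $\mono_a$'s is quasisymmetric only if each summand is) is still true, but it must be proved by triangularity rather than disjointness: group the summands $c_b\mono_b$ by the flattening $\alpha$ of $b$, observe that the coefficient of $x^{\alpha\times 0^{n-\ell}}$ in the sum is $\sum_{\flatten(b)=\alpha} c_b$ whereas the coefficient of $x^{0^{n-\ell}\times\alpha}$ is just $c_{0^{n-\ell}\times\alpha}$, and then quasisymmetry together with nonnegativity forces every other $c_b$ to vanish. (One must also take care that the proposition only demands quasisymmetry in $x_1,\ldots,x_k$, so the comparison has to be restricted to rearrangements supported in the first $k$ variables.) So the first link of your chain is repairable, but the justification given is incorrect.

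The more serious problem is that your central ``key claim'' --- that the column-containment condition on $D$ is equivalent to every $T\in\MKD(D)$ having a weight with no internal zeros among its first $k$ parts --- carries essentially all the content of the proposition, and you prove it in neither direction: you explicitly defer both the lemma that containment propagates under Kohnert moves and the saturation step showing that lifting a ``bad'' Kohnert diagram into $\MKD(D)$ cannot destroy the offending zero. The paper sidesteps this entirely in the hard direction: given a cell of $D$ in row $i<k$ with no cell immediately above it, it performs one Kohnert move on each cell of row $i+1$ from right to left, obtaining a Kohnert diagram whose row $i+1$ is empty while row $i$ is not, and then exhibits the correspondingly shifted monomial as one that cannot occur in $\kohnert_D$; only the easy converse goes through $\MKD(D)$ and Proposition~\ref{prop:lift-mono}. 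As written, your proposal is a plan whose two decisive steps are missing, one of them resting on a false supporting lemma, so it does not yet constitute a proof.
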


\begin{proof}
  Suppose there is a cell $x$ in row $i<k$ which has no cell immediately above it. Then from right to left, perform a single Kohnert move on all cells in row $i+1$. Let $M$ be the associated monomial of the resulting Kohnert diagram. Then the monomial obtained from $M$ by replacing $x_j$ with $x_{j+1}$ for all $j\le i$ does not belong to $\Kohnert_D$, hence $\Kohnert_D$ is not quasisymmetric.
  
  Conversely, if the rows of $D$ are nested with smaller rows below larger ones, then each element of $\MKD(D)$ will have at least one cell in each nonempty row of $D$. Therefore, by Proposition~\ref{prop:lift-mono}, each terms in the monomial slide polynomial expansion will be quasisymmetric.
\end{proof}

\subsection{Kohnert quasisymmetric functions}
\label{sec:mono-stable}

The ring of quasisymmetric functions is the inverse limit of quasisymmetric polynomials. The monomial and fundamental quasisymmetric polynomials stabilize to the monomial and fundamental quasisymmetric functions when the number of variables tends to infinity.

Assaf and Searles \cite{AS17} showed that the monomial slide polynomials stabilize and that their stable limits are precisely the monomial quasisymmetric functions.

\begin{theorem}[\cite{AS17}]
  For a weak composition $a$, we have
  \begin{equation}
    \lim_{m\rightarrow\infty}\mono_{0^m \times a} = M_{\flatten(a)}(x_1,x_2,\ldots),
  \end{equation}
  where $0^m \times a$ denotes the weak composition obtained by prepending $m$ $0$'s to $a$.
  \label{thm:mono-stable}
\end{theorem}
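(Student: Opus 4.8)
The plan is to prove the stabilization statement for monomial slide polynomials, $\lim_{m\to\infty}\mono_{0^m\times a} = M_{\flatten(a)}(x_1,x_2,\ldots)$, by a direct comparison of the monomial supports on both sides. Recall from Definition~\ref{def:monomial-shift} that $\mono_{0^m\times a}$ is a sum of monomials $x^{b}$ where $b$ ranges over weak compositions of length $n+m$ with $\flatten(b) = \flatten(a)$ and $b \geq 0^m\times a$ in dominance order. The key observation is that prepending zeros to $a$ weakens the dominance constraint: since the first $m$ partial sums of $0^m\times a$ are all zero, the condition $b \geq 0^m\times a$ becomes vacuous on the first $m$ coordinates, and as $m\to\infty$ every weak composition $b$ with $\flatten(b) = \flatten(a)$ eventually satisfies the dominance condition.

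More precisely, first I would fix a strong composition $\alpha = \flatten(a)$ of length $\ell$ and total size $N$, and observe that a monomial $x^{b}$ appears in $M_{\alpha}(x_1,x_2,\ldots)$ if and only if $\flatten(b) = \alpha$, with no dominance restriction. Then I would argue that for any such $b$ of length $n+m$ (padding with trailing zeros as needed), the partial sums $b_1+\cdots+b_k$ are nonnegative integers, while the partial sums of $0^m\times a$ are $0$ for $k\leq m$ and equal to $a_1+\cdots+a_{k-m}$ for $k>m$; these latter partial sums are bounded above by $N$. So once the $\ell$ nonzero parts of $b$ all occur among positions $\geq$ some fixed index, the inequality $b_1+\cdots+b_k \geq (0^m\times a)_1+\cdots+(0^m\times a)_k$ holds for all $k$ — indeed it suffices that $m$ be large enough that the support of the relevant finite monomials is pushed past where the partial sums of $0^m \times a$ catch up. Conversely, every monomial of $\mono_{0^m\times a}$ has $\flatten$ equal to $\alpha$ by definition, hence appears in $M_\alpha$. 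This gives containment of supports in both directions in the limit, and since both sides are multiplicity-free sums of distinct monomials, the limits agree.

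The step I expect to require the most care is making the inverse-limit formalism precise: one must check that the sequence $\{\mono_{0^m\times a}\}_m$ is actually a coherent sequence under the standard projection maps $\mathbb{Z}[x_1,\ldots,x_{n+m+1}] \to \mathbb{Z}[x_1,\ldots,x_{n+m}]$ setting the last variable to zero, i.e. that setting $x_{n+m+1}=0$ in $\mono_{0^{m+1}\times a}$ recovers $\mono_{0^m\times a}$, and more to the point that $\mono_{0^m \times a}$ and $\mono_{0^{m+1}\times a}$ agree after relabeling once $m$ is large — equivalently, that no genuinely new monomial (in a bounded set of variables) is gained or lost as $m$ increases beyond a threshold depending only on $\alpha$. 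This is where the dominance condition must be tracked carefully. Since this is precisely Theorem~\ref{thm:mono-stable} as stated in \cite{AS17}, I would either cite that reference directly or, if a self-contained argument is wanted, carry out the support comparison above together with the straightforward check that the stable support of $\mono_{0^m\times a}$ stabilizes, which follows because the constraint $b\geq 0^m\times a$ is monotonically relaxed in $m$ and in the limit imposes nothing beyond $\flatten(b)=\alpha$.
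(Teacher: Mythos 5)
The paper offers no proof of this statement --- it is imported verbatim from \cite{AS17} --- so there is nothing internal to compare against; I will assess your argument on its own. Your support-comparison proof is correct and essentially complete. The crux is exactly as you say: both $\mono_{0^m\times a}$ and $M_{\flatten(a)}$ are multiplicity-free sums of monomials $x^b$ with $\flatten(b)=\flatten(a)$, and the only difference is the dominance constraint $b\geq 0^m\times a$, which becomes vacuous in the limit. The cleanest way to finish is the restriction-to-$k$-variables formulation you gesture at in your last paragraph: for fixed $k$ and any $m\geq k$, a weak composition $b$ supported in positions $1,\ldots,k$ with $\flatten(b)=\flatten(a)$ automatically satisfies $b\geq 0^m\times a$, since the partial sums of $0^m\times a$ vanish through position $m\geq k$ and for $j>m$ the left side has already reached the full size $|a|$ while the right side never exceeds it; hence $\mono_{0^m\times a}(x_1,\ldots,x_k,0,0,\ldots)=M_{\flatten(a)}(x_1,\ldots,x_k)$ for all $m\geq k$, which is precisely the asserted limit. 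One small slip in your write-up: you say the inequality holds ``once the $\ell$ nonzero parts of $b$ all occur among positions $\geq$ some fixed index,'' but the relevant condition is that the support of $b$ lies \emph{below} the threshold $m$ (equivalently, within the initial block of zeros of $0^m\times a$), i.e.\ positions $\leq m$; your subsequent sentence makes clear you intend the correct condition, so this is only a wording issue. Your concern about coherence of the sequence under the projections is also resolved by the same computation, since the restriction to $k$ variables is constant for $m\geq k$.
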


Let $0^m \times D$ denote the diagram of $D$ shifted up vertically by $m$ rows. For example, once again taking $D$ to be the third diagram in Figure~\ref{fig:diagrams}, we may compute
\begin{eqnarray*}
  \kohnert_{0\times D} & = & \mono_{(0,0,2,1,2)} + \mono_{(0,1,1,1,2)} + \mono_{(1,1,1,0,2)} + \mono_{(0,2,1,1,1)} \\
  & & + \mono_{(0,1,2,0,2)} + \mono_{(0,1,2,1,1)} + \mono_{(1,1,2,0,1)} + 2\mono_{(1,1,1,1,1)}.
\end{eqnarray*}
Moreover, for any $m \geq 0$, we have the following expansion,
\begin{eqnarray*}
  \kohnert_{0^{m+2}\times D} & = & \mono_{0^{m}\times(0,0,0,2,1,2)} + \mono_{0^{m}\times(0,0,1,1,1,2)} + \mono_{0^{m}\times(0,1,1,1,0,2)} \\
  & & + \mono_{0^{m}\times(0,0,2,1,1,1)} + \mono_{0^{m}\times(0,0,1,2,0,2)} + \mono_{0^{m}\times(0,0,1,2,1,1)} \\
  & & + \mono_{0^{m}\times(0,1,1,2,0,1)} + 2\mono_{0^{m}\times(0,1,1,1,1,1)} + \mono_{0^{m}\times(1,1,1,1,0,1)}.
\end{eqnarray*}
In particular, the monomial slide expansion of the Kohnert polynomial eventually stabilizes. Inspired by this, we may consider the stable limit of Kohnert polynomials in the following sense.

\begin{definition}
  The \emph{Kohnert quasisymmetric function indexed by $D$} is 
  \begin{equation}
    \Kohnert_D(X) = \lim_{m \rightarrow\infty} \kohnert_{0^m \times D},
  \end{equation}
  where $0^m \times D$ denotes the diagram of $D$ shifted up vertically by $m$ rows.
\end{definition}

For example, continuing with $D$ the third diagram in Figure~\ref{fig:diagrams}, we have
\begin{eqnarray*}
  \Kohnert_D & = & M_{(2,1,2)} + 2 M_{(1,1,1,2)} + M_{(2,1,1,1)} + M_{(1,2,2)} + M_{(1,2,1,1)} + M_{(1,1,2,1)} + 3 M_{(1,1,1,1,1)}.
\end{eqnarray*}

\begin{theorem}
  For any diagram $D$, $\Kohnert_D(X)$ is a well-defined quasisymmetric function that expands non-negatively into the monomial quasisymmetric functions.
  \label{thm:kohnert-stable}
\end{theorem}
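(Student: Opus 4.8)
The plan is to combine Theorem~\ref{thm:kohnert-mono} with the stability of monomial slide polynomials (Theorem~\ref{thm:mono-stable}), reducing everything to a stabilization statement for the index sets $\MKD(0^m\times D)$. By Theorem~\ref{thm:kohnert-mono} we have $\kohnert_{0^m\times D}=\sum_{T\in\MKD(0^m\times D)}\mono_{\wt(T)}$, so it suffices to prove: there exist an integer $M_0$ and a finite multiset $\mathcal{W}$ of weak compositions (each of total size equal to the number of cells of $D$) such that, for every $m\ge M_0$,
\[
\{\wt(T)\mid T\in\MKD(0^m\times D)\}=\{\,0^{m-M_0}\times w\mid w\in\mathcal{W}\,\}
\]
as multisets. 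Granting this, for $m\ge M_0$ we get $\kohnert_{0^m\times D}=\sum_{w\in\mathcal{W}}\mono_{0^{m-M_0}\times w}$, and letting $m\to\infty$ and applying Theorem~\ref{thm:mono-stable} termwise (the sum is finite) yields $\Kohnert_D(X)=\sum_{w\in\mathcal{W}}M_{\flatten(w)}$, which is a well-defined quasisymmetric function and is manifestly nonnegative in the monomial quasisymmetric basis.

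To establish the displayed stabilization I would study the \emph{shift map} $\phi_m\colon T\mapsto 0\times T$ raising a diagram by one row. Replaying any Kohnert sequence shows $\phi_m$ maps $\KD(0^m\times D)$ into $\KD(0^{m+1}\times D)$; since Kohnert moves never raise a cell, a diagram in $\KD(0^{m+1}\times D)$ lies in the image of $\phi_m$ exactly when its bottom row is empty, and then it is $\phi_m$ of a unique element of $\KD(0^m\times D)$, so in particular $0\times U\in\KD(0^{m+1}\times D)\iff U\in\KD(0^m\times D)$. One checks $\phi_m$ restricts to an \emph{injection} $\MKD(0^m\times D)\hookrightarrow\MKD(0^{m+1}\times D)$: the only empty row created by the shift lies strictly below every cell of $0\times T$, hence is irrelevant to $\Up$-saturation, and the displayed equivalence transports saturation in both directions. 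Since $\wt(\phi_m(T))=0\times\wt(T)$, the map also preserves the flattening of the weight. The displayed stabilization is therefore equivalent to: for all sufficiently large $m$, \emph{every} element of $\MKD(0^m\times D)$ has empty bottom row, i.e. $\phi_{m-1}$ is surjective; iterating then furnishes $M_0$ and $\mathcal{W}=\{\wt(T)\mid T\in\MKD(0^{M_0}\times D)\}$.

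For surjectivity, set $k$ equal to the number of cells of $D$ and let $S\in\MKD(0^{m+1}\times D)$ with $m\ge k$; suppose for contradiction that row $1$ of $S$ is nonempty. Since $S$ has only $k$ cells, some row among $1,\dots,k+1$ is empty, so the least empty row $r\ge 2$ of $S$ satisfies $r\le k+1\le m+1$; thus rows $1,\dots,r$ are all empty in $0^{m+1}\times D$ (whose cells lie in rows $\ge m+2$), while rows $1,\dots,r-1$ of $S$ are all nonempty. The contradiction comes from showing $\Up_{r-1}(S)\in\KD(0^{m+1}\times D)$, which violates the $\Up$-saturation defining membership in $\MKD$. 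I would prove the required statement as a lemma: \emph{if $E$ is a diagram, $S\in\KD(E)$, and $r\ge 2$ has rows $1,\dots,r$ of $E$ empty while rows $1,\dots,r-1$ of $S$ are nonempty and row $r$ of $S$ is empty, then $\Up_{r-1}(S)\in\KD(E)$}, by induction on the length of a Kohnert sequence $E=S_0\to\cdots\to S_t=S$, writing the last move $\mu$ as dropping a cell from row $\gamma$ to row $\delta<\gamma$. If $\delta\ge r$ then necessarily $\delta>r$ (dropping into row $r$ could not leave it empty in $S$), $S_{t-1}$ still satisfies the hypotheses, $\mu$ commutes with $\Up_{r-1}$ since its scan stays in rows $\ge r+1$, and the induction closes; and if $\gamma\ge r+1$ while $\delta\le r-1$, the cell must have jumped over an occupied cell in row $r$, which then survives in $S$, contradicting that row $r$ of $S$ is empty — so that case is vacuous.

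The main obstacle is the remaining case of this induction, where $\mu$ drops a cell from a row $\gamma\le r$ into a row $\le r-1$: a move taking place inside (or at the very top of) the low empty region, after which $S_{t-1}$ may itself violate the hypotheses because some row below $r$ can be empty in $S_{t-1}$, so the induction does not close verbatim. I expect this to require either a strengthened inductive hypothesis tracking all the ``loose'' cells currently sitting below row $r$, or a preliminary normalization showing any Kohnert sequence for $S$ can be reordered so that all moves landing in rows $1,\dots,r-1$ occur last and in a controlled order, so that the descent of each such cell may be truncated one row short; the bound $r\le k+1$ is exactly what guarantees enough empty space in $0^{m+1}\times D$ for a truncated descent to remain a legal Kohnert sequence. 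With this lemma established, $\phi_m$ is a bijection for $m\ge k$, the displayed stabilization holds with $M_0=k$, and the theorem follows.
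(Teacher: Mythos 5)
Your overall route is the same as the paper's: expand $\kohnert_{0^m\times D}$ via Theorem~\ref{thm:kohnert-mono}, show the indexing set $\MKD(0^m\times D)$ stabilizes under the shift $T\mapsto 0\times T$, and pass to the limit with Theorem~\ref{thm:mono-stable}. The paper's proof consists of exactly these three steps, asserting that $T\in\MKD(D)$ implies $0\times T\in\MKD(0\times D)$ and that stability is ``clear from Definition~\ref{def:MKD}'' once $m\geq |D|$. Your first two paragraphs are a correct and noticeably more careful account of what the paper takes for granted: the identification of the image of $\phi_m$ with the diagrams having empty bottom row (using that Kohnert moves never raise a cell, so the whole generating sequence can be shifted down), the transport of the $\Up$-saturation condition across $\phi_m$, and the reduction of stabilization to surjectivity of $\phi_m$ for $m\geq |D|$ are all sound.

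The gap is the one you flag yourself, and it is genuine: your induction on the length of the Kohnert sequence does not close in the case where the last move disturbs rows $\leq r$. Your cases where the last move stays in rows $>r$ or crosses row $r$ from above are handled correctly (the crossing case is indeed vacuous because the jumped-over cell at $(r,c)$ survives into $S$). Moves entirely within rows $\leq r-1$ can also be salvaged, since $S_{t-1}$ still has row $r$ empty and row $r-1$ nonempty (one only needs the hypothesis ``row $r$ empty, row $r-1$ nonempty,'' not that all of rows $1,\dots,r-1$ are nonempty), and the move commutes with $\Up_{r-1}$ up to splitting into two Kohnert moves when it departs from row $r-1$. But the subcase where the last move takes a cell \emph{from row $r$ itself} down to a row $\leq r-1$ genuinely breaks the induction: there $S_{t-1}$ has row $r$ nonempty, the inductive hypothesis fails, and the target $\Up_{r-1}(S)$ is not reachable from $\Up_{r-1}(S_{t-1})$ by replaying the move. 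Neither of your proposed repairs (a strengthened hypothesis tracking cells below row $r$, or a reordering of the sequence so that all moves landing below row $r$ come last) is carried out, and neither is routine --- reordering Kohnert moves is delicate because a cell left in a high row can block or redirect later moves. So as written the surjectivity of $\phi_m$, and hence the stabilization, is not proved; you have correctly located, but not filled, the hole that the paper papers over with ``it is clear.''
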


\begin{proof}
  For $T \in \MKD(D)$, we have $0\times T \in \MKD(0 \times D)$. Moreover, from Definition~\ref{def:MKD}, it is clear that the monomial slide expansion of $\kohnert_{0^m \times D}$ is stable once $m$ is at least the number of cells of $D$. Therefore the result follows from Theorem~\ref{thm:kohnert-mono} and Theorem~\ref{thm:mono-stable}.
\end{proof}

We review two motivating examples of this stability. For $D$ a Rothe diagram for $w$, the Kohnert polynomial is a Schubert polynomial and the stable limit, as shown by Macdonald \cite{Mac91}, is the Stanley symmetric function \cite{Sta84} introduced by Stanley to enumerate reduced expressions for a permutation. Implicit in the work of Lascoux and Sch{\"u}tzenberger \cite{LS90} and explicit in \cite{AS-2}, the Kohnert polynomial of a key diagram stabilizes to the Schur function indexed by the partition to which the weak composition sorts. Both of these examples have stable limits that are symmetric functions, but Kohnert quasisymmetric functions are not always symmetric. In Section~\ref{sec:right}, we consider a new Kohnert basis that gives rise to an interesting new basis for quasisymmetric functions.

As remarked earlier, two diagrams that differ by insertion or deletion of empty columns give rise to the same Kohnert polynomial. In the stable limit, we can strengthen this with the following.

\begin{proposition}
  Given two diagrams $D$ and $D^{\prime}$ that differ by insertion or deletion of empty rows, we have $\Kohnert_D = \Kohnert_{D^{\prime}}$. 
  \label{prop:delete-row}
\end{proposition}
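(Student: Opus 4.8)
The plan is to reduce the claim about empty rows to the already-stated fact about empty columns, by observing that both operations become trivial in the stable limit. First I would note that it suffices to treat a single empty row, since a general insertion or deletion of empty rows is a composition of single-row insertions; and by symmetry it suffices to insert a single empty row into $D$, say between rows $r$ and $r+1$ (or below all cells, or above all cells). Call the resulting diagram $D'$, so $\Up$-type reasoning applies: $D'$ is obtained from $D$ by shifting all cells in rows $>r$ up by one.

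The key step is to analyze how $\KD$ and hence $\MKD$ interact with this shift. The subtle point is that inserting an empty row genuinely changes the set of Kohnert diagrams: a cell can now land in the new empty row, so $\KD(D')$ is strictly larger than the image of $\KD(D)$ under the shift. However, I claim that after passing to $\MKD$ and then recording only the \emph{flattening} of the weight — which is all that survives in the monomial quasisymmetric expansion via Theorem~\ref{thm:kohnert-mono} and Theorem~\ref{thm:mono-stable} — the extra diagrams contribute nothing new. Concretely, for $T' \in \KD(D')$, the new empty row $r+1$ of $D'$ is exactly the kind of row targeted by the $\Up$ condition in Definition~\ref{def:MKD}: if $T'$ has any cell in row $r+1$ but the row below it (within the relevant block) is such that $\Up$ is applicable, then $T' \notin \MKD(D')$. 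I would argue that every element of $\MKD(D')$ either has row $r+1$ empty — in which case deleting that empty row gives an element of $\MKD(D)$ with the same flattened weight — or has row $r+1$ nonempty but then its weight still flattens to a composition already accounted for, because $\MKD$ selects the representatives that are maximal under the $\Up$ moves and these are precisely the ones with no "liftable" gap. In other words, the correspondence $T \leftrightarrow$ (delete empty row $r+1$) induces a bijection $\MKD(D') \to \MKD(D)$ preserving $\flatten(\wt(\cdot))$.

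Alternatively, and perhaps more cleanly, I would avoid re-deriving the bijection from scratch and instead reduce to the column statement already granted in the text: shifting cells up by one row is, up to relabeling, the same move as shifting cells right by one column after transposing the roles of the $\Up$ and Kohnert dynamics — but since Kohnert moves are not transpose-symmetric, this only works in the limit. The honest route is the first one. Either way, once the flatten-preserving bijection $\MKD(D') \cong \MKD(D)$ is in hand, Theorem~\ref{thm:kohnert-stable} gives
\[
\Kohnert_{D'} = \sum_{T' \in \MKD(D')} M_{\flatten(\wt(T'))} = \sum_{T \in \MKD(D)} M_{\flatten(\wt(T))} = \Kohnert_D,
\]
and we are done.

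The main obstacle I anticipate is making the bijection $\MKD(D') \to \MKD(D)$ precise, because inserting an empty row really does create Kohnert diagrams of $D'$ with no counterpart in $D$ — one must show that such "genuinely new" diagrams are never in $\MKD(D')$, i.e.\ that any Kohnert diagram of $D'$ using the inserted row can be lifted via some $\Up_r$ and therefore is excluded by Definition~\ref{def:MKD}, and that the lift lands in $\KD(D')$. Verifying that the lift stays inside $\KD(D')$ (rather than escaping the set of reachable diagrams) is the technical heart, and will use the structure of how Kohnert moves can be reordered — the same reordering independence invoked in the proof of Theorem~\ref{thm:kohnert-mono}.
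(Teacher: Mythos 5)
Your overall instinct---that the ``genuinely new'' Kohnert diagrams created by the inserted empty row must wash out in the stable limit---is correct, but the mechanism you propose is not, and the central claim of your argument is false: there is in general no $\flatten(\wt(\cdot))$-preserving bijection $\MKD(D')\to\MKD(D)$. The paper's own running example refutes it. Take $D$ to be the lock diagram of $(0,2,1,2)$ and $D'=0\times D$ (insert one empty row at the bottom). Section~\ref{sec:mono-stable} computes $|\MKD(D)|=5$ while $|\MKD(0\times D)|=9$, and the extra elements have flattened weights such as $(1,1,2,1)$ and $(1,1,1,1,1)$ that do not occur for $\MKD(D)$ at all. This is exactly the case your argument waves away: the elements of $\MKD(D')$ occupying the inserted row are \emph{not} ``already accounted for,'' and they are not excluded by Definition~\ref{def:MKD}, because a diagram can use the new row and still admit no lift within $\KD(D')$. (In this example the elements of $\MKD(D')$ with the inserted row empty do biject onto $\MKD(D)$ as you predict; the failure is entirely in the other case.) The same example shows your final display is wrong: $\Kohnert_D\neq\sum_{T\in\MKD(D)}M_{\flatten(\wt(T))}$, because the monomial slide expansion of $\kohnert_{0^m\times D}$ does not stabilize until $m\geq|D|$ (see the proof of Theorem~\ref{thm:kohnert-stable}). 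You cannot apply Theorem~\ref{thm:mono-stable} term-by-term to the expansion of $\kohnert_D$ itself, since $\mono_a$ being a summand of $\kohnert_D$ does not make $\mono_{0^m\times a}$ a summand of $\kohnert_{0^m\times D}$.

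What actually makes the limit work is that $0^m\times D$ and $0^m\times D'$ acquire the \emph{same} new terms as $m$ grows, and the paper proves this without passing through $\MKD$ at all. Fix $m$; then $\Kohnert_D(x_1,\dots,x_m,0,0,\dots)$ is the generating function for Kohnert diagrams of $0^M\times D$ ($M$ large) whose cells all lie weakly below row $m$, and any such diagram is also a Kohnert diagram of $0^M\times\flatten(D)$, where $\flatten(D)$ deletes all empty rows: once every cell has fallen below the block of prepended empty rows, only the column content and relative order of the nonempty rows of the starting diagram matter, and the two-way containment is checked by dropping whole rows straight down. Since $\flatten(D)=\flatten(D')$, the truncations agree for every $m$, and letting $m\to\infty$ gives the claim. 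If you wanted to salvage your route you would need the flatten-preserving bijection between $\MKD(0^m\times D)$ and $\MKD(0^m\times D')$ for $m\geq|D|$, which is genuinely harder than the direct comparison of truncated Kohnert-diagram sets.
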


\begin{proof}
Fix a positive integer $m$. Then $\Kohnert_D(x_1,\ldots , x_m, 0, 0, \ldots )$ is the weighted sum of all Kohnert diagrams of $D$ whose highest cell is weakly below row $m$, and similarly for  $\Kohnert_{D^\prime}(x_1,\ldots , x_m, 0, 0, \ldots )$. Slightly abusing notation, let $\flatten(D)$ be the diagram obtained by deleting all empty rows from $D$. Then clearly any Kohnert diagram of $0^m\times D$ whose highest cell is weakly below row $m$ is also a Kohnert diagram of  $0^m\times \flatten(D)$. By definition $\flatten(D) = \flatten(D^\prime)$, therefore  $\Kohnert_D(x_1,\ldots , x_m, 0, 0, \ldots ) = \Kohnert_{D^\prime}(x_1,\ldots , x_m, 0, 0, \ldots )$. The statement then follows by letting $m\to \infty$.
\end{proof}

%
\section{Fundamental slide expansions}
%
\label{sec:fund}

Strengthening the results of the previous section, we next investigate the fundamental slide expansion of a Kohnert polynomial, which is not always nonnegative. In Section~\ref{sec:fund-def}, we review the basis of fundamental slide polynomials introduced in \cite{AS17}. In Section~\ref{sec:fund-FKD}, we characterize those diagrams for which the corresponding Kohnert polynomial expands nonnegatively into the fundamental slide basis. Further, we conjecture a simple condition on diagrams that ensures the corresponding Kohnert polynomial expands nonnegatively into the Demazure character basis. In Section~\ref{sec:fund-stable}, we prove the surprising fact that, while some Kohnert polynomials are not positive on the fundamental slide basis, every Kohnert quasisymmetric function is positive on the fundamental quasisymmetric function basis.

\subsection{Fundamental slide polynomials}
\label{sec:fund-def}

Gessel \cite{Ges84} introduced another basis for quasisymmetric functions that is closely related to Schur functions.

Given two compositions $\alpha$ and $\beta$ of the same size, say that $\beta$ \emph{refines} $\alpha$ if there exist indices $i_1 < \cdots < i_{\ell}$ such that $\beta_{i_j+1} + \cdots + \beta_{i_{j+1}} = \alpha_{j+1}$. For example, $(1,2,2)$ refines $(3,2)$ but does not refine $(2,3)$.

\begin{definition}[\cite{Ges84}]
  The \emph{fundamental quasisymmetric polynomial indexed by $\alpha$} is 
  \begin{equation}
    F_{\alpha}(x_1,\ldots,x_n) \,\, = \sum_{\beta \ \mathrm{refines} \ \alpha} \! M_{\beta} \,\,\,\,\,\,
    = \sum_{\flatten(b) \ \mathrm{refines} \ \alpha} \! x_1^{b_1} \cdots x_n^{b_n} ,
    \label{e:fundamental}
  \end{equation}
  where the latter sum is over all weak compositions of length $n$ whose flattening refines $\alpha$.
  \label{def:fundamental}
\end{definition}

For example, taking $\alpha = (2,1,2)$ and restricting to $4$ variables, we have
\begin{eqnarray*}
  F_{(2,1,2)}(x_1,x_2,x_3,x_4) & = & M_{(2,1,2)}(x_1,x_2,x_3,x_4) + M_{(1,1,1,2)}(x_1,x_2,x_3,x_4) + M_{(2,1,1,1)}(x_1,x_2,x_3,x_4), \\
  & = & x_1^2 x_2 x_3^2 + x_1^2 x_2 x_4^2 + x_1^2 x_3 x_4^2 + x_2^2 x_3 x_4^2 + x_1 x_2 x_3 x_4^2 + x_1^2 x_2 x_3 x_4.
\end{eqnarray*}

The fundamental quasisymmetric functions inspired the \emph{fundamental slide polynomials} of Assaf and Searles \cite{AS17}, analogous to the relationship between the monomial slide polynomials and the monomial quasisymmetric polynomials.

\begin{definition}[\cite{AS17}]
  The \emph{fundamental slide polynomial indexed by $a$} is
  \begin{equation}
    \fund_{a} = \sum_{\substack{b \geq a \\ \flatten(b) \ \mathrm{refines} \ \flatten(a)}} x_1^{b_1} \cdots x_n^{b_n},
    \label{e:fundamental-shift}
  \end{equation}
  where $b \geq a$ means $b_1 + \cdots + b_k \geq a_1 + \cdots + a_k$ for all $k=1,\ldots,n$.
  \label{def:fundamental-shift}
\end{definition}

For example, taking $a = (2,0,1,2)$, which implies $4$ variables, we compute
\begin{displaymath}
  \fund_{(2,0,1,2)} = \mono_{(2,0,1,2)} +\mono_{(2,1,1,1)} = x_1^2 x_2 x_3^2 + x_1^2 x_2 x_4^2 + x_1^2 x_3 x_4^2 + x_1^2 x_2 x_3 x_4.
\end{displaymath}

The fundamental slide polynomials generalize the fundamental quasisymmetric polynomials.

\begin{proposition}[\cite{AS17}]
  For a weak composition $a$ of length $n$, $\fund_a$ is quasisymmetric in $x_1,\ldots,x_n$ if and only if $a_j\neq 0$ whenever $a_i\neq 0$ for some $i<j$. Moreover, in this case we have $\fund_a = F_{\flatten(a)}(x_1,\ldots,x_n)$.
  \label{prop:lift-fund}
\end{proposition}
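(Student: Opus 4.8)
The plan is to prove both directions directly from the defining sum \eqref{e:fundamental-shift}, following closely the proof of Proposition~\ref{prop:lift-mono} for monomial slide polynomials.

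For the ``if'' direction, suppose $a_j \neq 0$ whenever $a_i \neq 0$ for some $i < j$, which is to say $a = 0^m \times \alpha$ where $\alpha = \flatten(a)$ has length $\ell$ and $m = n-\ell$. I would show that the dominance condition $b \geq a$ holds automatically for every weak composition $b$ of length $n$ whose flattening $\flatten(b)$ refines $\alpha$; granting this, \eqref{e:fundamental-shift} collapses to $\fund_a = \sum_{\flatten(b)\ \mathrm{refines}\ \alpha} x_1^{b_1}\cdots x_n^{b_n} = F_\alpha(x_1,\ldots,x_n)$, which is quasisymmetric. To verify the claim, fix such a $b$, set $\beta = \flatten(b)$ with $L$ parts, and pick refinement cut points $0 = i_0 < i_1 < \cdots < i_\ell = L$ with $\beta_{i_{t-1}+1} + \cdots + \beta_{i_t} = \alpha_t$ for each $t$. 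For $k \leq m$ the partial-sum inequality $b_1 + \cdots + b_k \geq a_1 + \cdots + a_k = 0$ is trivial. For $k = m+j$ with $1 \leq j \leq \ell$, observe that $b$ has at most $n - L \leq n - \ell = m$ zero entries in total, so among its first $k$ entries at least $k - (n-L) = L - (\ell - j)$ are nonzero; since the strictly increasing chain $i_j < i_{j+1} < \cdots < i_\ell = L$ forces $i_j \leq L - (\ell - j)$, at least $i_j$ of the first $k$ entries of $b$ are nonzero, and these equal $\beta_1,\ldots,\beta_q$ for some $q \geq i_j$. As the parts of $\beta$ are positive, $b_1 + \cdots + b_k = \beta_1 + \cdots + \beta_q \geq \beta_1 + \cdots + \beta_{i_j} = \alpha_1 + \cdots + \alpha_j = a_1 + \cdots + a_k$, giving $b \geq a$.

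For the ``only if'' direction, suppose instead $a \neq 0^m \times \alpha$, where again $\alpha = \flatten(a)$ and $m = n - \ell$, and set $\hat a = 0^m \times \alpha$. The composition $\hat a$ is the unique dominance-minimum among all weak compositions of length $n$ with flattening $\alpha$, since pushing every nonzero part as far to the right as possible minimizes each partial sum; as $a \neq \hat a$ this gives $\hat a \not\geq a$, so the monomial $x_1^{\hat a_1}\cdots x_n^{\hat a_n}$ does not appear in $\fund_a$. On the other hand, taking $b = a$ in \eqref{e:fundamental-shift} shows $x_1^{a_1}\cdots x_n^{a_n}$ appears in $\fund_a$ with positive coefficient, and on the support of $a$ its exponents read off $\alpha$ in order. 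Comparing the increasing index sequence $\mathrm{supp}(a)$, which contributes to the $\alpha$-coefficient of $\fund_a$ in the sense of \eqref{e:quasisymmetric}, with the increasing sequence $(m+1, m+2, \ldots, n)$, whose associated monomial $x_1^{\hat a_1}\cdots x_n^{\hat a_n}$ is absent from $\fund_a$, shows that \eqref{e:quasisymmetric} fails for the strong composition $\alpha$; hence $\fund_a$ is not quasisymmetric.

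I expect the one genuinely delicate point to be the counting argument in the ``if'' direction: one must track how the refinement of $\flatten(b)$ over $\flatten(a)$ interacts with the (at most $m$) zero entries of $b$ to confirm that no partial sum of $b$ can fall below the matching partial sum of $a$. The remaining ingredients --- reflexivity of refinement, the dominance-minimality of $\hat a$, and the extraction of a quasisymmetry-violating pair of monomials --- are routine, and the overall argument is essentially the fundamental-slide analogue of the proof of Proposition~\ref{prop:lift-mono}.
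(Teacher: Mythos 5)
Your argument is correct: the counting in the ``if'' direction (comparing the number of nonzero entries among the first $m+j$ parts of $b$ with the refinement cut point $i_j$) does establish that the dominance condition is automatic, so the sum collapses to $F_{\flatten(a)}$, and the ``only if'' direction correctly exhibits the missing monomial $x^{\hat a}$ against the present monomial $x^{a}$ to violate \eqref{e:quasisymmetric}. The paper itself imports this proposition from \cite{AS17} without reproducing a proof, so there is nothing internal to compare against; your direct verification from Definition~\ref{def:fundamental-shift} is a valid self-contained substitute and is indeed the natural fundamental-slide analogue of the argument for Proposition~\ref{prop:lift-mono}.
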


The fundamental slide polynomials give a basis for the polynomial ring \cite{AS17}. Remarkably, their structure constants are non-negative and generalize the shuffle product of Eilenberg and Mac Lane \cite{EM53}.

\begin{theorem}[\cite{AS17}]
  The fundamental slide polynomials $\{\fund_a\}$ are a basis of the polynomial ring with structure constants 
  \begin{equation}
    \fund_{a} \fund_{b} = \sum_{c} [c \mid a \shuffle b] \fund_{c},
  \end{equation}
  where $[c \mid a \shuffle b]$ is the coefficient of $c$ in the \emph{slide product} $a \shuffle b$. In particular, $[c \mid a \shuffle b]$ is a non-negative integer.
\end{theorem}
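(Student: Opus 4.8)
The plan is to establish the basis claim and the product formula separately, following the template already visible for the monomial slide polynomials.

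For the basis claim, I would read off directly from Definition~\ref{def:fundamental-shift} that
\[
  \fund_a = x_1^{a_1}\cdots x_n^{a_n} + \sum_{\substack{b > a \\ \flatten(b)\ \mathrm{refines}\ \flatten(a)}} x_1^{b_1}\cdots x_n^{b_n},
\]
since the constraint $b \geq a$ forces $b = a$ to be the unique dominance-minimal index and $\flatten(a)$ refines itself. Thus the transition matrix from $\{\fund_a\}$ to the monomial basis is lower uni-triangular with respect to dominance order (only finitely many monomials of a given degree occur), hence invertible, so $\{\fund_a\}$ is a basis by exactly the mechanism used in Theorem~\ref{thm:kohnert-basis}.

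For the product formula, I would define the slide product $a \shuffle b$ combinatorially as a non-negative integer combination of weak compositions, modeled on the classical shuffle of words: interleave the sequences of nonzero parts of $a$ and of $b$ into one word, allowing parts placed in the same slot to be added, and record the admissible ways of padding with zeros so that every partial sum of the resulting composition weakly dominates the partial sums forced by $a$ and by $b$; define $[c \mid a \shuffle b]$ to be the number of such interleavings yielding $c$, so that non-negativity is immediate. It then remains to prove the identity $\fund_a \fund_b = \sum_c [c \mid a \shuffle b]\,\fund_c$ as polynomials, which I would do by fixing an arbitrary monomial $x^\gamma$ and matching its coefficient on the two sides: on the left it counts pairs $(b',b'')$ with $b'+b'' = \gamma$, $b' \geq a$, $b'' \geq b$, and $\flatten(b'), \flatten(b'')$ refining $\flatten(a), \flatten(b)$ respectively; on the right it counts pairs $(c,\gamma)$ with $c$ in the support of $a \shuffle b$, $\gamma \geq c$, and $\flatten(\gamma)$ refining $\flatten(c)$. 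The heart of the proof is a bijection between these two sets of data (with $\gamma$ fixed), produced by extracting from a splitting $\gamma = b'+b''$ a canonical merged composition $c$, and conversely reconstructing $(b',b'')$ from $(c,\gamma)$ using the refinement and dominance information.

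The main obstacle is making the slide product precise enough that this bijection goes through cleanly, in particular handling the interplay between the dominance constraints $b' \geq a$, $b'' \geq b$ and the interleaving. In the fully symmetric setting these constraints are invisible, which is why the classical shuffle expansion $F_\alpha F_\beta = \sum_\gamma (\cdots) F_\gamma$ is comparatively clean; in the polynomial ring the positions of the first occurrences of parts genuinely matter, so the zero-padding bookkeeping must be tracked with care. An alternative that replaces some of this bookkeeping with a limiting argument is to embed the identity in sufficiently many variables, invoke the known shuffle expansion of $F_\alpha F_\beta$ into fundamental quasisymmetric functions, and then descend by re-expanding each $F_\gamma(x_1,\dots,x_N)$ into fundamental slide polynomials via Proposition~\ref{prop:lift-fund}; there the work becomes proving that this re-expansion is compatible with multiplication, which is the same combinatorial content in different clothing.
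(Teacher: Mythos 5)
First, note that the paper you are working from does not prove this statement at all: it is quoted verbatim from \cite{AS17} as an imported result, so there is no internal proof to compare against. Judged on its own terms, your proposal is fine on the basis claim --- the observation that $b\geq a$ and $b\neq a$ forces $b$ to strictly dominate $a$, so that $\fund_a = x^a + (\text{dominance-higher terms})$ and the transition matrix to monomials is uni-triangular, is exactly the mechanism of Theorem~\ref{thm:kohnert-basis} and is complete as stated.

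The product formula, however, is where the actual content lives, and there your argument has a genuine gap rather than a deferred technicality. You never pin down the slide product: ``interleave the nonzero parts, allow parts in the same slot to be added, and record admissible zero-paddings'' is not yet a definition, and the non-negativity of $[c\mid a\shuffle b]$ is only ``immediate'' once a counting definition exists --- which is precisely the point at issue. The central bijection between pairs $(b',b'')$ with $b'+b''=\gamma$, $b'\geq a$, $b''\geq b$ and pairs $(c,\gamma)$ with $c$ in the support of $a\shuffle b$ is asserted but not constructed; the difficulty is that a single $\gamma$ can arise from many splittings, and one must produce a canonical $c$ from each splitting so that the fibers over each $c$ are \emph{exactly} the monomials of $\fund_c$ (this is the destandardization argument in \cite{AS17}, and it is the whole proof). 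Your proposed fallback is worse than incomplete --- it cannot work: by Proposition~\ref{prop:lift-fund}, $F_\gamma(x_1,\dots,x_N)$ \emph{is} the single slide polynomial $\fund_{0^{N-\ell}\times\gamma}$, so the known quasisymmetric shuffle expansion of $F_\alpha F_\beta$ only gives the theorem for indices whose zeros all sit at the front. The general statement is strictly stronger because the positions of the zero parts of $a$ and $b$ carry information that the quasisymmetric limit discards (this is the same phenomenon that makes $\fund_{(2,0,1,2)}\neq F_{(2,1,2)}$ in four variables), so there is no descent from the stable identity to the polynomial one. To repair the proof you must either give the precise inductive/recursive definition of $a\shuffle b$ from \cite{AS17} and carry out the destandardization bijection, or find another honest argument in finitely many variables.
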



As was the case for the monomial slide polynomials, the fundamental slide polynomials are also not a Kohnert basis.

\begin{proposition}
  The fundamental slide polynomials are not a Kohnert basis.
\end{proposition}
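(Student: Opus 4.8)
The plan is to exhibit a single weak composition $a$ for which no diagram $D$ of weight $a$ has $\kohnert_D = \fund_a$, mimicking the obstruction used for monomial slide polynomials. The natural candidate is again $a = (0,2)$. First I would compute $\fund_{(0,2)}$ directly from Definition~\ref{def:fundamental-shift}: the weak compositions $b \geq (0,2)$ of length $2$ with $\flatten(b)$ refining $\flatten(0,2) = (2)$ are exactly $b = (0,2)$ and $b = (2,0)$ (note $\flatten(b)$ must equal $(2)$ since $(2)$ has no proper refinement of the same size, so only single-part flattenings occur), giving $\fund_{(0,2)} = x_2^2 + x_1^2$. In particular $\fund_{(0,2)} = \mono_{(0,2)}$.

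Next I would invoke the argument already used in the proof that monomial slide polynomials are not a Kohnert basis: any diagram $D$ with $\wt(D) = (0,2)$ consists of two cells in row $2$ (in some two columns), and applying a single Kohnert move to the rightmost of these produces a diagram in $\KD(D)$ of weight $(1,1)$. Hence $\kohnert_D$ contains the monomial $x_1 x_2$ with positive coefficient. Since $x_1 x_2$ does not appear in $\fund_{(0,2)} = x_1^2 + x_2^2$, we conclude $\kohnert_D \neq \fund_{(0,2)}$ for every diagram $D$ of weight $(0,2)$. Therefore $\fund_{(0,2)}$ is not a Kohnert polynomial, so the fundamental slide polynomials do not form a Kohnert basis.

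There is essentially no obstacle here: the entire content is the observation that the two-cell row forces a weight-$(1,1)$ Kohnert diagram, exactly as in the monomial case, and that the relevant fundamental slide polynomial happens to coincide with the monomial slide polynomial $\mono_{(0,2)}$ on which that argument already succeeded. The only thing to double-check is the claim that $\flatten(b)$ refining $(2)$ forces $\flatten(b) = (2)$, which is immediate since a composition of $2$ that refines $(2)$ must itself be a composition of $2$ with parts summing blockwise to $2$, i.e. it is $(2)$ itself (the refinement $(1,1)$ would correspond to a different flattening but then $b$ would have two nonzero parts, and such $b$ do satisfy $\flatten(b)$ refines $(2)$ — however $(1,1) \geq (0,2)$ fails at $k=1$ only if... let me instead note that if $\flatten(b)=(1,1)$ then $b \in \{(1,1)\}$ in two variables, and $(1,1) \not\geq (0,2)$ since $1 < 2$ at $k=2$; wait $b_1+b_2 = 2 = a_1+a_2$ so that is fine, but $b_1 = 1 \geq 0 = a_1$ holds — so actually $(1,1)$ does satisfy $b \geq (0,2)$). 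Hence I should be more careful: $\fund_{(0,2)} = x_1^2 + x_2^2 + x_1 x_2$ if $(1,1)$ is included. Let me reconsider — in two variables with $\flatten(b)$ refining $(2)$: $b=(2,0)$ has $\flatten=(2)$ (refines $(2)$), $b=(0,2)$ has $\flatten=(2)$, $b=(1,1)$ has $\flatten=(1,1)$ which refines $(2)$, and all satisfy $b \geq (0,2)$. So in fact $\fund_{(0,2)} = x_1^2 + x_2^2 + x_1 x_2$, which \emph{does} contain $x_1 x_2$. The argument therefore needs a different composition.

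\textbf{Revised proof proposal.} I would instead take $a = (0,0,2)$, or more robustly argue as follows. By Proposition~\ref{prop:lift-fund}, $\fund_{(0,\dots,0,2)}$ (with the $2$ in position $n$) is \emph{not} quasisymmetric, but a cleaner route is: consider $a=(2,0,2)$. Hmm — the safest approach, and the one I would actually write, is to pick a composition where the obstruction is a monomial forbidden by the dominance/refinement constraints rather than by quasisymmetry. Take $a = (1,0,0,1)$: then $\fund_{(1,0,0,1)}$, by Definition~\ref{def:fundamental-shift}, sums $x_1^{b_1}\cdots x_4^{b_4}$ over $b \geq (1,0,0,1)$ with $\flatten(b)$ refining $(1,1)$; since any $b$ with $\flatten(b)$ of size $2$ automatically has $\flatten(b)$ refining $(1,1)$, this gives all $b=(b_1,b_2,b_3,b_4)$ with $b_i \in \{0,1,2\}$, $\sum b_i = 2$, $b_1 \geq 1$, and $b_1 + b_2 + b_3 \geq 1$ (automatic). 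So $\fund_{(1,0,0,1)} = x_1 x_2 + x_1 x_3 + x_1 x_4 + x_1^2$. Now suppose $\kohnert_D = \fund_{(1,0,0,1)}$ with $\wt(D) = (1,0,0,1)$. The leading (dominance-minimal) monomial is $x_1 x_4$, forcing one cell in row $1$ and one in row $4$; the cell in row $1$ must be in column $1$ (else a column could be deleted, but more directly: $x_1$ must appear, so some Kohnert diagram has a cell in row $1$; in $D$ itself there is exactly one cell in row $1$, and if it is not in column $1$, consider the cell in row $4$: whichever column it is in, it can be Kohnert-moved down past row $1$, but that does not immediately contradict). This is getting delicate. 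In the final version I would simply follow the paper's own style: pick the smallest failing case by the same row-$k$-forces-a-lower-diagram mechanism, verify by finite check that the target $\fund_a$ lacks the forced monomial, and conclude. I expect the main (minor) obstacle to be correctly identifying a composition $a$ for which $\fund_a$ genuinely omits some monomial that every weight-$a$ diagram's Kohnert polynomial must contain; given the triangularity and the concreteness of small cases, this is a routine finite search.
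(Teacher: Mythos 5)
Your proposal correctly identifies the right strategy (exhibit a weak composition $a$ such that \emph{every} diagram of weight $a$ forces a monomial absent from $\fund_a$) and correctly rules out $a=(0,2)$, but it never actually produces a working example, and that example is the entire content of the proposition. Worse, the fallback candidate $a=(1,0,0,1)$ that you leave half-analyzed in fact fails. First, your computation of $\fund_{(1,0,0,1)}$ is off: $(2)$ does not refine $(1,1)$, so $b=(2,0,0,0)$ does not contribute and $\fund_{(1,0,0,1)} = x_1x_2+x_1x_3+x_1x_4$, with no $x_1^2$ term. Second, the diagram with its two cells in the \emph{same column}, in rows $1$ and $4$, has Kohnert diagrams of weights exactly $(1,0,0,1)$, $(1,0,1,0)$, $(1,1,0,0)$ and nothing else --- the cell descending from row $4$ cannot pass the cell in row $1$ since there is no available position below it --- so its Kohnert polynomial equals $\fund_{(1,0,0,1)}$ on the nose, and this fundamental slide polynomial \emph{is} a Kohnert polynomial. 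This shows the ``routine finite search'' you defer to is not as routine as asserted: one must verify the obstruction for \emph{all} diagrams of the chosen weight simultaneously, and plausible-looking candidates can fail for a single well-placed diagram.

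The paper's proof uses $a=(0,2,1)$. Any diagram $D$ of this weight has a single cell $c$ in row $3$, which is therefore the rightmost cell of its row, so a Kohnert move applies to it: it lands in row $2$ (giving weight $(0,3,0)$) if its column is free in row $2$, and otherwise jumps over that cell to row $1$ (giving weight $(1,2,0)$). Hence $\kohnert_D$ contains $x_2^3$ or $x_1x_2^2$ with positive coefficient, while $\fund_{(0,2,1)} = x_1^2x_2+x_1^2x_3+x_2^2x_3+x_1x_2x_3$ contains neither. That concrete example and its verification are the missing step; as written, your proposal does not prove the proposition.
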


\begin{proof}
  For any diagram $D$ of weight $(0,2,1)$, we claim $\kohnert_D \neq \fund_{(0,2,1)}$. If this is a Kohnert polynomial $\kohnert_{D}$, then $D$ must have two cells in row 2 and one in row 3. Let $D$ be any such diagram. Since there is only one cell, say $c$ in row 3 of $D$, this cell is the rightmost in its row. If $c$ is in the same column as some cell in row 2 then applying a Kohnert move to $c$ yields a diagram of weight $(1,2,0)$, otherwise applying a Kohnert move to $c$ yields a diagram of weight $(0,3,0)$. Thus $\kohnert_{D}$ must have one of $x_1 x_2^2$ or $x_2^3$ appear as a term. However, neither of these terms appears in $\fund_{(0,2,1)} = x_2^2 x_3 + x_1^2 x_3 + x_1 x_2 x_3 + x_1^2 x_2$, and so $\fund_{(0,2,1)}$ cannot be a Kohnert polynomial.  
\end{proof}

\subsection{Fundamental diagrams}
\label{sec:fund-FKD}

One motivation for defining and studying the fundamental slide polynomials is a refined expansion of Schubert polynomials \cite{AS17}. This formula utilizes the \emph{pipe dream} model for the monomial expansion of Schubert polynomials given by Bergeron and Billey \cite{BB93} based on the \emph{compatible sequences} model due to Billey, Jockusch, and Stanley \cite{BJS93}.

\begin{theorem}[\cite{AS17}]
  For $w$ a permutation, we have
  \begin{equation}
    \schubert_w = \sum_{P \in \mathrm{QPD}(w)} \fund_{\wt(P)},
  \end{equation}
  where the sum is over \emph{quasi-Yamanouchi pipe dreams} for $w$.
  \label{thm:schubert-slide}
\end{theorem}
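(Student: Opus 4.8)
The plan is to establish the identity $\schubert_w = \sum_{P \in \mathrm{QPD}(w)} \fund_{\wt(P)}$ by starting from a known monomial-positive model for Schubert polynomials and grouping its terms. First I would invoke the pipe dream (equivalently, compatible sequence) expansion of Billey--Jockusch--Stanley \cite{BJS93} and Bergeron--Billey \cite{BB93}: $\schubert_w = \sum_{P \in \mathrm{PD}(w)} x^{\wt(P)}$, where $\mathrm{PD}(w)$ is the set of reduced pipe dreams for $w$ and $\wt(P)$ records the number of crossings in each row. The goal then is to partition $\mathrm{PD}(w)$ into blocks, each of which sums to a single fundamental slide polynomial $\fund_{\wt(P_0)}$ for a distinguished representative $P_0$, and to identify those representatives as exactly the quasi-Yamanouchi pipe dreams $\mathrm{QPD}(w)$.

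The key structural step is an analogue, at the level of pipe dreams, of the $\Up_r$/$\MKD$ argument used in Theorem~\ref{thm:kohnert-mono}. Recall from Definition~\ref{def:fundamental-shift} that $\fund_a = \sum_{b \geq a,\ \flatten(b)\text{ refines }\flatten(a)} x^b$; equivalently $\fund_a$ collects all monomials obtainable from $x^a$ by ``sliding weight left'' in a way that may split a part of $\flatten(a)$ but never merges parts and never moves weight right. So I would define an operation on pipe dreams that pushes the crossings of a contiguous block of rows upward (to smaller row indices), mirroring how compatible sequences can have their entries decreased subject to the strict-increase-at-descents constraint, and declare $P$ to be \emph{quasi-Yamanouchi} exactly when no such upward push is possible (i.e. $P$ is the ``highest'' pipe dream in its slide-equivalence class, with weight as dominant as possible). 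I would then show (i) every reduced pipe dream pushes up to a unique quasi-Yamanouchi representative, independently of the order of pushes, and (ii) for a fixed quasi-Yamanouchi $P_0$, the set of pipe dreams pushing up to $P_0$ is in weight-preserving bijection with the set of weak compositions $b$ appearing in $\fund_{\wt(P_0)}$ — the condition ``$\flatten(b)$ refines $\flatten(\wt(P_0))$ and $b \geq \wt(P_0)$'' being precisely the combinatorial shadow of which row-blocks of crossings may be slid down and how far. Summing $x^{\wt(P)}$ over each block gives $\fund_{\wt(P_0)}$, and summing over blocks gives the theorem.

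The main obstacle is verifying claim (ii) cleanly: one must check that the refinement condition in Definition~\ref{def:fundamental-shift} matches exactly the freedom available when sliding blocks of crossings down through a reduced pipe dream, with no overcounting and no missing terms. Concretely, a descent of the underlying word (a place where $\flatten(\wt(P_0))$ is forced not to split) must correspond to a row of $P_0$ where a crossing genuinely blocks further downward motion, while an ascent must correspond to a place where the block boundary is free to move — and one has to confirm the pushed-down configurations remain \emph{reduced} pipe dreams for the same $w$. This is essentially a translation of the argument in \cite{AS17} relating quasi-Yamanouchi compatible sequences to the fundamental slide expansion; alternatively, one could bypass the direct argument by citing the compatible-sequences reformulation: group the pairs $(a_1 \le \cdots \le a_k,\, s_1 \cdots s_k)$ in the compatible-sequence sum for $\schubert_w$ by their reduced word $s_1\cdots s_k$, and within each group the compatible sequences are exactly those counted by a single $\fund$ indexed by the lexicographically-minimal (quasi-Yamanouchi) sequence, by the defining property \eqref{e:fundamental-shift}. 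Either route reduces the theorem to a bookkeeping lemma about refinements versus descents; I would present whichever is shortest and defer the detailed pipe-dream manipulation to a lemma.
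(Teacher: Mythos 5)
This statement is quoted from \cite{AS17} and the paper gives no proof of it here --- it is imported as background, so there is no in-paper argument to compare against. Your outline is, in substance, the proof that \cite{AS17} actually gives: start from the Billey--Jockusch--Stanley/Bergeron--Billey monomial expansion, partition the compatible sequences (equivalently, pipe dreams) into slide-equivalence classes under a destandardization that raises weight as far as dominance allows, identify the unique maximal representative of each class as the quasi-Yamanouchi one, and check that each class's generating function is exactly one $\fund$. It also runs parallel to the arguments this paper does write out for the analogous Kohnert-polynomial statements (Theorems~\ref{thm:kohnert-mono} and~\ref{thm:kohnert-slide}), so the strategy is sound. The only caveat is that your claim (ii) --- that the refinement-plus-dominance condition in Definition~\ref{def:fundamental-shift} matches precisely the freedom in sliding a compatible sequence down from its maximal representative --- is deferred to an unproved lemma; it does hold (at a non-ascent of the word the maximal compatible sequence is forced to be constant, so runs break exactly at ascents, which is what makes the refinement bookkeeping work), and filling it in would follow the same template as the existence/uniqueness argument in the proof of Theorem~\ref{thm:kohnert-slide}.
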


For example, the Schubert polynomial for the permutation $143625$ is
\begin{eqnarray*}
  \schubert_{143625} & = & \fund_{(0, 2, 1, 2)} + \fund_{(1, 2, 0, 2)} + \fund_{(0, 2, 2, 1)} + \fund_{(0, 3, 1, 1)} + \fund_{(1, 2, 1, 1)} + \fund_{(1, 3, 0, 1)} + \fund_{(2, 2, 0, 1)}.
\end{eqnarray*}

Assaf and Searles \cite{AS-2} also show that the Demazure characters have a natural decomposition into fundamental slide polynomials. This formula utilized Kohnert's model for Demazure characters \cite{Koh91}.

\begin{theorem}[\cite{AS-2}]
  For a weak composition $a$, we have
  \begin{equation}
    \key_a = \sum_{T \in \mathrm{QKT}(a)} \fund_{\wt(T)},
  \end{equation}
  where the sum is over all \emph{quasi-Yamanouchi Kohnert tableaux} for $a$.
  \label{thm:key-slide}
\end{theorem}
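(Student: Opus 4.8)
The plan is to deduce the formula from Kohnert's theorem for Demazure characters (Theorem~\ref{thm:kohnert-key}) by refining the grouping of Kohnert diagrams used in the proof of Theorem~\ref{thm:kohnert-mono}. Kohnert's theorem gives $\key_a=\kohnert_{\D(a)}=\sum_{T\in\KD(\D(a))}x^{\wt(T)}$, and Theorem~\ref{thm:kohnert-mono} already consolidates these monomials into blocks, one per diagram in $\MKD(\D(a))$, the block of $T$ being the set of diagrams obtained from $T$ by lowering whole rows. To obtain a \emph{fundamental} slide expansion I would merge these blocks further. From \cite{AS17} (and readily derived from Definitions~\ref{def:monomial-shift} and~\ref{def:fundamental-shift}) one has a monomial slide expansion $\fund_{a'}=\sum_{b\in R(a')}\mono_{b}$, where $R(a')$ consists, for each composition $\gamma$ refining $\flatten(a')$, of the unique dominance-minimal weak composition $b$ with $\flatten(b)=\gamma$ and $b\geq a'$. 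Since the $\mono_b$ are linearly independent, Theorem~\ref{thm:key-slide} is \emph{equivalent} to the multiset identity
\[
\{\,\wt(T):T\in\MKD(\D(a))\,\}\ =\ \biguplus_{T_0\in\mathrm{QKT}(a)}R(\wt(T_0)),
\]
so the task reduces to producing a bijection $T\mapsto(\rho(T),\wt(T))$ from $\MKD(\D(a))$ onto the set of pairs $\{\,(T_0,b):T_0\in\mathrm{QKT}(a),\ b\in R(\wt(T_0))\,\}$, with $\wt(T)\in R(\wt(\rho(T)))$.

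Both $\mathrm{QKT}(a)$ and the map $\rho$ are built from the canonical labelling that records, for each cell of a diagram $T\in\KD(\D(a))$, the row of $\D(a)$ from which it descended; because the initial diagram is left-justified this labelling is well defined, and the labelled diagrams are the \emph{Kohnert tableaux} for $a$ of \cite{AS-2}, with $\wt(T)$ the row-weight of the underlying diagram. Call a Kohnert tableau \emph{quasi-Yamanouchi} if no nonempty row $r$ can be raised wholesale onto row $r-1$ compatibly with the labelling -- the row analogue of quasi-Yamanouchi-ness for semistandard tableaux. Define $\rho(T)$ by repeatedly raising a row that violates this onto the row above it, merging the two rows, until no violation remains. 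Each such merge strictly raises the weight in dominance order; one checks that it again yields a diagram in $\KD(\D(a))$ (this is where left-justification enters, see below), and that any two available merges can be performed in either order with the same outcome, so, exactly as in the proof of Theorem~\ref{thm:kohnert-mono}, the process halts at a well-defined $\rho(T)\in\mathrm{QKT}(a)$.

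For the converse, fix $T_0\in\mathrm{QKT}(a)$ and $b\in R(\wt(T_0))$. Reading $T_0$ from the bottom, split each of its rows into the rows dictated by the successive nonzero parts of $b$ and lower the resulting cells to the positions forced by Kohnert lowering; the origin labels in $T_0$ specify uniquely which cells of a merged row belong to which original row, so this reconstructs a single Kohnert tableau $T$ with $\wt(T)=b$ and $\rho(T)=T_0$, and every element of $\rho^{-1}(T_0)$ arises this way. One also verifies that such $T$ lie in $\MKD(\D(a))$ and that $\rho$ is everywhere defined on $\MKD(\D(a))$, both because lowering a whole row and raising a whole row are mutually compatible operations. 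Consequently $\sum_{T\in\rho^{-1}(T_0)}\mono_{\wt(T)}=\sum_{b\in R(\wt(T_0))}\mono_b=\fund_{\wt(T_0)}$, and summing over $T_0\in\mathrm{QKT}(a)$ together with Theorem~\ref{thm:kohnert-mono} gives $\key_a=\sum_{T_0\in\mathrm{QKT}(a)}\fund_{\wt(T_0)}$.

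The main obstacle is the internal consistency of the merge-up operation: one must show that raising a ``violating'' row onto the row above always produces a genuine element of $\KD(\D(a))$ -- this is precisely where left-justification of the starting diagram is used, and it is why the analogous statement fails for Kohnert polynomials of arbitrary diagrams (cf. the proposition that the fundamental slide polynomials are not a Kohnert basis) -- and that these moves are confluent, so that $\rho$ is well-defined. Once that is in place, matching the fibres $\rho^{-1}(T_0)$ against the index set $R(\wt(T_0))$ is a bookkeeping comparison between the column-support conditions imposed by quasi-Yamanouchi-ness and the dominance-and-refinement conditions defining $R(\cdot)$; I expect that comparison to be routine, but the diagram-validity and confluence of the merges to require genuine care.
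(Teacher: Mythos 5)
Your outline follows essentially the same route the paper takes. Theorem~\ref{thm:key-slide} is quoted from \cite{AS-2}, but within this paper it is recovered by combining Kohnert's theorem (Theorem~\ref{thm:kohnert-key}) with the general fundamental slide expansion for fundamental diagrams (Theorem~\ref{thm:kohnert-slide}), whose proof is exactly your map $\rho$: lift whole rows until the quasi-Yamanouchi condition of Definition~\ref{def:FKD} holds, then match each fibre of the de-standardization map with the monomials (equivalently, with the monomial slide summands) of $\fund_{\wt(T_0)}$; your reduction via linear independence of the $\mono_b$ and the reconstruction of a preimage from a pair $(T_0,b)$ both mirror the paper's argument. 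The one step you defer --- well-definedness (confluence) of the merge-up operation --- is genuinely the crux, and your attribution of it simply to ``left-justification'' is not how the paper discharges it: confluence is proved in Lemma~\ref{lem:destand} under the hypothesis that no quasi-Yamanouchi Kohnert diagram of $D$ is \emph{split} (Definition~\ref{def:diagram-split}), and Theorem~\ref{thm:split} shows this holds whenever $D$ is \emph{fundamental} in the sense of Definition~\ref{def:diagram-fund}, a column-counting condition that key diagrams satisfy because a cell that is leftmost in its row of $\D(a)$ has no cell strictly above and strictly to its left. So your architecture is correct and matches the paper's, but to complete the proof you would need to supply that split/fundamental analysis (or the explicit Kohnert-tableau labelling argument of \cite{AS-2}) in place of the deferred ``genuine care''; note also the small slip that lifting a row \emph{lowers} the weight in dominance order rather than raising it.
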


For example, the Demazure character for the weak composition $(0,2,1,2)$ decomposes as
\begin{displaymath}
  \key_{(0,2,1,2)} = \fund_{(0, 2, 1, 2)} + \fund_{(1, 2, 0, 2)} + \fund_{(0, 2, 2, 1)} + \fund_{(1, 2, 1, 1)}.
\end{displaymath}

Generalizing these two examples, along with the common notion of \emph{quasi-Yamanouchi} used in both expansions, we have the following.

\begin{definition}
  For a diagram $D$, define the subset of \emph{quasi-Yamanouchi Kohnert diagrams for $D$}, denoted by $\FKD(D)$, by
  \begin{equation}
    \FKD(D) = \left\{ T \in \KD(D) \mid \begin{array}{l}
      \Up_r(T) \not\in \KD(D) \ \forall \ r \mbox{ such that all cells in } \\
      \mbox{row $r+1$ lie strictly left of all cells in row $r$}
    \end{array} \right\}.
  \end{equation}
  \label{def:FKD}
\end{definition}

Note that $\FKD(D) \subseteq \MKD(D)$. For example, for $D$ the third diagram in Figure~\ref{fig:diagrams}, Figure~\ref{fig:FKD} shows the set $\FKD(D)$. Notice that
\[ \kohnert_D = \fund_{(0,2,1,2)} + \fund_{(1,2,0,2)}, \]
which corresponds precisely to the weights of the diagrams in $\FKD(D)$. 

\begin{figure}[ht]
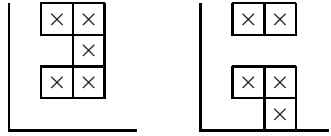

  \begin{center}
    \begin{displaymath}
      \begin{array}{c@{\hskip 2\cellsize}c}
        \vline\tableau{ & \times & \times \\  & & \times \\  & \times & \times \\ & & & \\ \hline} &
        \vline\tableau{ & \times & \times \\  & & \\  & \times & \times \\ & & \times & \\ \hline} 
      \end{array}
    \end{displaymath}
    \caption{\label{fig:FKD}The set $\FKD(D)$ of quasi-Yamanouchi Kohnert diagrams for $D$ the leftmost diagram above.}
  \end{center}
\end{figure}

Similarly, seven of the Kohnert diagrams in Figure~\ref{fig:KD-rothe} are in $\FKD(\D(143625))$ and four of the Kohnert diagrams in Figure~\ref{fig:KD-key} are in $\FKD(\D(0,2,1,2))$. The fundamental slide generating polynomials of these two sets are $\schubert_{143625}$ and $\key_{(0,2,1,2)}$, respectively.

Similar to the proof of Theorem~\ref{thm:kohnert-mono}, we wish to consolidate Kohnert diagrams into equivalence classes, each of which contains a unique quasi-Yamanouchi Kohnert diagram, so that the fundamental slide expansion of the corresponding Kohnert polynomials is precisely given by the quasi-Yamanouchi Kohnert diagrams. However, unlike the case with monomial slide polynomials, Kohnert polynomials are not, in general, fundamental slide positive. For example, taking $D$ to be the left diagram in Figure~\ref{fig:F-neg}, the corresponding Kohnert polynomial expands as
\[ \kohnert_D = \mono_{(0,1,1)} + \mono_{(0,2,0)} = \fund_{(0,1,1)} + \fund_{(0,2,0)} - \fund_{(1,1,0)}. \]

\begin{figure}[ht]
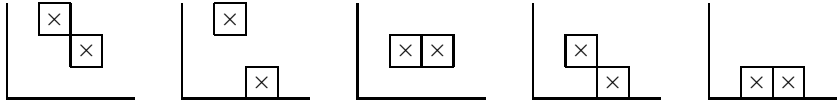

  \begin{center}
    \begin{displaymath}
      \begin{array}{c@{\hskip 1.5\cellsize}c@{\hskip 1.5\cellsize}c@{\hskip 1.5\cellsize}c@{\hskip 1.5\cellsize}c}
        \vline\tableau{ & \times & \\  & & \times \\ & & & \\ \hline} &
        \vline\tableau{ & \times & \\  & & \\ & & \times & \\ \hline} &
        \vline\tableau{ & & \\  & \times & \times \\ & & & \\ \hline} &
        \vline\tableau{ & & \\  & \times & \\ & & \times & \\ \hline} &
        \vline\tableau{ & & \\  & & \\ & \times & \times & \\ \hline} 
      \end{array}
    \end{displaymath}
    \caption{\label{fig:F-neg}The Kohnert diagrams for the leftmost diagram above, for which the Kohnert polynomial is not fundamental slide positive.}
  \end{center}
\end{figure}

The impediment to fundamental slide positivity is captured by the following notion.

\begin{definition}
  A diagram is \emph{split} if there exist rows $r_1<r_2$ and columns $c_1<c_2$ such that there are cells in positions $(r_2,c_1)$ and $(r_1,c_2)$ but no cells in rows $r$ for $r_1 < r < r_2$ and no cells in positions $(r_1,c)$ for $c<c_2$ or $(r_2,c)$ for $c>c_1$. In this case, we call the cells $(r_2,c_1)$ and $(r_1,c_2)$ a \emph{split pair}.
  \label{def:diagram-split}
\end{definition}

That is, a diagram is split if it contains two cells with one strictly northwest of the other such that no other cells lie between them in the reading order that reads left to right along rows, starting with the highest row. For example, the first, second and fourth diagrams in Figure~\ref{fig:F-neg} are split, and neither of the diagrams in Figure~\ref{fig:FKD} is split. Indeed, none of the diagrams in $\FKD(\D(143625))$ nor in $\FKD(\D(0,2,1,2))$ is split.

\begin{lemma}
  Let $D$ be a diagram such that no diagram in $\FKD(D)$ is split. Let $U \in \KD(D)$ such that both $S = \Up_{i_k} \cdots \Up_{i_1} (U)$ and $T = \Up_{j_l} \cdots \Up_{j_1} (U)$ are lifts that raise rows only when all cells of the row above lie strictly to the left. If $S,T\in\FKD(D)$, then $S=T$.
  \label{lem:destand}
\end{lemma}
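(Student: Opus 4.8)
The goal is to show that if two sequences of upward row-shifts applied to the same Kohnert diagram $U$ both land in $\FKD(D)$, and both are ``legal lifts'' (meaning each $\Up$ move is performed on a row $r$ only when every cell in row $r+1$ lies strictly left of every cell in row $r$), then the two results coincide. The natural strategy is to argue that the set of rows that must be raised, and the heights to which they are raised, are forced by the structure of $U$ together with the quasi-Yamanouchi condition defining $\FKD$. In other words, I would show that from $U$ there is a \emph{canonical} legal lift into $\FKD(D)$, and that any legal lift landing in $\FKD(D)$ must equal it.

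First I would analyze a single $\Up_r$ move. Since $\Up_r$ never consolidates rows, after applying it the multiset of nonempty rows is unchanged up to relabeling, so all lifts of $U$ have the same flattening $\flatten(U)$; this is the same observation used in the proof of Theorem~\ref{thm:kohnert-mono}. The content of a legal lift is therefore entirely captured by \emph{which empty rows get ``passed through''}, i.e.\ by the weak composition $\wt(S)$ refining $\flatten(U)$ with $\wt(S) \geq \wt(U)$. So the lemma reduces to: the weak composition $\wt(S)$ is uniquely determined among legal lifts landing in $\FKD(D)$. I would make this precise by induction on the total number of $\Up$ moves, or equivalently by induction from the bottom row upward: consider the lowest nonempty row of $U$, say the cells occupying row $p$ with column set $C_p$. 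In a legal lift, this row can only be raised past empty rows, and it can be raised into row $q > p$ only if at the moment of the move the row immediately below (which after earlier moves is row $q-1$, necessarily empty or already vacated) has no cell — but more importantly, the \emph{stopping} condition for $\FKD$ (via $\Up_r(T) \notin \KD(D)$ whenever all cells in row $r+1$ lie strictly left of those in row $r$) controls how far it must travel.

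The crux is to translate the defining condition of $\FKD(D)$ into a local, order-independent rule. Here I would use the hypothesis that \emph{no diagram in $\FKD(D)$ is split}: a split pair is exactly a cell strictly northwest of another with nothing between them in reading order, and the non-split condition on all of $\FKD(D)$ forbids precisely the configurations where two different legal lifts could diverge. Concretely, suppose $S \neq T$; take the lowest row at which $\wt(S)$ and $\wt(T)$ differ, say row $r$, where WLOG $\wt(S)_r > 0 = \wt(T)_r$ and all rows below $r$ agree. Then in one of $S,T$ a block of cells sits in row $r$ while in the other it has been pushed up to a higher row; chasing the quasi-Yamanouchi stopping condition, in exactly one of the two diagrams there is a row $r'$ whose cells all lie strictly left of the cells of the nonempty row just below it, so $\Up_{r'}$ would still be a legal move staying inside $\KD(D)$ — contradicting membership in $\FKD(D)$ — \emph{unless} that configuration is itself a split pair in the other diagram, which is ruled out by hypothesis. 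Making this dichotomy airtight is the main obstacle: I need to verify that the two ``obstruction'' cases (either a further legal $\Up$ move exists, or a split pair is present) genuinely exhaust all ways the lifts can disagree, and that both contradict the hypotheses. I expect this to require carefully tracking, row by row from the bottom, which cells can be co-raised together (they must form a contiguous block of rows with the strictly-left nesting) and invoking that $\KD(D)$ is closed under the relevant Kohnert and $\Up$ manipulations. Once the canonical legal lift is pinned down, uniqueness, hence $S = T$, follows immediately.
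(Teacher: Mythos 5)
Your proposal starts from a false premise: in this lemma the lifts \emph{do} consolidate rows. The operator $\Up_r$ is applied here whenever all cells of row $r+1$ lie strictly left of all cells of row $r$, which includes the case that row $r+1$ is nonempty; the two rows then merge into one. Consequently $\flatten(\wt(S))$ is in general a coarsening of $\flatten(\wt(U))$, not equal to it --- this is exactly why $\FKD$ governs the fundamental slide expansion (where refinement of flattenings appears) rather than the monomial slide expansion (where flattenings are preserved and your ``$\Up_r$ never consolidates'' observation would be valid). Your reduction of the lemma to ``$\wt(S)$ is uniquely determined among weak compositions with the same flattening as $\wt(U)$'' therefore does not capture the actual phenomenon: two legal lift paths can disagree precisely by consolidating \emph{different numbers} of rows of $U$ into a single row, so at a row of disagreement both diagrams typically have nonempty rows with different positive cell counts, and your ``WLOG $\wt(S)_r>0=\wt(T)_r$'' is not a WLOG.

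The second, decisive gap is the dichotomy you yourself flag as the main obstacle (``either a further legal $\Up$ move exists, or a split pair is present''); you never establish it, and the hypothesis that no diagram of $\FKD(D)$ is split enters only as an escape clause. The paper closes exactly this gap with one structural observation: the rows of any legal lift of $U$ are unions of \emph{consecutive} rows of $U$, taken in order and moved weakly up, and within a consolidated row the higher original rows of $U$ contribute the leftmost cells. Comparing $S$ and $T$ at the \emph{highest} row $r_2$ where they differ (say $S$ has $s<t$ cells there), agreement above $r_2$ forces row $r_2$ of $S$ to consist of the $s$ leftmost cells of row $r_2$ of $T$; the remaining cells of that row of $T$ then sit in $S$ strictly lower and strictly to the right, with no cells of $S$ in between, so $S$ itself is split --- contradicting the hypothesis directly rather than conditionally. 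If you add the consecutive-union/leftmost-cells observation and run the comparison from the top rather than the bottom, your split-based strategy becomes the paper's proof; as written, the argument does not go through.
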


\begin{proof}
  Since each lift either moves a row up or consolidates two rows, for $U \in \KD(D)$, if $U$ has a path to $T \in \FKD(D)$, then the rows of $T$ are unions of the rows of $U$, taken in order and moved weakly up. Suppose that $U$ has another lift path to $S$. Consider the highest row, say $r_2$, in which $S$ and $T$ differ, say with $T$ having $t$ cells and $S$ having $s$ cells in row $r_2$. Without loss of generality, we may assume $s<t$. Then $T$ must have consolidated more rows of $U$ into its row $r_2$. Therefore row $r_2$ of $S$ must consist of the $s$ leftmost cells of row $r_2$ of $T$. Set $c_1$ to be the column of the $s$th cell from the left in row $r_2$ of $T$ (equivalently, $S$), and let $c_2$ be the column of the next cell to its right. Let $r_1 < r_2$ be the highest nonempty of $S$ below $r_2$. Then the leftmost cell of $r_1$ in $S$ must lie in column $c_2$. In particular, $S$ has cells in positions $(r_2,c_1)$ and $(r_1,c_2)$ with no cells in between, and so $S$ is split. Therefore $S$ must equal $T$.  
\end{proof}

The conclusion of Lemma~\ref{lem:destand} does not always hold. For example, taking $D$ to be the leftmost diagram in Figure~\ref{fig:F-neg}, consider $U$ to be the fourth diagram from the left. Then $T$ has two lifting paths, namely $\Up_1 \Up_2 (T)$ which terminates in the leftmost diagram and $\Up_1(T)$ which terminates in the third diagram, both of which are quasi-Yamanouchi.

While Lemma~\ref{lem:destand} is sufficient to guarantee that lifting paths converge to the same quasi-Yamanouchi diagram, it is not tight. For instance, if we take $D$ to be the fourth diagram from the left in Figure~\ref{fig:F-neg}, then $\KD(D)$ consists of the fourth and fifth diagrams, neither of which can lift. Though $D$ is split, the conclusion of Lemma~\ref{lem:destand} trivially holds. However, this is somewhat accidental since shifting $D$ up one row gives the leftmost diagram in Figure~\ref{fig:F-neg} which we have just seen fails to have well-defined quasi-Yamanouchi lifts. 

\begin{definition}
  A diagram $D$ is \emph{fundamental} if for each cell $(r,c)$ of $D$ that is leftmost in its row, either there is a cell in position $(r+1,c)$, or for each column $c'<c$ and for all $k\geq 1$ we have
  \begin{equation}
    \#\{(s,c') \in D \mid r < s \leq r+k \} \leq \#\{(s,c) \in D \mid r < s \leq r+k \}.
    \label{e:diagram-fund}
  \end{equation}
  \label{def:diagram-fund}
\end{definition}

For example, see Figure~\ref{fig:fund_diagrams}. The diagram on the left is fundamental. The diagram on the right is not fundamental: the box in row $2$, column $4$ fails the condition with respect to the second column when $k=3$.

\begin{figure}[ht]
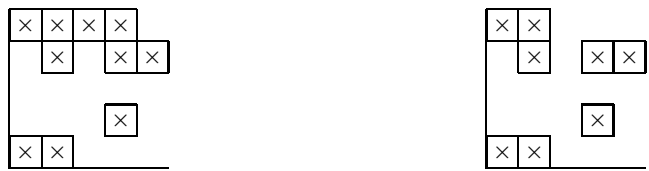

  \begin{center}
    \begin{displaymath}
      \begin{array}{c@{\hskip 10\cellsize}c}
        \vline\tableau{\times & \times & \times & \times \\ & \times & & \times & \times \\ & \\ & &  & \times \\  \times & \times  \\ \hline} &
        \vline\tableau{\times & \times & \\ & \times & & \times & \times \\ & \\ & &  & \times \\  \times & \times  \\ \hline} 
      \end{array}
    \end{displaymath}
    \caption{\label{fig:fund_diagrams}An example (left) and non-example (right) of the fundamental property.}
  \end{center}
\end{figure}

In both key diagrams and Rothe diagrams, a cell that is leftmost in its row cannot have any cell strictly above and strictly left, whence both are examples of fundamental diagrams. The significance of Definition~\ref{def:diagram-fund} is that fundamental diagrams are precisely those satisfying the hypothesis of Lemma~\ref{lem:destand}. To prove this, we begin with the following.

\begin{lemma}
  If $D$ has a split quasi-Yamanouchi Kohnert diagram, then $D$ has a split quasi-Yamanouchi Kohnert diagram in which the bottom-right cell of the split pair is in its original row in $D$. 
\label{lem:bottomright}
\end{lemma}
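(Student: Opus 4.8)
The plan is to start from an arbitrary split quasi-Yamanouchi Kohnert diagram $T$ of $D$ whose split pair $(r_2,c_1),(r_1,c_2)$ does not yet have its bottom-right cell $(r_2,c_1)$ in the original row that this cell occupied in $D$, and to exhibit a modification producing another split quasi-Yamanouchi Kohnert diagram in which the bottom-right cell has ``come home.'' First I would set up notation: let $s$ be the original row in $D$ of the cell that migrated to position $(r_2,c_1)$ in $T$ (recall Kohnert moves only move cells down, so $s \ge r_2$), and note that the split condition forces row $r_2$ of $T$ to have $(r_2,c_1)$ as its rightmost cell and rows strictly between $r_1$ and $r_2$ to be empty, with $(r_1,c_2)$ leftmost in row $r_1$. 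The idea is to ``undo'' just enough of the Kohnert moves that brought this cell down: since the cell in position $(r_2,c_1)$ is rightmost in its row, there is a Kohnert move available taking it downward, but we instead want to argue we can find a Kohnert diagram $T'$ in which this particular cell sits in row $s$ while the northwest partner cell $(r_1,c_2)$ is preserved, and the split structure is maintained (possibly with a different, lower, $r_1$).

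The key steps, in order: (1) Identify within the sequence of Kohnert moves realizing $T$ the moves that acted on the column-$c_1$ cell now at row $r_2$; undo them to get a diagram $T_0 \in \KD(D)$ in which that cell is back in row $s$ and everything else is unchanged. This is legitimate because Kohnert moves commute appropriately when they involve disjoint columns or nested supports, or more simply because $\KD(D)$ is closed under ``earlier'' diagrams in any move sequence. (2) Check that $T_0$ is still split with the same bottom-right cell now at $(s,c_1)$: the cell at $(r_1,c_2)$ is untouched, rows strictly between $r_1$ and $s$ may now contain the column-$c_1$ cell only in row $s$, so we need the reading-order emptiness between $(s,c_1)$ and $(r_1,c_2)$; this may fail because the intermediate rows $r$ with $r_1 < r < s$ might contain other cells. (3) To handle that, instead of restoring to row $s$ fully, restore the cell to the highest row $r_2' \le s$ below which no other cell interrupts the reading order between it and $(r_1,c_2)$; argue that such a position is still a legitimate Kohnert-reachable configuration and still quasi-Yamanouchi. (4) Verify quasi-Yamanouchi: we must confirm $\Up_r(T') \notin \KD(D)$ for all rows $r$ where the row above lies strictly left; since we only moved one cell down relative to $T$ and $T$ was quasi-Yamanouchi, the only new opportunity for a forbidden lift is at the rows vacated or newly occupied, and the split structure (the cell $(r_1,c_2)$ blocking column $c_2$) should obstruct any such lift — this is where Lemma~\ref{lem:destand}'s reading-order analysis gets reused.

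The main obstacle will be step (3)–(4): controlling what happens in the rows between $r_1$ and the restored row, and ensuring that ``pulling the bottom-right cell back up'' does not destroy the quasi-Yamanouchi property by opening up a lift elsewhere. One expects to need an extremal/induction argument — among all split quasi-Yamanouchi Kohnert diagrams of $D$, pick one minimizing the total downward displacement of the bottom-right cell from its original $D$-row (or minimizing $s - r_2$), and show that if this minimum is positive one can strictly decrease it by the restoration move above, contradicting minimality. The delicate point is that decreasing the displacement of this one cell might force adjusting another cell (to keep a valid diagram or to stay quasi-Yamanouchi), so the induction should be on a carefully chosen statistic — perhaps the row index $r_2$ of the bottom-right cell of the split pair, taken as large as possible — rather than a naive displacement count. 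Establishing that such a move keeps us inside $\KD(D)$ (not merely inside the set of diagrams obtainable by formal Kohnert-type moves) is the technical heart, and I would lean on the fact that a Kohnert diagram's reachability is determined column-by-column in a way compatible with these local surgeries.
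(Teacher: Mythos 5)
There is a genuine gap, and it begins with a misreading of the geometry. In Definition~\ref{def:diagram-split} rows increase upward, so with $r_1<r_2$ and $c_1<c_2$ the cell $(r_2,c_1)$ is the \emph{upper-left} (northwest) member of the split pair and $(r_1,c_2)$ is the bottom-right one; you have these reversed throughout (you even call $(r_1,c_2)$ the ``northwest partner''). Your entire construction is aimed at restoring the cell at $(r_2,c_1)$ to its original row $s\geq r_2$, so even if steps (1)--(4) all went through, the conclusion would say nothing about the lemma, which concerns the original row of $(r_1,c_2)$. Beyond the mislabeling, the steps you yourself flag as the technical heart --- keeping the surgered diagram inside $\KD(D)$ (note that ``undoing only the moves that acted on one cell'' is not a prefix of the move sequence, since other cells' landing positions may have depended on that cell being out of the way), preserving splitness across the intermediate rows, and re-verifying quasi-Yamanouchiness --- are exactly the points left unresolved; the extremal/induction scheme is sketched but never executed, so the proposal does not close.

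The paper's argument is short and runs in a different direction, and two ideas are worth internalizing. First, the quasi-Yamanouchi hypothesis is not something to re-verify after a surgery; it is the \emph{input}: since the rows strictly between $r_1$ and $r_2$ are empty and every cell of row $r_2$ lies strictly left of every cell of row $r_1$, Definition~\ref{def:FKD} applies to row $r_1$, and if every cell of row $r_1$ had descended from a higher row then the whole row could be lifted, contradicting $T\in\FKD(D)$. Hence some cell of row $r_1$ is \emph{already} in its original row --- nothing needs to be ``brought home.'' Second, the lemma does not require you to keep the same split pair. Taking $c$ to be the leftmost cell of row $r_1$ that is in its original row, one performs reverse Kohnert moves (no jumps) on the cells to its left --- all displaced from above --- lifting them until they reach their original rows or are blocked by an existing cell. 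The result is another quasi-Yamanouchi Kohnert diagram for $D$, split on $c$ and the cell formerly adjacent to it on the left, with $c$ in its original row by construction. The freedom to exhibit a \emph{different} split pair is what makes the statement provable in a few lines; trying to preserve the original pair while dragging one of its cells upward is what makes your route stall.
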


\begin{proof}
  Take any split $T \in \FKD(D)$. By definition, \emph{some} cell in the row of the bottom-right cell of the split pair is in its original row in $D$, otherwise this row could lift in $T$, contradicting that $T$ is a quasi-Yamanouchi Kohnert diagram. Let $c$ be the leftmost such cell. If $c$ is not leftmost in its row, then from left to right, perform reverse Kohnert moves (no jumps) on the cells to the left of $c$ in the row of $c$ until they either reach their original row or land to the right of an existing cell, whichever happens first. This process creates another quasi-Yamanouchi Kohnert diagram for $D$, and, in particular, it is split on cell $c$ and the cell that was closest (on the left) to $c$ in $T$.
\end{proof}

\begin{theorem}
  If a diagram $D$ is fundamental, then no quasi-Yamanouchi Kohnert diagram for $D$ is split. Conversely, if no quasi-Yamanouchi Kohnert diagram for $0^{|D|} \times D$ is split, then $D$ is fundamental.
  \label{thm:split}
\end{theorem}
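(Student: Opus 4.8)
The plan is to prove the two directions separately, using the split-pair reading-order description and Lemma~\ref{lem:bottomright} to reduce to a local analysis of columns.

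For the forward direction, I would argue by contrapositive: suppose some $T \in \FKD(D)$ is split, and produce a violation of the fundamental condition for $D$. By Lemma~\ref{lem:bottomright}, we may assume the bottom-right cell $(r_2,c_1)$ of the split pair sits in its original row $r_2$ in $D$. Since $T$ is split, the cell $(r_1,c_2)$ with $r_1<r_2$, $c_1<c_2$ has no cells strictly between the two in reading order; in particular row $r_1$ is the highest nonempty row of $T$ below $r_2$, and $(r_1,c_2)$ is leftmost in its row of $T$. Now trace this leftmost cell back to $D$: the cell of $D$ that became $(r_1,c_2)$ lies in some row $r \le r_1$ and column $c_2$, and is leftmost in its row of $D$ (Kohnert moves never move a cell left, and reverse Kohnert moves in the lift only move whole rows up). The fundamental condition applied to this leftmost cell $(r,c_2)$ of $D$, with the column $c_1 < c_2$, must then fail: because the Kohnert moves producing $T$ pushed down enough cells of column $c_2$ to leave row $r_1$ as $c_1$'s row but column $c_2$'s leftmost-cell-row in $T$, a counting argument comparing $\#\{(s,c_1)\in D\}$ and $\#\{(s,c_2)\in D\}$ in the window $r<s\le r+k$ (for $k = r_2 - r$, or the appropriate $k$) yields the strict inequality violating~\eqref{e:diagram-fund}. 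Filling in this counting bookkeeping — translating "row $r_1$ is forced to be $c_1$'s row in $T$ while being $c_2$'s leftmost row" into the cell-count inequality of Definition~\ref{def:diagram-fund} — is the technical heart of this direction.

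For the converse, suppose $D$ is not fundamental; I must exhibit a split quasi-Yamanouchi Kohnert diagram for $0^{|D|}\times D$. Non-fundamentality gives a leftmost cell $(r,c)$ of $D$ with no cell at $(r+1,c)$, and a column $c'<c$ and index $k\ge 1$ with $\#\{(s,c')\in D \mid r<s\le r+k\} > \#\{(s,c)\in D\mid r<s\le r+k\}$. The idea is to apply Kohnert moves that push the excess cells of column $c'$ (and any cells in intermediate columns between $c'$ and $c$, and the cells of column $c$ in the window) down so as to create a configuration in which, after appropriate lifting, a cell in column $c'$ sits in a row below, and immediately precedes in reading order, a cell of column $c$ (or the original cell $(r,c)$) sitting in a row above — this is precisely a split pair. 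The padding by $0^{|D|}$ empty rows is what makes room to carry out all these downward moves and the subsequent lifts without collisions; this is the role it played in the remark following Lemma~\ref{lem:destand}, where shifting up was exactly what exposed the split behaviour. I would construct the Kohnert diagram explicitly: push down the relevant cells of column $c$ first to clear the window, then push down the excess cell(s) of column $c'$, check the result is in $\KD(0^{|D|}\times D)$, and then verify it lies in $\FKD$ by checking no row can lift (which holds because the cell $(r,c)$, being leftmost in row $r$ of $D$ with nothing above it, anchors its row).

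The main obstacle I anticipate is the converse direction: one must carefully choose the sequence of Kohnert moves so that (a) the resulting diagram is genuinely reachable from $0^{|D|}\times D$ (Kohnert moves only move rightmost cells of a row, which constrains the order heavily), and (b) the resulting diagram is actually quasi-Yamanouchi, not merely split — some row might still be liftable, destroying membership in $\FKD$. Handling (b) likely requires, as in the proof of Lemma~\ref{lem:bottomright}, performing reverse Kohnert moves (row-shifts up, no jumps) on cells to restore them toward their original rows until a lift is blocked, and then arguing the split pair survives this adjustment. The counting inequality from the hypothesis is what guarantees that after this adjustment a split pair of the required form remains; making that argument airtight is where the real work lies. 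The forward direction, by contrast, should be a relatively direct bookkeeping argument once Lemma~\ref{lem:bottomright} is invoked.
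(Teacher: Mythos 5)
Your proposal is a plan with the two hardest steps left open, and in both directions the deferred step is where the paper's proof does essentially all of its work. In the forward direction you invert the logic (contrapositive: a split $T\in\FKD(D)$ yields a violation of \eqref{e:diagram-fund}) and then write that filling in the counting bookkeeping is "the technical heart." The paper instead argues directly: it shows that the inequality \eqref{e:diagram-fund} is an \emph{invariant} of Kohnert moves performed on a column $c'<c$ (after reducing to the case where column $c$ is never moved and there are no intervening columns), which is what prevents any cell above-left of $(r,c)$ from ever dropping below the cells of column $c$ in rows $r+1,\dots,r+k$; combined with Lemma~\ref{lem:bottomright} this rules out split quasi-Yamanouchi diagrams. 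That invariance argument requires a genuine case analysis (the moved cell $C$, the cells it jumps over, the equality cases), and nothing in your sketch substitutes for it. Your sketch also has concrete defects: Kohnert moves only move cells \emph{down}, so the cell of $D$ giving rise to $(r_1,c_2)$ sits in a row $r\ge r_1$, not $r\le r_1$; more seriously, Definition~\ref{def:diagram-fund} only constrains cells that are \emph{leftmost in their row of $D$}, and being leftmost in its row of $T$ does not imply the originating cell was leftmost in its row of $D$ (its left neighbours in $D$ may simply have moved down). Without exhibiting a leftmost cell of $D$ at which \eqref{e:diagram-fund} fails, the contrapositive cannot close.

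In the converse direction your outline agrees with the paper's (push cells down to manufacture a split pair over the witness $(r,c)$, $c'$, $k$), and you correctly flag the two obstacles: reachability and membership in $\FKD$. But your proposed fix for the second --- perform the moves, then lift rows until blocked and "argue the split pair survives" --- is exactly the step that can fail: lifting is what destroys split pairs (cf.\ Figure~\ref{fig:F-neg}), so survival is not automatic. The paper avoids this by choosing the moves so that no lift is ever possible: it first drops everything above-right of $(r,c)$ to anchor on $(r,c)$; then, working right to left, it aligns every cell of columns $c',\dots,c-1$ in the window with a cell of column $c$ (possible precisely because $c'$ was chosen rightmost and $k$ smallest), so every intermediate row is either empty to the right of $c'$ or anchored by an unmoved cell of column $c$; and finally it drops the top cell of column $c'$ past the window so that it lands at least two rows above $(r,c)$. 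The choice of rightmost $c'$ and then minimal $k$ is load-bearing and absent from your sketch. So while your high-level strategy for this direction is the right one, the proof as proposed is not complete.
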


\begin{proof}
  First suppose that $D$ is not fundamental. We will construct a split quasi-Yamanouchi Kohnert diagram of $0^{|D|}\times D$. We may assume there are no cells strictly above and strictly right of the cell $(r,c)$ in $0^{|D|}\times D$: if there are, then one can push all such cells down to be weakly below row $r$, so that the top-left of all these cells now lies in row $r$. These cells are now all ``anchored'' on the cell in position $(r,c)$, so the result is a quasi-Yamanouchi Kohnert diagram of $0^{|D|}\times D$. 
  
 Select the rightmost column $c'<c$ and then smallest $k$ such that the condition fails. In what follows, we work entirely in the rectangle between rows $r$ and $r+k$ (inclusive) and between columns $c'$ and $c$ (inclusive). Working from right to left, push cells in all columns $c', c'+1, \ldots c-1$ downwards (jumping over other cells if necessary) so that each of these cells ends up in the same row as some cell in column $c$. This is possible since the condition is satisfied on these columns. Now, all rows $r+1, \ldots , r+k$ that have no cell in column $c'$ also have no cell in any column to the right of $c'$. Since we never moved any cell in column $c$, these rows cannot be de-standardized, so the result is a quasi-Yamanouchi Kohnert diagram of $0^{|D|}\times D$.

Now consider the top cell $C$ of column $c'$ (in rows weakly lower than $r+k$). This cell $C$ has no cell to its right in its row, in particular, there is no cell in the same row in column $c$, since this would contradict the minimality of $k$. Also by minimality of $k$ and the argument of the previous paragraph, every row strictly between row $r$ and the row of $C$ either has cells in both columns $c'$ and $c$, or neither. To complete the construction, take the cell $C$ and move it downwards, jumping over all cells in column $c'$, rows $r+1, \ldots , r+k$. The cell $C$ ends in the row immediately below the lowest cell of column $c$ that is above $(r,c)$, which, by assumption, is in row $r+2$ or higher. By construction this is a quasi-Yamanouchi Kohnert diagram for $0^{|D|}\times D$, and it is split over the cell $C$ and the cell in position $(r,c)$.

For the other direction, suppose the $D$ satisfies the fundamental condition. We claim the cell $(r,c)$ can never be the lower-right cell in the split pair of a split quasi-Yamanouchi Kohnert diagram of $D$. In particular $D$ has no split quasi-Yamanouchi Kohnert diagram where the bottom-right cell is in its original position, so by Lemma~\ref{lem:bottomright} $D$ has no split quasi-Yamanouchi Kohnert diagrams at all. To prove this claim, we need to show that no cell strictly above and left of $(r,c)$ in $D$ can ever get strictly below the lowest cell of $D$ in the positions $(r+1,c), (r+2,c), \ldots (r+k,c)$ via a series of Kohnert moves, while remaining strictly above row $r$. Clearly this is true if $D$ has a cell in position $(r+1,c)$. Otherwise, consider any column $c'$ strictly left of column $c$. The condition states that for any cell in column $c$ (above row $r$), there are at least as many cells of $D$ weakly below this cell in column $c$ as there are in column $c'$. Performing Kohnert moves on the cells of column $c$ does not alter this, so we may suppose we do not perform any Kohnert moves on column $c$. Moreover, for simplicity, we may assume there are no cells of $D$ in columns between $c'$ and $c$ and above row $r$, since any such cells are to the right of column $c'$ and so only impede cells of column $c'$ from moving downwards. Suppose we can perform a Kohnert move on a cell $C$ of column $c'$. This means there is no cell in column $c$ in the row of $C$, therefore the condition implies there are strictly more cells in column $c$ strictly below $C$ than there are in column $c'$ strictly below $C$. 

Now perform the Kohnert move on $C$. The condition is clearly preserved for cells of $c'$ above the original position of $C$ and strictly below the position where $C$ lands. For cell $C$ itself, the condition is preserved since if $C$ jumps over, say, $\ell$ cells for some $\ell \ge 0$, then $C$ necessarily moves from being strictly above to strictly below weakly fewer than $\ell$ cells of column $c$ (with equality only if the all of the first $\ell$ positions in column $c$ strictly below the row of $C$ are occupied by cells). Finally, consider any cell that $C$ jumps over. By definition, all such cells exist in a column interval immediately below $C$. Therefore, if any one of these cells, say $X$, met the condition with equality, $C$ would necessarily fail the condition since there are strictly more cells in column $c'$ than there are in column $c$, in the rows between the row of $X$ and the row of $C$ (inclusive). Therefore, for any such cell $X$, there are strictly more cells of $D$ weakly below $X$ in column $c$ than there are in column $c'$ (above row $r$). Thus when $C$ moves from above to below $X$, the condition is maintained on $X$. Therefore, the condition is preserved under Kohnert moves on column $c'$, which in particular implies that no cell strictly above and left of $(r,c)$ in $D$ can ever get strictly below the lowest cell of $D$ in the positions $(r+1,c), (r+2,c), \ldots (r+k,c)$.
\end{proof}

Combining Lemma~\ref{lem:destand} and Theorem~\ref{thm:split}, we may give the fundamental slide expansion of a Kohnert polynomial indexed by a fundamental diagram.

\begin{theorem}
  Given a fundamental diagram $D$, we have
  \begin{equation}
    \kohnert_D = \sum_{T \in \FKD(D)} \fund_{\wt(T)} .
  \label{e:kohnert-slide}
  \end{equation}
  In particular, these Kohnert polynomials expand non-negatively into fundamental slide polynomials.
  \label{thm:kohnert-slide}
\end{theorem}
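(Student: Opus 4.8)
The plan is to mimic the proof of Theorem~\ref{thm:kohnert-mono}, consolidating the Kohnert diagrams of $D$ into equivalence classes each containing exactly one quasi-Yamanouchi Kohnert diagram, and to show that summing $x^{\wt(U)}$ over a class reproduces $\fund_{\wt(T)}$ for the class representative $T \in \FKD(D)$. First I would set up the equivalence: for $U \in \KD(D)$, repeatedly apply $\Up_r$ moves that raise row $r$ only when every cell of row $r+1$ lies strictly to the left of every cell of row $r$ (the ``fundamental'' type of lift appearing in Definition~\ref{def:FKD}), until no such move is available; this terminal diagram lies in $\FKD(D)$. The crux is that this terminal diagram is independent of the order of lifts, which is exactly Lemma~\ref{lem:destand}, and the hypothesis of that lemma — that no diagram in $\FKD(D)$ is split — is supplied by Theorem~\ref{thm:split} since $D$ is fundamental. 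Thus we obtain a well-defined map $\KD(D) \to \FKD(D)$ and a partition of $\KD(D)$ into fibers.

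Next I would compute the generating function of a single fiber. Fix $T \in \FKD(D)$ with $\wt(T) = a$; I must show $\sum_{U \,\mapsto\, T} x^{\wt(U)} = \fund_a = \sum_{b} x^b$, where $b$ ranges over weak compositions with $b \geq a$ and $\flatten(b)$ refining $\flatten(a)$. As in Theorem~\ref{thm:kohnert-mono}, any $U$ in the fiber has rows that are obtained from the rows of $T$ by splitting each row of $T$ into a top part and a bottom part (the bottom part possibly empty) along the unique place where a cell is not immediately below the cell to its upper-left, then distributing these pieces into rows at weakly lower positions; since the lift condition only fires when the upper row sits strictly left of the lower row, a row of $T$ can be split only at such a ``staircase break,'' and the number of admissible splittings of a row of length $a_i$ into ordered nonempty-or-bottom-empty contiguous pieces corresponds precisely to compositions refining $(a_i)$ — this is where the ``refines $\flatten(a)$'' condition on $\flatten(b)$ enters. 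Conversely, given any $b \geq a$ with $\flatten(b)$ refining $\flatten(a)$, one reconstructs $U$ uniquely by placing the pieces greedily from the bottom, exactly as in the proof of Theorem~\ref{thm:kohnert-mono}. This bijection between the fiber of $T$ and the monomial support of $\fund_a$ finishes the fiber computation, and summing over $T \in \FKD(D)$ gives \eqref{e:kohnert-slide}.

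The main obstacle I anticipate is the fiber-counting step, specifically verifying that the admissible ways to split and redistribute the rows of $T$ are in exact bijection with the weak compositions $b$ appearing in $\fund_a$ — one must check both that every such split yields $b \geq a$ with $\flatten(b)$ refining $\flatten(a)$ and, conversely, that every such $b$ is realized, and realized uniquely, paying careful attention to empty rows and to the interaction between splits of different rows of $T$. The well-definedness of the consolidation map is not an obstacle here since it is handed to us by Lemma~\ref{lem:destand} and Theorem~\ref{thm:split}; the subtlety is purely in matching the combinatorics of contiguous-row-splitting with the refinement order, and in confirming that the ``lift only when the row above is strictly to the left'' condition is exactly the constraint that makes splits occur only at staircase breaks, so that the count of splittings of a block of size $k$ is $2^{k-1}$, matching the number of compositions of $k$, i.e. the monomials of $\fund$ restricted to that block.
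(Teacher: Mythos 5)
Your proposal follows essentially the same route as the paper's proof: you invoke Theorem~\ref{thm:split} to verify the hypothesis of Lemma~\ref{lem:destand}, use that lemma to obtain a well-defined de-standardization map $\KD(D)\to\FKD(D)$, and then show each fiber's generating function is $\fund_{\wt(T)}$ by matching reverse lifts (contiguous splittings of the rows of $T$, dropped to lower rows) with the weak compositions $b \geq \wt(T)$ whose flattening refines $\flatten(\wt(T))$, with uniqueness of the reconstruction following from the lack of choice at each step. This is exactly the paper's argument, including the explicit right-to-left reconstruction of $U$ from $b$ and $T$.
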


\begin{proof}
  By Lemma~\ref{lem:destand}, for any $U\in\KD(D)$, we may define the de-standardization of $U$, denoted by $\destand_D(U)$, to be the result of any maximal length lifting path that necessarily results in a quasi-Yamanouchi Kohnert diagram for $D$. If $\destand_D(U)=T$, then $\wt(U) \geq \wt(T)$ and $\flatten(\wt(U))$ refines $\flatten(\wt(T))$ since $T$ is obtained recursively by moving \emph{all} cells in row $i-1$ of $U$ to row $i$ in $U$. Conversely, we claim that given $T \in \FKD(D)$, for every weak composition $b$ of length $n$ such that $b \geq \wt(T)$ and $\flatten(b)$ refines $\flatten(\wt(T))$, there is a unique $U \in \KD(D)$ with $\wt(U) = b$ such that $\destand_D(U) = T$. From the claim, we have
  \begin{displaymath}
    \sum_{U \in \destand_D^{-1}(T)} x^{\wt(U)} = \fund_{\wt(T)},
  \end{displaymath}
  from which theorem follows. To construct $U$ from $b$ and $T$, for $j = 1,\ldots,n$, if $\wt(T)_{j} = b_{i_{j-1} + 1} + \cdots + b_{i_{j}}$, then, from right to left, move the first $b_{i_{j-1} + 1}$ cells down to row $i_{j-1} + 1$, the next $b_{i_{j-1} + 2}$ cells down to row $i_{j-1} + 2$, and so on. Each of these moves is a valid Kohnert move with no cells jumping over any others. Existence is proved, and uniqueness follows from the lack of choice at every step.
\end{proof}

Both Schubert polynomials and Demazure characters expand non-negatively into fundamental slide polynomials, with the former indexed by quasi-Yamanouchi pipe dreams and the latter by quasi-Yamanouchi Kohnert tableaux. Since both Rothe diagrams and key diagrams are fundamental, the expansion in \eqref{e:kohnert-slide} gives a common generalization of these results. 

\subsection{Kohnert quasisymmetric functions are fundamental positive}
\label{sec:fund-stable}

Assaf and Searles \cite{AS17} showed that the fundamental slide polynomials stabilize and that their stable limits are precisely the fundamental quasisymmetric functions.

\begin{theorem}[\cite{AS17}]
  For a weak composition $a$, we have
  \begin{equation}
    \lim_{m\rightarrow\infty}\fund_{0^m \times a} = F_{\flatten(a)}(x_1,x_2,\ldots),
  \end{equation}
  where $0^m \times a$ denotes the weak composition obtained by prepending $m$ $0$'s to $a$.
  \label{thm:fund-stable}
\end{theorem}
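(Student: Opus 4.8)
The plan is to prove the stabilization coefficient by coefficient. Fix indices $i_1 < \cdots < i_k$ and positive integers $c_1,\ldots,c_k$, set $\alpha = (c_1,\ldots,c_k)$, and let $b$ denote the weak composition with $b_{i_j} = c_j$ for each $j$ and all other entries zero (padded with trailing zeros so as to match the length of $0^m \times a$), so that $\flatten(b) = \alpha$. Since prepending zeros does not change the flattening, $\flatten(0^m \times a) = \flatten(a)$, and hence by Definition~\ref{def:fundamental-shift} the coefficient of $x_{i_1}^{c_1}\cdots x_{i_k}^{c_k}$ in $\fund_{0^m \times a}$ is $1$ exactly when $\alpha$ refines $\flatten(a)$ \emph{and} $b \geq 0^m \times a$, and is $0$ otherwise. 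By Definition~\ref{def:fundamental}, the coefficient of the same monomial in $F_{\flatten(a)}(x_1,x_2,\ldots)$ is $1$ exactly when $\alpha$ refines $\flatten(a)$, and is $0$ otherwise. So it suffices to show that the dominance condition $b \geq 0^m \times a$ is automatic for all sufficiently large $m$ whenever $\alpha$ refines $\flatten(a)$.

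For that step I would first observe that refinement of compositions is size-preserving, so $\alpha$ refining $\flatten(a)$ forces $|b| = |\flatten(a)| = |a|$. Now take $m \geq i_k$. For $l \leq m$ the first $m$ entries of $0^m \times a$ vanish, so $(0^m\times a)_1 + \cdots + (0^m\times a)_l = 0 \leq b_1 + \cdots + b_l$. For $l > m \geq i_k$, the support of $b$ is contained in $\{1,\ldots,i_k\} \subseteq \{1,\ldots,l\}$, so $b_1 + \cdots + b_l = |b| = |a| \geq (0^m\times a)_1 + \cdots + (0^m\times a)_l$. Hence $b \geq 0^m \times a$ holds for every $m \geq i_k$. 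Consequently the coefficient of $x_{i_1}^{c_1}\cdots x_{i_k}^{c_k}$ in $\fund_{0^m \times a}$ equals its coefficient in $F_{\flatten(a)}$ for all $m \geq i_k$; since $i_k$ depends only on the monomial being tracked, the stable limit is $F_{\flatten(a)}(x_1,x_2,\ldots)$.

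An alternative route is to combine Theorem~\ref{thm:mono-stable} with the monomial slide expansion of $\fund_a$ together with the identity $F_{\flatten(a)} = \sum_{\beta \text{ refines } \flatten(a)} M_\beta$, tracking how the dominance conditions in the monomial slide summands dissolve as $m \to \infty$; this amounts to the same bookkeeping. The one point requiring care — the main, and essentially only, obstacle — is making the dominance inequality $b \geq 0^m \times a$ hold uniformly in $m$: this uses crucially that the refinement hypothesis pins the total degree $|b| = |a|$, and that once the prepended block of zeros is at least as long as the support of the monomial under consideration, none of the partial-sum inequalities defining dominance can be violated.
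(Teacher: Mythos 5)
Your argument is correct. Note first that the paper does not prove this statement at all: it is quoted verbatim from \cite{AS17} as background, so there is no in-paper proof to compare against. Your coefficient-by-coefficient verification is a complete and self-contained argument: since both $\fund_{0^m\times a}$ and $F_{\flatten(a)}$ are multiplicity-free sums of monomials $x^b$ indexed by weak compositions $b$ with $\flatten(b)$ refining $\flatten(a)$, the only discrepancy is the dominance constraint $b\geq 0^m\times a$, and you correctly identify that this constraint becomes vacuous once $m$ exceeds the largest index in the support of $b$ --- the key points being that refinement preserves size (so $|b|=|a|$) and that the partial sums of $0^m\times a$ vanish through position $m$ and never exceed $|a|$ thereafter. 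This is the standard and essentially unique way to prove the stabilization, and matches the spirit of the analogous monomial-slide stability result (Theorem~\ref{thm:mono-stable}) that the paper also imports.
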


In order to remove extraneous redundancy from the stable limits, we say that a diagram is \emph{flat} if there is no empty row below a nonempty row. For each diagram $D$, we define $\flatten(D)$ to be the diagram obtained by removing empty rows. For any diagrams $C,D$ such that $\flatten(C)=\flatten(D)$, it is clear that $\Kohnert_D(X) = \Kohnert_C(X)$. In particular, in the stable limit, it is enough to consider flat diagrams. It is an interesting, though clearly difficult, question to characterize when two Kohnert quasisymmetric functions are equal. We offer the following partial solution that allows us to consider only flattened, fundamental diagrams.

\begin{lemma}
  Given a diagram $D$ and a row index $i$ such that all cells in row $i+1$ lie strictly left of all cells in row $i$, we have $\Kohnert_D(X) = \Kohnert_{\Up_i(D)}(X)$. 
\label{lem:flat}
\end{lemma}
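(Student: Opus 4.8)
The plan is to set up a weight-preserving bijection between Kohnert diagrams of $0^m\times D$ and Kohnert diagrams of $0^m\times\Up_i(D)$, for each fixed number of variables $m$, that moreover respects the "highest cell weakly below row $m$" constraint used in the proof of Proposition~\ref{prop:delete-row}; then let $m\to\infty$. The hypothesis that every cell of row $i+1$ of $D$ lies strictly left of every cell of row $i$ is exactly what makes $\Up_i$ well-defined on $D$ (no cell of row $i$ sits beneath a cell of row $i+1$), and, crucially, it means that in $\Up_i(D)$ the old row-$i$ cells and old row-$(i+1)$ cells occupy disjoint column sets within the new row $i+1$, with the old row-$(i+1)$ cells strictly to the left. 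I would first record this structural observation and the elementary fact that $\Kohnert_D=\Kohnert_{\flatten(D)}$, so it suffices to compare $D$ with $\Up_i(D)$ directly rather than after shifting.

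The main step is the bijection itself. Informally, the cells that originate in row $i+1$ of $D$ can always, in any Kohnert diagram of $D$, be taken to sit below all the cells originating in row $i$ "in spirit," because they start out to the left and lower; the move $\Up_i$ simply merges the two rows into one, and I claim this merging commutes with the generation of all Kohnert diagrams. More precisely, I would argue: (a) every Kohnert diagram of $\Up_i(D)$ is a Kohnert diagram of $D$, since a legal sequence of Kohnert moves on $\Up_i(D)$ can be mimicked on $D$ — the only cells whose starting positions differ are the former row-$i$ cells, which in $D$ start one row lower, so any downward move available in $\Up_i(D)$ is still available (possibly reached after first lowering those cells into row $i$, which is a legal Kohnert move in $D$ since row $i$ is empty below them in the relevant columns). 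Conversely, (b) given a Kohnert diagram $T$ of $D$, I produce a Kohnert diagram $T'$ of $\Up_i(D)$ with the same weight: run the same sequence of moves but, whenever the sequence would place a former-row-$(i+1)$ cell into row $i$ or below in a way that would "conflict" with the relabeling, adjust using the strict left-of condition to show no genuine obstruction arises; the key point is that the former row-$i$ cells of $D$, starting strictly to the right of the former row-$(i+1)$ cells, never need to pass beneath them, so the correspondence "cell's history" is preserved and weights (row counts) match. I expect the cleanest formulation is via an explicit map on the underlying cell-histories: track each cell by its originating row in $D$; a Kohnert diagram of $D$ is then a placement of these labeled cells; the map replaces originating-row-$(i+1)$ labels with $i$ and drops everybody originating in rows $>i+1$ down by one, and one checks this placement is realizable in $\Up_i(D)$ and vice versa. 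The strict left-of hypothesis guarantees that in any reachable configuration the former row-$i$ and former row-$(i+1)$ cells do not compete for the same "rightmost cell in a row" choices in a way that would break realizability.

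The hard part will be step (b): verifying that an arbitrary sequence of Kohnert moves on $D$ can be faithfully transported to $\Up_i(D)$ without changing the multiset of row-counts. The subtlety is that Kohnert moves depend on which cell is rightmost in its row, and moving a row up can change these adjacencies; I will need to show the strict left-of condition is exactly what prevents a spurious change — e.g., in $\Up_i(D)$ a former row-$i$ cell is always to the right of every former row-$(i+1)$ cell in the merged row, so it is the one selected by a Kohnert move precisely when its counterpart would be selected in $D$. Once the bijection is established and shown to commute with the shift $0^m\times(-)$ (which is immediate, as prepending zero rows does not interact with rows $i,i+1$ once $m$ is large), the equality of Kohnert polynomials in each finite variable count follows, and letting $m\to\infty$ gives $\Kohnert_D(X)=\Kohnert_{\Up_i(D)}(X)$. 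Alternatively, if a fully explicit bijection proves unwieldy, I would instead derive the lemma from Theorem~\ref{thm:kohnert-stable} plus Theorem~\ref{thm:kohnert-mono}: show that $\MKD(D)$ and $\MKD(\Up_i(D))$ have the same multiset of weights after flattening — indeed $\Up_i$ acts on $\MKD$ by merging rows $i,i+1$, and under the strict left-of hypothesis this is a weight-and-flattening-preserving bijection onto $\MKD(\Up_i(D))$ — so the monomial-slide expansions, hence the Kohnert quasisymmetric functions, coincide.
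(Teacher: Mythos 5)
There is a genuine gap, and it comes from not engaging with the fact that this lemma is only true \emph{in the stable limit}. Take $D$ to be a single cell in position $(1,1)$ and $i=1$ (the hypothesis is vacuous since row $2$ is empty): then $\kohnert_D = x_1$ while $\kohnert_{\Up_1(D)} = x_1+x_2$, so your claim (a) that $\KD(\Up_i(D)) \subseteq \KD(D)$ is false, as is the claim in your fallback argument that $\MKD(D)$ and $\MKD(\Up_i(D))$ have the same multiset of flattened weights. Both statements become true only after prepending enough empty rows, and your justifications never use those empty rows. Indeed, your argument for (a) --- ``mimic the moves, possibly after first lowering the former row-$i$ cells into row $i$'' --- is really a proof of the \emph{opposite} containment: lowering those cells recovers $D$ from $\Up_i(D)$, showing $D \in \KD(\Up_i(D))$ and hence $\KD(D) \subseteq \KD(\Up_i(D))$. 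That, incidentally, is your step (b), which you label the hard part but which is immediate with $T'=T$: the former row-$i$ cells are the rightmost cells of row $i+1$ of $\Up_i(D)$ and the positions directly below them are empty, so a few Kohnert moves take $\Up_i(D)$ to $D$, and no transport of move sequences or cell histories is needed.

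The idea you are missing is the one that makes the nontrivial containment work after stabilization: once there is at least one empty row available at the bottom, the \emph{entire diagram} $\Up_i(D)$ translated down by one row is reachable from $D$ by Kohnert moves --- drop the cells of row $i+1$ into row $i$ (legal because they sit strictly left of, hence in different columns from, the row-$i$ cells), then shift every other row down by exactly one, working from the bottom up so that each cell's landing square is the vacant position immediately below it. Consequently $0^{M-1}\times \Up_i(D) \in \KD(0^{M}\times D)$ for $M$ large, so every Kohnert diagram of $0^{M}\times \Up_i(D)$ supported in the bottom $m$ rows is also a Kohnert diagram of $0^{M}\times D$; combined with the easy containment above, the truncations $\kohnert_{0^{M}\times D}(x_1,\dots,x_m)$ and $\kohnert_{0^{M}\times \Up_i(D)}(x_1,\dots,x_m)$ agree, and letting $m\to\infty$ gives the lemma. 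This is exactly the paper's argument. Without identifying this use of the vertical slack, neither your bijection nor your $\MKD$ shortcut can be completed, since both fail on the one-cell example before the shift is introduced.
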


\begin{proof}
It is enough to show $\kohnert_{0^{m + |D| + 1}\times D}(x_1, \ldots x_{m}) = \kohnert_{0^{m + |D| + 1}\times \Up_i(D)}(x_1, \ldots x_{m})$ for all $m$. Specifically, let $\KD_i(D)$ denote the subset of $\KD(D)$ consisting of diagrams having no cells in row $i+1$ or higher. We will show $\KD_m(0^{m + |D| + 1}\times D) = \KD_m(0^{m + |D| + 1}\times \Up_i(D))$.

Since  $0^{m + |D| + 1}\times D$ is a Kohnert diagram of $0^{m + |D| + 1}\times \Up_i(D)$, it is clear that $\KD_m(0^{m + |D| + 1}\times D) \subset \KD_m(0^{m + |D| + 1}\times \Up_i(D))$. To see the other containment, observe that $0^{m + |D|}\times \Up_i(D)$ is a Kohnert diagram of $0^{m + |D| + 1}\times D$, formed by dropping all cells in row $i+1$ of $D$ down to row $i$ and dropping all cells not in rows $i$ or $i+1$ down one row.
 \end{proof}

\begin{figure}[ht]
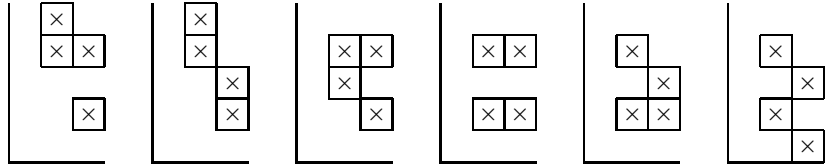

  \begin{center}
    \begin{displaymath}
      \begin{array}{c@{\hskip 1.5\cellsize}c@{\hskip 1.5\cellsize}c@{\hskip 1.5\cellsize}c@{\hskip 1.5\cellsize}c@{\hskip 1.5\cellsize}c}
        \vline\tableau{ & \times & \\ & \times & \times \\ & \\ & & \times \\ & \\ \hline} &
        \vline\tableau{ & \times & \\ & \times & \\ & & \times \\ & & \times \\ & \\ \hline} &
        \vline\tableau{ & & \\ & \times & \times \\ & \times \\ & & \times \\ & \\ \hline} &
        \vline\tableau{ & & \\ & \times & \times \\ & \\ & \times & \times \\ & \\ \hline} &
        \vline\tableau{ & & \\ & \times & \\ & & \times \\ & \times & \times \\ & & \\ \hline} &
        \vline\tableau{ & & \\ & \times & \\ & & \times \\ & \times & \\ & & \times \\ \hline} 
      \end{array}
    \end{displaymath}
    \caption{\label{fig:F-not-fund}The elements of $\MKD(D)$ for $D$ the leftmost diagram, which is not fundamental.}
  \end{center}
\end{figure}

For example, letting $D$ be the leftmost diagram in Figure~\ref{fig:F-not-fund}, we may compute the fundamental quasisymmetric function expansion of the Kohnert quasisymmetric function by
\begin{eqnarray*}
  \Kohnert_D & = & F_{\flatten(0,1,0,2,1)} + F_{\flatten(0,2,0,2,0)} + F_{\flatten(0,1,1,2,0)} - F_{\flatten(1,1,0,2,0)} \\
  & = & F_{(1,2,1)} + F_{(2,2)}.
\end{eqnarray*}
Notice that each of the split quasi-Yamanouchi Kohnert diagram is canceled in the limit. Correspondingly, if we lift $D$ to a fundamental diagram, we obtain the fourth diagram in Figure~\ref{fig:F-stable} for which there are two quasi-Yamanouchi Kohnert diagrams. 

\begin{figure}[ht]
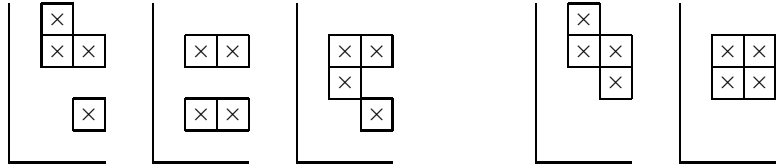

  \begin{center}
    \begin{displaymath}
      \begin{array}{c@{\hskip 1.5\cellsize}c@{\hskip 1.5\cellsize}c@{\hskip 1.5\cellsize}c@{\hskip 3\cellsize}c@{\hskip 1.5\cellsize}c}
        \vline\tableau{ & \times & \\ & \times & \times \\ & \\ & & \times \\ & \\ \hline} &
        \vline\tableau{ & & \\ & \times & \times \\ & \\ & \times & \times \\ & \\ \hline} & 
        \vline\tableau{ & & \\ & \times & \times \\ & \times \\ & & \times \\ & \\ \hline} & &
        \vline\tableau{ & \times & \\ & \times & \times \\ & & \times \\ & \\ & \\ \hline} &
        \vline\tableau{ & & \\ & \times & \times \\ & \times & \times \\ & \\ & \\ \hline} 
      \end{array}
    \end{displaymath}
    \caption{\label{fig:F-stable}The three quasi-Yamanouchi Kohnert diagrams for the leftmost diagram (left), which is not itself fundamental, and the two quasi-Yamanouchi Kohnert diagrams for the fourth diagram (right), which is fundamental.}
  \end{center}
\end{figure}

Despite the restriction of Theorem~\ref{thm:kohnert-slide} to fundamental diagrams, Lemma~\ref{lem:flat} allows us to prove that positivity in the stable limit holds in general.

\begin{theorem}
  For any diagram $D$ and any $m$ at least as great as the number of cells of $D$, we have
  \begin{equation}
    \Kohnert_D = \sum_{\substack{T \in \FKD(0^m \times D) \\ T \ \mathrm{not  \ split}}} F_{\flatten(\wt(T))} .
  \label{e:kohnert-gessel}
  \end{equation}
  In particular, Kohnert quasisymmetric functions expand non-negatively into fundamental quasisymmetric functions.
  \label{thm:kohnert-stable-fund}
\end{theorem}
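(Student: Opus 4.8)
The plan is to reduce the general statement to the case of fundamental diagrams already handled by Theorem~\ref{thm:kohnert-slide}, using Lemma~\ref{lem:flat} to pass between a given diagram and a fundamental one with the same Kohnert quasisymmetric function, and Theorem~\ref{thm:fund-stable} to take stable limits of fundamental slide polynomials. Concretely, fix $m \geq |D|$. By Theorem~\ref{thm:kohnert-stable}, the monomial slide expansion of $\kohnert_{0^m \times D}$ has stabilized, so $\Kohnert_D$ is computed by this single polynomial; it remains to show the fundamental slide expansion of $\kohnert_{0^m \times D}$, after taking the limit, collapses to the stated sum over non-split quasi-Yamanouchi Kohnert diagrams.

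First I would show that $0^m \times D$ need not be fundamental, but can be repaired: starting from $0^m \times D$, repeatedly apply $\Up_i$ whenever all cells in row $i+1$ lie strictly left of all cells in row $i$. By Lemma~\ref{lem:flat}, each such move preserves the Kohnert quasisymmetric function, and the process terminates (each move strictly increases the weight in dominance order, or one argues by a potential function on row positions) at a diagram $D'$ with $\Kohnert_{D'} = \Kohnert_D$ and — this is the key claim — $D'$ is fundamental, because after the process no further $\Up_i$ is triggered, which by Theorem~\ref{thm:split} (applied with enough leading zero rows, which $D'$ still has since $m \geq |D|$ and lifting only moves cells up within the original window) forces the fundamental condition. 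Then Theorem~\ref{thm:kohnert-slide} gives $\kohnert_{D'} = \sum_{T \in \FKD(D')} \fund_{\wt(T)}$, and applying Theorem~\ref{thm:fund-stable} termwise yields $\Kohnert_{D'} = \sum_{T \in \FKD(D')} F_{\flatten(\wt(T))}$, with no cancellation since $D'$ fundamental means no $T \in \FKD(D')$ is split.

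The remaining work is to re-express this in terms of $\FKD(0^m \times D)$ rather than $\FKD(D')$: I would set up a weight-preserving correspondence between the non-split diagrams of $\FKD(0^m \times D)$ and the diagrams of $\FKD(D')$, induced by the same sequence of $\Up_i$ moves used to pass from $0^m \times D$ to $D'$. A split quasi-Yamanouchi Kohnert diagram $T$ of $0^m \times D$ admits a further $\Up$ only after being un-split, or else is genuinely stuck; the point is that split ones contribute $F$'s that cancel in pairs (as illustrated by the Figure~\ref{fig:F-not-fund} example, where the split diagram cancels a non-split one), precisely tracked by the de-standardization map $\destand$ from the proof of Theorem~\ref{thm:kohnert-slide} lifted to the $D'$ level. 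So the surviving terms are exactly indexed by $\{T \in \FKD(0^m \times D) \mid T \text{ not split}\}$, giving \eqref{e:kohnert-gessel}.

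The main obstacle I anticipate is the bookkeeping in the last step: making rigorous that the non-split elements of $\FKD(0^m \times D)$ biject with $\FKD(D')$ under the repair moves, and that the split elements' contributions cancel exactly. One must check that applying an $\Up_i$ repair move to the diagram $0^m \times D$ commutes appropriately with the formation of $\FKD$ — i.e., that $\Up_i$ of a quasi-Yamanouchi Kohnert diagram of $0^m \times D$ is a quasi-Yamanouchi Kohnert diagram of $\Up_i(0^m \times D)$, and conversely every element of $\FKD(\Up_i(0^m \times D))$ pulls back — and that "split" is the exact obstruction to this being a bijection rather than a surjection. Theorem~\ref{thm:split} and Lemma~\ref{lem:bottomright} supply the structural facts needed, so this should be a matter of careful induction on the number of repair moves; but it is where the argument is most delicate, since it is exactly the phenomenon that fundamental-positivity fails at the polynomial level yet holds in the limit.
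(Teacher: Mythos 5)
Your overall strategy is the one the paper uses: reduce to the fundamental case via Lemma~\ref{lem:flat}, apply Theorem~\ref{thm:kohnert-slide} and Theorem~\ref{thm:fund-stable} there, and then re-index the limit by the non-split elements of $\FKD(0^m\times D)$. The problem is the step you yourself single out as ``the key claim'': that when the $\Up_i$-repair process stalls, the resulting diagram is fundamental, and that this follows from Theorem~\ref{thm:split}. That implication is false. Non-liftability of every row (in the sense of the hypothesis of Lemma~\ref{lem:flat}) is strictly weaker than Definition~\ref{def:diagram-fund}: consider the flat diagram with cells $(1,3)$, $(2,2)$, $(2,4)$, $(3,2)$, $(4,2)$. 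For each $i=1,2,3$, row $i+1$ has a cell weakly right of the leftmost cell of row $i$, so no $\Kohnert$-preserving lift is available except lifting the top row, which only translates the diagram upward; yet the cell $(1,3)$ violates \eqref{e:diagram-fund} with $c'=2$ and $k=1$, so the diagram is not fundamental. Theorem~\ref{thm:split} relates the fundamental condition to the existence of split quasi-Yamanouchi Kohnert diagrams of $0^{|D|}\times D$, not to liftability of the rows of $D'$ itself, so it cannot close this gap. (The paper's own proof asserts the same reduction in a single sentence without justification, so you have correctly located the crux of the argument; but the justification you offer does not work, and something else is needed here.)

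The final step of your plan also diverges from the paper and is left at the level of a sketch. You propose a bijection between $\FKD(D')$ and the non-split elements of $\FKD(0^m\times D)$ together with a pairwise cancellation of the split contributions. The paper avoids any cancellation argument: it de-standardizes the elements of $\KD_k(0^m\times D)$ (Kohnert diagrams with all cells in rows $\leq k$, for $|D|\leq k\leq m$) and argues that, because every cell then sits far below its original row, these lifting paths terminate at downward translations of exactly the non-split quasi-Yamanouchi Kohnert diagrams of $0^m\times D$, after which one lets $k\to\infty$. Your ``cancel in pairs'' picture does match the worked example following Lemma~\ref{lem:flat}, but as stated it is an observation about one example rather than an argument; to make it rigorous you would need a precise matching between split quasi-Yamanouchi diagrams and the negative terms of the signed slide expansion, which neither Lemma~\ref{lem:destand} nor Lemma~\ref{lem:bottomright} supplies directly.
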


\begin{proof}
  First consider the case when $D$ is fundamental. In this case, $0^m \times D$ is also fundamental for any $m \geq 0$. Therefore, if $D$ is fundamental, then by Theorem~\ref{thm:kohnert-slide}, $\kohnert_{0^m \times D}$ has a nonnegative fundamental slide expansion indexed by $\FKD(0^m \times D)$. From Definition~\ref{def:FKD}, it is clear that, for $m$ at least the number of cells of $D$, no $T \in \FKD(0^{m+1} \times D)$ has a cell in the bottom row. Therefore the fundamental slide expansion of $\kohnert_{0^m \times D}$ is stable once $m$ is at least the number of cells of $D$. Therefore by Theorem~\ref{thm:fund-stable}, the Kohnert quasisymmetric function $\Kohnert_D$ is given by
  \[ \Kohnert_D(X) = \sum_{T \in \FKD(0^m \times D)} F_{\flatten(\wt(T))} (X), \]
  for any $m$ at least as great as the number of cells of $D$ (and often much smaller). 

 If $D$ is not fundamental, then one can apply $\Up_i$ until arriving at a diagram that is fundamental. By Lemma~\ref{lem:flat}, they have the same Kohnert quasisymmetric function, which, by the argument of the previous paragraph, expands non-negatively into fundamental quasisymmetric functions.
  
To show that the expansion is indexed exactly by the non-split quasi-Yamanouchi Kohnert diagrams of $D$, use $\Up_i$ to de-standardize elements of $\KD_k(0^m \times D)$ for any $k$ and $m$ where $|D|\le k \le m$. Then since every cell of $0^m \times D$ is above the $k$th row, repeated application of $\Up_i$ (staying within $\KD_k(0^m\times D)$) always yields (a downward translation of) a non-split quasi-Yamanouchi Kohnert diagram of $0^m \times D$, and (a downward translation of) every non-split quasi-Yamanouchi Kohnert diagram of $0^m \times D$ is obtained in this way. Hence $\kohnert_{0^m \times D}(x_1, \ldots , x_k)$ is the sum of fundamental slide polynomials indexed by translations of the non-split quasi-Yamanouchi Kohnert diagram of $0^m \times D$. In the limit the weights of the translations flatten to the weights of the quasi-Yamanouchi Kohnert diagrams of $0^m \times D$, and the statement follows.
\end{proof}

Since the fundamental quasisymmetric expansion is governed by the non-split quasi-Yamanouchi diagrams, we have the following converse to Theorem~\ref{thm:kohnert-slide} using Theorems~\ref{thm:split} and \ref{thm:kohnert-stable-fund}.
  
\begin{corollary}
  Given a diagram $D$ that is not fundamental, the Kohnert polynomial $\kohnert_{0^{|D|} \times D}$ is not non-negative on the fundamental slide basis.
\end{corollary}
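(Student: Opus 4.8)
The plan is to argue by contradiction: assuming $\kohnert_{D'}$ is nonnegative on the fundamental slide basis, where $D' = 0^{|D|}\times D$, I would exhibit a single $\fund_{\beta}$ that is forced to occur with a negative coefficient. So write $\kohnert_{D'} = \sum_a d_a\fund_a$ and suppose every $d_a \ge 0$.

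The first step is to reorganize the ``naive quasi-Yamanouchi expansion''. For $W \in \KD(D')$, call a quasi-Yamanouchi Kohnert diagram $T$ a \emph{target} of $W$ if $T$ is reached from $W$ by a maximal sequence of lifts $\Up_r$, each applied only when all cells of row $r+1$ lie strictly left of all cells of row $r$ (the moves used in Lemma~\ref{lem:destand}); a quasi-Yamanouchi diagram is its own only target, and $D'$ itself is quasi-Yamanouchi since any lift $\Up_r(D')$ would have weight strictly below $\wt(D')$, the dominance-minimal weight in $\KD(D')$. The construction in the proof of Theorem~\ref{thm:kohnert-slide} — which builds, from a quasi-Yamanouchi $T$ and a weak composition $b \ge \wt(T)$ with $\flatten(b)$ refining $\flatten(\wt(T))$, a unique $W \in \KD(D')$ of weight $b$ having $T$ as a target — shows that
\[
  \sum_{T \in \FKD(D')} \fund_{\wt(T)} \;=\; \sum_{W \in \KD(D')} n(W)\, x^{\wt(W)},
\]
where $n(W)$ is the number of targets of $W$ (distinct $W$ of equal weight having disjoint target sets). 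Since $\kohnert_{D'} = \sum_{W \in \KD(D')} x^{\wt(W)}$, the difference $\Delta := \bigl(\sum_{T \in \FKD(D')}\fund_{\wt(T)}\bigr) - \kohnert_{D'} = \sum_{W}\bigl(n(W)-1\bigr)\, x^{\wt(W)}$ has nonnegative monomial coefficients.

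The crux — and the step I expect to be the main obstacle — is to find $U \in \KD(D')$ with $n(U) \ge 2$ whose weight $\beta := \wt(U)$ is dominance-minimal among such diagrams and is \emph{not} the weight of any quasi-Yamanouchi Kohnert diagram of $D'$. Since $D$ is not fundamental, Theorem~\ref{thm:split} supplies a split quasi-Yamanouchi Kohnert diagram for $D'$, which by Lemma~\ref{lem:bottomright} may be chosen with its bottom-right split cell in its original row. From it I would construct $U$ by pushing the bottom-right cell of the split pair down one row and then the top-left cell into the row just vacated — the prototype discussed after Lemma~\ref{lem:destand}: the lift recombining these two cells into a single row produces one target, while lifting them apart with the top cell first returns to the split diagram, which then de-standardizes to a second, distinct target. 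This is precisely where the shift by $0^{|D|}$ rows is needed: it guarantees the empty rows immediately below the split pair that make the un-lifting legal; without it (as with the fourth diagram of Figure~\ref{fig:F-neg}, cf.\ Theorem~\ref{thm:split}) no such branching need exist. The work here is to verify carefully that both lift-sequences are maximal and terminate at genuinely distinct quasi-Yamanouchi diagrams, that $\beta$ may be taken minimal, that $\beta$ is not realized by any quasi-Yamanouchi diagram, and to reduce the case in which the split pair is not in adjacent rows to the adjacent-row case by first contracting the row-gap with Kohnert moves.

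Granting such a $U$, I would finish as follows. Lifting strictly lowers the dominance weight (while consolidating rows), so by minimality of $\beta$ every $W$ with $\wt(W)$ strictly dominated by $\beta$ has $n(W) = 1$; thus the monomial coefficients of $\Delta$ vanish strictly below $x^\beta$. Writing $\Delta = \sum_a (c_a - d_a)\fund_a$ with $c_a = \#\{T \in \FKD(D') : \wt(T)=a\}$ and using that $\fund_a$ has unique dominance-minimal monomial $x^a$, peeling coefficients in increasing dominance order forces $c_a = d_a$ for every $a$ strictly dominated by $\beta$, whence
\[
  -\,d_\beta \;=\; c_\beta - d_\beta \;=\; [\,x^\beta \mid \Delta\,] \;=\!\!\! \sum_{\substack{W \in \KD(D') \\ \wt(W)=\beta}} \bigl(n(W)-1\bigr) \;\ge\; n(U)-1 \;\ge\; 1,
\]
using $c_\beta = 0$ because no quasi-Yamanouchi diagram has weight $\beta$. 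Hence $d_\beta \le -1$, contradicting $d_\beta \ge 0$, and therefore $\kohnert_{0^{|D|}\times D}$ is not nonnegative on the fundamental slide basis.
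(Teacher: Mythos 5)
Your overall strategy --- writing $\sum_{T\in\FKD(D')}\fund_{\wt(T)}=\sum_{W\in\KD(D')}n(W)\,x^{\wt(W)}$ for $D'=0^{|D|}\times D$, observing that $\Delta=\sum_W(n(W)-1)x^{\wt(W)}$ is monomial-nonnegative, and peeling fundamental slide coefficients in dominance order --- is a genuinely different route from the paper's, which deduces the corollary from Theorem~\ref{thm:split} together with the stable-limit expansion of Theorem~\ref{thm:kohnert-stable-fund}. Your steps 1--3 and the triangularity bookkeeping in step 5 are sound: the existence-and-uniqueness construction from the proof of Theorem~\ref{thm:kohnert-slide} is carried out one target $T$ at a time and does not use fundamentality, so the identity for $\Delta$ holds. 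The proof does not close, however, because of step 4.

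Two claims there are unproven, and the second is fatal. First, you need some $U\in\KD(D')$ with $n(U)\ge 2$ at all; this is the converse of Lemma~\ref{lem:destand}, which the paper never establishes --- Theorem~\ref{thm:split} only supplies a \emph{split} quasi-Yamanouchi diagram, and your sketch for converting it into a two-target diagram leaves open exactly the points you flag (that both lift sequences are legal, are maximal, and end at \emph{distinct} elements of $\FKD(D')$). Second, and more seriously, the conclusion $d_\beta\le -1$ requires $c_\beta=0$, i.e.\ that the dominance-minimal weight $\beta$ among multi-target diagrams is not the weight of \emph{any} quasi-Yamanouchi Kohnert diagram. Nothing forces this: a quasi-Yamanouchi diagram is never itself multi-target (its only target is itself), but a different diagram of the same weight $\beta$ may well be quasi-Yamanouchi. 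If $c_\beta>0$, the peeling yields only $d_\beta=c_\beta-[x^\beta\mid\Delta]$, which can be nonnegative, and the induction cannot be continued past $\beta$ because the correction terms $(c_a-d_a)[x^\gamma\mid\fund_a]$ no longer vanish and acquire no definite sign at the next level; as written the argument proves $d_\beta<c_\beta$, not $d_\beta<0$. The paper's route through the stable limit sidesteps both issues: Theorem~\ref{thm:kohnert-stable-fund} shows the fundamental expansion is governed by the non-split quasi-Yamanouchi diagrams, while Theorem~\ref{thm:split} guarantees a split one exists for $0^{|D|}\times D$. To salvage your direct approach you would need to show that the deficit $\sum_W(n(W)-1)x^{\wt(W)}$ cannot be absorbed by the quasi-Yamanouchi counts $c_a$; that is precisely the content still missing.
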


Theorem~\ref{thm:kohnert-stable-fund} is the strongest result one can expect in that we know of no other bases for quasisymmetric functions on which all Kohnert quasisymmetric functions are nonnegative.

%
\section{Demazure character expansions}
%
\label{sec:demazure}

Our two motivating examples for general Kohnert polynomials are Schubert polynomials and Demazure characters. Generalizing Proposition~\ref{prop:vex} that characterizes when a Schubert polynomial is equal to a Demazure character, Lascoux and Sch{\"u}tzenberger \cite{LS90} proved that Schubert polynomials always expand as a nonnegative integral sum of Demazure characters. Thus it is natural to explore the question of when a general Kohnert polynomial expands as a nonnegative integral sum of Demazure characters.

\subsection{Demazure diagrams}
\label{sec:dem-diagram}

Inspired by these two important bases, Schubert polynomials and Demazure characters, we have the following simple condition on diagrams.

\begin{definition}
  A diagram $D$ is \emph{demazure} if whenever a pair of cells $(r_2, c_1)$ and $(r_1, c_2)$ are in $D$, where $r_1<r_2$ and $c_1<c_2$, then the cell $(r_1, c_1)$ is also in $D$.
  \label{def:diagram-dem}
\end{definition}

For example, the diagram on the left of Figure~\ref{fig:dem-ex} is not demazure since it contains cells in positions $(4,2)$ and $(2,4)$ but not the cell in position $(2,2)$. In contrast, the diagram on the right of Figure~\ref{fig:dem-ex} is demazure, precisely because this impediment has been removed.

\begin{figure}[ht]
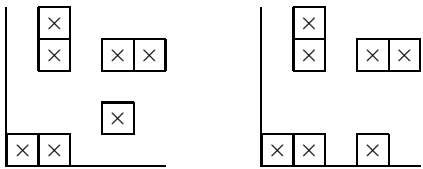

  \begin{displaymath}
    \begin{array}{c@{\hskip 3\cellsize}c}
      \vline\tableau{ & \times & \\ & \times & & \times & \times \\ & \\ & &  & \times \\ \times & \times & &  \\ \hline} &
      \vline\tableau{ & \times & \\ & \times & & \times & \times \\ & \\ & &  & \\ \times & \times & & \times  \\ \hline} 
    \end{array}
  \end{displaymath}
    \caption{\label{fig:dem-ex}The left diagram is not demazure; the right is demazure.}
\end{figure}

In particular, a demazure diagram is necessarily fundamental. Conversely, a fundamental diagram need not be demazure. For example, all four diagrams in Figure~\ref{fig:diagrams} are fundamental, but the third is not demazure. 

The following equivalent characterization of the Demazure condition is similar to Definition~\ref{def:diagram-fund} characterizing fundamental diagrams, but now we consider all cells of $D$, and strengthen the weak inequality on the column counts to a strict inequality. 

\begin{proposition}
  A diagram $D$ is demazure if and only if for each cell $(r,c)$ of $D$, for all $k\geq 1$ we have
  \begin{equation}
    \#\{(s,c') \in D \mid r < s \leq r+k \} < \#\{(s,c) \in D \mid r < s \leq r+k \},
    \label{e:diagram-dem}
  \end{equation}
  (whenever the left hand side is nonzero) for each column $c_0 < c' < c$ where $(r,c_0)$ is the rightmost cell that lies left of $(r,c)$ in row $r$, or $c_0=0$ if no such cell exists.
\end{proposition}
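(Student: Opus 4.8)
The plan is to show the two conditions are logically equivalent by contraposition in each direction, translating Definition~\ref{def:diagram-dem} (the "forbidden configuration" form) into the counting inequality \eqref{e:diagram-dem} and back. The key observation is that both conditions are really statements about columns $c' $ strictly between a cell $(r,c)$ and the nearest cell to its left in row $r$: in that gap column $c'$ there is, by definition of $c_0$, no cell of $D$ in row $r$ itself, so the relevant comparison is genuinely about the cells \emph{strictly above} row $r$.

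First I would prove that if $D$ is not demazure then \eqref{e:diagram-dem} fails. Suppose $(r_2,c_1)$ and $(r_1,c_2)$ lie in $D$ with $r_1 < r_2$, $c_1 < c_2$, but $(r_1,c_1) \notin D$; among all such bad triples choose one minimizing $r_2 - r_1$, and then, fixing $r_1$ and the column $c_1$, take $c_2$ as small as possible, so that no cell of $D$ in row $r_1$ lies weakly between columns $c_1$ and $c_2$. Writing $r = r_1$, $c = c_2$, $c' = c_1$, and $k = r_2 - r_1$: the cell $(r_2, c_1) = (r+k, c')$ contributes to the left-hand count, so the left side is nonzero; and $c_0$, the rightmost cell left of $(r,c)$ in row $r$, has column strictly less than $c_1$ (or $c_0 = 0$), so $c_0 < c' < c$ as required. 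It remains to check the count inequality is violated, i.e. that the number of cells of $D$ in column $c'$ with row in $(r, r+k]$ is at least the number in column $c$ over the same rows. This is where I would use minimality of $r_2 - r_1$: any cell $(s,c)$ with $r < s \le r+k$ together with $(r+k, c') = (r_2,c_1)$ — wait, more carefully, I would argue that each cell of $D$ in column $c$ strictly above row $r$ but weakly below $r+k$, paired against $(r_2,c_1)$ and the absence of $(r,c_1)$, would give a bad triple with smaller row-gap unless there is a matching cell in column $c'$ below it. Pushing this pairing argument through gives $\#\{(s,c')\} \ge \#\{(s,c)\}$ in the range, contradicting \eqref{e:diagram-dem}.

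Conversely, I would show that if \eqref{e:diagram-dem} fails, witnessed by a cell $(r,c)$, a column $c'$ with $c_0 < c' < c$, and some $k \ge 1$ with $\#\{(s,c') : r < s \le r+k\} \ge \#\{(s,c) : r < s \le r+k\} \ge 1$, then $D$ is not demazure. Since the right-hand count is at least $1$, there is a cell $(s,c) \in D$ with $r < s \le r+k$; and since the left count is at least as large, there is a cell $(t,c') \in D$ with $r < t \le r+k$, which we may take to be the \emph{highest} such (largest $t$), and correspondingly take $(s,c)$ to be the lowest cell of column $c$ with $r < s \le r+k$ that is weakly above $(t,c')$ — a pigeonhole/interlacing argument using the two counts shows such a pair with $s \le t$ exists, actually with $t \ge s$, giving cells $(t,c')$ and $(s,c)$ with $s \le t$, $c' < c$. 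Then $(t,c')$ is northwest-or-equal-row of $(s,c)$; if $s < t$ this is an instance of the demazure forbidden pattern with $(r_1,r_2,c_1,c_2) = (s,t,c',c)$, and we must check $(s,c') \notin D$ — which follows if we chose $(s,c)$ to be the lowest such cell with no cell of column $c'$ strictly between rows $s$ and $t$; if instead $s = t$ we use $(r,c)$ itself together with $(t,c')$, noting $(r,c') \notin D$ because $c' > c_0$. Either way we exhibit a violation of Definition~\ref{def:diagram-dem}.

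The main obstacle I expect is the bookkeeping in both directions for correctly extracting a \emph{single} forbidden pattern $(r_1,c_1),(r_2,c_2)$ with the missing corner $(r_1,c_1)$ from the aggregate counting statement: the counts only guarantee existence of cells in column $c'$ and column $c$ in a row range, not that a specific pair lines up with an empty corner. The right way to handle this is an interlacing/extremal choice argument — always pick the most extreme witnessing cell (highest in column $c'$, lowest in column $c$) and a minimal row-gap — so that any "escape" from the forbidden configuration forces a cell that contradicts the extremality. Once the translation between "local forbidden pattern" and "monotone column counts" is set up cleanly, each direction is a short argument; the rest is routine.
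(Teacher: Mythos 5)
Your proposal is correct in substance, and on the hard direction it takes a genuinely different route from the paper. For ``condition implies demazure'' (your first direction, stated in contrapositive form) the paper argues by contradiction together with an induction on the dimension sum of the rectangle spanned by a violating pair: it first checks directly that a violating pair with \emph{no internal cells} already breaks \eqref{e:diagram-dem} (there the left count is $1$ and the right count is at most $1$), and then reduces an arbitrary violating pair to one with no internal cells. You instead choose a violating pair minimizing $r_2-r_1$ and then $c_2$, and show by a row-by-row matching that every row of $(r_1,r_2]$ occupied in column $c_2$ is also occupied in column $c_1$ (for $s<r_2$ by minimality of the gap, for $s=r_2$ by hypothesis), so the left count is at least the right count and is nonzero. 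This is complete and arguably more direct than the paper's induction; your minimal-gap extremal choice plays exactly the role of the paper's induction on rectangle size.

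On the other direction there is a genuine (though easily repaired) slip. The paper's argument is one sentence: if $D$ is demazure, then for an empty position $(r,c')$ with a cell of $D$ to its right in row $r$, column $c'$ can contain no cell above row $r$ at all, so the left-hand count in \eqref{e:diagram-dem} is identically zero and the condition holds vacuously. Your version asserts that when the inequality fails the right-hand count is at least $1$; this is false in general --- take $D=\{(2,1),(1,2)\}$ with $(r,c)=(1,2)$, $c'=1$, $k=1$, where the left count is $1$ and the right count is $0$ --- so the interlacing construction of the pair $(s,c)$, $(t,c')$ has nothing to latch onto, and the justification that $(s,c')\notin D$ in your $s<t$ case is also not pinned down. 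Fortunately your ``$s=t$'' fallback is already the whole argument: any cell $(t,c')$ with $t>r$, together with $(r,c)$ and the empty corner $(r,c')$ (empty because $c_0<c'<c$), is a forbidden configuration, and such a $(t,c')$ exists precisely because the left-hand count is nonzero. Discard the rest of that direction and keep only this observation.
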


\begin{proof}
If $D$ is demazure, then $D$ clearly satisfies this condition since for any empty space to the left of a given cell $(r,c)$, there is no cell in the column above row $r$ in the column of that empty space.

Conversely, suppose $D$ satisfies this condition. Suppose there are two cells $(r_2, c_1)$ and $(r_1, c_2)$ of $D$ such that there are no other cells $(r,c)$ of $D$ where $c_1 \le c \le c_2$ and $r_1 \le r \le r_2$, except possibly at $(r_1, c_1)$ and $(r_2,c_2)$. In this situation, we say the pair $(r_2, c_1)$ and $(r_1, c_2)$ has no internal cells. Then the demazure condition forces the existence of a cell of $D$ at $(r_1, c_1)$. 

We induct on the sum of dimensions of any rectangle of positions in $\mathbb{N}\times \mathbb{N}$. By the argument above, the statement has to hold for any rectangle whose length and width sum to $4$ or less, establishing the base case. Given a rectangle $R$ of positions between rows $r_1 < r_2$ and columns $c_1 < c_2$, suppose $(r_2, c_1)$ and $(r_1, c_2)$ are cells of $D$ but $(r_1, c_1)$ is not. Then $(r_2, c_1)$ and $(r_1, c_2)$ must have internal cells. Let $r_a$ be the lowest row in this rectangle such that $(r_a, c_1)$ is a cell of $D$. By assumption, $a>1$. Then let $r_b$ be the highest row in this rectangle strictly below $r_a$ that has at least one cell. This exists since $r_1$ has at least one cell. Take the rightmost cell $X$ in row $r_a$ that is strictly left of the leftmost cell $Y$ in row $r_b$. By construction, the pair of cells $X, Y$ has no internal cells and there is no cell in the column of $X$ and row $r_b$ of $Y$. Moreover, since $(r_2, c_1)$ and $(r_1, c_2)$ has at least one internal cell, by construction at least one of $X$ and $Y$ is such an internal cell, and thus $X$ and $Y$ determine a rectangle whose dimension sum is strictly smaller than that of $R$, yielding the desired contradiction. 
\end{proof}

As our motivation for the demazure condition comes from Schubert polynomials and Demazure characters, we have the following motivating observation.

\begin{proposition}
  Both composition diagrams and Rothe diagrams are demazure.
\end{proposition}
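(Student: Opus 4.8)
The plan is to verify Definition~\ref{def:diagram-dem} directly from the defining data of each family, handling composition diagrams and Rothe diagrams in turn. For a composition diagram $\D(a)$ the claim is immediate: the cells in row $r$ occupy precisely columns $1,2,\ldots,a_r$, so if $(r_1,c_2)\in\D(a)$ then $a_{r_1}\ge c_2>c_1$ and hence $(r_1,c_1)\in\D(a)$ for every $c_1<c_2$. Here the second cell $(r_2,c_1)$ of the putative split pair plays no role, since a left-justified diagram is already closed under moving any cell leftward within its row.

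For Rothe diagrams I would first record the standard symmetric membership criterion implied by \eqref{e:rothe}: a position $(i,c)$ lies in $\D(w)$ if and only if $w_i>c$ and $w^{-1}(c)>i$. Indeed, $(i,c)\in\D(w)$ means there is some $j>i$ with $w_j=c$ and $w_i>w_j$; as $w$ is a bijection the only possible $j$ is $w^{-1}(c)$, and the conditions $j>i$ and $w_i>w_j=c$ are exactly $w^{-1}(c)>i$ and $w_i>c$.

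Granting this criterion, the demazure property follows by a short chain of inequalities. Suppose $(r_2,c_1)$ and $(r_1,c_2)$ lie in $\D(w)$ with $r_1<r_2$ and $c_1<c_2$. From $(r_1,c_2)\in\D(w)$ we obtain $w_{r_1}>c_2>c_1$, and from $(r_2,c_1)\in\D(w)$ we obtain $w^{-1}(c_1)>r_2>r_1$; these are precisely the two conditions the criterion requires for $(r_1,c_1)\in\D(w)$. Hence $(r_1,c_1)\in\D(w)$, completing the verification. The only step carrying any content is establishing the symmetric criterion for Rothe diagrams, which is classical; once it is in place the argument is purely order-theoretic, so I anticipate no real obstacle.
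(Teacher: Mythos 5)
Your proposal is correct and follows essentially the same route as the paper: left-justification handles composition diagrams immediately, and the Rothe case reduces to the standard northwest property of $\D(w)$. The only difference is one of presentation — the paper simply asserts that $(r_1,c_1)\notin\D(w)$ forces either the whole column below or the whole row to the right to be empty, whereas you derive the same conclusion by first writing out the membership criterion $(i,c)\in\D(w)\iff w_i>c$ and $w^{-1}(c)>i$, which makes the verification fully self-contained.
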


\begin{proof}
  Failure of the demazure condition ensures the existence of a pair of cells $(r_1,c_2)$ and $(r_2,c_1)$ in $D$, where $r_1<r_2$ and $c_1<c_2$, such that the position $(r_1,c_1)$ is not a cell of $D$. Such a configuration is impossible in a composition diagram $D$ since all rows are left rectified, so $(r_1,c_2) \in D$ implies $(r_1,c_1) \in D$. Similarly, in a Rothe diagram $D$ , if a cell $(r_1,c_1) \not\in D$ then either $(r_2,c_1) \not\in D$ for all $r_2>r_1$ or $(r_1,c_2)\not\in D$ for all $c_2>c_1$. Thus both composition diagrams and Rothe diagrams are demazure.
\end{proof}

This together with extensive computations supports the following conjecture.

\begin{conjecture}
  Given a demazure diagram $D$, the Kohnert polynomial $\kohnert_D$ expands non-negatively into Demazure characters.
  \label{conj:demazure}
\end{conjecture}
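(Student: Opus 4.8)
The plan is to prove the conjecture by establishing a more refined combinatorial statement: that the Kohnert diagrams of a demazure diagram $D$ can be partitioned into blocks, each of which is in weight-preserving bijection with the set of Kohnert diagrams of a single key diagram $\D(b)$, with the key compositions $b$ counted with nonnegative multiplicity. Equivalently, following the strategy that Assaf~\cite{Ass-R} used for Schubert polynomials, I would seek to define an explicit "key-association" map on $\KD(D)$ that sorts each Kohnert diagram to a key diagram; the hoped-for outcome is that the fibers of this map are exactly the Kohnert-diagram sets of the target keys. Since $\schubert_w$ is the Kohnert polynomial of the Rothe diagram, which is demazure, and since $\schubert_w$ is known to be Demazure-positive by Lascoux--Sch\"utzenberger, this would simultaneously recover the known case and explain why the demazure condition is the right hypothesis.

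The first step would be to exploit the fact, proved in Theorem~\ref{thm:kohnert-slide} (applicable since demazure diagrams are fundamental), that $\kohnert_D = \sum_{T \in \FKD(D)} \fund_{\wt(T)}$, reducing the problem to showing that this particular nonnegative sum of fundamental slide polynomials regroups into a nonnegative sum of Demazure characters. Since Theorem~\ref{thm:key-slide} gives each $\key_a = \sum_{T \in \mathrm{QKT}(a)} \fund_{\wt(T)}$, the task becomes: construct a surjection from $\FKD(D)$ onto a multiset of compositions $\{a\}$, such that the fiber over each $a$ is in weight-preserving bijection with the quasi-Yamanouchi Kohnert tableaux $\mathrm{QKT}(a)$. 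The crucial structural input here is the demazure condition itself: whenever cells $(r_2,c_1)$ and $(r_1,c_2)$ both occur with $r_1<r_2$, $c_1<c_2$, the cell $(r_1,c_1)$ is present, which (as noted after Definition~\ref{def:diagram-dem}) forces $D$ to be "northwest-closed" in the Reiner--Shimozono sense. I would use this to show that the leftmost cells in each row of any $T \in \KD(D)$ behave coherently under Kohnert moves, so that a well-defined "left key" composition can be extracted.

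The second step is to formalize the key-extraction and prove the bijection on fibers. I would attempt to adapt the column-sorting or "lifting-the-columns" procedure: for $T \in \KD(D)$, read off the multiset of column indices, sort the cells within each column downward to obtain a canonical diagram, and argue that the demazure hypothesis guarantees this canonical diagram is again a Kohnert diagram of $D$ and is a key diagram $\D(b)$ up to column permutation; the multiset of such $b$, each with multiplicity one per block, gives the Demazure expansion. Verifying that each block is genuinely all of $\KD(\D(b))$ — neither too big (staying inside $\KD(D)$, which uses northwest-closure to guarantee the requisite Kohnert moves are legal) nor too small (every Kohnert diagram of $\D(b)$ is realized, which requires that the upward "un-sorting" moves never collide) — is where the argument has real content.

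The main obstacle, and the reason this remains a conjecture, is precisely the fiber-size verification: Kohnert moves on a general demazure diagram can interact in ways that are delicate to control, and there is no a priori guarantee that the naive column-sorting map has fibers of the correct cardinality, nor even that it lands in $\KD(D)$. A secondary obstacle is that the demazure condition, while it rules out the "split" pathologies that break fundamental-slide positivity, does not obviously prevent the kind of cancellation-free-but-non-triangular overlap between blocks that would spoil a direct bijection; one may need instead an inductive argument on the number of cells or on a suitable statistic, peeling off one Kohnert move at a time and tracking how $\FKD(D)$ changes, in the spirit of the divided-difference recursion for $\pi_w$. I would expect the cleanest route to be an induction that mirrors the operators $\pi_i$ directly: show that applying a carefully chosen sequence of Kohnert moves to $D$ corresponds, on the level of polynomials, to applying $\pi_i$ to a smaller demazure diagram, so that $\kohnert_D$ is built up from $x^{\mathrm{sort}}$ by Demazure operators exactly as $\key_a$ is — but making each such step legal and Demazure-positive is exactly the hard part.
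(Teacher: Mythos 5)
This statement is a \emph{conjecture} in the paper: the authors offer no proof, only supporting evidence (key diagrams and Rothe diagrams are demazure and their Kohnert polynomials are known to be Demazure-positive; skew diagrams are demazure and the skew polynomials are proved Demazure-positive by a separate weak-dual-equivalence argument; plus extensive computation, and the observation that the demazure condition coincides with the Reiner--Shimozono northwest condition). So there is no proof in the paper against which to match your proposal, and your proposal does not supply one either: it is a research plan whose central step is explicitly left open. You acknowledge this yourself, which is to your credit, but the result is that no part of the conjecture is actually established beyond what the paper already records.

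Concretely, the gap is the entire second step. The reduction via Theorem~\ref{thm:kohnert-slide} to regrouping $\sum_{T \in \FKD(D)} \fund_{\wt(T)}$ into sums of the form $\sum_{T \in \mathrm{QKT}(a)} \fund_{\wt(T)}$ is a correct and natural framing (and is consistent with how the paper proves the skew-polynomial case), but the proposed key-association map --- column-sorting a Kohnert diagram of $D$ to a key diagram and claiming the fibers are exactly $\KD(\D(b))$ --- is never defined precisely, and you give no argument that it is well-defined, lands in $\KD(D)$, or has fibers of the right size; indeed you identify exactly these points as ``where the argument has real content'' and as ``the reason this remains a conjecture.'' The fallback suggestion of an induction mirroring the operators $\pi_i$ is likewise only a direction, with no candidate sequence of Kohnert moves and no verification that each step preserves Demazure positivity. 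As written, the proposal would be a reasonable opening paragraph of a research program, but it proves nothing, and it should not be presented as a proof of the conjecture.
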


If true, Conjecture~\ref{conj:demazure} gives an enormous class of Kohnert polynomials that have representation-theoretic and geometric significance. In further support of the conjecture, the demazure condition is exactly the same as the \emph{northwest} condition of Reiner and Shimozono \cite{RS95-2,RS98} in their study of Specht modules associated to diagrams. This suggests a connection between flagged Weyl modules and Kohnert polynomials.

In light of Proposition~\ref{prop:key-stable}, which states that Demazure characters stabilize to Schur functions, Conjecture~\ref{conj:demazure} gives an enormous class of Kohnert quasisymmetric functions that are \emph{Schur positive}. This would be striking given that not all Kohnert quasisymmetric functions are symmetric let alone Schur positive and shows the potential power of Kohnert polynomials.

\subsection{Skew polynomials}
\label{sec:dem-skew}

We now give a new and nontrivial example of a demazure Kohnert basis that further supports Conjecture~\ref{conj:demazure}.

\begin{definition}
  For a weak composition $a$, the \emph{skew diagram} $\mathbb{S}(a)$ is constructed as follows: 
  \begin{itemize}
  \item left justify $a_i$ cells in row $i$,
  \item for $j$ from $1$ to $n$ such that $a_j>0$, take $i<j$ maximal such that $a_i>0$, and if $a_i > a_j$, then shift rows $k \geq j$ rightward by $a_i-a_j$ columns,
  \item shift each row $j$ rightward by $\#\{i<j \mid a_i=0\}$ columns.
  \end{itemize}
  The \emph{skew polynomial} is the Kohnert polynomial $\kohnert_{\mathbb{S}(a)}$.
  \label{def:diagram-skew}
\end{definition}

For example, we construct the skew diagram $\mathbb{S}(1,0,3,2,0,3)$ from the composition diagram $\D(1,0,3,2,0,3)$ by shifting rows $k \geq 4$ rightward by $1$ column since $a_3 - a_4 = 1$, then shifting rows $3,4$ rightward by one since $a_2=0$ and row $6$ rightward by two since $a_2 = a_5 = 0$. These steps are illustrated in Figure~\ref{fig:skew}.

\begin{figure}[ht]
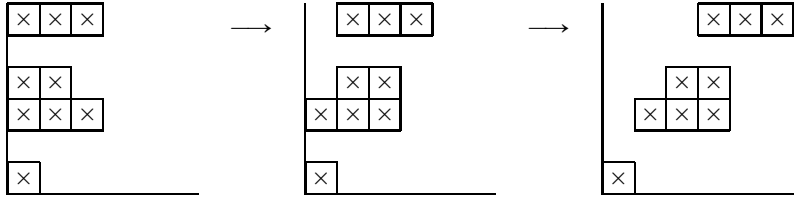

    \begin{displaymath}
      \begin{array}{c@{\hskip \cellsize}c@{\hskip 1\cellsize}c@{\hskip \cellsize}c@{\hskip 1\cellsize}c}
        \vline\tableau{ \times & \times & \times & & & \\ \\ \times & \times \\ \times & \times & \times \\ \\ \times \\\hline} & \longrightarrow &
      \vline\tableau{ & \times & \times & \times & & \\ \\ & \times & \times \\ \times & \times & \times \\ \\ \times \\\hline} & \longrightarrow &
      \vline\tableau{ & & & \times & \times & \times \\ \\ & & \times & \times \\ & \times & \times & \times \\ \\ \times \\\hline}
      \end{array}
    \end{displaymath}
    \caption{\label{fig:skew}Illustration of construction of the skew diagram $\mathbb{S}(1,0,3,2,0,3)$.}
\end{figure}

\begin{proposition}
  Skew diagrams are demazure.
\end{proposition}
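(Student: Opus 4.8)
The plan is to reduce the statement to one structural fact about $\mathbb{S}(a)$: each nonzero row is a contiguous block of cells, and the left endpoints of these blocks are weakly increasing as the row index increases. Write $[L_j, R_j]$ for the interval of columns occupied by row $j$ when $a_j > 0$, so that $R_j - L_j + 1 = a_j$. Contiguity is immediate: after the initial left-justification row $i$ occupies $[1, a_i]$, and every subsequent operation in Definition~\ref{def:diagram-skew} is a rigid rightward translation of a set of whole rows, which carries intervals to intervals.

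For the monotonicity of the $L_j$, let $j_1 < \cdots < j_m$ be the indices with $a_{j_t} > 0$. After left-justification $L_{j_t} = 1$ for every $t$. The second bullet processes these indices in increasing order, and when it reaches $j = j_t$ (for $t \geq 2$) it shifts all rows of index $\geq j_t$ rightward by the fixed nonnegative amount $\max(0, a_{j_{t-1}} - a_{j_t})$. Any operation of the form ``translate all rows of index $\geq c$ rightward by a fixed $\delta \geq 0$'' preserves the property that $L_j$ is weakly increasing over $\{j : a_j \neq 0\}$ --- the only inequality to check is across the threshold, where $L_{c-1}$ is unchanged and $L_c$ only increases --- so by induction on the steps of the loop this property persists through the second bullet. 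The third bullet then replaces $L_j$ by $L_j + \#\{i < j : a_i = 0\}$, and the added term is itself weakly increasing in $j$; a sum of two weakly increasing functions of $j$ is weakly increasing, so $L_{j_1} \leq \cdots \leq L_{j_m}$ in $\mathbb{S}(a)$.

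It then follows quickly that $\mathbb{S}(a)$ is demazure. Suppose $(r_2, c_1)$ and $(r_1, c_2)$ are cells of $\mathbb{S}(a)$ with $r_1 < r_2$ and $c_1 < c_2$. Rows $r_1$ and $r_2$ are nonzero, so monotonicity gives $L_{r_1} \leq L_{r_2}$, while $(r_2, c_1) \in \mathbb{S}(a)$ gives $L_{r_2} \leq c_1$; hence $L_{r_1} \leq c_1$. Also $(r_1, c_2) \in \mathbb{S}(a)$ gives $c_2 \leq R_{r_1}$, so $c_1 < c_2 \leq R_{r_1}$. Therefore $c_1 \in [L_{r_1}, R_{r_1}]$, i.e. $(r_1, c_1) \in \mathbb{S}(a)$, which is exactly the demazure condition. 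The only delicate point is the bookkeeping in the middle paragraph: one must confirm that the loop in the second bullet really does translate each affected row by the claimed fixed amounts and that no crossing of left endpoints can ever be introduced; everything else is immediate.
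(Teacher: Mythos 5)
Your proof is correct and follows essentially the same route as the paper's: both arguments rest on the two structural facts that each row of $\mathbb{S}(a)$ is a contiguous interval of columns and that the left endpoints of nonempty rows weakly increase with the row index, from which the demazure condition follows immediately. The only difference is that you verify these facts by tracking the construction in detail, whereas the paper simply asserts them.
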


\begin{proof}
  Failure of the demazure condition ensures the existence of a pair of cells $(r_1,c_2)$ and $(r_2,c_1)$ in $D$, where $r_1<r_2$ and $c_1<c_2$, such that the position $(r_1,c_1)$ is not a cell of $D$. Such a configuration is impossible in a skew diagram $D$ since rows of cells have no internal gaps and the leftmost cell in a lower row is weakly left of the leftmost cell in a higher row. Thus $(r_2,c_1) \in D$ implies $(r_1,c_1) \in D$ whenever row $r_1$ is nonempty.
\end{proof}

Conjecture~\ref{conj:demazure} implies that skew polynomials should expand nonnegatively in Demazure characters. Indeed, this fact follows using the machinery of weak dual equivalence developed in \cite{Ass-W}.

\begin{theorem}
  Skew polynomials $\{\kohnert_{\mathbb{S}(a)}\}$ form a basis of $\mathbb{Z}[x_1,x_2,\ldots,x_n]$, expand nonnegatively in demazure characters, and stabilize to Schur positive symmetric functions.
\end{theorem}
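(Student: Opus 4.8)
The three assertions are handled in turn. The \emph{basis} claim is immediate: the construction of $\mathbb{S}(a)$ left-justifies $a_i$ cells in row $i$ and then only translates entire rows horizontally, so $\wt(\mathbb{S}(a)) = a$ for every weak composition $a$; hence $\{\kohnert_{\mathbb{S}(a)}\}$ is a set of Kohnert polynomials indexed by all weak compositions with the correct weights, and Theorem~\ref{thm:kohnert-basis} gives that it is a basis of $\mathbb{Z}[x_1,\ldots,x_n]$.

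For \emph{Demazure positivity}, the plan is to realize $\KD(\mathbb{S}(a))$, suitably standardized, as the vertex set of a weak dual equivalence graph in the sense of \cite{Ass-W}. Since skew diagrams are demazure and hence fundamental, Theorem~\ref{thm:kohnert-slide} gives $\kohnert_{\mathbb{S}(a)} = \sum_{T \in \FKD(\mathbb{S}(a))} \fund_{\wt(T)}$. First I would refine this by introducing \emph{standard Kohnert tableaux} of $\mathbb{S}(a)$: fillings of the cells of a Kohnert diagram with distinct entries $1,\ldots,|\mathbb{S}(a)|$ increasing along columns in the reading order used throughout, so that each standard tableau contributes a fundamental slide polynomial and their total recovers $\kohnert_{\mathbb{S}(a)}$ (exactly as quasi-Yamanouchi Kohnert tableaux do for $\key_a$ in \cite{AS-2}). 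On this set I would define the dual-equivalence involutions $\varphi_i$, swapping the positions of $i-1,i,i+1$ according to the standard rule, and verify the axioms of a weak dual equivalence graph: commutation of $\varphi_i$ and $\varphi_j$ for $|i-j|\geq 3$, the correct local structure of the $\langle \varphi_i,\varphi_{i+1}\rangle$-components, and the prescribed transformation of the descent composition. The main theorem of \cite{Ass-W} then yields $\kohnert_{\mathbb{S}(a)} = \sum_b d_{a,b}\,\key_b$ with $d_{a,b} \in \mathbb{Z}_{\geq 0}$.

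For the \emph{stable limit}, observe first that $\mathbb{S}(0^m \times a)$ is $\mathbb{S}(a)$ translated up by $m$ rows and right by $m$ columns; since horizontal translation does not change a Kohnert polynomial, $\kohnert_{\mathbb{S}(0^m\times a)} = \kohnert_{0^m \times \mathbb{S}(a)}$ and so $\Kohnert_{\mathbb{S}(a)} = \lim_{m\to\infty}\kohnert_{\mathbb{S}(0^m\times a)}$. The weak dual equivalence structure built above behaves coherently under prepending zero rows and, in the $m\to\infty$ limit, degenerates to an honest dual equivalence graph, whose generating function is Schur positive by Assaf's structure theorem for dual equivalence graphs. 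Equivalently, one may combine the Demazure positivity of $\kohnert_{\mathbb{S}(0^m\times a)}$ with Proposition~\ref{prop:key-stable}---each Demazure character in the expansion stabilizing to a single Schur function $s_{\sort(b)}$---using that the expansion stabilizes with $m$. Either way, $\Kohnert_{\mathbb{S}(a)}$ is a symmetric function expanding nonnegatively in Schur functions.

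The main obstacle is the verification of the weak dual equivalence axioms for standard Kohnert tableaux of $\mathbb{S}(a)$---in particular, controlling how Kohnert moves interact with the involutions $\varphi_i$ and checking the behavior of the descent statistic under them. This is exactly the place where the rigid geometry of skew diagrams is needed: every row is a contiguous block of cells, and the leftmost cells of rows weakly increase going up. It is also why the argument does not extend to arbitrary demazure diagrams, for which Demazure positivity remains Conjecture~\ref{conj:demazure}.
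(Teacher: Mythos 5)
Your basis argument and your stable-limit argument are both fine and match the paper: the former is an immediate application of Theorem~\ref{thm:kohnert-basis}, and the latter follows from Demazure positivity together with Proposition~\ref{prop:keytoSchur} once one notes that $\mathbb{S}(0^m\times a)$ is just $\mathbb{S}(a)$ translated up and to the right, so that horizontal translation invariance of Kohnert polynomials reduces the limit to $\Kohnert_{\mathbb{S}(a)}$. The problem is the middle third.

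For Demazure positivity you propose to build, from scratch, a weak dual equivalence structure on standard Kohnert tableaux of $\mathbb{S}(a)$: define involutions $\varphi_i$ and ``verify the axioms of a weak dual equivalence graph.'' You then name that verification as ``the main obstacle'' and stop. That verification \emph{is} the theorem --- it is precisely the step that fails for general demazure diagrams (which is why Conjecture~\ref{conj:demazure} is open), so gesturing at ``the rigid geometry of skew diagrams'' without actually using it is not a proof. To see how much work is being deferred, look at the paper's proof of Theorem~\ref{thm:LR} for lock diagrams: even in that much more constrained setting, establishing well-definedness of the involutions requires a careful case analysis exploiting right-justification, and the local axiom is checked by finite (computer-assisted) enumeration. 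Nothing in your write-up indicates how the analogous analysis would go for skew diagrams, nor even that your $\varphi_i$ are well-defined involutions on the proposed set of tableaux.

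The paper takes a different and essentially cost-free route here: it does \emph{not} construct a new dual equivalence structure. Instead it observes that $\kohnert_{\mathbb{S}(a)}$ coincides with a \emph{skew key polynomial} of \cite{Ass-W} --- a composition with a partition removed from its northwest corner --- by chaining the bijection between standard key tableaux and quasi-Yamanouchi Kohnert tableaux (\cite{Ass-W}, Theorem~3.15) with the bijection from those to quasi-Yamanouchi Kohnert diagrams (\cite{AS-2}, Theorem~2.8). The generalized Littlewood--Richardson rule of \cite{Ass-W} (Theorem~4.10) already gives the nonnegative Demazure expansion of skew key polynomials, so positivity is inherited rather than re-proved. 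If you want to salvage your approach, you either need to carry out the axiom verification in full, or you should instead establish the identification $\kohnert_{\mathbb{S}(a)} = \key_{b\setminus\mu}$ for the appropriate $b$ and $\mu$ and cite the existing result, as the paper does.
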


\begin{proof}
  By Theorem~\ref{thm:kohnert-basis}, since skew polynomials are a Kohnert basis, they are lower uni-triangular with respect to monomials, and as such form a basis for the polynomial ring.   
  
  In \cite{Ass-W}(Theorem~4.10), Assaf proves a generalized Littlewood--Richardson rule that expands a \emph{skew key polynomial}, indexed by a composition with a partition shape removed from the northwest corner, as a nonnegative integral sum of Demazure characters (therein called key polynomials). The definition for these skew key polynomials, \cite{Ass-W}(Definition~4.7), uses standard key tableaux. The bijection between standard key tableaux and quasi-Yamanouchi Kohnert tableaux stated in \cite{Ass-W}(Definition~3.14) and proved in \cite{Ass-W}(Theorem~3.15) together with the bijection from the latter to quasi-Yamanouchi Kohnert diagrams stated in \cite{AS-2}(Definition~2.5) and proved in \cite{AS-2}(Theorem~2.8) establishes the equivalence of skew key polynomials with the Kohnert polynomials defined by the indexing shape. Thus each skew polynomial is a skew key polynomial, and so expands nonnegatively into Demazure characters.

  Finally, by Proposition~\ref{prop:keytoSchur} Demazure characters stabilize to Schur functions, hence the stable limit of a skew polynomial is a Schur-positive symmetric function.
\end{proof}

For example, the skew diagram $\mathbb{S}(1,0,3,2,0,3)$ can be realized as the composition diagram $\D(1,0,4,4,3,6)$ skewed by the partition $(0,0,1,2,3,3)$, and so the polynomial equivalence states
\[ \kohnert_{\mathbb{S}(1,0,3,2,0,3)} = \key_{(1,0,4,4,3,6)\setminus(0,0,1,2,3,3)} . \]

Thanks to the stability results for Kohnert polynomials, we have the following.

\begin{corollary}
  Let $a$ be a weak composition. Let $\lambda$ be the partition given by the flattening of the weight of the skew diagram for $a$, i.e.
  \[ \lambda = \mathrm{rev}( \flatten(\wt(\mathbb{S}(a))) ), \]
  and let $\mu$ be the partition given by 
  \[ \mu_i = \lambda_i - \flatten(a)_i . \]
  Then the stable limit of the skew polynomial indexed by $a$ is
  \[ \Kohnert_{\mathbb{S}(a)} = \lim_{m\rightarrow\infty} \kohnert_{\mathbb{S}(a)} = s_{\lambda/\mu} \]
  In particular, skew polynomials are a polynomial generalization of \emph{skew Schur functions}.
  \label{cor:skew}
\end{corollary}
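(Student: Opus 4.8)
The plan is to deduce the identification of the stable limit from the skew key polynomial identity established in the preceding theorem and then invoke the known stabilization of Demazure characters. By that theorem, $\kohnert_{\mathbb{S}(a)}$ coincides with the skew key polynomial $\key_{\lambda/\mu}$ of \cite{Ass-W}, where $\lambda/\mu$ is the skew shape carried by $\mathbb{S}(a)$: by construction every row of $\mathbb{S}(a)$ is a contiguous block of cells, the left endpoints weakly increase as one passes to higher nonempty rows, and (as one checks from the two families of column shifts in Definition~\ref{def:diagram-skew}) so do the right endpoints, so that reading the nonempty rows of $\mathbb{S}(a)$ from the top down yields a genuine skew Young diagram. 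The first step is therefore the bookkeeping one of matching the outer boundary of this diagram with $\lambda = \mathrm{rev}(\flatten(\wt(\mathbb{S}(a))))$, the inner boundary with $\mu$, and the row lengths $\lambda_i - \mu_i$ with the parts of $\flatten(a)$, exactly as in the statement.

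It then remains to show that the skew key polynomial $\key_{\lambda/\mu}$ stabilizes to the skew Schur function $s_{\lambda/\mu}$. Using Proposition~\ref{prop:delete-row} to discard the empty rows of $\mathbb{S}(a)$, we have $\Kohnert_{\mathbb{S}(a)} = \lim_{m} \key_{0^m \times (\lambda/\mu)}$. Expand each $\key_{0^m \times (\lambda/\mu)}$ into Demazure characters by the generalized Littlewood--Richardson rule of \cite{Ass-W}(Theorem~4.10); since every cell of $0^m \times \mathbb{S}(a)$ lies above row $m$ once $m \geq |\mathbb{S}(a)|$, these expansions stabilize in $m$, and by Proposition~\ref{prop:keytoSchur} each Demazure character $\key_{0^j \times \gamma}$ occurring tends to $s_{\sort(\gamma)}$. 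Thus $\Kohnert_{\mathbb{S}(a)} = \sum_{\gamma} c_{\gamma}\, s_{\sort(\gamma)}$ for nonnegative integers $c_\gamma$. Grouping the $\gamma$ by their sorted shape $\nu$, the combined coefficient $\sum_{\sort(\gamma) = \nu} c_{\gamma}$ must equal the classical skew Littlewood--Richardson number $c^{\lambda}_{\mu\nu}$, because the \cite{Ass-W} rule refines the classical one: its left-hand side becomes symmetric in the stable limit, so its right-hand side must then coincide with the Schur expansion of $s_{\lambda/\mu}$. Hence $\Kohnert_{\mathbb{S}(a)} = \sum_{\nu} c^{\lambda}_{\mu\nu}\, s_{\nu} = s_{\lambda/\mu}$, which is the claim.

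I expect the principal obstacle to be exactly this last identification, namely checking that the Demazure-character expansion of $\key_{\lambda/\mu}$ provided by \cite{Ass-W} degenerates, after stabilization, to the classical Littlewood--Richardson expansion of $s_{\lambda/\mu}$ with the correct multiplicities. Should that refinement not be recorded directly, one can instead compute the stable limit intrinsically: by Theorem~\ref{thm:kohnert-stable} the monomial-symmetric coefficients of $\Kohnert_{\mathbb{S}(a)}$ are counted by the height-bounded elements of $\MKD(0^m \times \mathbb{S}(a))$, and a weight-preserving bijection between these and semistandard Young tableaux of shape $\lambda/\mu$ would pin down $\Kohnert_{\mathbb{S}(a)}$ as $s_{\lambda/\mu}$ directly. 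The preliminary bookkeeping identifying $\lambda$ and $\mu$ from Definition~\ref{def:diagram-skew} is routine but must be carried out carefully because of the shifts in the construction of $\mathbb{S}(a)$.
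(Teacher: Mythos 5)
Your overall route matches the paper's: the paper gives no separate argument for this corollary beyond the sentence ``Thanks to the stability results for Kohnert polynomials,'' relying on the preceding theorem's identification $\kohnert_{\mathbb{S}(a)} = \key_{\lambda/\mu}$ (a skew key polynomial in the sense of \cite{Ass-W}) together with stabilization. Your first step --- checking that $\mathbb{S}(a)$ really is a skew Young diagram and matching $\lambda$, $\mu$ to its outer and inner boundaries --- is exactly the bookkeeping the paper leaves implicit, and your observation that both left and right endpoints of the rows weakly increase upward is the right thing to verify from Definition~\ref{def:diagram-skew}.

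The gap is in your identification of the stable limit. From the Demazure expansion $\kohnert_{\mathbb{S}(a)} = \sum_\gamma c_\gamma \key_\gamma$ and Proposition~\ref{prop:keytoSchur} you correctly get that $\Kohnert_{\mathbb{S}(a)} = \sum_\nu \bigl(\sum_{\sort(\gamma)=\nu} c_\gamma\bigr) s_\nu$ is Schur-positive; but concluding that the grouped coefficients equal $c^{\lambda}_{\mu\nu}$ is precisely the content of the corollary, and your stated reason --- ``its left-hand side becomes symmetric in the stable limit, so its right-hand side must then coincide with the Schur expansion of $s_{\lambda/\mu}$'' --- is a non sequitur: symmetry alone does not single out $s_{\lambda/\mu}$ among Schur-positive symmetric functions of the right degree. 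What you actually need is either (a) the fact, to be cited or proved from \cite{Ass-W}, that the generalized Littlewood--Richardson rule there degenerates stably to the classical one for the skew shape $\lambda/\mu$, or (b) your fallback: a weight-preserving bijection between the height-bounded Kohnert diagrams of $0^m\times\mathbb{S}(a)$ and semistandard Young tableaux of shape $\lambda/\mu$ (equivalently, a descent-preserving bijection between non-split quasi-Yamanouchi Kohnert diagrams and standard Young tableaux of shape $\lambda/\mu$, which would give $\Kohnert_{\mathbb{S}(a)} = \sum_T F_{\Des(T)} = s_{\lambda/\mu}$ via Theorem~\ref{thm:kohnert-stable-fund}). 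You name both escapes but execute neither, so as written the argument establishes only Schur-positivity of the stable limit, not the asserted identity. To be fair, the paper itself outsources this same step to \cite{Ass-W}, so the defect is one of justification rather than of strategy.
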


\begin{figure}[ht]
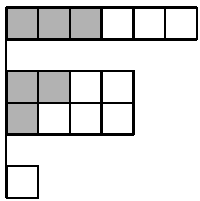

  \begin{displaymath}
    \vline\tableau{ \cb & \cb & \cb & \ & \ & \ \\ \\ \cb & \cb & \ & \ \\ \cb & \ & \ & \ \\ \\ \ }
  \end{displaymath}
  \caption{\label{fig:skew-schur}Illustration of the partitions $\lambda = (6,4,4,1)$ and $\mu = (3,2,1)$ associated to the skew diagram $\mathbb{S}(1,0,3,2,0,3)$.}
\end{figure}

For example, Figure~\ref{fig:skew-schur} illustrates the computation of $\lambda$ and $\mu \subset \lambda$ in establishing the following limit,
\[ \Kohnert_{\mathbb{S}(1,0,3,2,0,3)} = s_{(6,4,4,1)\setminus(3,2,1)} . \]

\subsection{Applications of skew polynomials}
\label{sec:dem-app}

Unlike Schubert polynomials, whose structure constants enumerate points in a suitable intersection of Schubert varieties and as such as known to be nonnegative, Demazure characters often have negative structure constants. For example,
\[ \key_{(2,0,2)} \key_{(0,2,0)} = \key_{(2, 2, 2)} + \key_{(3, 1, 2)} + \key_{(4, 0, 2)} + \key_{(2, 3, 1)} - \key_{(3, 2, 1)} + \key_{(2, 4, 0)} - \key_{(4, 2, 0)} . \]

Interestingly, the structure constants for skew polynomials are often nonnegative. For example, Figure~\ref{fig:skew-pos} illustrates the following expansion,
\[ \kohnert_{\mathbb{S}(2,0,2)} \cdot \kohnert_{\mathbb{S}(0,2,0)} = \kohnert_{\mathbb{S}(2,2,2)} + \kohnert_{\mathbb{S}(3,1,2)} + \kohnert_{\mathbb{S}(4,0,2)} + \kohnert_{\mathbb{S}(2,3,1)} + \kohnert_{\mathbb{S}(2,4,0)}.\]

\begin{figure}[ht]
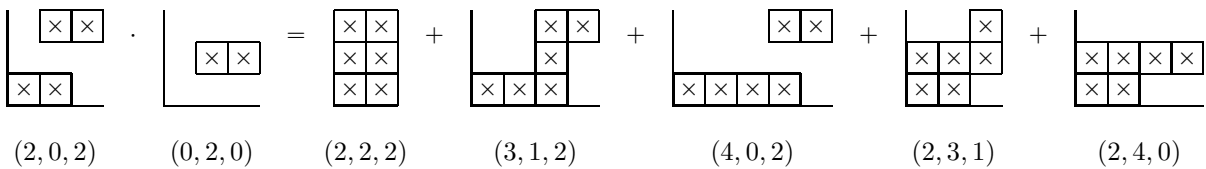

  \begin{displaymath}
    \begin{array}{ccc c ccccc cccc}
      \vline\tableau{ & \times & \times \\ \\ \times & \times \\\hline} & \cdot &
      \vline\tableau{ \\ & \times & \times \\ \\\hline} & = &
      \vline\tableau{ \times & \times \\ \times & \times \\ \times & \times \\\hline } & + & 
      \vline\tableau{ & & \times & \times \\ & & \times \\ \times & \times & \times \\\hline } & + & 
      \vline\tableau{ & & & \times & \times \\ \\ \times & \times & \times & \times \\\hline } & + & 
      \vline\tableau{ & & \times \\ \times & \times & \times \\ \times & \times \\\hline } & + & 
      \vline\tableau{ & \\ \times & \times & \times & \times \\ \times & \times \\\hline } \\ \\
      (2,0,2) & & (0,2,0) & & \makebox[0pt]{$(2,2,2)$} & & (3,1,2) & & (4,0,2) & & (2,3,1) & & (2,4,0)
    \end{array}
  \end{displaymath}
  \caption{\label{fig:skew-pos}Skew diagrams illustrating the skew polynomial expansion of $\kohnert_{\mathbb{S}(2,0,2)} \cdot \kohnert_{\mathbb{S}(0,2,0)}$.}
\end{figure}

In light of Corollary~\ref{cor:skew}, this gives the following nonnegative expansion of a product of skew Schur function into skew Schur functions, 
\[ s_{(3,2)/(1)} \cdot s_{(2)} = s_{(2,2,2)} + s_{(4,3,3)/(2,2)} + s_{(5,4)/(3)} + s_{(3,3,2)/(2)} + s_{(4,2)}. \]
Since skew Schur functions over determine a basis for symmetric functions, this expansion is surprising.

However, such a nice expansion does not always hold. For example, Figure~\ref{fig:skew-neg} illustrates the following signed expansion,
\[ \kohnert_{\mathbb{S}(1,0,1)} \cdot \kohnert_{\mathbb{S}(0,0,1)} = \kohnert_{\mathbb{S}(1,0,2)} + \kohnert_{\mathbb{S}(1,1,1)} + \kohnert_{\mathbb{S}(2,0,1)} - \kohnert_{\mathbb{S}(3,0,0)} . \]

\begin{figure}[ht]
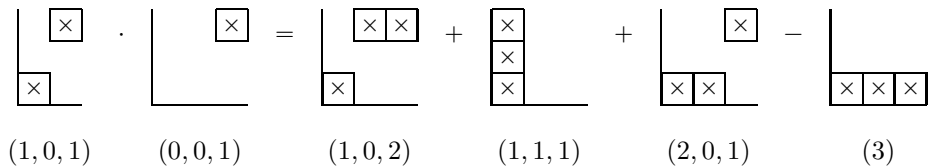

  \begin{displaymath}
    \begin{array}{ccc c cc cc cc c}
      \vline\tableau{ & \times \\ \\ \times & \\\hline} & \cdot &
      \vline\tableau{ & & \times \\ & \\ & \\\hline} & = &
      \vline\tableau{ & \times & \times \\ & \\ \times \\\hline } & + & 
      \vline\tableau{\times & & \\ \times \\ \times \\\hline } & + & 
      \vline\tableau{ & & \times \\ \\ \times & \times \\\hline } & - & 
      \vline\tableau{ \\ \\ \times & \times & \times \\\hline } \\ \\
      (1,0,1) & & (0,0,1) & & (1,0,2) & & (1,1,1) & & (2,0,1) & & (3)
    \end{array}
  \end{displaymath}
  \caption{\label{fig:skew-neg}Skew diagrams illustrating the skew polynomial expansion of $\kohnert_{\mathbb{S}(1,0,1)} \cdot \kohnert_{\mathbb{S}(0,0,1)}$.}
\end{figure}

In this case, Corollary~\ref{cor:skew} still applies and  gives the following signed expansion, 
\[ s_{(2,1)/(1)} \cdot s_{(1)} = s_{(3,1)/(1)} + s_{(1,1,1)} + s_{(3,2)/(2)} - s_{(3)}. \]
Despite the signs, the canonical expansion of a product of skew Schur functions into skew Schur functions is still interesting, and signs appearing can be natural (e.g. see \cite{AM11}). Therefore exploring the structure constants for skew polynomials is a worthwhile endeavor.

Shifting to a more positive direction, skew polynomials correspond with Demazure characters in the case when $a$ is weakly increasing, in which case both are Schur polynomials. Skew polynomials also correspond with Schubert polynomials in certain cases, even outside of the above coincidence with Schur polynomials. For instance, we have the following non-obvious coincidence,
\[ \kohnert_{\mathbb{S}(1,0,3,2,0,3)} = \schubert_{216539478} . \]
Comparing the skew diagram $\mathbb{S}(1,0,3,2,0,3)$ and Rothe diagram $\D(216539478)$ as in Figure~\ref{fig:skew-rothe}, the diagrams themselves are somewhat different yet the resulting Kohnert polynomials coincide.

\begin{figure}[ht]
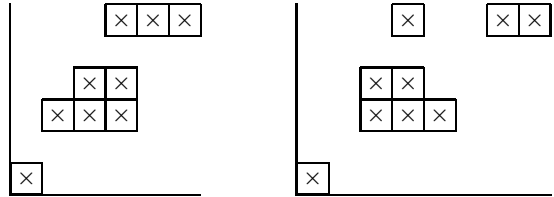

    \begin{displaymath}
      \begin{array}{c@{\hskip 3\cellsize}c}
      \vline\tableau{ & & & \times & \times & \times \\ \\ & & \times & \times \\ & \times & \times & \times \\ \\ \times \\\hline} &
      \vline\tableau{ & & & \times & & & \times & \times \\ \\ & & \times & \times \\ & & \times & \times & \times \\ \\ \times \\\hline}
      \end{array}
    \end{displaymath}
    \caption{\label{fig:skew-rothe}The skew diagram $\mathbb{S}(1,0,3,2,0,3)$ and Rothe diagram $\D(216539478)$.}
\end{figure}

While this coincidence of skew polynomials and Schubert polynomials does not hold in general, we conjecture that Schubert polynomials are, in fact, nested between Demazure characters and skew polynomials in the following sense.

\begin{conjecture}
  Skew polynomials expand as nonnegative sums of Schubert polynomials.
  \label{conj:skew}
\end{conjecture}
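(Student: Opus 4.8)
Since Conjecture~\ref{conj:skew} is stated as open, what follows is a proposed strategy rather than a proof. The first thing to record is that a direct operator-theoretic attack is hopeless: although $\key_{\lambda/\mu}$-type polynomials are built from the Demazure operators $\pi_i$, the cone of Schubert-positive polynomials is \emph{not} stable under $\pi_i$. For instance $\pi_1 \schubert_{231} = \pi_1(x_1 x_2) = x_1 x_2 + x_2^2$, and one checks that this is not a nonnegative combination of Schubert polynomials: the dominance-minimal monomial $x_2^2$ equals $x^{\mathrm{code}(1423)}$, which forces $\schubert_{1423} = x_1^2 + x_1 x_2 + x_2^2$ to occur with positive coefficient, reintroducing an $x_1^2$ term that the target does not have. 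So the positivity in Conjecture~\ref{conj:skew} must be extracted from geometry or from a careful combinatorial assembly, and in either case must use the special shape of $\mathbb{S}(a)$ in an essential way, since arbitrary demazure diagrams (Definition~\ref{def:diagram-dem}), and even arbitrary Demazure characters, are not expected to be Schubert-positive.

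\textbf{Geometric approach.} The plan is to realize $\kohnert_{\mathbb{S}(a)} = \key_{\lambda/\mu}$, via the identification with skew key polynomials \cite{Ass-W} used above, as a polynomial representative of the class of a cycle in a flag variety --- concretely, a degeneracy locus for a morphism of flagged vector bundles in the spirit of Fulton \cite{Ful92}, where the rightward shifts defining $\mathbb{S}(a)$ encode the nested rank conditions. One would then (i) compute this class by a Kempf--Laksov/Giambelli-type argument and identify it with the pushforward to the full flag variety of a single Schubert class on a partial-flag bundle, and (ii) invoke the nonnegativity of Schubert structure constants on flag bundles (a consequence of Kleiman transversality, these numbers being enumerative) to conclude that the induced Schubert expansion has nonnegative coefficients. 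The dominant case, where $a$ is weakly increasing and $\kohnert_{\mathbb{S}(a)} = \key_a = s_{\mathrm{rev}(a)}$ is itself a Grassmannian Schubert polynomial, together with coincidences such as $\kohnert_{\mathbb{S}(1,0,3,2,0,3)} = \schubert_{216539478}$ (the underlying locus being irreducible) and the symmetric shadow $\Kohnert_{\mathbb{S}(a)} = s_{\lambda/\mu}$ of Corollary~\ref{cor:skew}, all serve as consistency checks for such a construction.

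\textbf{Combinatorial approach, and the obstacle.} Alternatively one stays inside the combinatorics that already proved Demazure-positivity of skew polynomials. Skew polynomials are generated by skew key tableaux; after standardizing, the goal is to endow the standard objects with a \emph{weak dual equivalence} in the sense of \cite{Ass-W}, but refined so that the generating function of each equivalence class is an individual Schubert polynomial --- equivalently, to find a skew analogue of Edelman--Greene or Little bumping, compatible with the shift structure of $\mathbb{S}(a)$, that reads off reduced-word data for permutations. The principal obstacle, and the reason the conjecture is still open, is the absence of any effective test for Schubert-positivity: to convert the known expansion $\kohnert_{\mathbb{S}(a)} = \sum_b d_b \key_b$ into a Schubert expansion one must recognize the multiset $\{(b,d_b)\}$ as a union of the Lascoux--Sch\"utzenberger Demazure expansions of Schubert polynomials, and it is precisely the ``contiguous rows, left-justified cascade of row-starts'' geometry of $\mathbb{S}(a)$ --- the same feature making these diagrams demazure --- that must be leveraged to pin down which Demazure characters occur and with what multiplicity. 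Carrying this out, by either route, is where essentially all the difficulty lies.
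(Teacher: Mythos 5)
The statement you are addressing is stated in the paper as Conjecture~\ref{conj:skew} and is \emph{not proved} there: the authors offer only computational verification (degree up to $10$ in up to $6$ variables) and the heuristic evidence surrounding Figures~\ref{fig:skew-pos} and \ref{fig:skew-rothe}. So there is no proof in the paper to compare your attempt against, and you are right to present a strategy rather than a proof. Your two proposed routes (a degeneracy-locus realization of $\key_{\lambda/\mu}$ in the spirit of Fulton, or a refinement of the weak dual equivalence argument of \cite{Ass-W} so that classes generate individual Schubert polynomials rather than Demazure characters) are both reasonable and consistent with how the paper frames the problem; neither is carried out in the paper, and your identification of the main obstacle --- the lack of an effective test for recognizing a multiset of Demazure characters as a union of Schubert-to-Demazure expansions --- is apt.

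One concrete error in your motivating discussion: the computation $\pi_1\schubert_{231} = \pi_1(x_1x_2) = x_1x_2 + x_2^2$ is wrong. Since $x_1x_2$ is symmetric in $x_1,x_2$, we have $\pi_1(x_1x_2) = \partial_1(x_1^2x_2) = \frac{x_1^2x_2 - x_1x_2^2}{x_1 - x_2} = x_1x_2$, so $\pi_1$ fixes $\schubert_{231}$. Your downstream reasoning (that a polynomial whose dominance-minimal monomial is $x_2^2 = x^{\mathrm{code}(1423)}$ but which lacks the $x_1^2$ term of $\schubert_{1423}$ cannot be Schubert-positive) is correct in form, but the polynomial $x_1x_2 + x_2^2$ simply does not arise from this example. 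If you want to argue that the Schubert-positive cone is not stable under the $\pi_i$, you need a different witness; as stated, the claim is unsupported. This does not affect the viability of your overall program, but since that observation is what you use to rule out the ``direct operator-theoretic attack,'' it should be repaired or removed.
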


Conjecture~\ref{conj:skew} has been verified for degree up to $10$ in up to $6$ variables. If true, this conjecture is highly suggestive that skew polynomials are a combinatorial shadow of representation-theoretic and geometric objects yet to be discovered.

%
\section{Extending Schur functions to the ring of quasisymmetric functions}
%
\label{sec:right}

We now demonstrate the construction of another Kohnert basis, this one not demazure, with interesting properties. In Section~\ref{sec:right-def} we define the new Kohnert basis of \emph{lock polynomials} and apply our previous results to give explicit formulas for the monomial and fundamental slide expansions. In Section~\ref{sec:right-poly}, we demonstrate how to generate a tableaux model from the corresponding Kohnert diagrams and use this to prove a special case when lock polynomials and Demazure characters coincide. In Section~\ref{sec:right-qsym}, we consider the stable limits of lock polynomials, which we term \emph{extended Schur functions}, since they contain Schur functions and give a basis for quasisymmetric functions.

\subsection{Lock polynomials}
\label{sec:right-def}

We now define a new Kohnert basis. Using the machinery of Kohnert polynomials, this requires only that for each weak composition $a$, we make a choice for the columns in which we place the $a_i$ boxes in row $i$. For each weak composition $a$, there is a unique right-justified diagram of weight $a$ which we call the \emph{lock diagram for $a$} and denoted by $\DD(a)$. For example, the third diagram in Figure~\ref{fig:diagrams} is the lock diagram for $(0,2,1,2)$. The Kohnert diagrams for this diagram are shown in Figure~\ref{fig:KD-right}.

\begin{figure}[ht]
  \begin{center}
    \begin{displaymath}
      \begin{array}{c@{\hskip \cellsize}c@{\hskip \cellsize}c@{\hskip \cellsize}c@{\hskip \cellsize}c@{\hskip \cellsize}c@{\hskip \cellsize}c@{\hskip \cellsize}c@{\hskip \cellsize}c}
        \vline\tableau{ \times & \times \\ & \times \\ \times & \times \\ & \\\hline} &
        \vline\tableau{ \times & \times \\ & \times \\ \times & \\ & \times \\\hline} &
        \vline\tableau{ \times & \times \\ & \\ \times & \times \\ & \times \\\hline} &
        \vline\tableau{ \times & \\ & \times \\ \times & \times \\ & \times \\\hline} &
        \vline\tableau{ \times & \times \\ & \times \\ & \\ \times & \times \\\hline} &
        \vline\tableau{ & \\ \times & \times \\ \times & \times \\ & \times \\\hline} &
        \vline\tableau{ \times & \times \\ & \\ & \times \\ \times & \times \\\hline} &
        \vline\tableau{ \times & \\ & \times \\ & \times \\ \times & \times \\\hline} &
        \vline\tableau{ & \\ \times & \times \\ & \times \\ \times & \times \\\hline} 
      \end{array}
    \end{displaymath}
    \caption{\label{fig:KD-right}Kohnert diagrams for $\DD(0,2,1,2)$.}
  \end{center}
\end{figure}

\begin{definition}
  The \emph{lock polynomial indexed by $a$} is
  \begin{equation}
    \lock_{a} = \kohnert_{\sDD(a)},
  \end{equation}
  where $\DD(a)$ is the right justified diagram of weight $a$.
  \label{def:right_key}
\end{definition}

For example, from Figure~\ref{fig:KD-right}, we see that
\begin{eqnarray*}
  \lock_{(0,2,1,2)} & = & x_1^2 x_2 x_3^2 + x_1^2 x_2 x_3 x_4 + x_1^2 x_2 x_4^2 + x_1^2 x_3 x_4^2 + x_1 x_2^2 x_3^2 \\
  & & + x_1 x_2^2 x_3 x_4 + x_1 x_2^2 x_4^2 + x_1 x_2 x_3 x_4^2 + x_2^2 x_3 x_4^2 .
\end{eqnarray*}

By Theorem~\ref{thm:kohnert-basis}, since lock polynomials are a Kohnert basis, they are, in particular, a basis of polynomials.

\begin{corollary}
  The lock polynomials form a basis for the polynomial ring that is lower uni-triangular with respect to monomials.
\end{corollary}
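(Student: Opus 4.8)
The plan is to deduce this directly from Theorem~\ref{thm:kohnert-basis}, of which it is essentially a formal consequence. By Definition~\ref{def:right_key}, the lock polynomial $\lock_a$ is the Kohnert polynomial $\kohnert_{\DD(a)}$, where $\DD(a)$ is the right-justified diagram of weight $a$. First I would observe that the family $\{\DD(a)\}$, ranging over all weak compositions $a$, is exactly a family of the kind required in the hypothesis of Theorem~\ref{thm:kohnert-basis}: one diagram for each weak composition, with $\wt(\DD(a)) = a$. This weight identity is immediate from the construction, since $\DD(a)$ is obtained by placing $a_i$ cells in row $i$ (right-justified), so its $i$th row-count is $a_i$ by definition.

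Having checked the hypothesis, Theorem~\ref{thm:kohnert-basis} immediately yields that $\{\lock_a\} = \{\kohnert_{\DD(a)}\}$ is a basis of the polynomial ring. For the lower uni-triangularity claim, I would invoke the monomial expansion established in the proof of Theorem~\ref{thm:kohnert-basis}: for any diagram $D$ with $\wt(D) = a$ one has
\[ \kohnert_D = x_1^{a_1}\cdots x_n^{a_n} + \sum_{b > a} c_{a,b}\, x_1^{b_1}\cdots x_n^{b_n}, \]
with the $c_{a,b}$ nonnegative integers and the sum over weak compositions $b$ strictly dominating $a$. Applying this with $D = \DD(a)$ shows each $\lock_a$ has leading term $x_1^{a_1}\cdots x_n^{a_n}$, minimal in dominance order among the monomials occurring, with coefficient $1$; this is precisely lower uni-triangularity of the transition matrix from $\{\lock_a\}$ to the monomial basis.

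There is no real obstacle here: the statement is a direct corollary of the general machinery already developed, and the only thing to verify — that right-justification produces a diagram of the correct weight — is trivial. The one point I would state carefully is that dominance is merely a partial order, so "lower uni-triangular with respect to monomials" is to be read in the sense that, after listing weak compositions in any linear extension of dominance order, the transition matrix between $\{\lock_a\}$ and $\{x_1^{a_1}\cdots x_n^{a_n}\}$ is lower triangular with $1$'s on the diagonal — which is exactly what the displayed expansion provides.
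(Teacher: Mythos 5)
Your proposal is correct and matches the paper's argument, which likewise deduces the corollary directly from Theorem~\ref{thm:kohnert-basis} after noting that the lock polynomials are by construction a Kohnert basis (one diagram $\DD(a)$ of weight $a$ per weak composition). The extra care you take in spelling out the uni-triangularity via a linear extension of dominance order is a harmless elaboration of what the paper leaves implicit.
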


By Theorem~\ref{thm:kohnert-mono}, we may express lock polynomials more compactly in the monomial slide basis as follows.

\begin{corollary}
  Lock polynomials expand non-negatively into monomial slide polynomials by
  \begin{equation}
    \lock_a = \sum_{T \in \MKD(\sDD(a))} \mono_{\wt(T)}.
  \end{equation}
\end{corollary}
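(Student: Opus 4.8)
The plan is to recognize this as an immediate specialization of Theorem~\ref{thm:kohnert-mono}. By Definition~\ref{def:right_key}, the lock polynomial $\lock_a$ is by fiat the Kohnert polynomial $\kohnert_{\sDD(a)}$ attached to the right-justified diagram $\DD(a)$ of weight $a$. Theorem~\ref{thm:kohnert-mono} applies to an \emph{arbitrary} diagram $D$, asserting $\kohnert_D = \sum_{T \in \MKD(D)} \mono_{\wt(T)}$, with the expansion manifestly nonnegative (each $\mono_{\wt(T)}$ appears with coefficient equal to the number of $T \in \MKD(D)$ having that weight).

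First I would note that $\DD(a)$ is a bona fide diagram of weight $a$: it places $a_i$ cells, left-to-right contiguously and right-justified, in row $i$, so $\wt(\DD(a)) = a$ and the hypotheses of Theorem~\ref{thm:kohnert-mono} are met with $D = \DD(a)$. Then I would simply substitute $D = \DD(a)$ into the conclusion of Theorem~\ref{thm:kohnert-mono} to obtain $\kohnert_{\sDD(a)} = \sum_{T \in \MKD(\sDD(a))} \mono_{\wt(T)}$, and rewrite the left-hand side as $\lock_a$ using Definition~\ref{def:right_key}. Nonnegativity of the coefficients is inherited verbatim. One can optionally cross-check against the explicit list in Figure~\ref{fig:KD-right} for $a = (0,2,1,2)$, computing $\MKD(\sDD(0,2,1,2))$ and confirming the monomial slide expansion agrees with the displayed $\lock_{(0,2,1,2)}$.

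There is no genuine obstacle: the statement is a routine corollary of the already-proved general theorem, and the only content is identifying the indexing diagram. If anything, the ``hard part'' is purely expository — making sure the notation $\MKD(\sDD(a))$ in the corollary statement matches the operator $\MKD(D)$ from Definition~\ref{def:MKD} with $D = \DD(a)$, which it does by construction.
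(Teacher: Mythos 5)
Your proposal is correct and matches the paper exactly: the paper states this corollary as an immediate specialization of Theorem~\ref{thm:kohnert-mono} to the diagram $D = \DD(a)$, using the definition $\lock_a = \kohnert_{\sDD(a)}$, and offers no further argument. Your identification of the only substantive point (that $\DD(a)$ is a valid diagram of weight $a$, so the general theorem applies) is precisely what is needed.
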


For the previous example, refined to $\MKD$ in Figure~\ref{fig:MKD}, we have
\begin{eqnarray*}
  \lock_{(0,2,1,2)} & = & \mono_{(0,2,1,2)} + \mono_{(1,1,1,2)} + \mono_{(2,1,1,1)} + \mono_{(1,2,0,2)} + \mono_{(1,2,1,1)}.
\end{eqnarray*}

Even more powerful, by Theorem~\ref{thm:kohnert-slide} we have the following.

\begin{corollary}
  Lock polynomials expand non-negatively into fundamental slide polynomials by
  \begin{equation}
    \lock_{a} = \sum_{T \in \FKD(\sDD(a))} \fund_{\wt(T)}.
  \end{equation}
  \label{cor:lock-fund}
\end{corollary}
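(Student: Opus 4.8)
The plan is to obtain this as an immediate application of Theorem~\ref{thm:kohnert-slide}, which already provides the fundamental slide expansion $\kohnert_D = \sum_{T \in \FKD(D)} \fund_{\wt(T)}$ for every \emph{fundamental} diagram $D$. Since $\lock_a = \kohnert_{\sDD(a)}$ by definition, all that needs to be shown is that the lock diagram $\DD(a)$ is fundamental in the sense of Definition~\ref{def:diagram-fund}; the asserted expansion, and with it nonnegativity, then follows by taking $D = \DD(a)$.

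To check fundamentality I would first record the shape of $\DD(a)$. Being right-justified, all of its nonempty rows are aligned along one common column $m$ (the presence of any empty columns to the left is immaterial, as it changes neither the Kohnert polynomial nor the fundamental property), so row $i$ occupies exactly the columns $m - a_i + 1, \ldots, m$, and $(i,c)$ is a cell of $\DD(a)$ precisely when $c \leq m$ and $a_i \geq m - c + 1$. From this I extract the key feature: the row supports of the columns of $\DD(a)$ are totally ordered by inclusion, since for $c' < c$ the membership condition $a_i \geq m - c' + 1$ for column $c'$ is strictly stronger than $a_i \geq m - c + 1$ for column $c$; hence every row with a cell in column $c'$ also has one in column $c$.

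Granting this, Definition~\ref{def:diagram-fund} is verified at once. Let $(r,c)$ be any cell of $\DD(a)$ that is leftmost in its row. For any column $c' < c$ and any $k \geq 1$, the nesting of row supports gives an inclusion of the set of rows $s$ with $r < s \leq r+k$ and $(s,c') \in \DD(a)$ into the corresponding set for column $c$, hence
\[
  \#\{(s,c') \in \DD(a) \mid r < s \leq r+k\} \;\leq\; \#\{(s,c) \in \DD(a) \mid r < s \leq r+k\}.
\]
So the defining inequality holds for every leftmost cell (we never even need the alternative that $(r+1,c)$ be a cell of $\DD(a)$), and $\DD(a)$ is fundamental. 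Applying Theorem~\ref{thm:kohnert-slide} then yields $\lock_a = \sum_{T \in \FKD(\sDD(a))} \fund_{\wt(T)}$.

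I do not anticipate any real obstacle here — the result genuinely is a corollary. The only step needing care is making ``right-justified'' precise (all nonempty rows share a common right edge) and translating that into the nested-column-supports statement, which is what makes the fundamental condition transparent. It is worth contrasting this with Section~\ref{sec:dem-diagram}: a lock diagram is always fundamental but is generally \emph{not} demazure --- indeed $\DD(0,2,1,2)$, the third diagram of Figure~\ref{fig:diagrams}, is fundamental but not demazure --- which is precisely why lock polynomials are fundamental-slide-positive without being expected to be Demazure-positive.
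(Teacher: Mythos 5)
Your proposal is correct and follows essentially the same route as the paper: verify that the right-justification of $\DD(a)$ forces the row supports of its columns to be nested, so that the counting condition \eqref{e:diagram-fund} holds and $\DD(a)$ is fundamental, then invoke Theorem~\ref{thm:kohnert-slide}. Your write-up merely makes the nesting argument more explicit than the paper's one-line observation.
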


\begin{proof}
  Given a lock diagram $\DD(a)$, for every column $c \leq \max(a)$ and for any nonempty collection of rows, there are at least as many cells in those rows in column $c$ as there are in column $c-1$. Therefore \eqref{e:diagram-fund} is always satisfied, and so lock diagrams are fundamental by Definition~\ref{def:diagram-fund}. Thus Theorem~\ref{thm:kohnert-slide} applies.
\end{proof}

Returning again to our example, with the fundamental diagrams shown in Figure~\ref{fig:FKD}, we have
\begin{eqnarray*}
  \lock_{(0,2,1,2)} & = & \fund_{(0,2,1,2)} + \fund_{(1,2,0,2)}.
\end{eqnarray*}

Unlike Schubert polynomials and, trivially, Demazure characters, the lock polynomials do not expand non-negatively into Demazure characters. For example, 
\begin{eqnarray*}
  \lock_{(0,2,1,2)} & = & \key_{(0,2,1,2)} - \key_{(0,2,2,1)} + \key_{(1,2,2,0)} - \key_{(2,2,1,0)} .
\end{eqnarray*}
In light of Conjecture~\ref{conj:demazure}, this comes as no surprise since lock diagrams are not, in general, demazure.

We remark that lock polynomials do not have nonnegative expansions into other familiar bases of the polynomial ring, including quasi-key polynomials \cite{AS-2} and Demazure atoms \cite{LS90}.

\subsection{Lock tableaux}
\label{sec:right-poly}

Kohnert's rule allows for easy computations, but the potential redundancy of two different sequences of Kohnert moves arriving at the same diagram can be problematic. In \cite{AS-2}, the authors gave a static description of \emph{Kohnert tableaux} for key diagrams by tracking from where each cell in a Kohnert diagram came in the key diagram, and when Kohnert's algorithm gives multiple possibilities for this, fixing a canonical choice. This was done via a canonical labeling of a Kohnert diagram coming from a key diagram, resulting in a simple rule to determine readily if a given diagram can arise as a Kohnert diagram for a key diagram. We extend this procedure to lock diagrams below.

\begin{definition}
  Given a weak composition $a$ of length $n$, a \emph{lock tableau of content $a$} is a diagram filled with entries $1^{a_1}, 2^{a_2}, \ldots, n^{a_n}$, one per cell, satisfying the following conditions:
  \begin{enumerate}[label=(\roman*)]
  \item there is exactly one $i$ in each column from $\max(a)-a_i+1$ through $\max(a)$;
  \item each entry in row $i$ is at least $i$;
  \item the cells with entry $i$ weakly descend from left to right;
  \item the labelling strictly decreases down columns.
  \end{enumerate}
  Denote the set of lock tableaux of content $a$ by $\LT(a)$.
  \label{def:lock-tableaux}
\end{definition}

For example, the lock tableaux of content $(0,2,1,2)$ are shown in Figure~\ref{fig:LT}. Compare this with the Kohnert diagrams for $\DD(0,2,1,2)$ shown in Figure~\ref{fig:KD-right}.

\begin{figure}[ht]
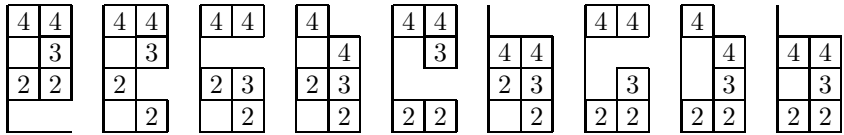

  \begin{center}
    \begin{displaymath}
      \begin{array}{c@{\hskip \cellsize}c@{\hskip \cellsize}c@{\hskip \cellsize}c@{\hskip \cellsize}c@{\hskip \cellsize}c@{\hskip \cellsize}c@{\hskip \cellsize}c@{\hskip \cellsize}c}
        \vline\tableau{ 4 & 4 \\ & 3 \\ 2 & 2 \\ & \\\hline} &
        \vline\tableau{ 4 & 4 \\ & 3 \\ 2 & \\ & 2 \\\hline} &
        \vline\tableau{ 4 & 4 \\ & \\ 2 & 3 \\ & 2 \\\hline} &
        \vline\tableau{ 4 & \\ & 4 \\ 2 & 3 \\ & 2 \\\hline} &
        \vline\tableau{ 4 & 4 \\ & 3 \\ & \\ 2 & 2 \\\hline} &
        \vline\tableau{ & \\ 4 & 4 \\ 2 & 3 \\ & 2 \\\hline} &
        \vline\tableau{ 4 & 4 \\ & \\ & 3 \\ 2 & 2 \\\hline} &
        \vline\tableau{ 4 & \\ & 4 \\ & 3 \\ 2 & 2 \\\hline} &
        \vline\tableau{ & \\ 4 & 4 \\ & 3 \\ 2 & 2 \\\hline} 
      \end{array}
    \end{displaymath}
    \caption{\label{fig:LT}The nine lock tableaux for $\DD(0,2,1,2)$.}
  \end{center}
\end{figure}

The definition of \emph{Kohnert tableaux} in \cite{AS-2}, the analogous model for key diagrams, differs from Definition~\ref{def:lock-tableaux} only in condition (iv). For the Kohnert tableaux case, condition (iv) allowed for an \emph{inversion} in a column, i.e. a pair $i<j$ with $i$ above $j$ in the same column, only if there is an $i$ in the column immediately to the right of and strictly above $j$. Condition (iv) for lock tableaux is far simpler. Moreover, this simplification is forced in the following sense.

\begin{proposition}
  Given a weak composition $a$, let $T$ be any filling of a diagram with entries $1^{a_1}, 2^{a_2}, \ldots, n^{a_n}$, one per cell, satisfying the conditions (i), (ii), and (iii) of Definition~\ref{def:lock-tableaux} and the following
  \begin{itemize}
  \item[(iv)'] if $i<j$ appear in a column with $i$ above $j$, then there is an $i$ in the column immediately to the right of and strictly above $j$.
  \end{itemize}
  Then $T$ is a lock tableau.
  \label{prop:inversion}
\end{proposition}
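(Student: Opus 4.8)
The plan is to show that, in the presence of conditions (i), (ii), (iii), the relaxed inversion rule (iv)$'$ cannot actually produce any inversions, so that (iv) holds automatically and $T$ is a lock tableau in the sense of Definition~\ref{def:lock-tableaux}. I would argue by contradiction: suppose $T$ has an inversion, i.e.\ a column containing entries $i<j$ with $i$ strictly above $j$, and among all such inversions choose one occurring in a column $c$ that is as far to the right as possible. Write the cell of $i$ as $(r,c)$ and the cell of $j$ as $(r',c)$, so $r>r'$.

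First I would handle the case $c=\max(a)$: condition (iv)$'$ requires an $i$ in the column immediately to the right of $c$, but by condition (i) the entry $i$ occupies only the columns $\max(a)-a_i+1$ through $\max(a)$, so no column to the right of $c$ contains an $i$ — a contradiction. Hence $c<\max(a)$. Now the key step is to show the inversion propagates one column to the right. Since $j$ occurs in column $c$, condition (i) (the entry $j$ occupies $\max(a)-a_j+1$ through $\max(a)$) forces $j$ to also occur in column $c+1$, say at row $s$; condition (iii), that the cells labelled $j$ weakly descend from left to right, gives $s\le r'$. On the other hand condition (iv)$'$, applied to the inversion in column $c$, supplies an $i$ in column $c+1$ at some row $r''>r'$. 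Combining, $r''>r'\ge s$, so in column $c+1$ the entry $i$ again lies strictly above the entry $j$: an inversion in a column strictly to the right of $c$, contradicting the maximality of $c$. Therefore $T$ has no inversions, and together with (i)--(iii) this is exactly the definition of a lock tableau.

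The only delicate point — the "main obstacle," such as it is — is bookkeeping the row inequalities: one must compose "weakly descend from left to right" for the $j$-cells with "strictly above $j$" from (iv)$'$ in the correct direction, and confirm that $j$ genuinely reappears in column $c+1$ via the explicit column range in (i). I would also remark that condition (ii) is not needed for the argument, and that the rightmost-column device merely packages the cleaner observation that an inversion would otherwise have to recur in every column to its right, all the way to column $\max(a)$, which (i) forbids.
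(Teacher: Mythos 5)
Your proof is correct and follows essentially the same route as the paper's: pick the rightmost column containing an inversion, use (iv)$'$ to place an $i$ in the next column, use (i) to force a $j$ there too, and combine (iii) with the "strictly above" clause of (iv)$'$ to propagate the inversion rightward, contradicting maximality. Your explicit treatment of the boundary case $c=\max(a)$ and your observation that (ii) is unused are minor refinements of the same argument.
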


\begin{proof}
  Suppose $T$ satisfies conditions (i), (ii), and (iii) of Definition~\ref{def:lock-tableaux} and condition (iv)' above. Let $c$ be the rightmost column of $T$ such that there exist entries $i<j$ in column $c$ with $i$ above $j$. By condition (iv)', there is an $i$ in column $c+1$. However, by condition (i), since there is an $i$ in column $c+1$, there must also be a $j$ in column $c+1$. By condition (iii), the $j$ in column $c+1$ lies weakly below the $j$ in column $c$ which, by condition (iv'), lies strictly below the $i$ in column $c+1$, contradicting the choice of $c$. Therefore columns of $T$ must be strictly decreasing top to bottom.
\end{proof}

We mirror the results of \cite{AS-2} for Kohnert tableaux to prove that lock tableaux precisely characterize Kohnert diagrams of lock diagrams.

\begin{lemma}
  For $T \in \LT(a)$, the diagram of $T$ is a Kohnert diagram for $\DD(a)$.
  \label{lem:LT2KD}
\end{lemma}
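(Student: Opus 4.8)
The plan is to show that any lock tableau $T \in \LT(a)$ can be obtained from $\DD(a)$ by a sequence of Kohnert moves, by inducting on how far the cells of $T$ have descended from their starting positions in $\DD(a)$. Concretely, I would argue that if $T$ is not already equal to $\DD(a)$, then there is a cell of $T$ that can be lifted by a \emph{reverse} Kohnert move (moving the rightmost cell of a row up, undoing a jump), producing another lock tableau $T'$ which is strictly closer to $\DD(a)$; iterating terminates at $\DD(a)$, and reversing the sequence exhibits $T$ as a Kohnert diagram for $\DD(a)$.

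First I would set up the correspondence between labels and rows: by condition (i), the cells labelled $i$ occupy, in $\DD(a)$, precisely columns $\max(a)-a_i+1$ through $\max(a)$ in row $i$, and in a general $T$ the label $i$ records the row of $\DD(a)$ from which that cell descended. Conditions (ii), (iii), (iv) then say exactly that the labels are consistent with a legal sequence of downward moves: an $i$-labelled cell lies weakly below row $i$ (condition (ii)), the $i$-cells stay weakly decreasing (condition (iii), matching that Kohnert moves act on the rightmost cell of a row), and columns strictly decrease (condition (iv), reflecting that a later-pushed cell from a higher-indexed row cannot land on top of a lower-indexed one). I would make precise the notion of the ``displacement'' statistic $\sum_{\text{cells }x} (i(x) - \mathrm{row}(x))$ where $i(x)$ is the label and $\mathrm{row}(x)$ the current row; this is $0$ exactly when $T = \DD(a)$ and is strictly positive otherwise.

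The heart of the argument: suppose the displacement is positive, so some cell sits strictly below the row matching its label. Among all such cells, I would pick the one that is lowest, breaking ties by taking the rightmost, and call its label $i$ and its position $(r,c)$ with $r < i$. I claim this cell is the rightmost cell in row $r$ and that lifting it by one reverse Kohnert move (sliding it up, jumping over any cells directly above until it reaches the first empty spot below row $i$) yields a diagram still satisfying (i)--(iv). Rightmost-in-row: any cell to its right in row $r$ would, by minimality of our choice (lowest, then rightmost), have to be in its home row, i.e. labelled $r$; but $r < i$ forces, via conditions (iii) and (iv), a contradiction with the label $i$ appearing weakly left in the same row. That the lift stays a valid lock tableau is then a check of the four conditions: (ii) holds since we only move the cell up, still weakly below row $i$; (iii) is preserved because the $i$-cells weakly to the right of $(r,c)$ are, by the extremal choice, already in their home positions and hence weakly below the new position; (iv) is the delicate one, and I expect this to be the main obstacle — I need to verify that after the cell jumps over whatever lies directly above it in column $c$, the strict-decrease-down-columns property is not violated, which will come down to showing those jumped-over cells carry labels smaller than $i$ (impossible by condition (iv) applied in $T$) or larger (so they belong to even higher home rows and the lifted cell, now labelled $i$, sits correctly above them). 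After establishing the single-step lift, the lemma follows by induction on displacement, and then reversing the lifting sequence realizes $T$ from $\DD(a)$ via ordinary Kohnert moves.
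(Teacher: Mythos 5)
Your overall strategy---undoing $T$ back to $\DD(a)$ by reverse Kohnert moves and inducting on the total displacement $\sum_{x}\bigl(i(x)-\mathrm{row}(x)\bigr)$---is the same as the paper's, and your argument that the chosen cell is rightmost in its row is correct. But your extremal choice of \emph{which} displaced cell to lift is the wrong one, and the central step fails. Take $a=(0,2,1,2)$ and the lock tableau with $4$ at $(4,1)$, $4$ at $(3,2)$, $2$ and $3$ at $(2,1)$ and $(2,2)$, and $2$ at $(1,2)$ (the fourth tableau in Figure~\ref{fig:LT}). Your rule selects the lowest displaced cell, the $2$ at $(1,2)$, with home row $i=2$. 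But column $2$ is occupied at rows $2$ and $3$, so the only reverse Kohnert move available to this cell jumps it all the way to $(4,2)$, strictly above its home row: there is no ``first empty spot below row $i$,'' and the move you describe does not exist. More generally, any lift that jumps is doomed: the jumped-over cells carry labels larger than $i$ by condition (iv), so after the jump the $i$-labelled cell sits \emph{above} larger labels and (iv) is violated (your parenthetical asserts the opposite), and the landing row can exceed $i$, violating (ii). A second, smaller gap: for a reverse Kohnert move to be valid one must check the cell is rightmost in the row where it \emph{lands} (that is the row on which the forward move acts), not the row it leaves; you verify only the latter.

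The fix is to invert the extremal choice. The paper picks the \emph{first} displaced cell $C$ in the reading order that scans rows top-to-bottom, left-to-right within each row. For that choice, condition (iv) forces any cell directly above $C$ to have a strictly larger label, hence (since $C$'s label already exceeds its row index) to be displaced as well and to precede $C$ in reading order---contradicting the choice of $C$. So nothing sits immediately above $C$, the lift is a clean one-row move with no jumping (and cannot overshoot the home row, since the label exceeds the current row index), and the remaining issue---that $C$ is rightmost in the landing row---is exactly what the paper then establishes from conditions (i), (iii) and (iv). With the top-down choice all four conditions are preserved and the induction closes; with your bottom-up choice the intermediate diagrams need not be lock tableaux and the argument does not go through.
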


\begin{proof}
  Fix $T \in \LT(a)$. We claim one can perform reverse Kohnert moves on $T$ to obtain the lock diagram of $a$. Reading the cells of $T$ left to right along rows, starting at the top row, find the first cell, say $C$, whose label is greater than its row index. Any cell above $C$ must have had larger label by condition (iv) of Definition~\ref{def:lock-tableaux}, so by choice of $C$ there cannot be a cell immediately above it, so we may lift $C$ to the row above. To show this reverse move is valid, we need to show there is no cell $C^{\prime}$ to the right of the position in which $C$ lands. Any cell of the landing row must have entry equal to its row index by the choice of $C$, and by condition (ii) $C$ has entry at least that large, so the entry of $C$ is at least as great as the entry of $C^{\prime}$. However, by condition (i), there must be a cell with entry the same as that of $C$ in the column of $C^{\prime}$, and by condition (iii) that cell must be strictly lower. This creates a violation of condition (iv) in the column of $C^{\prime}$ in $T$, a contradiction.

  This procedure clearly preserves condition (i) of Definition~\ref{def:lock-tableaux}. Since this procedure moves the top left cell having a given label greater than its row number, conditions (ii) and (iii) are preserved. Since cells do not change their order within a column, condition (iv) is preserved. Therefore the result is in $\LT(a)$. Iterating this procedure, one eventually obtains the lock diagram with all entries in row $i$ equal to $i$, and each move is a valid reverse Kohnert move. Hence $T$ with its labels removed is in $\KD(\DD(a))$.
\end{proof}

To establish the converse of Lemma~\ref{lem:LT2KD}, we give a canonical labeling of a Kohnert diagram. Once again, the algorithm for Kohnert diagrams coming from a lock diagram is far simpler than the analogous labeling algorithm for Kohnert diagrams coming from a key diagram.

\begin{definition}
  Given $D\in \KD(\DD(a))$, define the \emph{lock labeling of $D$ with respect to $a$}, denoted by $L_a(D)$, by placing the labels $\{i \mid a_i \geq j\}$ to cells of column $\max(a)-j+1$ in increasing order from bottom to top. 
\label{def:labelling_algorithm}
\end{definition}

The lock labeling algorithm is clearly well-defined and establishes the following.

\begin{theorem}
  The labeling map $L_a$ is a weight-preserving bijection between $\KD(\DD(a))$ and $\LT(a)$. In particular, we have
  \begin{equation}
    \lock_a = \sum_{T \in \LT(a)} x^{\wt(T)},
  \end{equation}
  where $\wt(T)$ is the weak composition whose $i$th part is the number of cells in row $i$ of $T$.  
  \label{thm:kohnert}
\end{theorem}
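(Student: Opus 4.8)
\textit{Proof proposal.}
The plan is to realize $L_a$ and the label-forgetting map as mutually inverse bijections, after which weight-preservation is automatic and the generating-function identity follows from Definition~\ref{def:kohnert_poly} by reindexing the sum over $\KD(\DD(a))$ through $L_a$. Concretely: let $\phi\colon \LT(a)\to\KD(\DD(a))$ be the map that deletes all labels; this is well-defined by Lemma~\ref{lem:LT2KD}. The easy composite is $L_a\circ\phi$: given $T\in\LT(a)$, condition (i) forces column $\max(a)-j+1$ to contain exactly the label set $\{i\mid a_i\ge j\}$, and condition (iv) forces those labels to increase from bottom to top, which is precisely how $L_a$ fills that column; hence $L_a(\phi(T))=T$. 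In particular $\phi$ is injective. Once we know that $L_a$ maps into $\LT(a)$, the reverse composite $\phi\circ L_a$ is the identity on $\KD(\DD(a))$, since $L_a$ only attaches labels and never moves a cell; so $L_a$ and $\phi$ are mutually inverse, $L_a$ is weight-preserving, and $\lock_a=\kohnert_{\DD(a)}=\sum_{D\in\KD(\DD(a))}x^{\wt(D)}=\sum_{T\in\LT(a)}x^{\wt(T)}$.

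The substantive step is therefore to check that $L_a(D)\in\LT(a)$ for every $D\in\KD(\DD(a))$, i.e. that the column-sorted labeling satisfies (i)--(iv). Conditions (i) and (iv) are immediate from Definition~\ref{def:labelling_algorithm}: a Kohnert move relocates a cell within its own column, so every $D\in\KD(\DD(a))$ has the same column counts as $\DD(a)$, namely $\#\{i\mid a_i\ge j\}$ cells in column $\max(a)-j+1$, which is exactly what makes the labeling well-defined, and by construction the labels strictly decrease down each column. For condition (ii) I would first show, by a one-step verification, that Kohnert moves only push cells down inside each column: for each column $c$ and each row $\rho$, $\#\{(s,c)\in D\mid s\ge\rho\}\le\#\{(s,c)\in\DD(a)\mid s\ge\rho\}$. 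Since the $k$-th label from the bottom of column $c$ equals the $k$-th smallest row occupied by $\DD(a)$ in column $c$, this majorization forces the cell carrying that label to lie weakly below that row, so its entry is at least its row index.

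Condition (iii) --- the cells with a fixed label $i$ weakly descend from left to right --- is the crux, and I expect it to be the main obstacle. The difficulty is that a single Kohnert move, though confined to one column $c$, silently \emph{relabels} cells of $c$ (the labeling is column-sorted, not attached to cells) and moves label $i$'s cell in column $c$ weakly downward, which a priori could drop it below label $i$'s cell in column $c+1$. Ruling this out requires more than the within-column majorization: one needs the fact that the moved cell is the \emph{rightmost} in its row, together with an invariant controlling the relative heights of a common label across adjacent columns --- an invariant that holds for $\DD(a)$ because its columns are nested and must be shown to survive every Kohnert move. An alternative route to the same conclusion is to argue directly, in the style of the proof of Lemma~\ref{lem:LT2KD}, that any $D\in\KD(\DD(a))$ carries \emph{some} lock-tableau labeling obtained by reversing Kohnert moves back to $\DD(a)$, and then to identify that labeling with $L_a(D)$ using the injectivity of $\phi$ and the identity $L_a\circ\phi=\mathrm{id}$ established above. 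With (ii) and (iii) in hand, the remaining bijection bookkeeping and the passage to the generating function are routine.
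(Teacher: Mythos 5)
Your overall architecture — $\phi$ forgets labels, $L_a\circ\phi=\mathrm{id}$ is forced by conditions (i) and (iv), and the whole theorem reduces to showing $L_a(D)\in\LT(a)$ for every $D\in\KD(\DD(a))$ — is exactly the paper's, and your treatments of conditions (i), (ii) and (iv) (in particular the within-column majorization argument for (ii)) match the paper's proof. The problem is that you stop precisely where the theorem's content lives: you correctly identify condition (iii) as the crux, correctly name the two ingredients that must be used (the moved cell is rightmost in its row; the columns of $\DD(a)$ are nested), and then say that ``an invariant controlling the relative heights of a common label across adjacent columns\dots must be shown to survive every Kohnert move.'' That invariant \emph{is} condition (iii); announcing that it must be shown to be preserved is a restatement of the claim, not a proof of it. Your fallback route fares no better: carrying labels forward along a chosen sequence of Kohnert moves from $\DD(a)$ to $D$ immediately breaks condition (iv) the moment a cell jumps over others, so one is forced back to the column-sorted labeling and hence back to the same unproved preservation statement.

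What closes the gap in the paper is a structural observation you are missing: if $D'$ is obtained from $D$ by a Kohnert move on a cell $C$ in column $c$, then comparing $L_a(D)$ with $L_a(D')$, the net effect is that $C$ \emph{and the contiguous interval of cells directly beneath it} each slide down one row while retaining their labels (the occupied set of column $c$ loses the row of $C$ and gains the first empty row below, and re-sorting the same label multiset bottom-to-top realizes exactly this shift). Granting this, violations of (iii) against column $c-1$ are impossible since cells of column $c$ only move down, and against column $c+1$ one argues: every label in that interval also occurs in column $c+1$ (nestedness of lock-diagram columns), the position immediately right of $C$ is empty (since $C$ was rightmost in its row), so $C$'s label sits \emph{strictly} lower in column $c+1$ than $C$ does, and because labels are sorted within column $c+1$ and the interval below $C$ is contiguous, the same strict inequality propagates to every label in the interval; a one-row downward shift therefore preserves the weak descent. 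Without this paragraph — or a genuine substitute for it — the proof is incomplete.
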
	

\begin{proof}
  Suppose that $D\in\KD(\DD(a))$. The lock labeling map $L_a$ is well-defined on $D$ since the number of cells per column is preserved by Kohnert moves. No filling of $D$ other than $L_a(D)$ can give an element of $\LT(a)$ by condition (iv). Therefore, by Lemma~\ref{lem:LT2KD}, removing the labels gives an inverse map provided $L_a(D)$ is a Kohnert tableau.

  Condition (i) of Definition~\ref{def:lock-tableaux} is manifest from the selection of entries, and condition (iv) follows immediately from the lock labeling. For condition (ii), note that every cell in any given column of $D$ must be weakly below where it started, since $D$ is a Kohnert diagram. In particular, for any index $i$ the number of boxes of $D$ appearing below row $i$ is weakly larger than the number of boxes of $\DD(a)$ appearing below row $i$, so since the columns are labeled in increasing order from bottom to top with labels given by the row indices of the original positions of the cells, the label of every cell of $D$ must be weakly larger than its row index.
  
 Clearly condition (iii) holds for the lock labeling of $\DD(a)$, so it is enough to check this condition is preserved under Kohnert moves. Let $D$ be a Kohnert diagram of $D$; assume that (iii) is satisfied under the lock labeling of $D$. When we make a Kohnert move on $D$ and relabel according to the lock labeling, the overall effect is to move a cell $C$ and the interval of cells immediately below $C$ down one space, retaining their labels. Since all labels in the column of $C$ move downwards, clearly this does not introduce any violation of (iii) with entries to the left of the column of $C$. Since we started with a lock diagram, all labels appearing in the interval of cells below $C$ must also appear in the column to the right of $C$, and since we performed a Kohnert move on $C$, the cell immediately right of $C$ in $D$ is empty. Hence $C$'s label appears strictly lower than $C$ in the column right of $C$, and by the lock labeling and the fact that the cells below $C$ form an interval, the same must be true for all cells in the interval below $C$. Hence (iii) is preserved on moving all these cells (with their labels) down one space.
\end{proof}

Recall the quasi-Yamanouchi condition for Kohnert tableaux in \cite{AS-2}.

\begin{definition}
  A lock tableau is \emph{quasi-Yamanouchi} if for each nonempty row $i$, one of the following holds:
  \begin{enumerate}
  \item there is a cell in row $i$ with entry equal to $i$, or
  \item there is a cell in row $i+1$ that lies weakly right of a cell in row $i$.
  \end{enumerate}
  Denote the set of quasi-Yamanouchi lock tableaux of content $a$ by $\QLT(a)$.
  \label{def:quasi-Yam}
\end{definition}

For example, the quasi-Yamanouchi lock tableaux of content $(0,3,2,1)$ are shown in Figure~\ref{fig:QLT}.

\begin{figure}[ht]
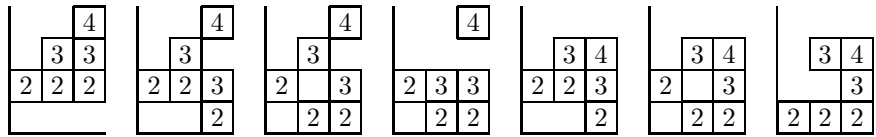

  \begin{center}
    \begin{displaymath}
      \begin{array}{c@{\hskip \cellsize}c@{\hskip \cellsize}c@{\hskip \cellsize}c@{\hskip \cellsize}c@{\hskip \cellsize}c@{\hskip \cellsize}c}
        \vline\tableau{ & & 4 \\ & 3 & 3 \\ 2 & 2 & 2 \\ & & \\\hline} &
        \vline\tableau{ & & 4 \\ & 3 & \\ 2 & 2 & 3 \\ & & 2 \\\hline} &
        \vline\tableau{ & & 4 \\ & 3 & \\ 2 & & 3 \\ & 2 & 2 \\\hline} &
        \vline\tableau{ & & 4 \\ & & \\ 2 & 3 & 3 \\ & 2 & 2 \\\hline} &
        \vline\tableau{ & & \\ & 3 & 4 \\ 2 & 2 & 3 \\ & & 2 \\\hline} &
        \vline\tableau{ & & \\ & 3 & 4 \\ 2 & & 3 \\ & 2 & 2 \\\hline} &
        \vline\tableau{ & & \\ & 3 & 4 \\ & & 3 \\ 2 & 2 & 2 \\\hline} 
      \end{array}
    \end{displaymath}
    \caption{\label{fig:QLT}The set $\QLT(0,3,2,1)$ of quasi-Yamanouchi lock tableaux of content $(0,3,2,1)$ .}
  \end{center}
\end{figure}

Quasi-Yamanouchi lock tableaux allow for the following re-characterization of the fundamental slide expansion of lock polynomials.

\begin{theorem}
  Lock polynomials expand non-negatively into fundamental slide polynomials by
  \begin{equation}
    \lock_{a} = \sum_{T \in \QLT(a)} \fund_{\wt(T)}.
  \end{equation}
  \label{thm:lock-fund-2}
\end{theorem}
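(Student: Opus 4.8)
The plan is to transport the fundamental slide expansion of Corollary~\ref{cor:lock-fund}, which is indexed by the quasi-Yamanouchi Kohnert diagrams $\FKD(\DD(a))$, across the weight-preserving bijection $L_a\colon \KD(\DD(a))\to\LT(a)$ of Theorem~\ref{thm:kohnert}. Concretely, I will show that $L_a$ restricts to a bijection
\[ L_a\colon \FKD(\DD(a)) \longrightarrow \QLT(a), \]
after which the identity $\lock_a = \sum_{T\in\FKD(\sDD(a))}\fund_{\wt(T)} = \sum_{T\in\QLT(a)}\fund_{\wt(T)}$ is immediate, since $L_a$ preserves weights.

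The heart of the argument is a dictionary between the two notions of \emph{quasi-Yamanouchi}. By construction of $L_a$, the entry of a cell of $D\in\KD(\DD(a))$ records the row in which that cell sat in $\DD(a)$, since Kohnert moves preserve both the number of cells in each column and their bottom-to-top order within a column. Thus, for a nonempty row $i$, the row $i$ of $L_a(D)$ contains a cell with entry $i$ precisely when $D$ has a cell of row $i$ that originated in row $i$ of $\DD(a)$, i.e.\ when not every cell of row $i$ of $D$ came from strictly above row $i$. The key claim is that, for the indices $r$ relevant to Definition~\ref{def:FKD} --- nonempty rows $r$ for which row $r+1$ is empty or all cells of row $r+1$ lie strictly left of all cells of row $r$ --- one has $\Up_r(D)\in\KD(\DD(a))$ if and only if every cell of row $r$ of $D$ originated strictly above row $r$. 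The forward direction is clear: in $\Up_r(D)$ every cell must sit weakly below its $\DD(a)$-position, and the cells raised out of row $r$ now occupy row $r+1$. For the converse, the relevant-$r$ hypothesis guarantees that no column of $D$ contains cells in both row $r$ and row $r+1$, so $\Up_r$ shifts each raised cell up by exactly one place within its column; hence $L_a(\Up_r(D))$ assigns the raised cells the same entries, now $\ge r+1$, and one checks that conditions (i)--(iv) of Definition~\ref{def:lock-tableaux} all survive the move, so $\Up_r(D)$ is the diagram of a lock tableau and therefore lies in $\KD(\DD(a))$ by Lemma~\ref{lem:LT2KD}.

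Granting this claim, the equivalence $D\in\FKD(\DD(a)) \Longleftrightarrow L_a(D)\in\QLT(a)$ is a direct comparison performed row by row over the nonempty rows $r$: the defining condition ``$\Up_r(D)\notin\KD(\DD(a))$ whenever all cells of row $r+1$ lie strictly left of all cells of row $r$'' says exactly that, for each nonempty $r$, either some cell of row $r+1$ lies weakly right of some cell of row $r$, or some cell of row $r$ originated in row $r$; by the dictionary above these are precisely conditions (2) and (1) of Definition~\ref{def:quasi-Yam}. Since $L_a$ is a weight-preserving bijection $\KD(\DD(a))\to\LT(a)$, it restricts to the asserted bijection on these quasi-Yamanouchi subsets, and the theorem follows from Corollary~\ref{cor:lock-fund}.

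I expect the main obstacle to be the converse half of the key claim: verifying that a single lift $\Up_r$ at a relevant index preserves all lock-tableau axioms, especially the weakly-descending axiom (iii) for each entry value, together with correctly handling the degenerate cases in which row $r$ or row $r+1$ is empty, so that the quantifier in Definition~\ref{def:FKD} is vacuous and ``relevant $r$'' must be read as ranging over nonempty rows. Once this lifting criterion is pinned down in terms of entries, the remaining comparison is bookkeeping.
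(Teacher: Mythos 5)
Your proof is correct, but it takes a genuinely different route from the paper's. The paper proves Theorem~\ref{thm:lock-fund-2} directly at the tableau level: it defines a de-standardization map $\destand\colon\LT(a)\to\QLT(a)$ (lift an entire row $i$ to row $i+1$ whenever all its cells lie strictly right of those of row $i+1$ and its leftmost label exceeds $i$), shows each fiber $\destand^{-1}(S)$ has generating function $\fund_{\wt(S)}$ by explicitly reconstructing the unique preimage of each admissible weight $b$, and never invokes Corollary~\ref{cor:lock-fund}. You instead start from Corollary~\ref{cor:lock-fund} and show that the labeling bijection $L_a$ of Theorem~\ref{thm:kohnert} restricts to a weight-preserving bijection $\FKD(\DD(a))\to\QLT(a)$, by translating the liftability condition $\Up_r(D)\in\KD(\DD(a))$ into the statement that every label in row $r$ exceeds $r$. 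Your key claim is sound: under the relevant-$r$ hypothesis no column meets both rows $r$ and $r+1$, so the canonical column labels are unchanged by $\Up_r$; the forward implication is then condition (ii) applied to $L_a(\Up_r(D))$, and the converse is a routine check of (i)--(iv) followed by Lemma~\ref{lem:LT2KD} (your worry about axiom (iii) resolves because all of row $r$ moves together, so the relative heights of equal labels are preserved). What your route buys is economy and an explicit dictionary between the diagram-level and tableau-level quasi-Yamanouchi conditions, which the paper leaves implicit; the cost is that you inherit the full weight of the fundamental-diagram machinery (Lemma~\ref{lem:destand}, Theorem~\ref{thm:split}, Theorem~\ref{thm:kohnert-slide}), whereas the paper's self-contained fiber argument also produces the explicit inverse construction that is reused nearly verbatim in the proof of Theorem~\ref{thm:SET}. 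One small point you already flag correctly: the quantifier in Definition~\ref{def:FKD} must be read over nonempty rows $r$, since lifting an empty row is a no-op.
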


\begin{proof}
  We define a \emph{de-standardization map} from $\LT(a)$ to $\QLT(a)$ by sending a lock tableau $T$ to the quasi-Yamanouchi lock tableau $\destand(T)$ constructed as follows. For each row, say $i$, if every cell in row $i$ lies strictly right of every cell in row $i+1$ and the leftmost cell of row $i$ has label larger than $i$, then move every cell in row $i$ up to row $i+1$. Repeat until no such row exists. To see that the de-standardization map maintains the lock tableau conditions, note that the labels within each column are maintained, proving (i). De-standardization does not move cells to a row higher than their label,so (ii) is maintained. No cell is moved from weakly below to strictly above any other, and no cell moves upward if there is a cell to its right in the row above, so conditions (iii) and (iv) are maintained. Finally, by definition de-standardization terminates if and only if the quasi-Yamanouchi condition is met.

  Let $T\in \LT(a)$ and suppose $\destand(T) = S \in \QLT(a)$. Since $\destand$ moves cells upwards we have $\wt(T)\ge \wt(S)$, and since $\destand$ moves\emph{all} cells in row $i$ to row $i+1$, we have $\flatten(\wt(T))$ refines $\flatten(\wt(S))$. Hence $x^T$ is a monomial of $\fund_{\wt(S)}$. Conversely, let $S\in \QLT(a)$, and let $b$ be a weak composition such that $b\ge \wt(S)$ and $\flatten(b)$ refines $\flatten(\wt(S))$. We show there is a unique $T\in \LT(a)$ with $\wt(T) = b$ and $\destand(T)=S$.  To reconstruct $T$ from $b$ and $U$, for $j = 1,\ldots,n$, if $\wt(S)_{j} = b_{i_{j-1} + 1} + \cdots + b_{i_{j}}$, then, from right to left, move the first $b_{i_{j-1} + 1}$ cells down to row $i_{j-1} + 1$, the next $b_{i_{j-1} + 2}$ cells down to row $i_{j-1} + 2$, and so on. By construction $T$ is a Kohnert diagram of $S$ (so $T\in \LT(a)$), $\wt(T) =b$, and $\destand(T)=S$. Uniqueness follows from the lack of choice at every step.
\end{proof}

For example, from Figure~\ref{fig:QLT} we may quickly compute
\begin{displaymath}
  \lock_{(0,3,2,1)} = \fund_{(0,3,2,1)} + \fund_{(1,3,1,1)} + \fund_{(2,2,1,1)} + \fund_{(2,3,0,1)} + \fund_{(1,3,2,0)} + \fund_{(2,2,2,0)} + \fund_{(3,1,2,0)}.
\end{displaymath}

While we noted that lock polynomials are not, in general, nonnegative on Demazure characters, there is a case where lock polynomials and Demazure characters coincide. Letting Conjecture~\ref{conj:demazure} be our guide, we notice that a lock diagram $\DD(a)$ is demazure if and only if $\flatten(a)$ is weakly decreasing. Thus the following result lends more weight to Conjecture~\ref{conj:demazure}.

\begin{theorem}
  Given a weak composition $a$ such that $\flatten(a)$ is weakly decreasing, we have
  \begin{equation}
    \lock_{a} = \key_{a}.
    \label{e:LR}
  \end{equation}
  \label{thm:LR}
\end{theorem}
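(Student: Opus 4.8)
\textit{Proof proposal.} The plan is to induct on $\mathrm{inv}(a) = \#\{(i,j) : i<j,\ a_i = 0 < a_j\}$, reducing through the divided-difference recurrence for Demazure characters. Every weak composition appearing in the argument will have weakly decreasing flattening, a property preserved throughout since exchanging a $0$ with a positive part leaves the flattening unchanged. For the base case $\mathrm{inv}(a)=0$, the hypothesis forces $a$ to be weakly decreasing, so $\key_a = x^a$; on the other side every nonempty row of $\DD(a)$ reaches column $\max(a)$, so that column is occupied in every nonempty row below any cell, no Kohnert move is available, $\KD(\DD(a)) = \{\DD(a)\}$, and $\lock_a = x^{\wt(\DD(a))} = x^a = \key_a$.

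For the inductive step, I would take $i$ least with $a_i<a_{i+1}$; since $\flatten(a)$ is weakly decreasing this forces $a_i=0<a_{i+1}$. Setting $b = s_i a$, we get $b_i>0=b_{i+1}$, $\flatten(b)=\flatten(a)$, and $\mathrm{inv}(b)=\mathrm{inv}(a)-1$, so $\lock_b=\key_b$ by induction; since $\key_a = \pi_i \key_b$ by the standard recurrence, everything reduces to the key identity $\lock_a = \pi_i \lock_b$. Because $a$ and $b$ agree off positions $i,i+1$, where $a$ reads $(0,a_{i+1})$ and $b$ reads $(a_{i+1},0)$, the lock diagrams satisfy $\DD(a) = \Up_i(\DD(b))$: the contiguous right-justified block $B$ of cells filling row $i$ of $\DD(b)$ is raised into the empty row $i+1$. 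Hence $\DD(b)\in\KD(\DD(a))$ and $\KD(\DD(b))\subseteq\KD(\DD(a))$, and I would partition $\KD(\DD(a))$ into fibers over $\KD(\DD(b))$: send $U\in\KD(\DD(a))$ to the diagram $V(U)$ obtained by pushing every cell of $B$ down from row $i+1$ to row $i$ (and, more generally, undoing the lift wherever possible), checking as in the proof of Theorem~\ref{thm:kohnert-mono} that $V(U)$ is well-defined and lies in $\KD(\DD(b))$. The fiber over $V$ should consist exactly of the diagrams obtained from $V$ by raising to row $i+1$ a left-justified sub-block of the cells in row $i$ of $V$, of each size $t=0,1,\dots,\wt(V)_i-\wt(V)_{i+1}$; here $\wt(V)_i\ge\wt(V)_{i+1}$ always holds because all rows of $\DD(b)$ below row $i$ are full in the $\beta=b_i$ columns spanned by $B$ (their parts of the weakly decreasing flattening are $\ge\beta$), so the $\beta$ cells of $B$ can never descend and stay put in row $i$, while row $i+1$ lies within those $\beta$ columns. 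The weights over such a fiber are then precisely the monomials of $\pi_i x^{\wt(V)}$, and summing over fibers with linearity of $\pi_i$ yields $\lock_a=\pi_i\lock_b$, closing the induction.

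The hard part will be making this fiber analysis rigorous: row $i+1$ of $\DD(b)$ is empty but cells originating above it may descend into rows $i+1$ and $i$, so one must identify the liftable cells of $V$ canonically, verify that each lift produces a legitimate Kohnert diagram of $\DD(a)$, and confirm that the number of lifts and their weights reproduce $\pi_i x^{\wt(V)}$ — this being the same kind of $\Up_r$-move bookkeeping behind Theorems~\ref{thm:kohnert-mono} and~\ref{thm:kohnert-slide}, but more delicate because of the cells descending from higher rows. Two alternatives would side-step the $\pi_i$-accounting. First, one reads off from Definition~\ref{def:diagram-skew} that when $\flatten(a)$ is weakly decreasing $\DD(a)$ coincides with the skew diagram $\mathbb{S}(a)$ up to empty rows and columns, so $\lock_a = \kohnert_{\mathbb{S}(a)}$ is the skew key polynomial of this staircase skew shape, and it remains only to check that for this shape the standard skew key tableaux rectify bijectively onto the key tableaux of content $a$, collapsing the skew key polynomial to the single Demazure character $\key_a$. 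Second, one can argue entirely with tableaux, producing a weight-preserving bijection between the lock tableaux $\LT(a)$ of Definition~\ref{def:lock-tableaux} and the Kohnert tableaux for $\D(a)$ from \cite{AS-2} whenever $\flatten(a)$ is weakly decreasing.
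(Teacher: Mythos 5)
The core step of your induction --- the fiber analysis establishing $\lock_a=\pi_i\lock_b$ --- rests on the claim that $\wt(V)_i\ge\wt(V)_{i+1}$ for every $V\in\KD(\DD(b))$, justified by asserting that all rows of $\DD(b)$ below row $i$ are full in the $\beta$ columns of $B$, so the cells of $B$ can never descend. That assertion is false whenever $a_j=0$ for some $j<i$: your choice of $i$ (least with $a_i<a_{i+1}$) only guarantees $a_1\ge\cdots\ge a_{i-1}\ge a_i=0$, so there may be empty rows below row $i$ into which the cells of $B$ can drop. Concretely, take $a=(1,0,0,1,1)$, so $i=3$ and $b=(1,0,1,0,1)$, with $\DD(b)$ consisting of single cells in rows $1,3,5$ of column~$1$. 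The diagram $V\in\KD(\DD(b))$ with cells in rows $1,2,4$ has $\wt(V)=(1,1,0,1,0)$, i.e.\ $\wt(V)_3=0<1=\wt(V)_4$, and $\pi_3 x^{\wt(V)}=0$; with more cells per row one can similarly produce $V$ with $\wt(V)_{i+1}\ge\wt(V)_i+2$, where $\pi_i x^{\wt(V)}$ has \emph{negative} coefficients. No fiber of Kohnert diagrams can have weight generating function equal to such a $\pi_i x^{\wt(V)}$, so the proposed partition of $\KD(\DD(a))$ cannot exist as described. The identity $\lock_a=\pi_i\lock_b$ does hold in this example (both sides equal $\key_{(1,0,0,1,1)}$), but only after cancellation across fibers, which your argument neither anticipates nor supplies; this is a genuine gap, not merely a rigor issue.

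Your first fallback is also incorrect: $\DD(a)$ does not agree with $\mathbb{S}(a)$ up to empty rows and columns once $a$ has internal zeros. For $a=(2,0,1)$ the final shift in Definition~\ref{def:diagram-skew} moves row $3$ of $\mathbb{S}(a)$ to column $3$, outside the two columns of $\DD(a)$, and indeed $\kohnert_{\mathbb{S}(2,0,1)}=x_1^2x_2+x_1^2x_3+x_1^3\ne x_1^2x_2+x_1^2x_3=\lock_{(2,0,1)}$. The second fallback (a direct weight-preserving bijection between $\LT(a)$ and the Kohnert tableaux of $\D(a)$) is plausible but is only a restatement of the goal. For comparison, the paper proves the theorem by an entirely different route: it constructs involutions $\psi_i$ on quasi-Yamanouchi lock tableaux satisfying the axioms of weak dual equivalence (using the right-justification and the weakly decreasing hypothesis to verify well-definedness), deduces that $\lock_a$ is a nonnegative sum of Demazure characters, and then pins down that this sum is the single term $\key_a$ by combining the Schur-function stable limit with the common leading monomial. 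If you want to salvage the $\pi_i$-recursion, you would need either to prove the cancellation directly or to first restrict to a quasi-Yamanouchi-type subset of $\KD(\DD(b))$ on which the inequality $\wt(V)_i\ge\wt(V)_{i+1}$ actually holds.
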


\begin{proof}
  We utilize the machinery of weak dual equivalence \cite{Ass-W} to establish that $\lock_a$ expands nonnegatively into Demazure characters when $\flatten(a)$ is weakly decreasing. For $a$ a weak composition of $n$, we must define involutions $\psi_2,\ldots,\psi_{n-1}$ on $\QLT(a)$ such that $\psi_i \psi_j (T) = \psi_j \psi_i (T)$ whenever $|i-j| \geq 3$ and for $i-h \leq 3$, there exists a weak composition $b$ of $i-h+3$ such that
  \[ \sum_{U \in [T]_{(h,i)}} \fund_{\wt_{(h-1,i+1)}(U)} = \key_{b}, \]
  where $[T]_{(h,i)}$ is the equivalence class generated by $\psi_h,\ldots,\psi_i$, and $\wt_{(h,i)}(U)$ is the weak composition of $i-h+1$ obtained by deleting the first $h-1$ and last $n-i$ nonzero parts from $\wt(U)$.

  To define the desired involutions, we first relabel the cells of $T\in\QLT(a)$ with $1,2,\ldots,n$ from the bottom row up, labeling each row right to left, then raise the cells to $\DD(a)$ maintaining their relative order. The result is a bijective filling of $\DD(a)$ with $1,2,\ldots,n$ such that rows decrease left to right and columns decrease top to bottom. This process is reversible by lowering cells of a bijective filling of $\DD(a)$ such that $1$ is the lowest, $2$ the next lowest, and so on, and then applying the de-standardization map from the proof of Theorem~\ref{thm:lock-fund-2}. In fact, if we allow cells to fall below the $x$-axis, then this establishes a bijection between $\QLT(a)$ and bijective fillings of $\DD(a)$ decreasing rows and columns. 

  For $2 \leq i \leq n-1$, let $\psi_i$ act on $T \in \QLT(a)$ by instead acting on bijective fillings of $\DD(a)$ with decreasing rows and columns as follows. If, in reading entries right to left from the top row down, $i\pm 1$ lies between $i$ and $i \mp 1$, then $\psi_i$ exchanges $i$ and $i \mp 1$; otherwise $\psi_i$ acts by the identity. To see that this is well-defined, there are two cases to check. If $i+1$ lies above $i$ in the same column, then $i-1$ lies between them in the previous sense only if it lies right of $i$. Since $\DD(a)$ is right justified, this forces an entry $j$ right of $i+1$ and above $i-1$. The decreasing rows and columns forces $j=i$, which is a contradiction. Similarly, if $i$ lies above $i-1$ in the same column, then $i+1$ lies between them in the previous sense only if it lies left of $i$. If $\flatten(a)$ is weakly decreasing, then this forces an entry $j$ below $i+1$ and left of $i-1$. The decreasing rows and columns forces $j=i$, which is again a contradiction. Therefore when $\flatten(a)$ is weakly decreasing, $\psi_i$ is well-defined. 

  Given the local nature of $\psi_i$, since $\{i-1,i,i+1\} \cap \{j-1,j,j+1\} = \varnothing$ whenever $|i-j| \geq 3$, we clearly have the commutativity relation $\psi_i \psi_j (T) = \psi_j \psi_i (T)$. The second condition is local, requiring between three and six consecutively labeled cells that must fit inside a staircase diagram. Therefore there are finitely many cases to check, which can be verified easily by direct (and tedious) enumeration or by computer. We have done both, so Demazure positivity follows. To see that this is a single Demazure character, we note that bijective fillings of $\DD(a)$ with decreasing rows and columns are in bijection with bijective fillings of partition shape $\flatten(a)$ with increasing rows and columns, the latter of which are standard Young tableaux that generate a Schur function. Therefore in the stable limit we have a single term in the Schur expansion, so the non-negativity together with Proposition~\ref{prop:key-stable} implies the Demazure expansion must have a single term as well. By the unique leading term for lock polynomials and Demazure characters, we must have $\lock_a = \key_a$.
\end{proof}

\subsection{The extended Schur basis}
\label{sec:right-qsym}

By Theorem~\ref{thm:kohnert-stable}, we may consider the stable limits of lock polynomials, which we call \emph{extended Schur functions}.

\begin{definition}
  Given a (strong) composition $\alpha$, the \emph{extended Schur polynomial indexed by $\alpha$} is given by
  \begin{equation}
    \eS_{\alpha}(x_1,\ldots,x_m) = \lock_{0^m \times \alpha}(x_1,\ldots,x_m,0,\ldots,0),
    \label{e:extended-poly}
  \end{equation}
  and the \emph{extended Schur function indexed by $\alpha$} is given by
  \begin{equation}
    \eS_{\alpha}(X) = \lim_{m \rightarrow \infty} \lock_{0^m \times \alpha} = \Kohnert_{\sDD(\alpha)}.
    \label{e:extended-function}
  \end{equation}  
  \label{def:eSchur}
\end{definition}

For example, we can compute the extended Schur function $\eS_{(2,1,2)}(X)$ by
\begin{eqnarray*}
  \lock_{(2,1,2)} & = & \fund_{(2,1,2)}, \\
  \lock_{(0,2,1,2)} & = & \fund_{(0,2,1,2)} + \fund_{(1,2,0,2)}, \\
  \lock_{(0,0,2,1,2)} & = & \fund_{(0,0,2,1,2)} + \fund_{(0,1,2,0,2)} + \fund_{(1,1,2,0,1)}, \\
  & \vdots & \\
  \eS_{(2,1,2)} & = & F_{(2,1,2)} + F_{(1,2,2)} + F_{(1,1,2,1)}.
\end{eqnarray*}

By Proposition~\ref{prop:delete-row}, we have the following statement showing that the extended Schur functions include all Kohnert quasisymmetric functions for lock diagrams.

\begin{corollary}
  Given a weak composition $a$, we have
  \[ \Kohnert_{\sDD(a)} = \eS_{\flatten(a)}. \]
\end{corollary}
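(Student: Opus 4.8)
The plan is to reduce the claim directly to Proposition~\ref{prop:delete-row} together with the definition of the extended Schur function, so the argument is essentially bookkeeping. First I would unwind the notation: by Definition~\ref{def:right_key}, $\Kohnert_{\sDD(a)}$ is the Kohnert quasisymmetric function of the right-justified diagram $\DD(a)$ of weight $a$, while by \eqref{e:extended-function} in Definition~\ref{def:eSchur} we have $\eS_{\flatten(a)} = \Kohnert_{\sDD(\flatten(a))}$. Hence it suffices to prove the single identity $\Kohnert_{\sDD(a)} = \Kohnert_{\sDD(\flatten(a))}$.

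Second, I would observe that $\DD(a)$ and $\DD(\flatten(a))$ differ only by insertion and deletion of empty rows. Indeed, $\DD(a)$ places $a_i$ cells in row $i$, each row flush against column $\max(a)$; the rows with $a_i=0$ are empty, and deleting them while closing up the gaps (preserving the top-to-bottom order of the surviving rows) yields a diagram whose $j$th nonempty row from the bottom has $\flatten(a)_j$ cells, still flush against column $\max(a)=\max(\flatten(a))$ — that is, exactly $\DD(\flatten(a))$, up to trailing empty rows, which are themselves empty rows and hence also harmless. The degenerate case $a=0^n$ is immediate since both diagrams are empty.

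Finally, applying Proposition~\ref{prop:delete-row} gives $\Kohnert_{\sDD(a)} = \Kohnert_{\sDD(\flatten(a))}$, and combining with \eqref{e:extended-function} yields $\Kohnert_{\sDD(a)} = \eS_{\flatten(a)}$, as desired.

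I do not anticipate a genuine obstacle here; the only point requiring a little care is the verification that the right-justification anchor column $\max(a)$ is unchanged under flattening, so that after deleting the empty rows one lands on $\DD(\flatten(a))$ exactly rather than on some horizontal translate of it — and even that discrepancy, were it to arise, would be absorbed by the remark (made just before Theorem~\ref{thm:kohnert-basis}) that inserting or deleting empty columns does not affect a Kohnert polynomial, hence does not affect its stable limit.
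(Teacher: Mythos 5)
Your proposal is correct and matches the paper's own (implicit) proof exactly: the paper derives this corollary directly from Proposition~\ref{prop:delete-row} applied to $\DD(a)$ and $\DD(\flatten(a))$, which differ only by empty rows, combined with the definition $\eS_{\alpha} = \Kohnert_{\sDD(\alpha)}$ from \eqref{e:extended-function}. Your extra care about the anchor column is sound but, as you note, not actually needed since $\max(a)=\max(\flatten(a))$.
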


To justify the name \emph{extended Schur functions}, we have the following result.

\begin{proposition}
  For $\lambda$ a partition, we have
  \begin{equation}
    \eS_{\lambda}(X) = s_{\lambda}(X).
  \end{equation}
  \label{prop:eSchur}
\end{proposition}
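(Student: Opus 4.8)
The plan is to show that the extended Schur function $\eS_\lambda$ coincides with the Schur function $s_\lambda$ by identifying the combinatorial objects enumerated by each side. Recall $\eS_\lambda(X) = \Kohnert_{\sDD(\lambda)}$, and by Theorem~\ref{thm:kohnert-stable-fund} this stable limit expands into fundamental quasisymmetric functions indexed by the non-split quasi-Yamanouchi Kohnert diagrams of $0^m \times \DD(\lambda)$ for $m$ large. Via the bijection of Theorem~\ref{thm:kohnert} between Kohnert diagrams of $\DD(a)$ and lock tableaux $\LT(a)$, together with the de-standardization of Theorem~\ref{thm:lock-fund-2}, this is the same as $\sum_{T \in \QLT(0^m\times\lambda)} \fund_{\wt(T)}$ stabilized, and passing to the limit gives $\sum_{T} F_{\flatten(\wt(T))}$ over an appropriate set of quasi-Yamanouchi lock tableaux.

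First I would set up the comparison with the classical fundamental expansion of $s_\lambda$, namely $s_\lambda = \sum_{Q \in \SYT(\lambda)} F_{\Des(Q)^c}$ (the fundamental quasisymmetric function indexed by the descent composition of the standard Young tableau $Q$). So it suffices to exhibit a bijection between the indexing sets that matches descent data. The key device is already essentially present in the proof of Theorem~\ref{thm:LR}: relabel the cells of a lock tableau $T$ of content $\lambda$ (here $\lambda$ a partition) with $1, 2, \ldots, n$ reading bottom row up, each row right to left, then raise cells back to $\DD(\lambda)$ maintaining relative order. This gives a bijective filling of the right-justified shape $\DD(\lambda)$ with decreasing rows and decreasing columns, which in turn (by reflecting the columns, i.e. reversing each column, since $\DD(\lambda)$ is right-justified of partition shape) corresponds to a standard Young tableau of shape $\lambda$ with increasing rows and columns. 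The main work is to track how the quasi-Yamanouchi condition on lock tableaux, under stabilization, translates into exactly the descent-composition bookkeeping that produces $F_{\Des(Q)^c}$.

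Concretely, the key steps in order are: (1) Use Theorem~\ref{thm:lock-fund-2} to write $\lock_{0^m\times\lambda} = \sum_{T \in \QLT(0^m\times\lambda)} \fund_{\wt(T)}$, and observe that for $m \geq n = |\lambda|$ the quasi-Yamanouchi lock tableaux stabilize, so $\eS_\lambda = \sum F_{\flatten(\wt(T))}$ where $T$ ranges over the stable set of quasi-Yamanouchi lock tableaux of content $\lambda$. (2) Apply the standardization/relabeling bijection above to identify this stable set with standard Young tableaux of shape $\lambda$. (3) Check that under this bijection, $\flatten(\wt(T))$ is precisely the descent composition (or its appropriate complement) of the corresponding SYT — here the quasi-Yamanouchi condition is exactly what forces the row-weight of $T$ to record the descent set rather than a finer refinement, and the "bottom-up, right-to-left" reading order is what makes a row-break in $T$ correspond to a descent of the SYT. (4) Conclude $\eS_\lambda = \sum_{Q\in\SYT(\lambda)} F_{\Des(Q)^c} = s_\lambda$.

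The main obstacle will be step (3): making the translation between the quasi-Yamanouchi condition and descent compositions precise and checking the direction of all the conventions (which way rows and columns decrease, whether one gets $\Des(Q)$ or its complement, whether the reading word is that of $Q$ or its transpose). This is the kind of calculation where a sign-of-the-permutation-style bookkeeping error is easy to make, so I would verify it carefully against the worked example $\eS_{(2,1,2)} = F_{(2,1,2)} + F_{(1,2,2)} + F_{(1,1,2,1)}$, confirming these are exactly the $F_{\Des(Q)^c}$ over the three standard Young tableaux of shape $(2,1,2)$ — wait, $(2,1,2)$ is not a partition; for the partition case one should instead check a genuine partition example, e.g. $\eS_{(2,1)}$ or $\eS_{(2,2)}$, against the known Schur expansion $s_{(2,1)} = F_{(2,1)} + F_{(1,2)}$. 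Once the conventions are pinned down, the bijection itself is the standard one between column-strict-type fillings of a right-justified shape and SYT, and the remainder is routine.
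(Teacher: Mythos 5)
Your argument is correct in outline, but it takes a genuinely different route from the paper. The paper's proof is two lines: since $\lambda$ is a partition, $\flatten(\lambda)$ is weakly decreasing, so Theorem~\ref{thm:LR} gives $\lock_{0^m\times\lambda}=\key_{0^m\times\lambda}$, and Proposition~\ref{prop:key-stable} then gives $\lim_m \key_{0^m\times\lambda}=s_\lambda$. All of the real work is thus outsourced to Theorem~\ref{thm:LR}, whose proof uses weak dual equivalence. You instead propose a direct bijective argument: expand $\eS_\lambda$ over (stabilized) quasi-Yamanouchi lock tableaux, identify these with standard Young tableaux of shape $\lambda$ via the reflect-and-relabel bijection, match descent data, and compare with Gessel's expansion $s_\lambda=\sum_{Q}F_{\Des(Q)}$. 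This works --- indeed the paper itself carries out essentially this descent bookkeeping later, in Theorem~\ref{thm:SET}, where the stable fundamental expansion of $\eS_\alpha$ is indexed by standard extended tableaux with their descent compositions; for $\alpha=\lambda$ a partition, reversing each row of $\DD(\lambda)$ turns a standard extended tableau (rows decreasing left to right, columns decreasing top to bottom) into a French-notation SYT of shape $\lambda$, each entry stays in its row, and the paper's descent composition (break the run when $i+1$ sits in a strictly higher row than $i$) is exactly the SYT descent composition, with no complementation needed. There is no circularity, since Theorem~\ref{thm:SET} does not rely on this proposition. What your route buys is independence from Theorem~\ref{thm:LR} and its dual-equivalence machinery; what it costs is that your step (3), which you correctly flag as the crux, is left unexecuted --- to make the proof complete you would need to actually verify the descent correspondence (or simply cite Theorem~\ref{thm:SET} and reduce to the reflection bijection above), and your test case should be a genuine partition such as $(2,1)$ or $(2,2)$, as you noted, rather than $(2,1,2)$.
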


\begin{proof}
  By Theorem~\ref{thm:LR}, we have $\lock_{0^m \times \lambda} = \key_{0^m \times \lambda}$ since $\lambda$ is weakly decreasing. By Proposition~\ref{prop:key-stable}, we have $\lim_{m\rightarrow\infty} \key_{0^m \times \lambda} = s_{\lambda}(X)$. The result now follows from Definition~\ref{def:eSchur}.
\end{proof}

Note that Demazure characters do not expand non-negatively into lock polynomials. For example,
\begin{eqnarray*}
  \key_{(0,2,1,2)} & = & \lock_{(0,2,1,2)} + \lock_{(0,2,2,1)} - \lock_{(1,2,2,0)}.
\end{eqnarray*}
Conversely, Proposition~\ref{prop:eSchur} shows that the stable limits of Demazure characters do expand nonnegatively into the stable limits of lock polynomials. 

We can use lock tableaux to give a tableaux model for extended Schur functions as follows.

\begin{definition}
  Given a (strong) composition $\alpha$, a \emph{semi-standard extended tableau of shape $\alpha$} is a filling of $\DD(\alpha)$ with positive integers such that rows weakly decrease left to right and columns strictly decrease top to bottom. Denote the set of semi-standard extended tableaux of shape $\alpha$ by $\SSET(\alpha)$, or by $\SSET_n(\alpha)$ if we restrict the integers to $\{1,2,\ldots,n\}$.
  \label{def:SSET}
\end{definition}

For example, the semi-standard extended tableaux of shape $(2,1,2)$ are shown in Figure~\ref{fig:SSET}. Compare these with the lock tableaux for $(0,2,1,2)$ shown in Figure~\ref{fig:LT}.

\begin{figure}[ht]
  \begin{center}
    \begin{displaymath}
      \begin{array}{c@{\hskip \cellsize}c@{\hskip \cellsize}c@{\hskip \cellsize}c@{\hskip \cellsize}c@{\hskip \cellsize}c@{\hskip \cellsize}c@{\hskip \cellsize}c@{\hskip \cellsize}c}
        \tableau{4 & 4 \\ & 3 \\ 2 & 2 } &
        \tableau{4 & 4 \\ & 3 \\ 2 & 1 } &
        \tableau{4 & 4 \\ & 3 \\ 1 & 1 } &
        \tableau{4 & 4 \\ & 2 \\ 2 & 1 } &
        \tableau{4 & 4 \\ & 2 \\ 1 & 1 } &
        \tableau{4 & 3 \\ & 2 \\ 2 & 1 } &
        \tableau{4 & 3 \\ & 2 \\ 1 & 1 } &
        \tableau{3 & 3 \\ & 2 \\ 2 & 1 } &
        \tableau{3 & 3 \\ & 2 \\ 1 & 1 } 
      \end{array}
    \end{displaymath}
    \caption{\label{fig:SSET}The set $\SSET_4(2,1,2)$ of semi-standard extended tableaux of shape $(2,1,2)$ with entries in $\{1,2,3,4\}$.}
  \end{center}
\end{figure}

For $T$ a semi-standard extended tableau, let $\wt(T)$ be the weak composition whose $i$th part is the number of entries of $T$ equal to $i$. Then extended Schur functions are the generating function for extended tableaux.

\begin{theorem}
  For $\alpha$ a (strong) composition, we have
  \begin{eqnarray*}
    \eS_{\alpha}(x_1,\ldots,x_n) & = & \sum_{T \in \SSET_n(\alpha)} x_1^{\wt(T)_1} \cdots x_n^{\wt(T)_n}, \\
    \eS_{\alpha}(X) & = & \sum_{T \in \SSET(\alpha)} X^{\wt(T)},
  \end{eqnarray*}
  where $X^{\wt(T)}$ is the monomial $x_1^{\wt(T)_1} \cdots x_n^{\wt(T)_n}$ when $\wt(T)$ has length $n$.
  \label{thm:SSET}
\end{theorem}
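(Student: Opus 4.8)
The plan is to obtain the tableau formula for $\eS_\alpha$ by passing through lock tableaux. We already have, by Theorem~\ref{thm:kohnert} (via the bijection $L_a$), that $\lock_a = \sum_{T \in \LT(a)} x^{\wt(T)}$ for any weak composition $a$, and by Definition~\ref{def:eSchur} that $\eS_\alpha(X) = \lim_{m\to\infty} \lock_{0^m \times \alpha}$. So the first step is to understand $\LT(0^m \times \alpha)$ and show that its stable behavior is governed by $\SSET(\alpha)$. The key observation is that a lock tableau of content $0^m \times \alpha$, by condition (i) of Definition~\ref{def:lock-tableaux}, still has exactly one cell per column populated by each ``letter'' appearing in $\alpha$, and the diagram $\DD(0^m\times\alpha)$ is just $\DD(\alpha)$ shifted up by $m$ rows; the entries, however, live in $\{1,\dots,n+m\}$. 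I would first establish a weight-preserving bijection between $\LT(0^m \times \alpha)$ (for $m$ large) and a suitable set of fillings of $\DD(\alpha)$ with strictly decreasing columns and weakly decreasing rows, by reading off the relative order of the entries rather than their absolute values.

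Second, I would make the passage to the stable limit precise. As in the proof of Theorem~\ref{thm:kohnert-stable}, the monomial content of $\lock_{0^m\times\alpha}$ stabilizes once $m$ is at least the number of cells of $\DD(\alpha)$: no cell of a Kohnert diagram of $0^m\times\alpha$ can fall into the bottom $m - |\alpha|$ rows, so adding further zero rows only relabels. Concretely, given $T \in \SSET(\alpha)$ with entries in $\{1,\dots,n\}$, one recovers a unique $T' \in \LT(0^m\times\alpha)$ for $m \geq |\alpha|$ by placing the cells according to the standard ``gravity'' construction: for each column, put the letters of $T'$ in the rows dictated by $T$'s entries, with the understanding that $T$'s entry $i$ in a given cell means that cell sits in row $i$ of the shifted diagram once we reinterpret. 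I would check that conditions (ii)--(iv) of a lock tableau translate exactly into ``weakly decreasing rows, strictly decreasing columns'' for the $\SSET$ filling, and that condition (i) is automatic from the right-justified shape. This yields $\eS_\alpha(x_1,\dots,x_n) = \sum_{T \in \SSET_n(\alpha)} x^{\wt(T)}$, and letting $n \to \infty$ gives the second identity.

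The main obstacle, I expect, is the bookkeeping in the bijection between $\LT(0^m\times\alpha)$ and $\SSET_n(\alpha)$: one must carefully track how a cell's absolute label in the lock tableau (an integer in $\{1,\dots,n+m\}$, constrained by conditions (i)--(iv)) corresponds to its intended entry in $\SSET$, and verify that condition (ii) ``each entry in row $i$ is at least $i$'' becomes vacuous/automatic in the limit — precisely because after prepending enough zeros, the constraint that the label be at least the row index is always satisfiable and no longer restricts the relative pattern. Once the dictionary is set up correctly, the remaining verifications (weight preservation, that $\SSET$ conditions are equivalent to $\LT$ conditions modulo the shift, stabilization of the monomial set) are routine. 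An alternative, cleaner route would be to invoke Proposition~\ref{prop:delete-row} (so that only $\flatten$ matters) together with a direct semistandardization argument on lock tableaux, but the bijective approach above keeps everything parallel to the already-established Theorem~\ref{thm:kohnert}.
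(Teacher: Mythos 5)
Your proposal follows essentially the same route as the paper: both pass through Theorem~\ref{thm:kohnert} to express $\lock_{0^m\times\alpha}$ via lock tableaux, then set up the weight-preserving bijection with $\SSET_m(\alpha)$ that swaps the roles of row index and entry (the lock-tableau cell's row becomes the $\SSET$ entry, and its label determines the $\SSET$ row), checks that conditions (i)--(iv) translate into the right-justified shape with weakly decreasing rows and strictly decreasing columns, and lets $m\to\infty$. Your observation that condition (ii) becomes automatic is correct for exactly the reason the paper gives: once cells are restricted weakly below row $m$ and all labels exceed $m$, condition (ii) reduces to the statement that entries lie in $\{1,\ldots,m\}$.
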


\begin{proof}
  By Theorem~\ref{thm:kohnert} and Definition~\ref{def:eSchur}, we have that $\eS_{\alpha}(x_1,\ldots,x_m)$ is the generating polynomial for lock tableaux of content $(0^m \times \alpha)$ with no cells above row $m$. Given such a lock tableau $D$, we may define a semi-standard extended tableau $T$ as follows. For every cell $x$ of $D$, place an entry equal to the row index of $x$ into the cell of $T$ in the same column as $x$ and in the row given by $m$ minus the entry of $x$. For example, the lock tableaux in Figure~\ref{fig:LT} map to the semi-standard extended tableau in Figure~\ref{fig:SSET}, respectively. To reverse the procedure, given a semi-standard extended tableau $T$, we may construct a lock tableau $D$ by raising $T$ up $m$ rows then moving each cell of $T$ down to the row equal to its entry. 

  Definition~\ref{def:lock-tableaux} condition (i) is equivalent to $T$ having shape $\DD(a)$, condition (ii) and the restriction to lock tableaux with all entries weakly below row $m$ is equivalent to $T$ having labels in $\{1,2,\ldots,m\}$, condition (iii) is equivalent to rows of $T$ weakly decreasing, and condition (iv) is equivalent to columns of $T$ strictly decreasing. Moreover, $\wt(D) = \wt(T)$. Therefore this gives a weight-preserving bijection between $\LT(0^m \times \alpha)$ with all cells weakly below row $m$ and $\SSET_m(\alpha)$, so the first formula follows. The second follows from the first by letting $m$ go to infinity.
\end{proof}

Campbell, Feldman, Light, Shuldiner and Xu \cite{CFLSX14} defined the same class of tableaux of composition shape, which they termed \emph{shin-tableaux}, when they introduced the \emph{shin functions}, which are a basis of symmetric functions in non-commuting variables that generalize the Schur functions. As these tableaux characterize the expansion of noncommutative homogeneous symmetric functions into shin functions, they also characterize the expansion of the dual basis, which are quasisymmetric functions, into monomial quasisymmetric functions. In other words, the extended Schur functions are the dual basis to the noncommutative shin functions. In \cite{CFLSX14}, the authors observe this and state the positive expansion into monomial quasisymmetric functions. Below we develop further properties of this basis.

Just as the fundamental slide expansion of lock polynomials is a more compact formula, we may translate Theorem~\ref{thm:SSET} into a fundamental quasisymmetric function expansion using the following.

\begin{definition}
  A \emph{standard extended tableau of shape $\alpha$} is a semi-standard extended tableau of shape $\alpha$ that uses each of the integers $1,2,\ldots,n$ exactly once. Denote the set of standard extended tableaux of shape $\alpha$ by $\SET(\alpha)$, and call the element of $\SET(\alpha)$ whose entries in row $i+1$ are the first $\alpha_{i+1}$ integers larger than $\alpha_1 + \ldots + \alpha_i$ the \emph{super-standard} extended tableau of shape $\alpha$.
  \label{def:SET}
\end{definition}

For example, the standard extended tableaux of shape $(2,1,2)$ are shown in Figure~\ref{fig:SET}. The leftmost is the super-standard one.

\begin{figure}[ht]
  \begin{center}
    \begin{displaymath}
      \begin{array}{c@{\hskip 2\cellsize}c@{\hskip 2\cellsize}c}
        \tableau{5 & 4 \\ & 3 \\ 2 & 1 } &
        \tableau{5 & 4 \\ & 2 \\ 3 & 1 } &
        \tableau{5 & 3 \\ & 2 \\ 4 & 1 } 
      \end{array}
    \end{displaymath}
    \caption{\label{fig:SET}The set $\SET(2,1,2)$ of standard extended tableaux of shape $(2,1,2)$.}
  \end{center}
\end{figure}

For $T$ a standard extended tableau, define the \emph{descent composition of $T$}, denoted by $\Des(T)$, to be the (strong) composition given by increasing runs of the entries $1,2,\ldots,n$ when read right to left in $T$. For example, the descent compositions for the standard extended tableaux in Figure~\ref{fig:SET} are $(2,1,2)$, $(1,2,2)$, and $(1,1,2,1)$, respectively; note the descent composition of the super-standard extended tableau is $\alpha$. Compare this with the $F$-expansion of $\eS_{(2,1,2)}$.

\begin{theorem}
  For $\alpha$ a (strong) composition, we have
  \begin{equation}
    \eS_{\alpha}(X) = \sum_{T \in \SET(\alpha)} F_{\Des(T)} (X).
  \end{equation}
  \label{thm:SET}
\end{theorem}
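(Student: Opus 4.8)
The plan is to prove this by standard destandardization, passing through the semi-standard tableau model of Theorem~\ref{thm:SSET} and organizing $\SSET(\alpha)$ into fibres over $\SET(\alpha)$. The key identity to establish is that for each standard extended tableau $S \in \SET(\alpha)$, the generating function of all semi-standard extended tableaux that ``standardize'' to $S$ is exactly $F_{\Des(S)}(X)$; summing over $S \in \SET(\alpha)$ and invoking Theorem~\ref{thm:SSET} then yields the claim.

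First I would define the \emph{standardization} of a semi-standard extended tableau $T \in \SSET(\alpha)$: relabel the entries of $T$ with $1, 2, \ldots, n$ (where $n = |\alpha|$) by processing the values in increasing order, and among cells sharing a given value, assigning the smaller standardized labels in the reading order that reads each row right to left, proceeding from the top row down. (This reading order is forced: within a fixed value, equal entries in the same row decrease left to right is not available, but columns strictly decrease, so equal entries never share a column; the right-to-left, top-down convention is the one compatible with the rows-weakly-decrease, columns-strictly-decrease conditions, exactly as in the classical standardization of reverse semistandard tableaux.) One checks routinely that the result $\std(T)$ lies in $\SET(\alpha)$: weak decrease in rows becomes strict decrease, and strict decrease in columns is preserved. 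Next I would show that for $S \in \SET(\alpha)$ and a weak composition $b$ of length $n$, there is a semi-standard extended tableau $T$ with $\std(T) = S$ and $\wt(T) = b$ if and only if $\flatten(b)$ refines $\Des(S)$ (equivalently, whenever $i+1$ lies earlier than $i$ in the right-to-left, top-down reading word of $S$ — i.e. $i$ is a descent — one must have the value at the cell labeled $i$ strictly less than the value at the cell labeled $i+1$), and that such $T$ is then unique. Uniqueness is immediate since the relative order of entries is dictated by $S$ and the multiplicities by $b$; existence is a direct construction, reading off $S$ and assigning values according to $b$.

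Given this fibre description, the generating function over the fibre of $S$ is $\sum_{\flatten(b)\ \mathrm{refines}\ \Des(S)} X^b$, which is precisely $F_{\Des(S)}(X)$ by Definition~\ref{def:fundamental} (the second form of \eqref{e:fundamental}). Since every $T \in \SSET(\alpha)$ lies in exactly one fibre, Theorem~\ref{thm:SSET} gives
\[ \eS_\alpha(X) = \sum_{T \in \SSET(\alpha)} X^{\wt(T)} = \sum_{S \in \SET(\alpha)} \ \sum_{\substack{T \in \SSET(\alpha) \\ \std(T) = S}} X^{\wt(T)} = \sum_{S \in \SET(\alpha)} F_{\Des(S)}(X), \]
as desired. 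Alternatively, and perhaps more in the spirit of the paper, one can bypass standardization of tableaux entirely and instead push Theorem~\ref{thm:lock-fund-2} through the bijection of Theorem~\ref{thm:SSET}: the de-standardization map on $\LT(0^m\times\alpha)$ restricts to a map on semi-standard extended tableaux, $\QLT(0^m\times\alpha)$ corresponds to $\SET(\alpha)$ in the limit, and the weight $\wt(T)$ in $\QLT$ translates to the descent composition $\Des(S)$; then \eqref{e:extended-function} and Theorem~\ref{thm:fund-stable} finish the argument.

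The main obstacle is verifying that the standardization reading order is the correct one and that the refinement/descent condition is stated correctly — i.e. pinning down exactly which pairs $(i, i+1)$ are forced apart in value. This is a bookkeeping point about how the weak-decrease-in-rows and strict-decrease-in-columns conditions interact under the right-to-left, top-to-bottom reading word, and it must be checked that the descent set of the reading word of $S$ (used to define $\Des(S)$) matches the set of positions where a strict increase of values is forced; once that is nailed down, the rest is the routine fibre-counting sketched above. A secondary, purely cosmetic, point is to confirm that the descent composition as defined in the paragraph before the theorem (increasing runs read right to left) agrees with the composition whose entries record the gaps between forced strict-increase positions, which is immediate from the definition of $F_\alpha$ and the notion of refinement.
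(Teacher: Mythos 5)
Your proposal is correct and follows essentially the same route as the paper: define a standardization map $\SSET(\alpha)\to\SET(\alpha)$ (reading equal entries right to left), show the fibre over each $S\in\SET(\alpha)$ consists of exactly one tableau for each weight $b$ with $\flatten(b)$ refining $\Des(S)$, and conclude that each fibre's generating function is $F_{\Des(S)}(X)$ before summing via Theorem~\ref{thm:SSET}. The bookkeeping points you flag (the reading order for equal entries and the matching of forced strict increases with the descent set) are exactly the checks the paper's proof performs.
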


\begin{proof}
  Similar to the proof of Theorem~\ref{thm:lock-fund-2}, we construct a \emph{standardization map} from $\SSET(\alpha)$ to $\SET(\alpha)$ by reading entries from smallest to largest and reading cells with entry $i$ from right to left, change the entries to $1,2,3,\ldots,n$. It is easy to see that this maintains the extended tableau conditions, so the result is a standard extended tableau.

  Let $T\in \SSET(\alpha)$ and suppose that $T$ standardizes to $S \in \SET(\alpha)$. By construction and the definition of the descent composition, we have $\flatten(\wt(T))$ refines $\Des(S)$. Hence $x^T$ is a monomial of $F_{\Des(S)}$. Conversely, let $S\in \SET(\alpha)$, and let $b$ be a weak composition such that $\flatten(b)$ refines $\Des(S)$. We show there is a unique $T\in SSET(\alpha)$ with $\wt(T) = b$ that standardizes to $S$. To reconstruct $T$ from $b$ and $S$, for $j = 1,\ldots,n$, if $\Des(S)_{j} = b_{i_{j-1} + 1} + \cdots + b_{i_{j}}$, then, from right to left, set the first $b_{i_{j-1} + 1}$ cells to have entry $i_{j-1} + 1$, the next $b_{i_{j-1} + 2}$ cells to have entry $i_{j-1} + 2$, and so on. This maintains the extended tableaux conditions, $\wt(T) =b$, and $T$ standardizes to $S$. Uniqueness follows from the lack of choice at every step.
\end{proof}

Using this characterization, we now justify the terminology extended Schur \emph{basis}.

\begin{theorem}
  The extended Schur functions form a basis for the ring of quasisymmetric functions.
\end{theorem}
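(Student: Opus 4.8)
The plan is to fix a degree $n$ and show that $\{\eS_\alpha : \alpha \text{ a composition of } n\}$ is a basis of the degree-$n$ component of the ring of quasisymmetric functions by realizing it as unitriangular, with respect to dominance order on compositions (padding with zeros to compare different lengths, as in the convention $a \leq b$ of Section~\ref{sec:kohnert}), against Gessel's monomial quasisymmetric basis $\{M_\alpha\}$. By Theorem~\ref{thm:SSET} we have $\eS_\alpha = \sum_{T \in \SSET(\alpha)} X^{\wt(T)}$, so collecting monomials gives $\eS_\alpha = \sum_\beta K_{\alpha\beta} M_\beta$ for nonnegative integers $K_{\alpha\beta}$, where $K_{\alpha\beta} > 0$ exactly when $\beta = \flatten(\wt(T))$ for some $T \in \SSET(\alpha)$.

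The heart of the argument is the dominance lemma: for every $T \in \SSET(\alpha)$ one has $\flatten(\wt(T)) \leq \alpha$, with equality only for the semi-standard extended tableau whose row $i$ is constant equal to $i$, which has weight exactly $\alpha$ and so gives $K_{\alpha\alpha} = 1$. To prove this I would list the distinct entries of $T$ as $v_1 < v_2 < \cdots$ and study, for each $j$, the subdiagram $E_j \subseteq \DD(\alpha)$ of cells with entry at most $v_j$. Since columns of $T$ strictly decrease from top to bottom, $E_j$ meets each column in a bottom segment; since rows of $T$ weakly decrease from left to right and $\DD(\alpha)$ is right justified, $E_j$ meets each row in a right segment. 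The rightmost column of $\DD(\alpha)$ meets every nonempty row, and reading its entries upward gives a strictly increasing sequence $u_1 < u_2 < \cdots$ with $u_i \geq v_i$; hence $E_j$ contains at most $j$ cells of that column, so it occupies at most the bottom $j$ rows of $\DD(\alpha)$, and since it has at most $\alpha_i$ cells in row $i$ the number of entries of $T$ that are at most $v_j$ — which is precisely the $j$th partial sum of $\flatten(\wt(T))$ — is at most $\alpha_1 + \cdots + \alpha_j$. Tracking the equality case through this chain forces $E_j$ to fill rows $1,\ldots,j$ for every $j$, which together with strictly decreasing columns forces row $i$ to be constant equal to $v_i$, pinning down the unique tableau with $\flatten(\wt(T)) = \alpha$.

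Granting the lemma, $\eS_\alpha = M_\alpha + \sum_\beta K_{\alpha\beta} M_\beta$ with every $\beta$ in the sum strictly dominated by $\alpha$, so ordering the compositions of $n$ by any linear extension of dominance makes the transition matrix to $\{M_\alpha\}$ lower unitriangular, hence invertible, and $\{\eS_\alpha\}$ is a basis of quasisymmetric functions. The same scheme can instead be run through the fundamental expansion of Theorem~\ref{thm:SET}: one checks that the super-standard extended tableau is the unique element of $\SET(\alpha)$ with descent composition $\alpha$ and that $\Des(T) \leq \alpha$ for all $T \in \SET(\alpha)$, giving unitriangularity against $\{F_\alpha\}$; alternatively, as noted just before the statement, the $\eS_\alpha$ are dual to the noncommutative shin functions of \cite{CFLSX14}, which form a basis of the ring of noncommutative symmetric functions, so the result also follows by duality. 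The main obstacle is the dominance lemma, and within it the structural fact that small entries of a semi-standard extended tableau of shape $\DD(\alpha)$ are confined to the lowest rows: since $\DD(\alpha)$ is only right justified rather than a partition or composition diagram, this cannot be read off directly and requires playing the right-justification off against both the weakly decreasing rows and the strictly decreasing columns as sketched above.
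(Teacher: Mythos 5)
Your proposal is correct, but it takes a genuinely different route from the paper. The paper's proof is a two-sentence appeal to Theorem~\ref{thm:SET}: the super-standard tableau has descent composition $\alpha$, which is lexicographically larger than that of any other element of $\SET(\alpha)$, so the transition matrix to $\{F_\alpha\}$ is uni-triangular for lexicographic order. You instead use the monomial expansion of Theorem~\ref{thm:SSET} and prove uni-triangularity against $\{M_\alpha\}$ for dominance order, and the heart of your argument --- the lemma that $\flatten(\wt(T)) \leq \alpha$ for all $T \in \SSET(\alpha)$ --- is sound as sketched: the set $E_j$ of cells with entry at most the $j$th smallest value $v_j$ meets each column in a bottom segment and each row in a right segment, right-justification forces any row met by $E_j$ to be met in the last column, and the strictly increasing last column (which also guarantees $T$ has at least $\ell(\alpha)$ distinct entries, needed to compare compositions of different lengths under padding) caps the number of such rows at $j$. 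One small imprecision: equality $\flatten(\wt(T)) = \alpha$ is attained by every tableau whose rows are constant with increasing values $v_1 < v_2 < \cdots$, not only by the one with $v_i = i$; what you actually need, and what your equality analysis does deliver, is that exactly one tableau has weight equal to $\alpha$ itself, which gives $K_{\alpha\alpha}=1$. Your route costs a harder lemma but buys a sharper conclusion --- dominance uni-triangularity against the monomial quasisymmetric basis, hence $M$-positivity with leading term $M_\alpha$ --- whereas the paper's lexicographic argument against $\{F_\alpha\}$ is essentially immediate from the definition of the super-standard tableau. Your two fallback routes (the $F$-expansion with a leading-term argument, and duality with the shin basis of \cite{CFLSX14}) are both viable; the first is exactly the paper's proof.
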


\begin{proof}
  Clearly, the descent composition of the super-standard extended tableau of shape $\alpha$ is larger in lexicographic order than the descent composition of any other element of $\SET(\alpha)$. Hence by Theorem~\ref{thm:SET}, the extended Schur functions are upper uni-triangular with respect to lexicographic order on the fundamental quasisymmetric functions.
\end{proof}

The extended Schur basis exhibits many nice properties and should have interesting applications to symmetric and quasisymmetric functions. We close our introduction of this basis with two such properties.

\begin{proposition}
  Let $\alpha$ be a (strong) composition and let $\beta$ be obtained from $\alpha$ by exchanging two adjacent parts $\alpha_i < \alpha_{i+1}$. Then the difference $\eS_{\beta} - \eS_{\alpha}$ is $F$-positive. In particular, the terms of the fundamental quasisymmetric expansion of $\eS_{\alpha}$ are a sub(multi)set of the terms of $s_{\lambda}$ where $\lambda = \sort(\alpha)$.
  \label{prop:swap}
\end{proposition}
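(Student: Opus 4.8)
The plan is to invoke Theorem~\ref{thm:SET}, which writes $\eS_{\gamma} = \sum_{T \in \SET(\gamma)} F_{\Des(T)}$ for every strong composition $\gamma$, and to construct a descent-preserving injection $\iota\colon \SET(\alpha) \hookrightarrow \SET(\beta)$. Granting this, $\eS_{\beta} - \eS_{\alpha} = \sum_{S \in \SET(\beta) \setminus \iota(\SET(\alpha))} F_{\Des(S)}$ is visibly $F$-positive. The final sentence then follows by iteration: bubble-sorting $\alpha$ into $\lambda = \sort(\alpha)$ is a finite chain of exactly such adjacent swaps, so $\eS_{\lambda} - \eS_{\alpha}$ is $F$-positive, and since $\eS_{\lambda} = s_{\lambda}$ by Proposition~\ref{prop:eSchur}, the fundamental expansion of $\eS_{\alpha}$ is contained, with multiplicities, in that of $s_{\lambda}$.

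To define $\iota$, note that $\beta_i = \alpha_{i+1} > \alpha_i = \beta_{i+1}$ while row $i$ lies below row $i+1$, so $\DD(\beta)$ is obtained from $\DD(\alpha)$ by taking the $k := \alpha_{i+1}-\alpha_i$ leftmost cells of row $i+1$ --- exactly the cells of row $i+1$ not lying directly above a cell of row $i$ --- and dropping them straight down into row $i$. For $T \in \SET(\alpha)$, let $\iota(T)$ be the filling of $\DD(\beta)$ obtained by performing this same relocation on the cells of $T$, each relocated cell retaining its entry and every other cell staying put. I would first check $\iota(T) \in \SET(\beta)$. Row $i+1$ of $\iota(T)$ is a suffix of row $i+1$ of $T$, hence strictly decreasing; the only condition needing an argument is that row $i$ of $\iota(T)$ strictly decreases. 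If the relocated entries are $e_1 > \cdots > e_k$ and $g_1$ is the leftmost original entry of row $i$ of $T$, then row $i+1$ of $T$ reads $e_1 > \cdots > e_k > f_1 > \cdots$ with $f_1$ sitting directly above $g_1$, whence $e_k > f_1 > g_1$ and row $i$ of $\iota(T)$ is strictly decreasing. Every column condition is inherited from $T$ because the relocation does not alter, in any column, the top-to-bottom sequence of entries (a relocated entry stays sandwiched between the same two entries of its column), and the columns meeting both rows $i$ and $i+1$ are literally unchanged. Injectivity is immediate: $T$ is recovered from $\iota(T)$ by lifting the leftmost $k$ cells of row $i$ back up to row $i+1$.

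The crux, which I expect to require the most care, is descent preservation. Recall $\Des(T)$ is determined by the set of indices $j$ such that $j{+}1$ lies in a strictly higher row of $T$ than $j$, and that under $\iota$ the only entries changing rows are $e_1, \ldots, e_k$, each dropping from row $i{+}1$ to row $i$. The chain $e_k > f_1 > g_1$ shows every relocated entry strictly exceeds every non-relocated entry of rows $i$ and $i{+}1$, and exceeds each such entry of row $i$ by at least $2$. So for a consecutive pair $\{j, j{+}1\}$: if neither or both of them are relocated, their relative rows are unchanged; if $j$ is relocated and $j{+}1$ is not, then $j{+}1$ cannot be among the entries of row $i{+}1$ of $T$ (it would be some $f_\bullet < j$), so its row is unaffected and, being $\neq i{+}1$, its comparison with the row of $j$ is the same whether $j$ sits in row $i{+}1$ or row $i$; symmetrically, if $j{+}1$ is relocated and $j$ is not, then $j$ cannot sit in row $i$ of $T$ (it would be some $g_\bullet$, forcing $j{+}1 \geq g_\bullet + 2$, impossible), so again the comparison is unchanged. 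Hence $\Des(\iota(T)) = \Des(T)$, which completes the construction; the remaining verifications are routine bookkeeping.
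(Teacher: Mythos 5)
Your proposal is correct and follows essentially the same route as the paper: both proofs invoke Theorem~\ref{thm:SET} and define the injection $\SET(\alpha)\hookrightarrow\SET(\beta)$ by dropping the leftmost $\alpha_{i+1}-\alpha_i$ cells of row $i+1$ down one row with their entries, using the chain ``leftmost entry of row $i$ $<$ entry above it $<$ entry to its left'' for well-definedness. Your verification that $\Des$ is preserved is in fact more careful than the paper's one-line justification (``we only move entries within their original column''), and you make explicit the bubble-sort iteration to $\eS_{\sort(\alpha)}=s_{\sort(\alpha)}$ that the paper leaves implicit.
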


\begin{proof}
  Define a map from $\SET(\alpha)$ to $\SET(\beta)$ by dropping the leftmost $\alpha_{i+1}-\alpha_{i}$ cells of row $i+1$ of $\DD(\alpha)$ down one row, retaining all entries. This map is well-defined since all cells retain their relative order within columns, and since in any element of $\SET(\alpha)$ the leftmost cell of row $i$ has smaller entry than the cell immediately above it, which in turn has smaller entry than the cell immediately left of it (which is the rightmost cell to drop down). The map is clearly injective, and moreover preserves $\Des$ since we only move entries within their original column.
\end{proof}

For example, taking $\alpha = (2,1,2)$ and exchanging $\alpha_2$ and $\alpha_3$ to get $\beta = (2,2,1)$, we have
\[ \eS_{(2,2,1)} - \eS_{(2,1,2)} = F_{(2,2,1)} + F_{(1,2,1,1)}. \]
For further examples, compare entries for the extended Schur functions in Table~\ref{tab:eSchur}.

\begin{proposition}
The extended Schur function $\eS_\alpha$ is equal to a single fundamental quasisymmetric function $\fund_\alpha$ if and only if $\DD(\alpha)$ is a (reverse) hook shape, i.e. $\alpha = (1^k, \ell)$ for some $k$ and $\ell$.
\end{proposition}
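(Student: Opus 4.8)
The plan is to reduce the statement, via the tableau formula of Theorem~\ref{thm:SET}, to a count of standard extended tableaux, and then to recognize $\SET(\alpha)$ as the set of linear extensions of a partial order on the cells of $\DD(\alpha)$. By Theorem~\ref{thm:SET}, $\eS_\alpha = \sum_{T \in \SET(\alpha)} F_{\Des(T)}$, a sum of fundamental quasisymmetric functions in which every term has coefficient at least $1$ and in which the super-standard extended tableau $T_0$ contributes exactly $F_\alpha$, since $\Des(T_0) = \alpha$. As the fundamental quasisymmetric functions are linearly independent, $\eS_\alpha$ equals a single fundamental quasisymmetric function if and only if $\SET(\alpha) = \{T_0\}$; that is, if and only if $\#\SET(\alpha) = 1$ (and in that case the function is $F_\alpha$). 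So it suffices to prove that $\#\SET(\alpha) = 1$ exactly when $\alpha = (1^k, \ell)$.

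To analyze this count, I would place on the cells of $\DD(\alpha)$, with row $1$ at the bottom, the partial order $P_\alpha$ in which a cell $(i,c)$ is covered by $(i,c-1)$ and by $(i+1,c)$ whenever these cells belong to $\DD(\alpha)$. By Definition~\ref{def:SET}, a standard extended tableau of shape $\alpha$ is precisely an order-preserving bijection from $P_\alpha$ to $\{1,\dots,n\}$, so $\#\SET(\alpha)$ is the number of linear extensions of $P_\alpha$, which equals $1$ if and only if $P_\alpha$ is a chain. The key observation is that a single covering step either lowers the column index by one or raises the row index by one, so along any chain of coverings the column index is weakly decreasing and the row index is weakly increasing; hence two cells are comparable in $P_\alpha$ only when one lies weakly above-and-left of the other.

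With this in hand the characterization is short. If $\alpha = (1^k,\ell)$ then $\DD(\alpha)$ consists of the top row $(k+1,1),\dots,(k+1,\ell)$ together with the tail $(k,\ell),\dots,(1,\ell)$ running down the rightmost column; one checks directly that every cell $(i,\ell)$ with $i \le k$ lies below $(k+1,j)$ in $P_\alpha$ (ascend column $\ell$, then move left along row $k+1$), so $P_\alpha$ is the chain $(1,\ell) < \cdots < (k+1,\ell) < (k+1,\ell-1) < \cdots < (k+1,1)$ and has a unique linear extension. Conversely, $\alpha$ fails to be of the form $(1^k,\ell)$ exactly when some part $\alpha_p$, with $p$ strictly less than the length of $\alpha$, satisfies $\alpha_p \ge 2$. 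Writing $M = \max_i \alpha_i$, the leftmost cell $(p,\,M-\alpha_p+1)$ of row $p$ and the rightmost cell $(p+1,M)$ of row $p+1$ then satisfy $p < p+1$ and $M - \alpha_p + 1 \le M - 1 < M$, so neither lies weakly above-and-left of the other; they are incomparable in $P_\alpha$, which therefore has at least two linear extensions, giving $\#\SET(\alpha) \ge 2$.

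The main obstacle is bookkeeping rather than conceptual: one must fix the orientation conventions so that ``linear extension of $P_\alpha$'' matches Definition~\ref{def:SET} verbatim, and check that the comparability argument in the $(1^k,\ell)$ case remains valid in the degenerate cases $\ell = 1$ (a single column) and $k = 0$ (a single row). Once the poset reformulation is in place, the equivalence follows at once.
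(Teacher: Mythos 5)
Your proof is correct and follows essentially the same route as the paper: both reduce the statement (via the $F$-expansion over $\SET(\alpha)$ and linear independence of the $F$'s) to showing $\#\SET(\alpha)=1$ exactly for reverse hooks, and both locate the obstruction in the non-hook case at the same pair of cells, the leftmost of row $p$ and the rightmost of row $p+1$. Your linear-extension/poset framing is a clean formalization of what the paper asserts directly (the paper exhibits a second tableau by swapping the entries of exactly those two cells), but it is not a different argument.
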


\begin{proof}
If $\alpha$ is a reverse hook shape, there is clearly only one standard extended tableau of shape $\alpha$, specifically the super-standard one. Conversely, suppose $\alpha_i > 1$ and $\alpha_{i+1} > 0$. Then a second element of $\SET(\alpha)$ may be obtained from the super-standard one by swapping the entry of the leftmost cell of row $i$ and the rightmost cell of row $i+1$.
\end{proof}

\begin{table}[ht]
  \begin{displaymath}
    \begin{array}{rcl}
    \eS_{(1)}   & = & F_{(1)} \\[1ex]
    \eS_{(2)}   & = & F_{(2)} \\
    \eS_{(11)}  & = & F_{(11)} \\[1ex]
    \eS_{(3)}   & = & F_{(3)} \\
    \eS_{(21)}  & = & F_{(12)} + F_{(21)} \\
    \eS_{(12)}  & = & F_{(12)} \\
    \eS_{(111)} & = & F_{(111)} \\[1ex]
    \eS_{(4)}   & = & F_{(4)} \\
    \eS_{(31)}  & = & F_{(13)} + F_{(22)} + F_{(31)} \\
    \eS_{(13)}  & = & F_{(13)} \\
    \eS_{(22)}  & = & F_{(121)} + F_{(22)} \\
    \eS_{(211)} & = & F_{(112)} + F_{(121)} + F_{(211)}  \\
    \eS_{(121)} & = & F_{(112)} + F_{(121)} \\
    \eS_{(112)} & = & F_{(112)} \\
    \eS_{(1111)}& = & F_{(1111)} 
    \end{array} \hspace{1em}
    \begin{array}{rcl}
    \eS_{(5)}    & = & F_{(5)} \\
    \eS_{(41)}   & = & F_{(14)} + F_{(23)} + F_{(32)} + F_{(41)} \\
    \eS_{(14)}   & = & F_{(14)} \\
    \eS_{(32)}   & = & F_{(23)} + F_{(122)} + F_{(131)} + F_{(221)} + F_{(32)} \\
    \eS_{(23)}   & = & F_{(23)} + F_{(122)} \\
    \eS_{(311)}  & = & F_{(113)} + F_{(122)} + F_{(131)} + F_{(212)} + F_{(221)} + F_{(311)} \\
    \eS_{(131)}  & = & F_{(113)} + F_{(122)} + F_{(131)} \\
    \eS_{(113)}  & = & F_{(113)} \\
    \eS_{(221)}  & = & F_{(122)} + F_{(1121)} + F_{(212)} + F_{(1211)} + F_{(221)} \\
    \eS_{(212)}  & = & F_{(122)} + F_{(1121)} + F_{(212)} \\
    \eS_{(122)}  & = & F_{(122)} + F_{(1121)} \\
    \eS_{(2111)} & = & F_{(1112)} + F_{(1121)} + F_{(1211)} + F_{(2111)} \\
    \eS_{(1211)} & = & F_{(1112)} + F_{(1121)} + F_{(1211)} \\
    \eS_{(1121)} & = & F_{(1112)} + F_{(1121)} \\
    \eS_{(1112)} & = & F_{(1112)} \\
    \eS_{(11111)}& = & F_{(11111)} 
    \end{array}
  \end{displaymath}
  \caption{\label{tab:eSchur}A table of the fundamental expansion of the extended Schur functions.\vspace{-1\baselineskip}}
\end{table}

%
%

\bibliographystyle{amsalpha} 
\bibliography{kohnert}

\end{document}